\newtheorem{thm}{Theorem}[section]
\newtheorem{pro}[thm]{Proposition}
\newtheorem{lem}[thm]{Lemma}
\newtheorem{cor}[thm]{Corollary}
\theoremstyle{remark}
\newtheorem{rem}[thm]{Remark}
\newcommand{\nb}{\nabla}
\newcommand{\ld}{\lambda}
\newcommand{\Int}[2]{\displaystyle{\int_{#1}^{#2}}}
\newcommand{\Bb}{\mathbf {B}}
\newcommand{\R}{\mathbb{R}}
\newcommand{\N}{\mathbb{N}}
\newcommand{\Sum}[2]{\displaystyle{\sum_{#1}^{#2}}}
\newcommand{\norm}[1]{\left\|#1\right\|}
\g@addto@macro{\endabstract}{\@setabstract}
\newcommand{\authorfootnotes}{\renewcommand\thefootnote{\@fnsymbol\c@footnote}}%
\def\sig#1{\vbox{\hsize=5.5cm
\kern2cm\hrule\kern1ex
\hbox to \hsize{\strut\hfil #1 \hfil}}}
\newcommand\signatures[4]{%
\vspace{1cm}
\hbox to \hsize{\hfil #1\today\hfil}
\vspace{1cm}
\hbox to \hsize{\quad#2\hfil\hfil #3\quad}
\vspace{1cm}
\hbox to \hsize{\hfil#4\hfil}}
\numberwithin{equation}{section}
\title{Energy of surface states for 3D magnetic Schr\"{o}dinger operators}
\begin{document}
\maketitle
\begin{center}
 \par
  \authorfootnotes
 Marwa Nasrallah\textsuperscript{a,b}
 \scriptsize
 \par \bigskip
 \textsuperscript{a}{\it Department of Mathematics, Aarhus University, Denmark}\par
 \textsuperscript{b}{\it Ecole Doctorale Sciences Et Technologie, Hadath, Lebanon}\par
  \email{\it Email: marwa@imf.au.dk}
\end{center}

\begin{abstract}
We establish a semi-classical formula for the sum of eigenvalues of
a magnetic Schr\"{o}dinger operator in a three-dimensional domain
with compact smooth boundary and Neumann boundary conditions. The
eigenvalues we consider have eigenfunctions localized near the
boundary of the domain, hence they correspond to surface states.
Using relevant coordinates that straighten out the boundary, the
leading order term of the energy is described in terms of the
eigenvalues of model operators in the half-axis and the half-plane.
\end{abstract}
 \section{Introduction and main result}
\subsection{Introduction}

The computation of the number and the sum of eigenvalues of
Schr\"odinger operators in various asymptotic regimes is a central
question in mathematical physics. One motivation comes from the
problem of stability of matter (see Lieb-Solovej-Yngvason
\cite{LSY}). Another motivation is  the calculation of the quantum
current (see Fournais \cite{Fo}). The object of study in \cite{LSY}
is mainly the Pauli operator with magnetic field and electric
potential. The study of the finiteness of the number and the energy
of {\it negative} eigenvalues of the Pauli operator has been the
object of study of numerous papers, starting probably with the
establishing of the Cwickle-Rozenblum-Lieb
and Lieb-Thirring bounds, and followed up by many important papers
such as \cite{LSY, Er-So,Sob,FrEk}.

This paper aims at answering the same question as in \cite{LSY}  but
for the Schr\"odinger operator with magnetic field. The electric
potential is removed but the operator is defined in a domain with
boundary. This leads to a similar situation as in \cite{LSY}, but
the geometry of the boundary will have a significant influence on
the expression of the leading order terms (see
Theorem~\ref{main-thm} below). Details will be discussed at a later
point of this introduction.

The similar analogy between the results of this paper and those of
\cite{LSY} has been observed previously. In \cite{HeMo1}, while
estimating the ground state energy of a Schr\"odinger operator in a
domain with boundary, Helffer-Mohamed observed an analogy between
the semi-classical analysis of Schr\"odinger operators with electric
potentials and that of Schr\"odinger operators in domains with
boundaries. Loosely speaking, this analogy can be summarized by
saying that `boundaries' play a similar role to `electric
potentials'. More precisely, this analogy is established in
\cite{HeMo1} for the question of computing the ground state energy
for an operator in a domain with boundary. Guided by this analogy,
several important applications to the analysis of the
Ginzburg-landau model of superconductivity are given. We refer the
reader to the monograph \cite{FH-b} and references therein.

It is natural to wonder whether the same type of analogy between
`boundaries' and `electric potentials' still exists for the question
of computing the energy, as done in \cite{LSY}. The paper of
Fournais-Kachmar \cite{Fo-Ka} established this type of analogy
between boundaries and electric potentials  for two dimensional
domains and Neumann boundary condition. The goal of this paper is
to generalize the results of \cite{Fo-Ka} to the case of three
dimensional {\it smooth} domains.

\subsection{Earlier results}

Let $d\in\{2,3\}$ and $\mathcal{O}\subset\R^{d}$ be a bounded domain
with {\bf compact} and {\bf smooth} boundary  $\partial\mathcal{O}$.
We will consider both the
 case of interior domains
$\Omega=\mathcal{O}$ and exterior domains
$\Omega=\R^{d}\setminus\overline{\mathcal{O}}$.

We consider a magnetic vector potential ${\bf A}\in
C^{\infty}(\overline{\Omega};\R^{d})$ and introduce the magnetic
field ${\bf B}={\rm curl }~{\bf A}$ and the quantities
\begin{equation}\label{intensity+min}
 b:=\inf_{x\in\overline\Omega}{|{\bf B}(x)|},\qquad b^{\prime}=\inf_{x\in\partial\Omega}{|{\bf B}(x)|}\,.
\end{equation}
We assume that $b> 0$.
For $h>0$, we introduce the Neumann Schr\"{o}dinger operator $\mathcal{P}_{h}$ with magnetic field~:
\begin{equation}\label{main-ope}
 \mathcal{P}_{h}=(-ih\nabla +{\bf A})^{2}\quad {\rm in}\quad L^{2}(\Omega),
\end{equation}
whose domain is,
\begin{multline*}
    \mathcal{D}(\mathcal{P}_{h})=\big\{u\in L^{2}(\Omega)~:~(-ih\nb+{\bf{A}})^{j}u\in L^{2}(\Omega),\quad j=1,2,\quad\nu~.~(-ih\nb+{\bf  A})u=0
    \quad\text{   on   }\partial\Omega\big\}\,.
\end{multline*}
Here, for $x\in\partial\Omega$, $\nu(x)$ denotes the unit interior normal vector to $\partial\Omega$ at $x$.

The operator \eqref{main-ope} is the Friedrichs' self-adjoint
extension in $L^{2}(\Omega)$ associated with the semi-bounded closed
quadratic form~:
\begin{equation}\label{main-qf}
\mathcal{Q}_{h}(u):= \int_{\Omega}|(-ih\nb +{\bf A})u|^{2}dx,\quad \mathcal{D}(\mathcal{Q}_{h}):= \{u\in L^{2}(\Omega)\,:\,(-ih\nb +{\bf A})u\in L^{2}(\Omega)\}.
\end{equation}
If the domain $\Omega$ is bounded (interior case), it results from the compact embedding of $\mathcal{D}(\mathcal{Q}_{h})$ into $L^{2}(\Omega)$ that $\mathcal{P}_{h}$ has compact resolvent. Hence the spectrum is purely discrete consisting of a sequence of positive eigenvalues accumulating at infinity.

In the case of exterior domains, the operator ${\mathcal{P}}_{h}$
can have essential spectrum. It was established in
\cite[Theorem~3.1]{HeMo1} that there exists a constant $C_d\geq0$
such that for all $h\in(0,h_{0}]$, we have
\begin{equation}\label{est-HM}
 \int_{\Omega}|(-ih\nb +{\bf A})u|^{2}dx\geq h\int_{\Omega}(|{\bf B}(x)|-C_dh^{1/4})|u(x)|^{2}, \quad \forall\,u\in C^{\infty}_{0}(\Omega).
\end{equation}
Notice that in the two dimensional case where $d=2$, we have that
$C_d=0$. Using  a magnetic version of Persson's Lemma (see
\cite{Bo1,Per}), we get that
\[
\inf{\rm Spec}_{\rm ess}\mathcal{P}_{h}\geq h(b-C_dh^{1/4}).
\]

If the magnetic field is constant and the domain $\Omega$ has a
smooth boundary, it is established that:
\begin{equation}
\inf{\rm Spec}~\mathcal{P}_{h}=h\Theta_0b+o(h)\,,\quad (h\to 0_+)\,,
\end{equation}
where $\Theta_{0}\in (0,1)$ is the universal constant introduced in
\eqref{theta0'}. In such a situation, we see that if
$\Lambda\in[0,b)$, then the set
$${\rm Spec}~\mathcal P_h\,\cap\,[0,\Lambda h)\not=\emptyset\,.$$

In general, we consider $\Lambda\in[0,b)$ and  work under the
assumption that ${\rm Spec}~\mathcal P_h\,\cap\,[0,\Lambda
h)\not=\emptyset$ and denote the elements of this set as an
increasing sequence of eigenvalues counting multiplicities,
$${\rm Spec}(\mathcal P_h)\,\cap \,(-\infty\,,\Lambda h)=\{e_1(h)\,,e_2(h)\,,\cdots\}\,.$$

In \cite{Fo-Ka}, it is established the asymptotic behavior of the
sum
\[
 \sum_{j}(e_{j}(h)-\Lambda h)_{-}:= {\rm Tr}\big(\mathcal{P}_{h}-\Lambda h\big)_{-}
\]
in the semi-classical limit $h\rightarrow 0$. Here
$(x)_{-}=\max(-x,0)$ denotes the negative part of a number $x\in\R$,
and, for a self-adjoint operator $\mathcal{H}$, the operator
$\mathcal{H}_{-}=-\mathbf 1_{(-\infty,0)}(\mathcal H)\, \mathcal H $
is defined via the spectral theorem.

The result of \cite{Fo-Ka}, valid in two dimensions $(d=2)$,  is
recalled in Theorem~\ref{FK-th} below. In the statement of the
theorem, notice that, if $\xi\in\R$, the number $\mu_{1}(\xi)$ is
the lowest eigenvalue of the harmonic oscillator
\[
-\partial_{t}^{2}+(t-\xi)^{2}, \qquad{\rm in}\quad L^{2}(\R_{+}),
\]
and $\Theta_{0}$ is the universal constant defined as follows
\begin{equation}\label{theta0'}
\Theta_0=\inf_{\xi\in\R}\mu_1(\xi)\,.\end{equation}

\begin{thm} \label{FK-th}(Fournais-Kachmar \cite{Fo-Ka}; $d=2$).  As $h\to0_+$, there holds,
\begin{equation}\label{dim-2-N}
\lim_{h\rightarrow 0}h^{-1/2}\Sum{j}{} (e_{j}(h)-bh)_{-}= \dfrac{1}{2\pi}\iint_{\partial\Omega \times\R}
{ B}(x)^{3/2}\left(-\dfrac{b}{{ B}(x)}+\mu_{1}(\xi)\right)_{-}ds(x) d\xi,
\end{equation}
where $ds(x)$ denotes the arc-length measure on the boundary.
\end{thm}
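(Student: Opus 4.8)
The plan is to derive \eqref{dim-2-N} from matching upper and lower bounds for $h^{-1/2}\,{\rm Tr}(\mathcal P_h-bh)_-$. The algebraic tool is the variational identity
\[
{\rm Tr}(\mathcal P_h-bh)_-=-\inf\big\{{\rm Tr}\big((\mathcal P_h-bh)\gamma\big)\,:\,\gamma=\gamma^*,\ 0\le\gamma\le1\big\},
\]
the infimum being attained at $\gamma=\mathbf 1_{(-\infty,0)}(\mathcal P_h-bh)$; hence any admissible trial density matrix $\gamma$ gives a lower bound for the trace, and any operator lower bound for $\mathcal P_h$ gives an upper bound for the trace. In both directions the effective one-dimensional object is the de Gennes operator $\mathfrak h(\xi)=-\partial_\tau^2+(\tau-\xi)^2$ on $L^2(\R_+)$ with Neumann condition at $0$, whose lowest eigenvalue is $\mu_1(\xi)$ and whose infimum over $\xi$ is $\Theta_0$ (cf. \eqref{theta0'}); note that the $\xi$-integral on the right of \eqref{dim-2-N} runs over the bounded set $\{\mu_1(\xi)<b/B(x)\}$ and vanishes wherever $B(x)\ge b/\Theta_0$, so the right-hand side is finite. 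Two reductions are used throughout. First, because $C_d=0$ in \eqref{est-HM} when $d=2$, one has $\mathcal Q_h(u)\ge hb\|u\|^2$ for all $u\in C_0^\infty(\Omega)$; hence $\inf{\rm Spec}_{\rm ess}\,\mathcal P_h\ge hb$ and the exterior case is no harder than the interior one, the negative part being a finite sum. Second, the same a priori bound together with Agmon-type decay estimates for the spectral subspace of $\mathcal P_h$ below $bh$ shows that the relevant spectral data are carried by states concentrated within distance $O(\sqrt h\,|\log h|)$ of $\partial\Omega$, reducing the whole computation, up to $o(h^{1/2})$, to the magnetic Neumann operator on a thin boundary tube.

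\emph{Upper bound on the trace.} After the boundary localization one works in boundary normal coordinates $(s,t)$ on the tube, $s$ arc length along $\partial\Omega$ and $t=\dist(\cdot,\partial\Omega)$, in which $\mathcal P_h$ becomes $(hD_t)^2+a(s,t)^{-2}(hD_s-\widetilde A)^2$ with $D=-i\partial$, metric factor $a(s,t)=1+O(t)$, and gauge chosen so that $\widetilde A(s,t)=-\int_0^t B\,dt'=-B(s,0)\,t+O(t^2)$. One then localizes further in the boundary phase space $(s,\eta)$ into cells on which both $B$ and the effective tangential momentum may be treated as frozen (equivalently: after a momentum translation adapted to each cell, one decouples the cells by Neumann bracketing in the tangential variable). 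On a cell centered at $x_j\in\partial\Omega$ the reduced operator is, after the magnetic rescaling $t=\sqrt{h/B(x_j)}\,\tau$, unitarily equivalent to $hB(x_j)\,\mathfrak h(\xi)$ fibered over the rescaled tangential momentum $\xi$, up to errors that are $o(1)$ in relative size. Since the tangential phase-space density is $\tfrac1{2\pi h}\,ds\,d\eta=\tfrac1{2\pi}\sqrt{B(x_j)/h}\,ds\,d\xi$, the contribution of a cell of arc length $\ell$ to the trace equals, by the explicit one-dimensional spectral picture of $\mathfrak h(\xi)$ (only the ground band contributes, as $b/B(x_j)\le1$),
\[
\frac{h^{1/2}}{2\pi}\,\ell\,B(x_j)^{3/2}\int_{\R}\Big(\mu_1(\xi)-\frac{b}{B(x_j)}\Big)_-\,d\xi\;+\;o\big(h^{1/2}\ell\big).
\]
Summing over the $O(\ell^{-1})$ cells yields a Riemann sum converging, as $h\to0$, to the right-hand side of \eqref{dim-2-N}; the errors due to the metric factor $a$, to the variation of $B$ along and off $\partial\Omega$, to the gauge remainder and to the bracketing boundary terms all carry powers of the cell size $\ell$, and they are made to sum to $o(h^{1/2})$ by choosing $\ell$ in an admissible window (tending to $0$, but slowly on the scale $\sqrt h$).

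\emph{Lower bound on the trace.} Here one constructs an admissible trial density matrix supported near $\partial\Omega$. For $(s,\eta)\in\partial\Omega\times\R$ let $u_{s,\eta}$ be a cut-off, in the boundary coordinates above, of $e^{i\eta s'/h}\,\varphi_\xi\big(t/\sqrt{h/B(s)}\big)$, where $\xi=\eta/\sqrt{hB(s)}$ and $\varphi_\xi$ is the $L^2$-normalized ground state of $\mathfrak h(\xi)$, and set
\[
\gamma=\frac1{2\pi h}\iint_{\partial\Omega\times\R}\mathbf 1\!\left[\,B(s)\,\mu_1\!\Big(\tfrac{\eta}{\sqrt{hB(s)}}\Big)<b\,\right]\,|u_{s,\eta}\rangle\langle u_{s,\eta}|\;ds\,d\eta .
\]
The family $\{u_{s,\eta}\}$ provides an approximate resolution of the identity, so that, after a harmless adjustment of the cut-off, $\gamma$ is admissible ($0\le\gamma\le1$); quantifying the almost-orthogonality of the $u_{s,\eta}$ for well-separated parameters — which is what makes this step lossless to leading order — is the delicate point. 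Using $\langle u_{s,\eta},\mathcal P_h u_{s,\eta}\rangle=hB(s)\mu_1(\xi)+o(h)$ and $\langle u_{s,\eta},u_{s,\eta}\rangle=1+o(1)$ one computes
\[
-{\rm Tr}\big((\mathcal P_h-bh)\gamma\big)=\frac1{2\pi h}\iint_{\partial\Omega\times\R}\big(bh-hB(s)\mu_1(\xi)\big)_+\,ds\,d\eta+o(h^{1/2}),
\]
and the substitution $\eta=\sqrt{hB(s)}\,\xi$ turns the main term into $\tfrac{h^{1/2}}{2\pi}\iint_{\partial\Omega\times\R}B(x)^{3/2}\big(\mu_1(\xi)-b/B(x)\big)_-\,ds(x)\,d\xi$. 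Combining with the upper bound and dividing by $h^{1/2}$ gives \eqref{dim-2-N}.

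\emph{Main obstacle.} The heuristic — separation of variables into a normal de Gennes mode and a tangential Weyl count — is clear; the difficulty is the \emph{uniform} control of the many remainders generated when the genuinely non-separable, variable-coefficient operator $\mathcal P_h$ is replaced by the fibered model $hB\,\mathfrak h(\xi)$: the curvature of $\partial\Omega$, the variation of $B$ along and transverse to $\partial\Omega$, the gauge remainder, and especially the phase-space localization/bracketing errors each carry powers of the cell size, and these must be balanced so that, after summation over $O(\ell^{-1})$ cells, the error stays $o(h^{1/2})$ while the main term still converges to the stated integral. On the lower-bound side the analogous crux is establishing $0\le\gamma\le1$ with a loss that is itself $o(h^{1/2})$, i.e. quantifying the almost-orthogonality of the boundary wave packets $u_{s,\eta}$.
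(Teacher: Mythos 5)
Note first that Theorem~\ref{FK-th} is not proved in this paper: it is recalled from Fournais--Kachmar~\cite{Fo-Ka} as motivation only, so there is no proof of this statement here against which to check yours directly. The closest comparison available is the proof of the 3D analogue, Theorem~\ref{main-thm}, and at that level your outline is strategically aligned with it: the variational characterization of ${\rm Tr}(\mathcal P_h-\Lambda h)_-$ over orthonormal families and over density matrices (Lemmas~\ref{lem-VP-2} and~\ref{lem-VP-3}), localization to a thin boundary tube, boundary normal coordinates and a gauge change reducing the local model to a fibered de~Gennes operator $hB\,\mathfrak h(\xi)$, a Riemann sum over boundary cells for one inequality and a coherent-state/eigenprojector trial $\gamma$ for the other. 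Your cell-by-cell bookkeeping also reproduces the constant $\tfrac{1}{2\pi}B^{3/2}$ correctly.

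That said, you have written a roadmap, not a proof, and the two places you hand-wave are precisely where the argument lives. Your upper-bound step --- localization ``in the boundary phase space $(s,\eta)$ into cells on which $\dots$ the effective tangential momentum may be treated as frozen'' --- is not a defined operation: one cannot localize simultaneously in $s$ and $\eta$ without a quantitative cost, and ``freezing $\eta$ on a cell'' by itself has no sharp meaning. What~\cite{Fo-Ka} and Section~\ref{bndcoord}--Section~8 of this paper actually do is localize only in the base variable by an IMS partition, freeze $B$ and linearize the vector potential on each cell via a gauge change (cf.~Lemma~\ref{Lem-App}), and treat the tangential direction \emph{exactly} by a Fourier direct-integral decomposition; the eigenprojectors $\Pi_j(\xi,\tau)$ and $\Pi_{\theta,j}$ of Section~6 are exactly that resolution of the identity. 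The remainders this produces --- $O(h^2\ell^{-2})$ from IMS commutators, $O(\ell^4\varepsilon^{-1})$ from the quadratic part of $\widetilde{\bf A}$ after Cauchy--Schwarz, boundary-layer terms from the cut-off in the normal variable --- are not corrections that ``carry powers of the cell size'' and may be waved away: the proof is the verification that with an admissible choice of $\ell,\varepsilon$ as powers of $h$, the total error after summing over $O(\ell^{-1})$ cells is $o(h^{1/2})$ while the main term converges. A second, independent gap is the endpoint $\Lambda=b$ on which~\eqref{dim-2-N} insists. You correctly note that the $\xi$-integral is supported on $\{\mu_1(\xi)<b/B(x)\le 1\}$ and the right-hand side is finite, but the Agmon localization you invoke degenerates as the energy approaches the threshold $hb=\inf{\rm Spec}_{\rm ess}\mathcal P_h$, since there is no uniform spectral gap there; one must instead use that eigenvalues close to $bh$ contribute negligibly to the Riesz mean $(e_j-bh)_-$, and that bound is not automatic. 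This is exactly why the present paper restricts the 3D theorem to $\Lambda<b$ and lists $\Lambda=b$ as open (Perspective~(4)); the 2D case at $\Lambda=b$ is handled in~\cite{Fo-Ka} by an explicit supplementary estimate that your sketch leaves entirely implicit.
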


\subsection{Main results}

We focus on the case where $\Omega\subset \R^3$ ($d=3$). The main
result of this paper is  a generalization of Theorem~\ref{FK-th}
valid when $d=3$.

We shall need the following notation
\begin{itemize}
\item Given $\eta\in\R$, $\omega$ an open domain in $\R^{3}$ and a self-adjoint operator $\mathcal{H}$ in $L^{2}(\omega)$ such that the spectrum below $\eta$ is discrete, we shall denote by
\begin{equation}\label{def-number}
\mathcal{N}(\eta;\mathcal{H},\omega):={\rm Tr}({\bf{1}}_{(-\infty,\eta]} (\mathcal{H}))
\end{equation}
 the number of eigenvalues less than $\eta$, counting multiplicities, and by
\begin{equation}\label{def-energy}
\mathcal{E}(\eta; \mathcal{H},\omega):= {\rm Tr}\big(\mathcal{H}-\eta\big)_{-}
\end{equation}
their corresponding sum below $\eta$.
\item If $x$ is a point on the boundary of $\Omega$, then $\theta(x)$ denotes the angle in $[0,\pi/2]$ between
 the magnetic field $B= \mbox{curl}\,{\bf {A}}$ and the tangent plane to $\partial\Omega$ at the point $x$.  More precisely,
\begin{equation}\label{def-theta}
     \partial\Omega\ni x\mapsto  \theta(x)=\arcsin\bigg(\dfrac{|{\bf B}(x)\cdot\nu(x)|}{|{\bf B}(x)|}\bigg)\in [0,{\pi}/{2}].
\end{equation}
\item We let $\R_{+}=(0,\infty)$, $\R^{2}_{+}=\R\times(0,\infty)$ and $\R^{3}_{+}=\R^{2}\times(0,\infty)$.
 \item For $\xi\in\R$, we denote by $\mu_{1}(\xi)$ the lowest eigenvalue of the harmonic oscillator
 \[
 -\partial_{t}^{2}+(t-\xi)^{2}\qquad {\rm in}\quad L^{2}(\R_{+})
 \]
 with Neumann boundary conditions at $t=0$.
 \item For $\theta\in(0,\pi/2]$, we introduce the two-dimensional operator
 \begin{equation}\label{op-L-th}
 \mathcal{L}(\theta):=-\partial_{t}^{2}-\partial_{s}^{2}+(t\cos(\theta)-s\sin(\theta))^{2} \quad{\rm in}\quad L^{2}(\R^{2}_{+}).
 \end{equation}
 It is well known (see~\cite{HeMo4}) that the essential spectrum of $\mathcal{L}(\theta)$ is the interval $[1,\infty)$, and we shall denote by $\{\zeta_{j}(\theta)\}_{j}$ the countable set of eigenvalues of $\mathcal{L}(\theta)$ in the interval $[\zeta_{1}(\theta),1)$.
 \item We define the positive and negative parts of a real number $x$ by $(x)_{\pm}=\max({\pm x,0})$.
\end{itemize}
The main theorem of this paper is~:
\begin{thm}\label{main-thm}
Suppose $\Omega\subset\R^3$ is either an interior or an exterior
domain with compact smooth boundary
 $\partial\Omega$. Given $\Lambda\in[0,b)$, the
 following asymptotic formula holds,
\begin{equation}\label{main-for}
  \sum_{j}(e_{j}(h)-\Lambda h)_{-}={\rm Tr}\big(\mathcal{P}_{h}-\Lambda h\big)_{-}= \int_{\partial\Omega}
 |{\bf B}(x)|^{2} E(\theta(x),\Lambda |{\bf B}(x)|^{-1})d\sigma(x)+ o(1)
  ,\qquad{\rm as}
  \quad h\rightarrow 0,
\end{equation}
where, for $\lambda\in[0,1)$, the function $E(\theta,\lambda)$ is given by
\begin{equation*}
E(\theta,\lambda)=\left\{\begin{array}{lcl}
\dfrac{1}{3\pi^{2}}\displaystyle\int_{0}^{\infty}(\mu_{1}(\xi)-\lambda)_{-}^{3/2}d\xi&{\rm if}&\theta=0,\\
\label{E2}\dfrac{\sin(\theta)}{2\pi}\displaystyle\sum_{j}(\zeta_{j}(\theta)-\lambda )_{-}&{\rm if}&\theta\in(0,\pi/2],
\end{array}\right.
\end{equation*}
 and $d\sigma(x)$ denotes the surface  measure on the boundary $\partial\Omega$.
\end{thm}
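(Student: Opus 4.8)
\emph{Overview.} I would prove matching upper and lower bounds for $\mathrm{Tr}(\mathcal P_h-\Lambda h)_-$ using the variational characterization of the sum of negative eigenvalues, following the strategy of \cite{LSY,Fo-Ka} but carrying along the extra boundary dimension and the field angle $\theta(x)$. First, the bulk lower bound \eqref{est-HM}, together with Agmon-type exponential decay estimates and the assumption $\Lambda<b$, shows that eigenfunctions with eigenvalue below $\Lambda h$ are concentrated, modulo an $\mathcal O(h^{\infty})$ error in $L^{2}$, in an $h^{1/2}|\log h|$-tube around $\partial\Omega$. A standard localization then reduces the computation of $\mathrm{Tr}(\mathcal P_h-\Lambda h)_-$, up to $o(1)$, to that of the operator in a fixed boundary collar $\mathcal V_\delta=\{x\in\overline\Omega:\mathrm{dist}(x,\partial\Omega)<\delta\}$ with Neumann (for the upper bound) or Dirichlet (for the lower bound) condition on its inner face.

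\emph{Reduction to constant-field models.} In $\mathcal V_\delta$ I pass to boundary normal coordinates $(y,t)$, $y\in\partial\Omega$, $t=\mathrm{dist}(\cdot,\partial\Omega)$, in which $\mathcal P_h$ is a half-space magnetic Laplacian with metric coefficients $1+\mathcal O(t)$ and Neumann condition at $t=0$. Fix a scale $\ell=\ell(h)$ with $h^{1/2}\ll\ell\ll 1$ and a partition of $\partial\Omega$ into $\mathcal O(\ell^{-2})$ curvilinear cells $Q_k$ of diameter $\sim\ell$ centred at points $x_k$. Dirichlet--Neumann bracketing in the tangential variables, supplemented by an IMS localization formula, reduces $\mathrm{Tr}(\mathcal P_h-\Lambda h)_-$, up to $\mathcal O(h\ell^{-2})+\mathcal O(\ell)=o(1)$, to $\sum_k\mathrm{Tr}(\mathcal P_{h,k}-\Lambda h)_-$, where $\mathcal P_{h,k}$ is the magnetic Neumann (resp.\ Dirichlet) Laplacian on $Q_k\times(0,\delta)$ with the field frozen to $\mathbf B(x_k)$ and the metric frozen to Euclidean, the freezing errors being $\mathcal O(\ell)$ per cell and hence $o(1)$ after summation. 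Rescaling $x=x_k+h^{1/2}z$, then dilating by $|\mathbf B(x_k)|^{1/2}$, and using exponential decay in $t$ of the resulting model to replace the interval $(0,\delta h^{-1/2})$ by $(0,\infty)$, I obtain
\[
\mathrm{Tr}(\mathcal P_{h,k}-\Lambda h)_-=h\,|\mathbf B(x_k)|\,\mathrm{Tr}\big(\mathcal H(\theta(x_k))\big|_{Q_{L_k}\times\R_+}-\lambda_k\big)_-+o(1)_k ,
\]
where the errors $o(1)_k$ sum to $o(1)$, $\lambda_k=\Lambda|\mathbf B(x_k)|^{-1}\le\Lambda/b<1$, $L_k=|\mathbf B(x_k)|^{1/2}\ell\,h^{-1/2}\to\infty$ uniformly in $k$, $Q_{L_k}$ is a square of side $L_k$, and $\mathcal H(\theta)$ is the Neumann magnetic Laplacian on $\R^3_+$ with constant unit field making angle $\theta$ with $\partial\R^3_+$.

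\emph{Thermodynamic limit for the model, and conclusion.} Using translation invariance of the constant-field problem and a linear gauge, $\mathcal H(\theta)$ is unitarily equivalent, for $\theta\in(0,\pi/2]$, to the direct integral $\int_{\R}^{\oplus}\mathcal L(\theta)\,d\xi$ of copies of the operator $\mathcal L(\theta)$ in \eqref{op-L-th}, and, for $\theta=0$, to $\int_{\R^2}^{\oplus}\big(\tau^2-\partial_t^2+(t-\xi)^2\big)\,d\tau\,d\xi$ acting fibrewise on $L^2(\R_+)$; the condition $\lambda_k<1$ makes all the relevant fibre quantities finite. Restricting $\mathcal H(\theta)$ to $Q_L\times\R_+$ and counting the quasimomenta whose fibre bound states fit inside the box --- when $\theta>0$ the fibre state attached to $\xi$ is localized near the in-boundary coordinate $\xi/\sin\theta$, so it fits in $Q_L$ only for $\xi$ in an interval of length $\sim L\sin\theta$, and $\xi$ is quantized with spacing $2\pi/L$, giving $\sim\tfrac{\sin\theta}{2\pi}L^2$ states, while when $\theta=0$ both tangential momenta range over $\tfrac{2\pi}{L}\mathbb Z$, giving $\sim\tfrac{1}{(2\pi)^2}L^2$ --- together with localization estimates controlling the lateral boundary layer of $Q_L$, one gets, uniformly for $\lambda$ in a compact subset of $[0,1)$ and for $\theta\in[0,\pi/2]$,
\[
L^{-2}\,\mathrm{Tr}\big(\mathcal H(\theta)\big|_{Q_L\times\R_+}-\lambda\big)_-\longrightarrow E(\theta,\lambda) .
\]
For $\theta=0$ the elementary $\tau$-integration turns $\tfrac{1}{(2\pi)^2}\sum_j\iint(\tau^2+\mu_j(\xi)-\lambda)_-\,d\tau\,d\xi$ into $\tfrac{1}{3\pi^2}\int_0^\infty(\mu_1(\xi)-\lambda)_-^{3/2}d\xi$, since only $j=1$ survives ($\inf_\xi\mu_j(\xi)>1$ for $j\ge2$) and $(\mu_1(\xi)-\lambda)_-$ is supported in $\xi>0$ (because $\mu_1(\xi)\ge\mu_1(0)=1$ for $\xi\le0$). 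As $h\,|\mathbf B(x_k)|\,L_k^2=|\mathbf B(x_k)|^2\ell^2\sim|\mathbf B(x_k)|^2\sigma(Q_k)$, it follows that $\sum_k\mathrm{Tr}(\mathcal P_{h,k}-\Lambda h)_-$ is, up to $o(1)$, a Riemann sum for $\int_{\partial\Omega}|\mathbf B(x)|^2E(\theta(x),\Lambda|\mathbf B(x)|^{-1})\,d\sigma(x)$, whose integrand is bounded because $\lambda(x)\le\Lambda/b$ stays away from $1$; matching the upper and lower bounds gives \eqref{main-for}.

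\emph{Main difficulty.} The heart of the proof --- and the genuinely three-dimensional input absent from \cite{Fo-Ka} --- is the model analysis in the last step: establishing, with a rate uniform over the cells, the large-box asymptotics of $\mathrm{Tr}(\mathcal H(\theta)|_{Q_L\times\R_+}-\lambda)_-$, in particular pinning down the density $\tfrac{\sin\theta}{2\pi}$ of fibre states (this is exactly where the angle between $\mathbf B$ and $\partial\Omega$ enters the leading term), controlling the contribution of the lateral boundary of $Q_L$ for the band operators $\mathcal L(\theta)$ through quantitative localization of their eigenfunctions, and handling uniformity as $\theta\to0$, where the fibre spectrum below $1$ becomes dense and the two forms of $E(\theta,\lambda)$ have to be reconciled. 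By contrast, the Agmon estimates, the boundary change of coordinates, the bracketing and the Riemann-sum passage are routine adaptations of techniques from \cite{LSY,Fo-Ka}.
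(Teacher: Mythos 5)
Your proposal follows essentially the same route as the paper: localize to an $\mathcal O(\varsigma)$-collar of $\partial\Omega$ using the Helffer–Mohamed bound, tile the boundary with cells of side $\varsigma$ satisfying $h^{1/2}\ll\varsigma\ll 1$, freeze the metric and magnetic field in each cell and compare to the half-space/half-cylinder model $\mathscr{P}_{\theta}^{L}$, establish the thermodynamic limit $L^{-2}\mathcal{E}(\lambda;\mathscr{P}_{\theta}^{L},\Omega^L)\to E(\theta,\lambda)$ with its explicit fibre-counting form (the $\sin\theta/(2\pi)$ density from the $\xi\mapsto\xi/\sin\theta$ localization and the $\tau$-integration for $\theta=0$), and pass to the surface integral as a Riemann sum. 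The only minor divergence is in implementation: where you bracket with Dirichlet/Neumann walls on the cells, the paper runs the lower bound through IMS plus projectors onto model eigenfunctions and the upper bound through an explicit trial density matrix built from those eigenfunctions, and it isolates the existence of the large-box limit (via a subadditivity lemma) before computing it — but these are differences in machinery, not in strategy.
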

\begin{rem}
In the case $\theta=\frac{\pi}{2}$, it is well known (see\,\cite{HeMo3}) that the first eigenvalue $\zeta_{1}(\frac{\pi}{2})=1$ which implies that $E(\frac{\pi}{2},\lambda)=0$ for any $\lambda\in[0,1)$.
\end{rem}
\begin{rem}
In the case $\theta\in(0,\pi/2)$, we emphasize that the sum
appearing in the formula of $E(\theta,\lambda)$ above, is a finite
sum. Indeed, in view of Lemma~\ref{Nb-Ev-Finite} below, we learn
that the number of eigenvalues of $\mathcal{L}(\theta)$, below a
fixed $\lambda\in[0,1)$, is finite.
\end{rem}

\begin{rem}
In Lemma~\ref{continuity+} below, we show that the function
\[
(\theta,\lambda)\mapsto E(\theta,\lambda),
\]
is a continuous function as a function of two variables. Consequently, we obtain
\[
\lim_{\theta\rightarrow0}\dfrac{\sin(\theta)}{2\pi}\displaystyle\sum_{j}(\zeta_{j}(\theta)-\lambda )_{-}= \dfrac{1}{3\pi^{2}}\int_{0}^{\infty}(\mu_{1}(\xi)-\lambda)_{-}^{3/2}d\xi.
\]
Notice that this formula is connected to the  formula for the number of eigenvalues given in \cite{MoTr}:
 \[
\lim_{\theta\rightarrow0}\big(\sin\theta\,\mathcal N(\lambda,\theta)\big)= \dfrac{1}{\pi}\int_{0}^{\infty}(\mu_{1}(\xi)-\lambda)_{-}^{3/2}d\xi,
\]
where $\mathcal N(\lambda,\theta)={\rm Card}\{\zeta_j(\theta)~:~\zeta_j(\theta)\leq\lambda\}$\,.
\end{rem}

Using the technique  to go from energies to densities
(see~\cite{EvLiSiSo} for details), we can differentiate both sides
of \eqref{main-for} with respect to $\Lambda h$ and get an
asymptotic formula for the number of eigenvalues of
$\mathcal{P}_{h}$ below $\Lambda h$. This is stated in the next
corollary valid under the assumptions made in
Theorem~\ref{main-thm}.
\begin{cor}\label{Cor-nb}
Let $\Lambda\in[0,b)$ and $\sigma$ be the surface measure on
$\partial\Omega$. If,
\begin{equation}\label{asp-nb}
\sigma\big(\big\{x\in\partial\Omega~:~\theta(x)\in(0,\pi/2),\quad {\Lambda}{|{\bf B}(x)|}^{-1}\in  {\rm Spec }\,\mathcal{L}(\theta(x))\big\}\big)=0\,,
\end{equation}
then, the following asymptotic formula holds true as $h\to0_+$,
\begin{equation}
\lim_{h\rightarrow 0}h \mathcal{N}(\Lambda h;\mathcal{P}_{h},\Omega)
=\int_{\partial\Omega}|{\bf B}(x)|n(\theta(x),\Lambda|{\bf B}(x)|^{-1})d\sigma(x)\,,
\end{equation}
where, for $\lambda\in[0,1)$, $n(\theta,\lambda)$ is given by
\begin{equation*}
n(\theta,\lambda)=\left\{\begin{array}{ll}
\dfrac{1}{2\pi^{2}}\displaystyle\int^{\infty}_{0}(\mu_{1}(\xi)-\lambda)_{-}^{1/2}d\xi &{\rm if}\quad\theta=0,\\
\dfrac{\sin(\theta)}{2\pi}\mathcal{N}(\lambda,\theta)&{\rm if}\quad\theta\in(0,\pi/2].
\end{array}\right.
\end{equation*}
\end{cor}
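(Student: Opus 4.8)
\textit{Proof proposal.} The plan is to obtain the counting asymptotics from the energy asymptotics of Theorem~\ref{main-thm} by the convexity (``energies to densities'') argument of~\cite{EvLiSiSo}. Fix $\Lambda\in[0,b)$ and, for $h>0$ small and $t\in[0,b)$, set
\[
f_h(t):={\rm Tr}\big(\mathcal{P}_h-t h\big)_-=\sum_j(th-e_j(h))_+,\qquad g(t):=\int_{\partial\Omega}|{\bf B}(x)|^2\,E\big(\theta(x),t|{\bf B}(x)|^{-1}\big)\,d\sigma(x).
\]
For $h$ small these sums are finite, and $f_h$ is then a finite sum of convex nondecreasing functions of $t$, hence convex and nondecreasing; moreover Theorem~\ref{main-thm} gives $f_h(t)\to g(t)$ for every $t\in[0,b)$, so $g$ is convex, nondecreasing and finite on $[0,b)$. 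Differentiating termwise, the right derivative of $f_h$ at $t$ equals $h\,\#\{j:e_j(h)\le th\}=h\,\mathcal{N}(th;\mathcal{P}_h,\Omega)$, and this value belongs to the subdifferential $\partial f_h(t)$.

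Next I would show that, under the hypothesis \eqref{asp-nb}, $g$ is differentiable at the fixed value $\Lambda$ with
\[
g'(\Lambda)=\int_{\partial\Omega}|{\bf B}(x)|\,n\big(\theta(x),\Lambda|{\bf B}(x)|^{-1}\big)\,d\sigma(x).
\]
First, from the explicit formulas, $\partial_\lambda E(\theta,\lambda)=n(\theta,\lambda)$ at every $\lambda\in[0,1)$ where $E(\theta,\cdot)$ is differentiable: for $\theta=0$ the map $\lambda\mapsto\int_0^\infty(\lambda-\mu_1(\xi))_+^{3/2}d\xi$ is $C^1$ with derivative $\tfrac32\int_0^\infty(\lambda-\mu_1(\xi))_+^{1/2}d\xi$, while for $\theta\in(0,\pi/2]$ the map $\lambda\mapsto E(\theta,\lambda)=\tfrac{\sin\theta}{2\pi}\sum_j(\lambda-\zeta_j(\theta))_+$ is, by the remarks following Theorem~\ref{main-thm} and Lemma~\ref{Nb-Ev-Finite}, a \emph{finite} piecewise-linear convex function whose corners are exactly the points of ${\rm Spec}\,\mathcal{L}(\theta)\cap[\zeta_1(\theta),1)$ and whose slope off the corners is $\tfrac{\sin\theta}{2\pi}\mathcal{N}(\lambda,\theta)$. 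By the chain rule, for each $x$ the function $t\mapsto|{\bf B}(x)|^2E(\theta(x),t|{\bf B}(x)|^{-1})$ is differentiable at $\Lambda$ with derivative $|{\bf B}(x)|\,n(\theta(x),\Lambda|{\bf B}(x)|^{-1})$, except when $\theta(x)\in(0,\pi/2)$ and $\Lambda|{\bf B}(x)|^{-1}\in{\rm Spec}\,\mathcal{L}(\theta(x))$; by \eqref{asp-nb} this exceptional set is $\sigma$-negligible. Since $\Lambda|{\bf B}(x)|^{-1}\le\Lambda/b<1$ uniformly in $x$ and, for $s$ in a closed interval around $\Lambda$ contained in $[0,b)$, the set $\{(\theta(x),s|{\bf B}(x)|^{-1}):x\in\partial\Omega\}$ is relatively compact in $[0,\pi/2]\times[0,1)$, the continuity of $E$ (Lemma~\ref{continuity+}) together with convexity in $t$ bounds the difference quotients of $t\mapsto|{\bf B}(x)|^2E(\theta(x),t|{\bf B}(x)|^{-1})$ at $\Lambda$ uniformly in $x$ and in small increments; dominated convergence then gives the differentiability of $g$ at $\Lambda$ and the displayed formula.

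Finally I would invoke the soft convexity lemma: if convex functions $f_h$ converge pointwise on a neighbourhood of $\Lambda$ to a function $g$ differentiable at $\Lambda$, then every selection $a_h\in\partial f_h(\Lambda)$ satisfies $a_h\to g'(\Lambda)$ — indeed $s^{-1}\big(f_h(\Lambda)-f_h(\Lambda-s)\big)\le a_h\le s^{-1}\big(f_h(\Lambda+s)-f_h(\Lambda)\big)$, and letting $h\to0$ and then $s\to0_+$ squeezes $a_h$ to $g'(\Lambda)$. Applying this with $a_h=h\,\mathcal{N}(\Lambda h;\mathcal{P}_h,\Omega)$ yields $h\,\mathcal{N}(\Lambda h;\mathcal{P}_h,\Omega)\to g'(\Lambda)$, which is the assertion of the corollary. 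The substantive step is the middle one: assumption \eqref{asp-nb} is precisely what is needed to push the $\lambda$-derivative inside the boundary integral and to identify $\partial_\lambda E(\theta,\lambda)=n(\theta,\lambda)$; the convexity passage from $f_h$ to its derivative and the pointwise convergence $f_h\to g$ are routine.
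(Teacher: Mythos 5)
Your proposal is correct and follows essentially the same route as the paper: both exploit convexity of $t\mapsto{\rm Tr}(\mathcal{P}_h-th)_-$, identify $h\,\mathcal{N}(\Lambda h;\mathcal{P}_h,\Omega)$ with its (right) subgradient, pass pointwise limits $f_h\to g$ via Theorem~\ref{main-thm}, compute $\partial_\lambda E=n$ at non-corner points, and invoke \eqref{asp-nb} to make the corner set $\sigma$-negligible (the paper phrases the squeeze through Corollary~\ref{NB-vs-EN} with $\pm\varepsilon$ and the one-sided derivatives $\partial E/\partial\lambda_\pm$, while you package it as differentiability of $g$ at $\Lambda$ plus the standard convexity lemma, but these are the same argument).
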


The result of Corollary~\ref{Cor-nb} is a generalization of the
asymptotic formula given in \cite{Fr} for the number of {\it edge
states} in two dimensional domains. However, it is worthy to notice
that the result in two dimensions is valid without the geometric
condition in Corollary~\ref{Cor-nb}.

The geometric condition in \eqref{asp-nb} is satisfied when $\Omega$
is the unit ball, the magnetic field $\Bb$ is constant of unit
length and $\Lambda$ is  sufficiently close to the universal
constant $\Theta_0$. As we shall see, this is closely related to the
behavior of the functions $(0,\pi/2)\ni
\theta\mapsto\zeta_{j}(\theta)$.

In this concern, we recall the following two results.
\begin{lem}\label{thm-HM-zeta}\cite{HeMo4}
The functions $\theta\mapsto \zeta_{j}(\theta)$ are increasing and
continuous on $(0,\pi/2)$. Moreover,
$$\zeta_1(0)=\Theta_0\quad{\rm and}\quad \forall~\theta\in[0,\pi/2)\,,\quad\zeta_1(\theta)<1\,.$$
\end{lem}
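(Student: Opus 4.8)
The plan is to pass to a scaling–covariant normal form, run a Feynman–Hellmann argument for the monotonicity, and deduce the rest from it. Conjugating $\mathcal{L}(\theta)$ by a dilation of the variables one checks that, for $\theta\in(0,\pi/2)$,
\[
\mathcal{L}(\theta)\ \cong\ \cos\theta\cdot\mathcal{M}(\tan\theta),\qquad
\mathcal{M}(\alpha):=-\partial_t^{2}-\partial_s^{2}+(t-\alpha s)^{2}\ \text{ in }L^{2}(\R^{2}_{+}),\ \text{Neumann,}
\]
so that $\zeta_j(\theta)=m_j(\tan\theta)\big/\sqrt{1+\tan^{2}\theta}$, where $m_1(\alpha)\le m_2(\alpha)\le\cdots$ are the Rayleigh–Ritz values of $\mathcal{M}(\alpha)$. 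From the known fact $\Spec_{\mathrm{ess}}\mathcal{L}(\theta)=[1,\infty)$ one gets $\Spec_{\mathrm{ess}}\mathcal{M}(\alpha)=[\sqrt{1+\alpha^{2}},\infty)$, so every $m_j(\alpha)<\sqrt{1+\alpha^{2}}$ is an eigenvalue; its eigenfunctions are exponentially localized near the point of $\partial\R^{2}_{+}$ closest to the zero line $\{t=\alpha s\}$ (Agmon estimates), and in particular all their quadratic moments $\langle t^{2}\rangle,\langle st\rangle,\langle s^{2}\rangle$ converge. Each $m_j$ is Lipschitz and piecewise real–analytic in $\alpha$, so $m_j'$ exists off a discrete set and one–sidedly everywhere.

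For the monotonicity it suffices to show $(1+\alpha^{2})\,m_j'(\alpha)\ge\alpha\,m_j(\alpha)$ for each labelled branch, since this is exactly the statement that $\alpha\mapsto m_j(\alpha)/\sqrt{1+\alpha^{2}}$ is nondecreasing (and $\alpha=\tan\theta$ is increasing). Fix $\alpha$, a normalized eigenfunction $\Psi$ for $m_j(\alpha)$, and set $K_t=\|\partial_t\Psi\|^{2}$, $K_s=\|\partial_s\Psi\|^{2}$, $L=\langle\partial_t\Psi,\partial_s\Psi\rangle$. Using that $\Psi$ is a critical point of the Rayleigh quotient of $\mathcal{M}(\alpha)$ along the three one–parameter families $t\mapsto\mu t$, $s\mapsto\nu s$ and the area–preserving shear $(t,s)\mapsto(t,s+\rho t)$ — each of which preserves $\R^{2}_{+}$ — and the Feynman–Hellmann formula $m_j'(\alpha)=\langle\Psi,\partial_\alpha\mathcal{M}(\alpha)\Psi\rangle=-2\langle\Psi,s(t-\alpha s)\Psi\rangle$, one obtains
\[
K_t+K_s=\tfrac12 m_j(\alpha),\qquad K_s=\tfrac{\alpha}{2}\,m_j'(\alpha),\qquad L=-\alpha\,K_t .
\]
Cauchy–Schwarz then gives $\alpha K_t=|L|\le(K_tK_s)^{1/2}$, hence $K_s\ge\alpha^{2}K_t$, and strictly, since $\partial_t\Psi$ and $\partial_s\Psi$ cannot be proportional for a nonzero $\Psi\in L^{2}(\R^{2}_{+})$. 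Therefore $(1+\alpha^{2})K_s=K_s+\alpha^{2}K_s>\alpha^{2}(K_t+K_s)$, i.e.\ $(1+\alpha^{2})m_j'(\alpha)=\tfrac2\alpha(1+\alpha^{2})K_s>\tfrac2\alpha\,\alpha^{2}(K_t+K_s)=\alpha\,m_j(\alpha)$, so each $\zeta_j$ is strictly increasing. (At an eigenvalue crossing one applies this to every vector of the relevant eigenspace; then the matrix $\langle\Phi_i,\partial_\alpha\mathcal{M}(\alpha)\Phi_l\rangle$ obeys the same lower bound, so all branch–derivatives do, and continuity of the labelled branches glues the pieces.)

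The remaining assertions now follow. Since $\zeta_1(\theta)=\inf\Spec\mathcal{L}(\theta)\le\inf\Spec_{\mathrm{ess}}\mathcal{L}(\theta)=1$ for every $\theta$, strict monotonicity gives $\zeta_1(\theta)<\zeta_1(\theta')\le1$ for any $\theta<\theta'<\pi/2$; hence $\zeta_1(\theta)<1$ on $[0,\pi/2)$. For $\zeta_1(0)=\Theta_0$: dropping $\|\partial_s\cdot\|^{2}$ and treating $s$ as a parameter minorizes $\mathcal{M}(\alpha)$ by the direct integral of the de Gennes operators $-\partial_t^{2}+(t-\alpha s)^{2}$ on $\R_{+}$, whence $m_1(\alpha)\ge\Theta_0$; a quasimode $\phi_{\xi_0}(t)\chi(s)$ built from the optimal de Gennes ground state $\phi_{\xi_0}$ and a $\chi$ concentrated near the corresponding translate in $s$ gives $m_1(\alpha)\le\Theta_0+O(\alpha)$, so $\zeta_1(\theta)=m_1(\tan\theta)\cos\theta\to\Theta_0$ as $\theta\to0^{+}$.

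Finally, continuity of each $\zeta_j$ on $(0,\pi/2)$: upper semicontinuity is immediate from min–max by inserting the (exponentially decaying, hence admissible for nearby parameters) eigenfunctions at $\theta$ as trial functions for $\mathcal{L}(\theta')$ and noting that the forms $q_{\theta'}$ converge to $q_\theta$ on them; lower semicontinuity follows because $\inf\Spec_{\mathrm{ess}}$ stays pinned and uniform Agmon bounds prevent loss of mass at infinity along a sequence $\theta_n\to\theta$, so no eigenvalue below $1$ can disappear in the limit (and strict monotonicity forces the two one–sided limits to coincide anyway). I expect the main obstacle to be making the Feynman–Hellmann/virial step fully rigorous: one must first install the spectral picture ($\Spec_{\mathrm{ess}}\mathcal{M}(\alpha)=[\sqrt{1+\alpha^{2}},\infty)$ and exponential localization) so that the moments converge and the dilation/shear families are genuinely differentiable on the form domain, and then handle multiplicities and crossings with care — the form domain of $\mathcal{M}(\alpha)$ actually moves with $\alpha$, so the analytic–family structure is available only on the exponentially decaying spectral subspaces, and this has to be justified.
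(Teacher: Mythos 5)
The paper does not give a proof of this lemma: it is stated and cited verbatim to Helffer--Morame \cite{HeMo4} (see also \cite{HeMo3,LP1,FH-b}), so there is no ``paper proof'' to compare against. I therefore assess your blind argument on its own terms.

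Your reduction to $\mathcal{M}(\alpha)=-\partial_t^2-\partial_s^2+(t-\alpha s)^2$ via the isotropic dilation of $\R^2_+$, giving $\zeta_j(\theta)=\cos\theta\,m_j(\tan\theta)$, is correct, and the three virial identities obtained from the $t$-dilation, $s$-dilation and shear families (all of which preserve the half-plane) do yield $K_t+K_s=\tfrac12 m_j$, $K_s=\tfrac\alpha2 m_j'$ and $L=-\alpha K_t$. Together with Cauchy--Schwarz and the fact that $\partial_t\Psi$ and $\partial_s\Psi$ cannot be proportional for a nonzero $\Psi\in L^2(\R^2_+)$, this gives $(1+\alpha^2)m_j'(\alpha)>\alpha\,m_j(\alpha)$ on any branch that is genuinely an eigenvalue strictly below $\inf\Spec_{\rm ess}$. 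So the virial engine is sound where it applies.

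The step that fails is the way you derive $\zeta_1(\theta)<1$ on $(0,\pi/2)$. The Feynman--Hellmann/virial computation presupposes an $L^2$-normalised eigenfunction for $\zeta_1(\theta)$, and such an eigenfunction exists only because $\zeta_1(\theta)<\inf\Spec_{\rm ess}\,\mathcal{L}(\theta)=1$; the strict monotonicity is thus established only on the set where $\zeta_1<1$ is already known. Your sentence ``strict monotonicity gives $\zeta_1(\theta)<\zeta_1(\theta')\le1$'' therefore invokes the conclusion as a hypothesis. Breaking the circle requires an independent trial function: for each fixed $\theta\in(0,\pi/2)$ one must exhibit a compactly supported state with Rayleigh quotient strictly below $1$ (a de\,Gennes profile times a Gaussian in the tangential direction, suitably rotated), and this explicit quasi-mode is precisely the key ingredient of the Lu--Pan and Helffer--Morame proofs — it cannot be deduced from the monotonicity. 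The near-$\theta=0$ bound $m_1(\alpha)\le\Theta_0+O(\alpha)$ you record only covers a small right neighbourhood of $0$; to propagate $\zeta_1<1$ across all of $(0,\pi/2)$ you would need either a lower-semicontinuity/connectedness argument you have not supplied or, more simply, the quasi-mode. The remaining soft points you flag yourself (moving form domain, one-sided derivatives at crossings, uniform Agmon bounds for lower semicontinuity) are all repairable once $\zeta_1<1$ is installed, but this last is the one step that genuinely fails as written.
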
\label{lem:Po}
The second Lemma is taken from \cite{Po}.
\begin{lem}
Let $N\geq 1$ be an integer and suppose that there exists
$\theta_{\ast}\in(0,\pi/2)$ such that the following assumptions are
satisfied
\begin{enumerate}
\item $\zeta_{N}(\theta_{\ast})<1$;
\item The eigenvalues $\{\zeta_{j}(\theta_{\ast})\}_{1\leq n\leq N}$ are simple.
\end{enumerate}
Define
\[
\theta_{\max,N}:= \sup\{\theta\in(0,\pi/2],~\zeta_{N}(\theta)<1\}.
\]
Then for all $1\leq j\leq N$, the functions $\theta\mapsto
\zeta_{j}(\theta)$ are strictly increasing on $(0,\theta_{\rm
max,N})$.
\end{lem}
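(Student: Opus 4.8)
The strict monotonicity is the only new content here: the continuity, the real‑analyticity of the branches of eigenvalues lying below $1$, and the \emph{non‑strict} monotonicity are all available from \cite{HeMo4} (compare Lemma~\ref{thm-HM-zeta}). So the plan is to establish a Feynman--Hellmann formula for the derivative of an eigenvalue branch, show that it is strictly positive whenever the branch lies below $1$, and then pass from ``analytic branches'' to ``ordered eigenvalues''. For this last point, fix $\theta_0\in(0,\pi/2)$ with $\zeta_j(\theta_0)<1$ and, by Rellich's theorem applied to the analytic family $\theta\mapsto\mathcal{L}(\theta)$, pick a real‑analytic branch $\lambda(\theta)$ of eigenvalues with $\lambda(\theta_0)=\zeta_j(\theta_0)$ together with an analytic family $v(\theta)$ of normalized eigenfunctions. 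If every analytic branch lying below $1$ is strictly increasing, then so is each $\zeta_j$: at any $\theta$ the value $\zeta_j(\theta)$ is the $j$-th smallest of the branch values, at most $j-1$ branches lie strictly below $\zeta_j(\theta_0)$ at $\theta_0$, and an elementary ordering argument then forces strict monotonicity of $\zeta_j$; moreover, since the branches increase, the number of them below $1$ is non‑increasing in $\theta$, so $\{\theta\in(0,\pi/2):\zeta_N(\theta)<1\}$ is non‑empty, open, and stable under decreasing $\theta$, hence equals $(0,\theta_{\max,N})$, with $\theta_{\ast}$ inside it. The simplicity of $\zeta_1(\theta_{\ast}),\dots,\zeta_N(\theta_{\ast})$ is what makes this branch picture unambiguous at the reference point $\theta_{\ast}$.

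Differentiating $\mathcal{L}(\theta)v(\theta)=\lambda(\theta)v(\theta)$ and pairing with $v(\theta)$ gives $\lambda'(\theta)=\langle\mathcal{L}'(\theta)v(\theta),v(\theta)\rangle$; since $\mathcal{L}'(\theta)$ is multiplication by $\partial_\theta\big[(t\cos\theta-s\sin\theta)^2\big]=-2wz$ with $w=t\cos\theta-s\sin\theta$ and $z=t\sin\theta+s\cos\theta$, this reads $\lambda'(\theta)=-2\int_{\R^2_+}wz\,|v|^2\,dt\,ds$. I would then extract two integral identities by testing the eigenvalue equation against well‑chosen multipliers, taking care of the boundary terms on $\{t=0\}$. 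First, the dilation vector field $t\partial_t+s\partial_s$ preserves $\R^2_+$ and the Neumann condition, $[\mathcal{L}(\theta),t\partial_t+s\partial_s]=2(-\Delta)-2w^2$ (because $w^2$ is homogeneous of degree $2$), and the Green boundary term vanishes since $\partial_tv|_{t=0}=0$; this yields the virial identity $\|\nabla v\|^2=\|wv\|^2=\tfrac12\lambda$. Second --- and this is the crucial step --- testing the equation against $m=w\,\partial_z v$ (note $\partial_z w=0$, and the boundary term from integrating $-\partial_t^2$ by parts again vanishes by the Neumann condition), and using the equation only to substitute for $-\Delta v$, one obtains after integration by parts a relation of the shape $\langle\partial_w v,\partial_z v\rangle=-\tfrac12\sin^2\theta\,\lambda'(\theta)$.

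Combining the two identities finishes the computation. Writing $\partial_t=\cos\theta\,\partial_w+\sin\theta\,\partial_z$ and $\partial_s=-\sin\theta\,\partial_w+\cos\theta\,\partial_z$, re‑expressing $\lambda'(\theta)=-2\int wz|v|^2$ in terms of $\|\partial_w v\|^2$, $\|\partial_z v\|^2$, $\langle\partial_w v,\partial_z v\rangle$ with the help of the virial identity, and then substituting $\langle\partial_w v,\partial_z v\rangle=-\tfrac12\sin^2\theta\,\lambda'$, the $\|\partial_w v\|^2$ and the cross terms cancel and one is left with
\[
\lambda'(\theta)=\frac{4}{\sin(2\theta)}\,\|\partial_z v\|^2\qquad(\theta\in(0,\pi/2)),
\]
the value $\theta=\pi/4$ being covered by continuity of $\theta\mapsto\lambda'(\theta)$ and $\theta\mapsto v(\theta)$. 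This is $\ge 0$ (recovering the non‑strict monotonicity) and in fact strictly positive: if $\partial_z v\equiv 0$, then $v$ would be constant along each line $\{t\sin\theta+s\cos\theta=\mathrm{const}\}$, but these lines have infinite length inside $\R^2_+$, so an $L^2$ function with this property must vanish identically, contradicting $\|v\|=1$. Applying this to every analytic branch below $1$ and invoking the reduction of the first paragraph yields the strict monotonicity of $\zeta_1,\dots,\zeta_N$ on $(0,\theta_{\max,N})$.

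The main obstacle is the second identity: one must keep precise track of the boundary contributions at $\{t=0\}$ --- which involve the trace $\phi:=v(0,\cdot)$ and moments such as $\int_{\R}s\,\phi'(s)^2\,ds$, $\int_{\R}s^3\phi(s)^2\,ds$, $\int_{\R}s\,\phi(s)^2\,ds$ --- and check that, after integration by parts, they reassemble exactly into $-\tfrac12\sin^2\theta$ times the Feynman--Hellmann expression for $\lambda'(\theta)$. A secondary technical point is the (routine but not entirely immediate) justification that $\theta\mapsto\mathcal{L}(\theta)$ is an analytic family near each $\theta_0\in(0,\pi/2)$ in a sense permitting Rellich/Kato perturbation theory for the part of the spectrum below $1$ (the form domain depends on $\theta$), for which one relies on the analysis in \cite{HeMo4}.
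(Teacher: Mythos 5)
The paper does not prove this lemma at all: it is stated with the sentence ``The second Lemma is taken from \cite{Po}'' and no argument is given. So there is no ``paper's own proof'' to compare your attempt against; what you are really attempting is a self-contained proof of a result the paper only cites (Popoff's thesis). With that understood, the sketch you give has a genuine gap. The overall architecture --- analytic Kato--Rellich branches, Feynman--Hellmann, a pair of multiplier identities, unique continuation to conclude $\partial_z v\not\equiv 0$ --- is in the right spirit and is close to what one finds in the literature on $\mathcal L(\theta)$. The Feynman--Hellmann expression $\lambda'(\theta)=-2\int_{\R^2_+}wz\,|v|^2$ and the virial identity $\|\nabla v\|^2=\|wv\|^2=\lambda/2$ are both correct (the boundary terms in the virial argument do vanish for Neumann, as you say). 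But the proof hinges entirely on the ``second identity'' $\langle\partial_w v,\partial_z v\rangle=-\tfrac12\sin^2\theta\,\lambda'(\theta)$, and you do not establish it: you describe it only as ``a relation of the shape'' and then concede in the last paragraph that the boundary contributions at $\{t=0\}$ still need to be tracked and ``checked to reassemble exactly'' into the claimed coefficient. Until that is done, the key positivity $\lambda'\ge 0$ is not proven here.

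There is a further, unacknowledged gap in the algebraic step ``re-expressing $\lambda'(\theta)=-2\int wz|v|^2$ in terms of $\|\partial_w v\|^2$, $\|\partial_z v\|^2$, $\langle\partial_w v,\partial_z v\rangle$ with the help of the virial identity.'' The virial identity contains no $\int wz|v|^2$ term, so the stated two identities do not by themselves let you rewrite $\lambda'$ in gradient form --- a further multiplier identity (or a commutator identity, e.g.\ with the rotation generator $N=z\partial_w-w\partial_z$, which gives $[\mathcal L,N]=-2wz$ and hence a boundary-integral formula for $\lambda'$) is needed and must again be checked for Neumann boundary terms. In short: the strategy is plausible, and the final unique-continuation argument (that $\partial_z v\equiv0$ forces $v\equiv0$ in $L^2(\R^2_+)$) is fine, but the crucial identity that would make $\lambda'(\theta)$ manifestly nonnegative is asserted, not derived, so the proposal as written does not yet constitute a proof. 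You should either carry out the full integration-by-parts bookkeeping (including the moments $\int s(\phi')^2$, $\int s^3\phi^2$, $\int s\phi^2$ of the boundary trace that inevitably appear) to verify the claimed formula $\lambda'(\theta)=\tfrac{4}{\sin 2\theta}\|\partial_z v\|^2$, or cite \cite{Po} directly as the paper does.
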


It is pointed in \cite{Po} that, to each $N$, there is $\theta_\ast$
such that the two conditions of Lemmas~\ref{lem:Po} are satisfied.
Thus, for every $N$, the conclusion of Lemma~\ref{lem:Po} is true.
In particular, when $N=2$ we get,
$$\delta=\min\left(\frac{\zeta_2(0)-\zeta_1(0)}{2}\,,\,\frac{1-\Theta_0}2\right)>0\,.$$
By continuity of the functions $\zeta_1(\theta)$ and
$\zeta_2(\theta)$,  there exists $\epsilon_{0}\in(0,\theta_{\max,2}
)$ such that for all $\theta\in [0,\epsilon_{0}]$,
\[
\zeta_{2}(\theta)\geq \zeta_1(\theta)+\delta\geq \Theta_{0}+\delta.
\]
 Take $\Lambda\in(\Theta_0,\Theta_0+\delta)$. That way we get that
 \[
 \forall~\theta\in[0,\pi/2]\,,\quad \zeta_{2}(\theta)\geq\Theta_0+\delta>\Lambda\,,
 \]
 and $\zeta_{1}(\theta)=\Lambda$ has at most one solution in $[0,\theta_{\rm max,1}]$. Notice here that $\theta_{\max,1}=\pi/2$ is a consequence of Lemma~\ref{thm-HM-zeta}.

Returning back   the condition \eqref{asp-nb} and the above
discussion, we see that when the magnetic field is constant of unit
length and the domain $\Omega$ is the unit ball, the set
$$\Sigma=\{x\in\partial\Omega~:~\theta(x)\in(0,\pi/2),\quad {\Lambda}{|{\bf B}(x)|}^{-1}\in  {\rm Spec }\,\mathcal{L}(\theta(x))\big\}$$
consists of at most one circle (defined by the solution $\theta$ of
$\zeta_1(\theta)=\Lambda$). That way the set $\Sigma$ has measure
zero relative to the surface measure and the condition
\eqref{asp-nb} is satisfied.

\subsection{Perspectives} We list some natural questions for future
research:
\begin{enumerate}
\item Inspection of the number $\mathcal{N}(\Lambda h;\mathcal{P}_{h},\Omega)$ when the condition in \eqref{asp-nb} is violated.
\item Theorem~\ref{main-thm} is established when the domains $\Omega$ has a smooth boundary. An interesting question is to study the case when the domain $\Omega$ has corners or wedges (see \cite{Po}). In two dimensions, this is done in \cite{KKH}.
\item The inspection of the effect of the boundary conditions might be interesting. Theorem~\ref{main-thm} is established for the operator with Neumann boundary condition. A natural question is to consider the operator with  Robin boundary condition
\[
\nu~.~(-ih\nb+{\bf  A})u+\gamma u=0\quad \mbox{  on  }\quad \partial\Omega,
\]
where $\gamma\in L^{\infty}(\partial\Omega;\R)$ (see
\cite{Ka4}).
\item The asymptotic formula in Theorem~\ref{main-thm} holds for the energy of the eigenvalues below the energy level $\Lambda h$ with $\Lambda<b$.
However, in two dimensions, such a restriction on $\Lambda$ does
not appear ($\Lambda$ is allowed to be $b$).  Removing the
restriction on $\Lambda$ in three dimensions is an interesting
question.
\end{enumerate}

\subsection{Organization of the paper}

Compared to the situation in two dimensional domains, the analysis
of the problem  in three dimensional domains needs new ingredients.
The reason is that the boundary in $3$D is a surface and has richer
geometry than that in $2$D.

The paper is organized as follows. Section $2$ contains some basic tools concerning the variational principle of the sum of negative eigenvalues. Section $3$ is devoted to the spectral analysis of the model operator on a half-cylinder with Neumann boundary condition on one edge and Dirichlet boundary conditions on the other edges. Section $4$ is devoted to the construction of the function  $E(\theta,\lambda)$ as the limit of the energy of the operator in the half-cylinder. Continuity properties of this function are studied in Section $5$. Explicit formulas of $E(\theta,\lambda)$ are established in Section $6$. 
Section $7$ contains the expression of the operator relative to local coordinates near the boundary of the domain $\Omega$.
Section $8$ concludes with the proof of Theorem~\ref{main-thm} and Corollary~\ref{Cor-nb}.

\section{Variational principle}
In this section, we recall useful methods to establish upper and
lower bounds on the energy of eigenvalues (see for example
\cite{We-St}).
\begin{lem}{\label{lem-VP-2}}
Let $\mathcal{H}$ be a semi-bounded self-adjoint operator on $L^{2}(\R^{3})$ satisfying
\begin{equation}\label{hypI}
\inf{\rm Spec}_{\rm ess}(\mathcal{H})\geq 0\,.
\end{equation}
Let $\{\nu_{j}\}_{j=1}^{\infty}$ be the sequence of negative
eigenvalues of $\mathcal H$ couting multiplicities. We have,
\begin{equation}\label{eq-var-2}
-\sum_{j=1}^{\infty}(\nu_{j})_{-}
 =\inf\Sum{j=1}{N}\big\langle \psi_{j},\mathcal{H}\psi_{j}\big\rangle,
\end{equation}
where the infimum is taken over all $N\in\mathbb{N}$ and orthonormal families $\{\psi_{1},\psi_{2},\cdots,\psi_{N}\}\subset D(H)$.
\end{lem}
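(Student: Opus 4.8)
The plan is to prove the two inequalities separately. For the inequality
\[
-\sum_{j=1}^\infty (\nu_j)_- \le \inf \sum_{j=1}^N \langle \psi_j, \mathcal H \psi_j\rangle,
\]
I would argue as follows. Fix any $N$ and any orthonormal family $\{\psi_1,\dots,\psi_N\}\subset D(\mathcal H)$. Let $\Pi = \mathbf 1_{(-\infty,0)}(\mathcal H)$ be the spectral projection onto the negative subspace; since $\inf\mathrm{Spec}_{\rm ess}(\mathcal H)\ge 0$, this subspace is finite-dimensional (or at worst has an orthonormal eigenbasis with eigenvalues $\nu_j\uparrow 0$), and $\mathcal H \ge \mathcal H \Pi = -\mathcal H_-$ as quadratic forms on $D(\mathcal H)$ because $\mathcal H(\mathbf 1 - \Pi) \ge 0$. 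Hence $\sum_j \langle \psi_j, \mathcal H\psi_j\rangle \ge -\sum_j \langle \psi_j, \mathcal H_- \psi_j\rangle = -\mathrm{Tr}(P\,\mathcal H_-)$ where $P = \sum_j |\psi_j\rangle\langle\psi_j|$ is an orthogonal projection of rank $N$. Since $\mathcal H_-$ is a nonnegative operator and $0 \le P \le \mathbf 1$, we have $\mathrm{Tr}(P\,\mathcal H_-) \le \mathrm{Tr}(\mathcal H_-) = \sum_j (\nu_j)_-$, which gives the desired bound.

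For the reverse inequality I would simply exhibit a near-optimal family: take $\psi_j$ to be the normalized eigenfunctions of $\mathcal H$ associated to the negative eigenvalues $\nu_1,\dots,\nu_N$ (truncating at any finite $N$). These are orthonormal and lie in $D(\mathcal H)$, and $\sum_{j=1}^N \langle\psi_j,\mathcal H\psi_j\rangle = \sum_{j=1}^N \nu_j = -\sum_{j=1}^N (\nu_j)_-$. Letting $N\to\infty$ shows the infimum on the right is $\le -\sum_{j=1}^\infty (\nu_j)_-$. Combining the two inequalities yields \eqref{eq-var-2}. If there are only finitely many negative eigenvalues, the same argument works with the sum terminating; if there are none, both sides are $0$ and there is nothing to prove.

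The only genuine subtlety — and the step I would be most careful about — is justifying the form inequality $\langle\psi,\mathcal H\psi\rangle \ge -\langle\psi,\mathcal H_-\psi\rangle$ for all $\psi \in D(\mathcal H)$, together with the identity $\mathrm{Tr}(P\mathcal H_-)\le\mathrm{Tr}(\mathcal H_-)$ when $\mathcal H_-$ may be unbounded (it is bounded here since the negative eigenvalues are bounded below by $\inf\mathrm{Spec}(\mathcal H) > -\infty$) and possibly only trace-class when $\sum_j(\nu_j)_- < \infty$. When that sum is infinite the inequality is vacuous, so one may assume it finite; then $\mathcal H_-$ is a nonnegative trace-class operator, $P\mathcal H_- P$ is trace-class, and $\mathrm{Tr}(P\mathcal H_- P) = \sum_k \langle e_k, P\mathcal H_- P e_k\rangle \le \sum_k \langle e_k, \mathcal H_- e_k\rangle = \mathrm{Tr}(\mathcal H_-)$ for any orthonormal basis $\{e_k\}$, using $0 \le P\mathcal H_- P \le \mathcal H_-$. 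Everything else is routine spectral calculus, so this Lemma should follow in a few lines.
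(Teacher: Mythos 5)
The paper states this lemma without proof, deferring to the monograph of Stenger--Weinstein \cite{We-St}, so there is no in-paper argument to compare against; your proof supplies the standard one correctly. The two key ingredients for the lower bound on the infimum are both in order: the quadratic-form inequality $\langle\psi,\mathcal{H}\psi\rangle \ge -\langle\psi,\mathcal{H}_-\psi\rangle$ for $\psi\in D(\mathcal{H})$ (immediate from $\mathcal{H}=\mathcal{H}_+ - \mathcal{H}_-$ with $\mathcal{H}_+\ge 0$ and commuting spectral projections), and the trace monotonicity $\mathrm{Tr}(P\mathcal{H}_-)\le\mathrm{Tr}(\mathcal{H}_-)$ for a finite-rank orthogonal projection $P$, which you correctly reduce to $0\le\mathcal{H}_-^{1/2}P\mathcal{H}_-^{1/2}\le\mathcal{H}_-$. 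The reverse inequality via the first $N$ negative eigenfunctions is the right saturating choice, and you are careful to note that all $\nu_j<0$ so $\sum_{j\le N}\nu_j=-\sum_{j\le N}(\nu_j)_-$. The peripheral observations are also appropriate: semi-boundedness forces $\mathcal{H}_-$ to be bounded so the spectral calculus causes no trouble, the case $\sum_j(\nu_j)_-=\infty$ makes one direction vacuous and is recovered in the other by letting $N\to\infty$, and the finitely-many (or zero) negative eigenvalue cases are degenerate. No gaps.
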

The next lemma states another variational principle. It is used in several papers, e.g. \cite{LSY}.
\begin{lem}\label{lem-VP-3}
Let $\mathcal{H}$ be a self-adjoint semi-bounded operator satisfying the hypothesis \eqref{hypI}.
Suppose in addition that $(\mathcal{H})_{-}$ is trace class.
For any orthogonal projection $\gamma$ with range belonging to the domain of $\mathcal{H}$ and such that $\mathcal{H}\gamma$ is trace class, we have,
\begin{equation}\label{eq-var-2}
-\sum_{j=1}^{\infty}(\nu_{j})_{-} \leq {\rm Tr}(\mathcal{H\gamma})\,.
\end{equation}
\end{lem}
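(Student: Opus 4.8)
The plan is to compare the given projection $\gamma$ with the spectral projection $P:=\mathbf 1_{(-\infty,0)}(\mathcal H)$ onto the negative subspace of $\mathcal H$, and to show that $P$ is optimal, i.e.\ that ${\rm Tr}(\mathcal H\gamma)\ge{\rm Tr}(\mathcal H P)$ for every admissible $\gamma$. Note first that the assumption that $(\mathcal H)_-$ is trace class, together with $\inf{\rm Spec}_{\rm ess}(\mathcal H)\ge0$, forces the negative spectrum of $\mathcal H$ to consist of isolated eigenvalues $\nu_1\le\nu_2\le\cdots<0$ of finite multiplicity, accumulating (if at all) only at $0$, so that ${\rm Tr}(\mathcal H)_-=\sum_{j}(\nu_j)_-$ and ${\rm Tr}(\mathcal H P)=-\sum_j(\nu_j)_-$. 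Hence the lemma reduces to the inequality ${\rm Tr}(\mathcal H\gamma)\ge-{\rm Tr}(\mathcal H_-)$, where I write $\mathcal H=\mathcal H_+-\mathcal H_-$ with $\mathcal H_\pm\ge0$ the positive and negative parts of $\mathcal H$; note $\mathcal H_-$ is bounded (being trace class) and ${\rm Dom}(\mathcal H_+)={\rm Dom}(\mathcal H)\supset{\rm Ran}(\gamma)$.

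Next I would carry out the trace-class bookkeeping. Since $\mathcal H_-$ is trace class and $\gamma$ bounded, $\mathcal H_-\gamma$ is trace class; as $\mathcal H\gamma$ is trace class by hypothesis, $\mathcal H_+\gamma=\mathcal H\gamma+\mathcal H_-\gamma$ is trace class too, and therefore
\[
{\rm Tr}(\mathcal H\gamma)={\rm Tr}(\mathcal H_+\gamma)-{\rm Tr}(\mathcal H_-\gamma).
\]
For the first term, using $\gamma^2=\gamma$ and cyclicity of the trace (legitimate for a trace-class operator times a bounded one) gives ${\rm Tr}(\mathcal H_+\gamma)={\rm Tr}(\mathcal H_+\gamma^2)={\rm Tr}(\gamma\mathcal H_+\gamma)\ge0$, since $\gamma\mathcal H_+\gamma\ge0$. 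For the second term, write $\mathcal H_-=\mathcal H_-^{1/2}\mathcal H_-^{1/2}$ with $\mathcal H_-^{1/2}$ Hilbert--Schmidt; then $\mathcal H_-^{1/2}\gamma$ is Hilbert--Schmidt, cyclicity of the trace of a product of two Hilbert--Schmidt operators gives ${\rm Tr}(\mathcal H_-\gamma)={\rm Tr}(\mathcal H_-^{1/2}\gamma\,\mathcal H_-^{1/2})$, and since $0\le\gamma\le\mathbf 1$ one has $\mathcal H_-^{1/2}\gamma\,\mathcal H_-^{1/2}\le\mathcal H_-$ as nonnegative trace-class operators, whence ${\rm Tr}(\mathcal H_-\gamma)\le{\rm Tr}(\mathcal H_-)$. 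Combining the two bounds,
\[
{\rm Tr}(\mathcal H\gamma)\ \ge\ -{\rm Tr}(\mathcal H_-\gamma)\ \ge\ -{\rm Tr}(\mathcal H_-)\ =\ -\sum_{j=1}^\infty(\nu_j)_-,
\]
which is exactly \eqref{eq-var-2}.

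The argument is essentially soft functional analysis, and I expect the only real care to be in the trace-ideal technicalities: checking that $\mathcal H_+\gamma$ is trace class, that the cyclicity identities ${\rm Tr}(AB)={\rm Tr}(BA)$ are invoked only in legitimate settings (trace class times bounded, or Hilbert--Schmidt times Hilbert--Schmidt), and that the monotonicity $0\le S\le T\Rightarrow{\rm Tr}(S)\le{\rm Tr}(T)$ for trace-class $S,T$ is applied correctly; none of these presents a genuine obstacle. Finally, I would record, for later use, that the inequality is sharp: the choice $\gamma=P$ turns every inequality above into an equality, so the right-hand side of \eqref{eq-var-2} cannot be improved, which is the form in which the principle is applied in the sequel.
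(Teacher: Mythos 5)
Your proof is correct, and there is in fact nothing in the paper to compare it with: the paper states Lemma~\ref{lem-VP-3} without proof, crediting it as a well-known variational principle (cf.~\cite{LSY}). Your argument --- split $\mathcal H=\mathcal H_+-\mathcal H_-$, use cyclicity and $\gamma^2=\gamma$ to get $\operatorname{Tr}(\mathcal H_+\gamma)=\operatorname{Tr}(\gamma\mathcal H_+\gamma)\ge 0$, then use $0\le\gamma\le\mathbf 1$ and operator monotonicity to get $\operatorname{Tr}(\mathcal H_-\gamma)=\operatorname{Tr}(\mathcal H_-^{1/2}\gamma\,\mathcal H_-^{1/2})\le\operatorname{Tr}(\mathcal H_-)=\sum_j(\nu_j)_-$ --- is the standard one for statements of this type, and the trace-ideal bookkeeping (cyclicity for trace class times bounded, and for HS times HS; ideal property of the Hilbert--Schmidt class; monotonicity of the trace on non-negative operators) is all invoked in legitimate settings.

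One remark worth recording, since it bears on how the lemma is used later: you proved it, as stated, for orthogonal projections $\gamma$, and your positive-part step genuinely uses $\gamma^2=\gamma$. But in the body of the paper the lemma is applied to trial operators that are only density matrices with $0\le\gamma\le\mathbf 1$ and need not be idempotent --- e.g.\ $\gamma_j f=\chi_{j,L}\sum_k\langle\chi_{j,L}f,f_{k,n}\rangle f_{k,n}$ in the proof of Lemma~\ref{lem-thdL}, and $\gamma f=\sum_k\langle f,\zeta_{1,L}f_k\rangle\zeta_{1,L}f_k$ in the proof of Lemma~\ref{continuity+}. For these the positive-part inequality goes through with $\gamma^{1/2}$ in place of $\gamma$, writing $\operatorname{Tr}(\mathcal H_+\gamma)=\operatorname{Tr}(\gamma^{1/2}\mathcal H_+\gamma^{1/2})\ge0$ (modulo verifying that $\mathcal H_+^{1/2}\gamma^{1/2}$ is Hilbert--Schmidt, which follows from $\mathcal H_+\gamma$ trace class); your negative-part estimate only used $0\le\gamma\le\mathbf 1$ and carries over verbatim. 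It would be worth proving the lemma directly in this slightly broader form so that it covers all its actual uses in the paper.
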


For later purposes, we include
\begin{cor}\label{NB-vs-EN}
Let $\Omega$ be a subset of $\R^{3}$. Suppose that $P$ is a positive
self-adjoint operator on $L^{2}(\Omega)$ such that its spectrum
below $1$ is discrete. Let $\lambda\in[0,1)$ and $\varsigma\in\R$
such that $-\lambda \leq \varsigma<1-\lambda$. We have
\begin{equation}\label{N-E}
  \mathcal{E}(\lambda+\varsigma; P, \Omega)\leq\mathcal{E}(\lambda; P, \Omega)+ \varsigma \mathcal{N}(\lambda+\varsigma;P,\Omega),
\end{equation}
where $\mathcal{N}(\cdot; P, \Omega)$  and  $\mathcal{E}(\cdot; P,\Omega)$ are introduced in \eqref{def-number} and \eqref{def-energy} respectively.
\end{cor}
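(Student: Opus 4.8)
The plan is to work entirely at the level of eigenvalues of $P$ and reduce the inequality to an elementary pointwise estimate on the negative-part function. Since $P$ is positive and self-adjoint with discrete spectrum below $1$, and since $\lambda, \lambda+\varsigma \in [0,1)$ (this is exactly what the hypothesis $-\lambda \le \varsigma < 1-\lambda$ guarantees), both quantities $\mathcal{E}(\lambda;P,\Omega)$ and $\mathcal{E}(\lambda+\varsigma;P,\Omega)$ are finite sums over the finitely many eigenvalues of $P$ lying below $1$. Write $\{\mu_k\}_k$ for these eigenvalues, counted with multiplicity. Then by definition $\mathcal{E}(\eta;P,\Omega) = \sum_k (\mu_k - \eta)_-$ and $\mathcal{N}(\eta;P,\Omega) = \#\{k : \mu_k \le \eta\}$.

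The key step is the scalar inequality: for every real $t$ and every admissible $\varsigma$,
\begin{equation*}
(t - (\lambda+\varsigma))_- \le (t-\lambda)_- + \varsigma\,\mathbf{1}_{(-\infty,\lambda+\varsigma]}(t).
\end{equation*}
To see this, note first that $(t-\eta)_- = \max(\eta - t, 0)$ is nondecreasing and convex in $\eta$, and in fact $(t-\eta)_- - (t-\eta')_- \le (\eta - \eta')_+$ whenever this is relevant; more simply, one checks the claim by cases. If $t > \lambda+\varsigma$ the left side is $0$ and the right side is nonnegative (using $(t-\lambda)_- \ge 0$ and $\varsigma \ge -\lambda$, hence $\varsigma\,\mathbf 1 = 0$ here anyway), so the inequality holds. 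If $t \le \lambda+\varsigma$, the indicator equals $1$, and the inequality becomes $(\lambda+\varsigma) - t \le (t-\lambda)_- + \varsigma$, i.e. $\lambda - t \le (t-\lambda)_-$, which is an equality when $t \le \lambda$ and holds trivially ($\lambda - t < 0 \le (t-\lambda)_-$) when $\lambda < t \le \lambda + \varsigma$. Note that when $\varsigma < 0$ the set $(-\infty,\lambda+\varsigma]$ is smaller, and the same case analysis goes through since then $\lambda + \varsigma \le \lambda$.

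Applying this pointwise inequality with $t = \mu_k$ and summing over all eigenvalues $\mu_k$ below $1$ yields
\begin{equation*}
\mathcal{E}(\lambda+\varsigma;P,\Omega) = \sum_k (\mu_k - (\lambda+\varsigma))_- \le \sum_k (\mu_k - \lambda)_- + \varsigma \sum_k \mathbf{1}_{(-\infty,\lambda+\varsigma]}(\mu_k) = \mathcal{E}(\lambda;P,\Omega) + \varsigma\,\mathcal{N}(\lambda+\varsigma;P,\Omega),
\end{equation*}
which is \eqref{N-E}. I do not anticipate a genuine obstacle here; the only point requiring a little care is bookkeeping around the sign of $\varsigma$ and the verification that all sums are finite (so that rearrangement is legitimate), which is handled by the discreteness hypothesis together with $\lambda + \varsigma < 1$. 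If one prefers to avoid writing out eigenvalues, the same argument can be phrased via the spectral theorem by integrating the scalar inequality against the spectral measure of $P$ restricted to $(-\infty,1)$, but the eigenvalue formulation is cleanest given that the spectrum below $1$ is discrete.
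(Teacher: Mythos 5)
Your proof is correct, and it takes a genuinely different and somewhat more elementary route than the paper's. The paper constructs the trial density matrix $\gamma f=\sum_{k\le N}\langle f,g_k\rangle g_k$ built from the eigenfunctions of $P$ with eigenvalue below $\lambda+\varsigma$, invokes the variational principle of Lemma~\ref{lem-VP-3} to get $-\mathcal{E}(\lambda;P,\Omega)\le\operatorname{Tr}\big((P-\lambda)\gamma\big)$, and then evaluates the trace by the split $\sum(\lambda_k-\lambda)=\sum(\lambda_k-\lambda-\varsigma)+\varsigma N$. You bypass the operator-level machinery entirely: you reduce the statement to the scalar inequality $(t-(\lambda+\varsigma))_-\le(t-\lambda)_-+\varsigma\,\mathbf{1}_{(-\infty,\lambda+\varsigma]}(t)$ for real $t$, verify it by cases, and sum over eigenvalues. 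This is the same arithmetic in disguise --- your case $\lambda<t\le\lambda+\varsigma$, where the scalar inequality is strict, is precisely what makes the variational step in the paper a genuine inequality rather than an equality --- but your version needs nothing beyond the definitions of $\mathcal{E}$ and $\mathcal{N}$ as eigenvalue sums and the finiteness of those sums (which you correctly justify from discreteness below $1$ and $\lambda+\varsigma<1$), and it treats the signs $\varsigma\gtrless 0$ in a visibly uniform way. The paper's version is closer in form to how such bounds are typically manufactured in the Lieb--Thirring literature via test density matrices, which may be why that presentation was chosen, but your argument is sound and arguably cleaner to verify.
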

\begin{proof}
Let $\{\lambda_{k}\}_{k=1}^{N}$ be the family of eigenvalues below $\lambda+\varsigma$ for $P$ and $\{g_{k}\}_{k=1}^{N}$ are associated (normalized) eigenfunctions . Let us define the trial density matrix $\gamma: L^{2}(\Omega)\ni f \mapsto\gamma f \in L^{2}(\Omega)$,
\[
\gamma f= \Sum{1\leq k\leq N}{} \langle f, g_{k} \rangle g_{k}.
\]
which satisfies $0\leq \gamma\leq 1$ (in the sense of quadratic forms).
By Lemma~\ref{lem-VP-3}, it follows that
\begin{equation}\label{use-lem}
-\mathcal{E}(\lambda; P, \Omega):=-{\rm Tr}\big(P-\lambda\big)_{-}\leq {\rm Tr} \big((P-\lambda)\gamma\big).
\end{equation}
On the other hand, we have
  \begin{align}
  \nonumber {\rm Tr} \big((P-\lambda)\gamma\big) =\Sum{{1\leq k\leq N} }{}(\lambda_{k}-\lambda)
&=\Sum{{1\leq k\leq N}}{}(\lambda_{k}-\lambda-\varsigma)+
 \varsigma\Sum{{1\leq k\leq N}}{} 1\\
\label{tr+dec} &=-\mathcal{E}(\lambda+\varsigma; P, \Omega)+ \varsigma\mathcal{N}(\lambda+\varsigma;P,\Omega).
  \end{align}
 Inserting this into \eqref{use-lem} yields \eqref{N-E}.
 %
\end{proof}
\section{Model operator in the half-space}
Our main goal in this section is to establish an upper bound on the number and the sum of eigenvalues below the infimum of the essential spectrum of a magnetic Schr\"{o}dinger operator in a half-cylinder in terms of the area of the cylinder base.
  \subsection{Reflection with respect to the boundary}
In order to state Lemma \ref{ref-lem} below, we need to define the reflected magnetic Schr\"odinger operator in $L^{2}(\R^{3})$
associated with the Neumann Schr\"odinger operator in $L^{2}(\R^{3}_{+})$.

Given $\theta\in[0,\pi/2]$, we consider the magnetic potential
\begin{equation}\label{MF-Ftilde}
\widetilde{\bf F}_{\theta}(r,s,t):= (0,0,|t|\cos(\theta)- s \sin(\theta) ), \qquad (r,s,t)\in\R^{3}.
\end{equation}
Let
\begin{equation}
  \widetilde{\mathscr{P}}_{\theta}=(-i\nb+\widetilde{\bf F}_{\theta})^{2} \qquad{\rm in}\qquad L^{2}(\R^{3}),
\end{equation}
be the self-adjoint operator defined  by the closed quadratic form
\begin{equation}
  \widetilde{\mathcal Q}_{\theta}(u):= \int_{\R^{3}} |(-i\nb+\widetilde{\bf F}_{\theta})u|^{2}drdsdt,\quad
  \mathcal{D}(\widetilde{\mathcal Q}_{\theta}):=\big\{u\in L^{2}(\R^{3})~:~ (-i\nb+\widetilde{\bf F}_{\theta})u\in L^{2}(\R^{3})\big\}\,.
  \end{equation}
We let $\beta=(0,\cos(\theta),\sin(\theta))$ denote the constant magnetic field generated by the vector potential~:
\begin{equation}\label{Ftheta}
{\bf F}_{\theta}(r,s,t)= (0,0,t\cos(\theta)-s\sin(\theta)), \qquad (r,s,t)\in\R^{3}_{+}.
\end{equation}
Furthermore, let
\begin{equation}\label{Neum-r3+}
\mathscr{P}^{N}_{\theta}= (-i\nb+{\bf F}_{\theta})^{2} \qquad{\rm in}\qquad L^{2}(\R^{3}_{+}),
\end{equation}
be the self-adjoint (Neumann) Schr\"{o}dinger operator associated with the quadratic form
\begin{equation}
  \mathcal{Q}^{N}_{\theta}(u):= \int_{\R^{3}_{+}} |(-i\nb+{\bf F}_{\theta})u|^{2}drdsdt,  \quad  \mathcal{D}({\mathcal Q}^{N}_{\theta}):= \left\{u\in L^{2}(\R^{3}_{+})~:~ (-i\nb+{\bf F}_{\theta})u\in L^{2}(\R^{3}_{+})\right\}.
  \end{equation}
We establish in the next lemma estimates on the eigenvalue counting function and the energy of eigenvalues for a perturbation of $\mathscr{P}_{\theta}^{N}$.
\begin{lem}\label{ref-lem}
Let $\mathscr{U}$ be a positive bounded potential in $L^{2}(\R^{3})$
verifying $\mathscr{U}(\cdot,\cdot,-t)=\mathscr{U}(\cdot,\cdot,t)$.
Assume that the spectrum of $\mathscr{P}_{\theta}^{N}+ \mathscr{U}$
below $\lambda$ is discrete.   We have
\begin{equation}\mathcal{N}(\lambda;{\mathscr{P}}^{N}_{\theta}+\mathscr{U}, \R^{3}_{+})\leq C_{\rm CLR}\int_{\R^{3}}(\mathscr{U})_{-}^{3/2}drdsdt,\quad \mathcal{E}(\lambda;{\mathscr{P}}^{N}_{\theta}+\mathscr{U},\R^{3}_{+})\leq C_{\rm LT}
\int_{\R^{3}}(\mathscr{U})_{-}^{5/2}drdsdt\,,
\end{equation}
where $C_{\rm CLR}$ and $C_{\rm LT}$ are two positive universal
constants.
\end{lem}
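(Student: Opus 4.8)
The plan is to reflect the half-space Neumann problem across the boundary $\{t=0\}$ into a Schr\"odinger operator on all of $\R^3$, and then to invoke the classical Cwikel--Lieb--Rozenblum (CLR) and Lieb--Thirring (LT) inequalities, having first removed the magnetic field by means of the diamagnetic inequality.

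\emph{Reflection.} Given $u$ in the form domain of $\mathscr P_\theta^N+\mathscr U$, let $\tilde u$ be the appropriate reflection of $u$ across $\{t=0\}$: the even extension $\tilde u(r,s,t):=u(r,s,|t|)$, composed with complex conjugation on $\{t<0\}$ when the magnetic momentum acts in the $t$-variable. The absolute value $|t|$ in the definition \eqref{MF-Ftilde} of $\widetilde{\mathbf F}_\theta$ is there precisely so that this extension is compatible with the magnetic quadratic form: one verifies that $\tilde u\in\mathcal D(\widetilde{\mathcal Q}_\theta)$ and
\[
\|\tilde u\|_{L^2(\R^3)}^2=2\|u\|_{L^2(\R^3_+)}^2,\qquad \widetilde{\mathcal Q}_\theta(\tilde u)=2\,\mathcal Q_\theta^N(u),
\]
while the symmetry hypothesis $\mathscr U(\cdot,\cdot,-t)=\mathscr U(\cdot,\cdot,t)$ yields $\int_{\R^3}\mathscr U\,|\tilde u|^2=2\int_{\R^3_+}\mathscr U\,|u|^2$. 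Hence the Rayleigh quotient of $\widetilde{\mathscr P}_\theta+\mathscr U$ at $\tilde u$ equals that of $\mathscr P_\theta^N+\mathscr U$ at $u$. Since $\mathscr U\in L^2(\R^3)$ is relatively compact with respect to both $\mathscr P_\theta^N$ and $\widetilde{\mathscr P}_\theta$ (in $\R^3$ the kernel of $(-\Delta+1)^{-1}$ is square integrable, so $\mathscr U$ composed with the resolvent is Hilbert--Schmidt), the two operators share the essential spectrum of their magnetic part; under the discreteness hypothesis the Courant--Fischer min-max principle therefore applies to both. Feeding the reflected eigenfunctions below $\lambda$ in as trial functions, min-max gives
\[
\mathcal N(\lambda;\mathscr P_\theta^N+\mathscr U,\R^3_+)\ \le\ \mathcal N(\lambda;\widetilde{\mathscr P}_\theta+\mathscr U,\R^3),
\]
and, comparing the $j$-th min-max values of the two operators (equivalently, applying the variational principle of Lemma~\ref{lem-VP-2} to $\widetilde{\mathscr P}_\theta+\mathscr U-\lambda$ with the normalized reflected eigenfunctions as test family),
\[
\mathcal E(\lambda;\mathscr P_\theta^N+\mathscr U,\R^3_+)\ \le\ \mathcal E(\lambda;\widetilde{\mathscr P}_\theta+\mathscr U,\R^3).
\]

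\emph{Removing the field; CLR and LT.} Writing $\widetilde{\mathscr P}_\theta+\mathscr U-\lambda=(-i\nabla+\widetilde{\mathbf F}_\theta)^2+(\mathscr U-\lambda)$ and using Kato's diamagnetic inequality together with the Birman--Schwinger principle, this operator has no more eigenvalues below $0$, and no larger sum of negative eigenvalues, than $-\Delta+(\mathscr U-\lambda)$; equivalently, the CLR and LT bounds for $(-i\nabla+\widetilde{\mathbf F}_\theta)^2+(\mathscr U-\lambda)$ hold with the non-magnetic constants. The classical inequalities in dimension $d=3$,
\[
\mathcal N(0;-\Delta+V,\R^3)\le C_{\rm CLR}\!\int_{\R^3}(V)_-^{3/2},\qquad {\rm Tr}\,(-\Delta+V)_-\le C_{\rm LT}\!\int_{\R^3}(V)_-^{5/2},
\]
applied with $V=\mathscr U-\lambda$ and combined with the reflection step, then deliver the two estimates of the lemma.

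\emph{Main obstacle.} The substantive point is the reflection step: verifying that the evenly (if necessary, conjugately) extended function genuinely belongs to the form domain of the full-space magnetic operator and carries exactly twice the quadratic form — this is exactly where the $|t|$ in $\widetilde{\mathbf F}_\theta$ and the evenness of $\mathscr U$ in $t$ are used — together with the bookkeeping on essential spectra that makes the counting functions and the traces on both sides of the displayed inequalities simultaneously finite, so that the min-max principle may legitimately be applied. The diamagnetic reduction and the application of CLR and LT are then routine.
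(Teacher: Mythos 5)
Your proof follows the same route as the paper: reflect the Neumann half-space problem to a whole-space problem with the symmetrized potential $\widetilde{\mathbf F}_\theta$, use the min-max principle to compare counting functions and energies, and apply CLR and Lieb--Thirring. Two minor notes: the hedge about composing the even extension with complex conjugation is unnecessary here, since the vector potential $\widetilde{\mathbf F}_\theta$ has zero $t$-component and its remaining entries depend on $t$ only through $|t|$, so the plain even extension already preserves the magnetic quadratic form; and your explicit appeal to the diamagnetic inequality to justify magnetic CLR/LT with the nonmagnetic constants is a useful clarification that the paper leaves implicit (the paper simply invokes CLR and LT).
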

\begin{proof}
Let $n\in\mathbb{N}$. Let $\{u_{j}\}_{j=1}^{n}$ be an orthonormal family of eigenfunctions with corresponding eigenvalues $\{ \mu_{j}\}_{j=1}^{n}$ associated with the operator $\mathscr{P}_{\theta}^{N}+ \mathscr{U}$ in $L^{2}(\R^{3}_{+})$. We define the extension to $\R^{3}$ of the function $u_{j}$ by~:
\begin{equation}\label{def-u-tilde}
 \widetilde u_{j}(r,s,t)=\begin{cases}
 \frac{1}{\sqrt{2}}u_{j}(r,s,t)& t\geq 0\\
\frac{1}{\sqrt{2}}u_{j}(r,s,-t)&t<0.
 \end{cases}
\end{equation}
Since $\{u_{j}\}_{j=1}^{n}$ are normalized and pairwise orthogonal, we get for all $1\leq j,k\leq n$,
\begin{equation}\label{norm-u-tilde}
\langle \widetilde{u}_{j},\widetilde u_{k}\rangle_{L^{2}(\R^{3})}
=\langle u_{j},u_{k} \rangle_{L^{2}(\R^{3}_{+})}=\delta_{j,k},
\end{equation}
where $\delta_{j,k}$ is the Kronecker symbol.

The bilinear form associated with $\widetilde{\mathcal{Q}}_{\theta}+\mathscr{U}$ is defined on the form domain by:
\[
\widetilde a_{\theta,\mathscr{U}}(u,v)= \int_{\R^{3}}\left((-i\nb +\widetilde{\bf F}_{\theta})u \overline{(-i\nb +\widetilde{\bf F}_{\theta})v} +\mathscr{U}u\overline{v}\right)drdsdt.
\]
Here the magnetic field $\widetilde{\bf F}_{\theta}$ is the same as in \eqref{MF-Ftilde}.

 It is easy to see that the functions $\{{\widetilde u}_{j}\}_{j}$ belong to the form domain $\mathcal{D}(\widetilde{\mathcal{Q}}_{\theta})$, since, by construction, we have
\(
\widetilde{\mathcal{Q}}_{\theta}(\widetilde u_{j})= \mathcal{Q}^{N}_{\theta}( u_{j}).
\)
Since the potential $\mathscr{U}$ is symmetric in the $t$-variable, we obtain for all $1\leq j,k\leq n$,
\begin{equation}
\widetilde a_{\theta,\mathscr{U}}(\widetilde u_{j},\widetilde u_{k})= \langle u_{j}, (\mathscr{P}_{\theta}^{N}+\mathscr{U} )u_{k} \rangle_{L^{2}(\R^{3}_{+})}.
\end{equation}
Since the $\{u_{j}\}_{j=1}^{n}$ are eigenfunctions of $\mathscr{P}_{\theta}^{N}+\mathscr{U}$, we get using \eqref{norm-u-tilde},
\begin{align}\label{orth-f}
\widetilde a_{\theta,\mathscr{U}}(\widetilde u_{j},\widetilde u_{k})= \delta_{j,k} \mu_{k},\quad\forall~ 1\leq j,k\leq n.
\end{align}
Let $\widetilde\mu_{n}$ be the $n$-th eigenvalue of $\widetilde{\mathscr{P}}_{\theta}+\mathscr{U}$ defined by the min-max principle. Owing to \eqref{norm-u-tilde} and \eqref{orth-f} we find,
\begin{align*}
 \widetilde\mu_{n}=\inf_{ v_{1},\cdots v_{n} \in\mathcal D(\widetilde{\mathcal{Q}}_{\theta})} \max_{\stackrel {v\in{\rm span}[v_{1},\cdots,v_{n}]} {\norm{v}=1}}\widetilde a_{\theta,\mathscr{U}}(v,v)
\leq  \max_{v\in {\rm span}[\widetilde u_{1},\cdots,\widetilde u_{n}]}\widetilde a_{\theta,\mathscr{U}}(v,v) =  {\mu}_{n},
\end{align*}
This yields
\begin{equation}\label{Ext-R3-1}
\mathcal{N}(\lambda;{\mathscr{P}}^{N}_{\theta}+\mathscr{U},\R^{3}_{+})
\leq \mathcal{N}(\lambda;\widetilde{\mathscr{P}}_{\theta}+\mathscr{U},\R^{3}),
\end{equation}
and
\begin{equation}\label{Ext-R3-2}
 \mathcal{E}(\lambda;{\mathscr{P}}^{N}_{\theta}+\mathscr{U},\R^{3}_{+})
\leq \mathcal{E}(\lambda;\widetilde{\mathscr{P}}_{\theta}+\mathscr{U},\R^{3}).\end{equation}
The lemma follows by applying CLR inequality (resp. Lieb-Thirring inequality) to the right-hand side of \eqref{Ext-R3-1} (resp. \eqref{Ext-R3-2}).
 \end{proof}
 \subsection{Schr\"odinger operator in a half-cylinder}
Consider a positive real number $L$, and define the domain
\begin{equation}\label{OmegaL}
\Omega^{L}= \Big(-\frac{L}{2},\frac{L}{2}\Big)^{2}\times\R_{+}.
\end{equation}
In this section, we will analyse the magnetic Schr\"{o}dinger operator
\begin{equation}\label{PL1L2}
\mathscr{P}^{L}_{\theta}=(-i\nb+{\bf F}_{\theta})^{2}\quad{\rm in}\quad L^{2}(\Omega^{L})
\end{equation}
with Neumann boundary conditions at $t=0$, and Dirichlet boundary conditions at $r\in\{-\frac{L}{2},\frac{L}{2}\}$ and $s\in\{-\frac{L}{2},\frac{L}{2}\}$. Here, for $\theta\in[0,\pi/2]$, ${\bf F}_{\theta}$ is the magnetic potential introduced in \eqref{Ftheta}.

More precisely, the operator $\mathscr{P}^{L}_{\theta}$ is defined as the self-adjoint Friedrichs extension
 in $L^{2}(\Omega^{L})$ associated with the semi-bounded quadratic form
\begin{equation}\label{QL1L2}
  \mathcal{Q}_{\theta}^{L}(u)=\int_{\Omega^{L}}|(-i\nb +{\bf F}_{\theta})u|^{2}dr ds dt\,,
\end{equation}
defined for all functions $u$ in the form domain of $\mathcal{Q}_{\theta}^{L}$,
\begin{multline}
\mathcal{D}(\mathcal{Q}_{\theta}^{L})=\Big\{u\in L^{2}(\Omega^{L})\,:\, (-i\nb+{\bf F}_{\theta})u\in L^{2}(\Omega^{L}) ,
\quad u\Big(-\frac{L}{2},\cdot,\cdot\Big)=u\Big(\frac{L}{2},\cdot,\cdot\Big)=0,\\
u\Big(\cdot,-\dfrac{L}{2},\cdot\Big)=u\Big(\cdot,\frac{L}{2},\cdot\Big)=0 \Big\}.
\end{multline}
The next lemma establishes super-additivity properties on the sum of eigenvalues for $\mathscr{P}_{\theta}^{L}$.
\begin{lem}\label{monot}
For all $n\in\mathbb{N}$, $\ld\in[0,1)$ and $L>0$, we have,
\begin{equation}\label{En-inc}
\dfrac{\mathcal{E}(\ld;\mathscr{P}_{\theta}^{nL},\Omega^{nL})}{n^{2} L^{2}}\geq
 \dfrac{\mathcal{E}(\ld;\mathscr{P}_{\theta}^{L},\Omega^{L})}{L^{2}}.
\end{equation}
\end{lem}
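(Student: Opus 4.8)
The plan is to establish the superadditivity inequality \eqref{En-inc} by tiling the larger cylinder $\Omega^{nL}$ with $n^2$ translated copies of the small cylinder $\Omega^{L}$, and using the variational principle of Lemma~\ref{lem-VP-2} together with the fact that Dirichlet conditions on the interior interfaces can only raise the energy. More precisely, write $\Omega^{nL}=\bigcup_{k,\ell}\Omega^{L}_{k,\ell}+o(1)$-a.e., where for $1\le k,\ell\le n$ the cell $\Omega^{L}_{k,\ell}$ is the translate of $\Omega^{L}$ centered at the appropriate lattice point in the $(r,s)$-square $(-\tfrac{nL}{2},\tfrac{nL}{2})^2$. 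On each cell $\Omega^{L}_{k,\ell}$ consider the operator $\mathscr{P}^{L}_{\theta}$ with Neumann condition at $t=0$ and Dirichlet conditions on the four lateral faces; by translation invariance of the quadratic form $\int|(-i\nabla+{\bf F}_\theta)u|^2$ in the $r$-direction and gauge invariance in the $s$-direction (a translation in $s$ changes ${\bf F}_\theta$ only by $\nabla\varphi$ for a smooth $\varphi$), each cell operator is unitarily equivalent to $\mathscr{P}^{L}_{\theta}$ on $\Omega^{L}$, hence has the same eigenvalues and the same energy $\mathcal{E}(\lambda;\mathscr{P}^{L}_{\theta},\Omega^{L})$.

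The key step: given any $\varepsilon>0$, pick finitely many orthonormal eigenfunctions of $\mathscr{P}^{L}_{\theta}$ on $\Omega^{L}$ realizing $\mathcal{E}(\lambda;\mathscr{P}^{L}_{\theta},\Omega^{L})$ up to $\varepsilon$ (for each cell); transplant them via the above unitary equivalence to obtain an orthonormal family on each $\Omega^{L}_{k,\ell}$, then extend each by zero to all of $\Omega^{nL}$. Functions supported in distinct cells are automatically orthogonal, and because each transplanted function vanishes on the lateral boundary of its cell (Dirichlet), the zero-extension lies in the form domain $\mathcal{D}(\mathcal{Q}^{nL}_{\theta})$ and $\mathcal{Q}^{nL}_{\theta}(\widetilde u)=\mathcal{Q}^{L}_{\theta}(u)$. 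The resulting big orthonormal family of size $\le n^2 N$ then gives, via Lemma~\ref{lem-VP-2} applied to $\mathcal{H}=\mathscr{P}^{nL}_{\theta}-\lambda$ (whose essential spectrum, if any, lies above $1-\lambda\ge 0$ by the model analysis),
$$-\mathcal{E}(\lambda;\mathscr{P}^{nL}_{\theta},\Omega^{nL})\le \sum_{k,\ell}\sum_{j}\big\langle \widetilde u_{j}^{(k,\ell)},(\mathscr{P}^{nL}_{\theta}-\lambda)\widetilde u_{j}^{(k,\ell)}\big\rangle = \sum_{k,\ell}\sum_{j}\big\langle u_j,(\mathscr{P}^{L}_{\theta}-\lambda)u_j\big\rangle\le -n^2\big(\mathcal{E}(\lambda;\mathscr{P}^{L}_{\theta},\Omega^{L})-\varepsilon\big).$$
Letting $\varepsilon\to 0$ and dividing by $n^2L^2$ yields \eqref{En-inc}.

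The main obstacle is the bookkeeping around the variational principle of Lemma~\ref{lem-VP-2}: that lemma is stated for operators on $L^2(\mathbb{R}^3)$ with $\inf\mathrm{Spec}_{\mathrm{ess}}\ge 0$, so one must either extend by zero to $\mathbb{R}^3$ (noting the Dirichlet lateral conditions and the Neumann bottom make this compatible with the form domain, after reflecting across $t=0$ as in Lemma~\ref{ref-lem}) or re-prove the elementary min-max characterization directly on $L^2(\Omega^{nL})$; in either case one needs that the negative spectrum of $\mathscr{P}^{nL}_{\theta}-\lambda$ is purely discrete (true since $\lambda<1$ and the essential spectrum sits at or above $1$). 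A secondary subtlety is checking that the overlaps of cell boundaries are Lebesgue-null so that the zero-extensions genuinely assemble into an orthonormal family and the energies add exactly; this is immediate since the interfaces are finite unions of hyperplanes. None of this is deep, but it is the part that must be written carefully.
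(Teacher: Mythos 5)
Your proposal is correct and follows essentially the same route as the paper: tile $\Omega^{nL}$ into $n^2$ translated copies of $\Omega^{L}$, impose Dirichlet conditions at the interior interfaces (which makes the direct sum of cell forms dominate $\mathcal{Q}^{nL}_\theta$ in form sense), invoke magnetic translation invariance to identify each cell operator with $\mathscr{P}^{L}_\theta$, and conclude by the variational principle. The paper phrases this a bit more compactly as the operator inequality $\mathscr{P}^{nL}_\theta \le \oplus_{j,k}\mathscr{P}^{L}_{\theta,j,k}$ followed by min-max, whereas you unpack it via explicit zero-extended orthonormal families through Lemma~\ref{lem-VP-2}, but the underlying ideas are identical.
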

\begin{proof}
Let $j,k\in \mathbb{N}$ such that $0\leq j,k\leq n-1$, we define the domain
$$\Omega_{j,k}^{L}:=\Big(\frac{(-n+2j)L}{2}, \frac{(-n+2j+2)L}{2}\Big)\times\Big(\frac{(-n+2k)L}{2},\frac{(-n+2k+2)L}{2}\Big)\times\R_{+}.$$
It is clear that $\Omega^{nL}=\cup_{j,k=0}^{n-1}\Omega_{j,k}^{L}.$
We next consider the (Friedrichs) self-adjoint operator
$\mathscr{P}_{\theta,j,k}^{L}$ defined by the closed quadratic form
\begin{equation}\label{QL1L2ij}
\mathcal{Q}_{\theta,j,k}^{L}(u)=\int_{\Omega_{j,k}^{L}}|(-i\nb+{\bf F}_{\theta})u|^{2} drdsdt,
\end{equation}
with domain,
\begin{multline}
\mathcal{D}(\mathcal{Q}_{\theta,j,k}^{L})=\Big\{u\in L^{2}(\Omega^{L})\,:\,(-i\nb+{\bf F}_{\theta})u\in L^{2}(\Omega_{j,k}^{L}) ,\\
u\Big(\frac{(-n+2j)L}{2},\cdot,\cdot\Big)=u\Big(\frac{(-n+2j+2)L}{2},\cdot,\cdot\Big)=0,\\
u\Big(\cdot,\frac{(-n+2k)L}{2},\cdot\Big)=u\Big(\cdot, \frac{(-n+2k+2)L}{2},\cdot\Big)=0 \Big\}.
\end{multline}
Taking boundary conditions into account, we observe that for all $u=\sum_{j,k}{} u_{j,k}\in \oplus_{j,k}\mathcal{D}(\mathcal{Q}^{L}_{\theta,j,k})$,
\[
  \mathcal{Q}_{\theta}^{nL}(u)= \Sum{j,k}{} \mathcal{Q}_{\theta,j,k}^{L}(u_{i,j}).
\]
This implies, that \(
\mathscr{P}^{nL}_{\theta}\leq {\oplus}_{j,k}\mathscr{P}^{L}_{\theta ,j,k}
\) (in the sense of quadratic forms). From the min-max principle, it follows easily that
\begin{equation}\label{Nb-omega-ij}
       \sum_{j,k}\mathcal{E}(\ld;\mathscr{P}_{\theta,j,k}^{L},\Omega_{j,k}^{L})
       \leq \mathcal{E}(\ld;\mathscr{P}_{\theta}^{nL},\Omega^{nL}),\qquad \forall~1\leq j,k\leq n.
\end{equation}
   Since the operator $\mathscr{P}_{\theta,j,k}^{L}$
   is unitarly equivalent to
$\mathscr{P}_{\theta}^{L}$ by magnetic translation invariance, \eqref{Nb-omega-ij} becomes,
\[
n^{2}\mathcal{E}(\ld;\mathscr{P}_{\theta}^{L},\Omega^{L})\leq\mathcal{E}
(\ld;\mathscr{P}_{\theta}^{nL},\Omega^{nL}).
\]
This gives \eqref{En-inc} upon dividing both sides by $n^2L^{2}$.
\end{proof}
We show in the next lemma a rough bound on the number and the sum of eigenvalues of $\mathscr{P}_{\theta}^{L}$ in terms of $L^{2}$.
\begin{lem}\label{u-b-N}
 Let $L>0$. There exists a constant $C$ such that for all $\lambda\in[0,1)$ and $\theta\in[0,\pi/2]$, it holds true that
\begin{equation}\label{bnd-nb}
\dfrac{\mathcal N(\lambda;\mathscr{P}_{\theta}^{L},\Omega^{L})}{ L^{2}}\leq \dfrac{C}{\sqrt{1-\lambda}},
\end{equation}
and
\begin{equation}\label{bnd-engy}
  \dfrac{\mathcal{E}(\lambda;\mathscr{P}_{\theta}^{L},\Omega^{L})}{ L^{2}}\leq \dfrac{C}{\sqrt{1-\lambda}},
\end{equation}
where $ \mathcal{N}(\ld;\mathscr{P}_{\theta}^{L},\Omega^{L})$ and $ \mathcal{E}(\ld;\mathscr{P}_{\theta}^{L},\Omega^{L})$ are defined in \eqref{def-number} and \eqref{def-energy} respectively.
\end{lem}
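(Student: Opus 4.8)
The plan is to reduce the bound on $\mathcal{N}$ and $\mathcal{E}$ for $\mathscr{P}_\theta^L$ to the half-space estimate of Lemma~\ref{ref-lem} by inserting an auxiliary potential that dominates $1-\lambda$ on the relevant region and then invoking the CLR and Lieb--Thirring bounds. First I would note that, by the min-max principle, extending functions by zero across the Dirichlet faces $r,s\in\{-L/2,L/2\}$ gives a form comparison $\mathscr{P}_\theta^L \geq \mathscr{P}_\theta^N\big|_{\Omega^L}$ in the sense that eigenvalue counting functions on $\Omega^L$ are controlled by those of the half-space operator restricted to the slab $\Omega^L=(-L/2,L/2)^2\times\R_+$. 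However, $\mathscr{P}_\theta^N$ alone has no spectrum below $1$ in a useful counting sense on an unbounded region, so I would instead compare with $\mathscr{P}_\theta^N + \mathscr{U}$ for a suitable bounded, $t$-even potential $\mathscr{U}$.

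The key construction is to take $\mathscr{U}$ to be, roughly, $-(1-\lambda)$ on a bounded box $Q_L=(-L/2,L/2)^2\times(0,T)$ (extended evenly in $t$) and $0$ outside, where $T$ is a fixed constant to be chosen independent of $L$ and $\lambda$. On $\Omega^L$ one has pointwise $\mathscr{P}_\theta^L - \lambda \geq \mathscr{P}_\theta^L + \mathscr{U} - 1$ on $Q_L$ after accounting for the potential; more precisely I would arrange that any eigenfunction of $\mathscr{P}_\theta^L$ with eigenvalue $<\lambda<1$ is, by Agmon-type decay away from $t=0$ (the operator $\mathscr{P}_\theta^N$ has bottom of spectrum $1$ and the eigenfunctions localize near $t=0$ at scale $O(1)$), essentially concentrated in $Q_L$ for $T$ large enough. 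The cleaner route, avoiding decay estimates, is purely variational: for an orthonormal family of $N$ eigenfunctions of $\mathscr{P}_\theta^L$ below $\lambda$, extend them by zero to $\R_+^3$ to get test functions for $\mathscr{P}_\theta^N$, whence
\[
\mathcal{N}(\lambda;\mathscr{P}_\theta^L,\Omega^L)\leq \mathcal{N}\big(\lambda;\mathscr{P}_\theta^N+\mathscr{U},\R_+^3\big)
\]
once $\mathscr{U}\leq 0$ is supported in $\Omega^L$ and chosen so that $\langle u,(\mathscr{P}_\theta^N+\mathscr{U})u\rangle \leq \langle u,\mathscr{P}_\theta^L u\rangle$ for the zero-extended $u$; with $\mathscr{U}=-(1-\lambda)\mathbf 1_{\Omega^L}$ this is immediate since $\langle u,\mathscr{P}_\theta^N u\rangle = \langle u,\mathscr{P}_\theta^L u\rangle$ and the potential only lowers the form. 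Then Lemma~\ref{ref-lem} gives
\[
\mathcal{N}(\lambda;\mathscr{P}_\theta^L,\Omega^L)\leq C_{\rm CLR}\int_{\R^3}(\mathscr{U})_-^{3/2}\,drdsdt.
\]

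The remaining issue is that $(\mathscr{U})_-^{3/2}=(1-\lambda)^{3/2}\mathbf 1_{\Omega^L}$ integrates to $+\infty$ because $\Omega^L$ is unbounded in $t$. To fix this I would truncate: take $\mathscr{U}(r,s,t) = -(1-\lambda)\,\mathbf 1_{(-L/2,L/2)^2}(r,s)\,\chi(t)$ where $\chi$ is a fixed even cutoff equal to $1$ on $[-1,1]$ and supported in $[-2,2]$, and absorb the resulting error on the strip $t\in(1,2)$ using that $\mathscr{P}_\theta^N\geq 1$ there, so that on that strip $\mathscr{P}_\theta^N+\mathscr{U}-\lambda\geq \mathscr{P}_\theta^N - 1 \geq 0$ contributes nothing new to the count; thus the comparison still holds with this compactly-$t$-supported $\mathscr{U}$, and now
\[
\int_{\R^3}(\mathscr{U})_-^{3/2} \leq (1-\lambda)^{3/2}\cdot L^2\cdot 4 \leq C(1-\lambda)^{3/2} L^2 \leq \frac{C L^2}{\sqrt{1-\lambda}}
\]
since $(1-\lambda)^{3/2}\leq (1-\lambda)^{-1/2}$ for $\lambda\in[0,1)$, giving \eqref{bnd-nb}. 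For \eqref{bnd-engy}, the same $\mathscr{U}$ and the Lieb--Thirring half of Lemma~\ref{ref-lem} yield $\mathcal{E}(\lambda;\mathscr{P}_\theta^L,\Omega^L)\leq C_{\rm LT}\int(\mathscr{U})_-^{5/2}\leq C(1-\lambda)^{5/2}L^2 \leq C L^2/\sqrt{1-\lambda}$, again using $(1-\lambda)^{5/2}\leq(1-\lambda)^{-1/2}$ on $[0,1)$. The main obstacle is the bookkeeping that justifies discarding the $t\in(1,2)$ strip in the variational comparison — i.e., verifying that the zero-extended eigenfunctions genuinely serve as admissible trial functions and that replacing $-(1-\lambda)\mathbf 1_{\Omega^L}$ by its $t$-truncation does not destroy the inequality on counting/energy — but this follows from the positivity $\mathscr{P}_\theta^N\geq 1$ together with monotonicity of $\mathcal{N}$ and $\mathcal{E}$ under decreasing the potential, all uniform in $\theta$ because the constant $1=\inf\Spec\mathscr{P}_\theta^N$ is $\theta$-independent.
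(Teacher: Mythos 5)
Your proposal has a genuine gap, and the shortcut you describe does not exist.

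The zero-extension step is fine: for any $\mathscr{U}\le 0$ supported in $\overline{\Omega^L}$ (truncated in $t$ or not), extending the eigenfunctions of $\mathscr{P}_\theta^L$ by zero across the Dirichlet faces gives
$\mathcal{N}(\lambda;\mathscr{P}_\theta^L,\Omega^L)\le\mathcal{N}(\lambda;\mathscr{P}_\theta^N+\mathscr{U},\R^3_+)$.
The problem is that the right-hand side is $+\infty$ in the nontrivial regime $\lambda\in(\zeta_1(\theta),1)$ (or $\lambda>\Theta_0$ for $\theta=0$). By translation invariance in $r$, the spectrum of $\mathscr{P}_\theta^N$ below $1$ is absolutely continuous and begins at $\zeta_1(\theta)<1$; adding a compactly supported $\mathscr{U}\le0$ does not change the essential spectrum, so $\lambda$ sits inside it and the counting function is infinite. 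Equivalently, the CLR bound you need is on the reflected $\widetilde{\mathscr{P}}_\theta+(\mathscr{U}-\lambda)$ and therefore involves $\int(\lambda-\widetilde{\mathscr{U}})_+^{3/2}$ — this is what the paper actually uses when it invokes Lemma~\ref{ref-lem} (the form $\int(\mathscr{U})_-^{3/2}$ in the lemma statement is a misprint; with a genuinely \emph{positive} $\mathscr{U}$ that integral is $0$, which is visibly not the intended conclusion). Since your $\mathscr{U}$ vanishes outside a bounded box while $\lambda>0$, that integral diverges. Your claim that one can "absorb the strip $t\in(1,2)$ using $\mathscr{P}_\theta^N\ge1$" is also not correct: the bottom of the Neumann half‑space operator is $\zeta_1(\theta)<1$, not $1$, and in any case the divergence is in the lateral directions $|r|,|s|\to\infty$ (once you zero-extend, the Dirichlet confinement disappears and nothing replaces it), not only in $t$.

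What actually closes the argument, and what the paper does, is the opposite sign of potential: insert a \emph{positive} wall $V\ge 0$ supported \emph{outside} a bounded box $\Omega_1^T$, so that $(\lambda-\widetilde{V}_1)_+$ vanishes at infinity and the CLR integral collapses to the box. Since $V$ is supported outside the box, it costs nothing for the Dirichlet-truncated form. But because the eigenfunctions of $\mathscr{P}_\theta^L$ are not supported in $t<T$, one first needs an IMS localization in the $t$-variable at scale $T$, paying an error $C_0/T^2$, and then chooses $T\sim(1-\lambda)^{-1/2}$ so that $\lambda+C_0/T^2<1$. The $T$ in the box volume is precisely where the factor $(1-\lambda)^{-1/2}$ in the conclusion comes from. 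So the IMS cutoff in $t$ and the positive confining potential are not bookkeeping conveniences but the entire mechanism — there is no "cleaner route avoiding decay estimates."
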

\begin{proof}
Let $(\psi_{1}(t),\psi_{2}(t))$ be a partition of unity on $\R_{+}$ with $\psi_{1}^{2}(t)+\psi_{2}^{2}(t)=1$ and:
\begin{equation}
\left\{\begin{array}{ll}
\psi_{1}(t)=1\quad {\rm if}\quad $0<t<1$,&\\
\psi_{1}(t)=0\quad {\rm if}\quad $t>2$.
\end{array}\right.
\end{equation}
Let $T>1$ be a large number to be chosen later. We consider the following two sets
\[
\Omega^{T}_{1}=\Big(-\frac{L}{2},\frac{L}{2}\Big)^{2}\times(0,2T), \qquad \Omega^{T}_{2}=\Big(-\frac{L}{2},\frac{L}{2}\Big)^{2}\times(T,\infty).
\]
We define the partition of unity $(\psi_{1,T}(t),\psi_{2,T}(t))$ by
\[
\psi_{1,T}(t)=\psi_{1}\left(\dfrac{t}{T}\right),\qquad \psi_{2,T}(t)=\psi_{2}\left(\dfrac{t}{T}\right)
\]
We have $\psi_{k,T}^{\prime}(t)=
\frac{1}{T}\psi_{k}^{\prime}({\frac{t}{T}})$. Thus we deduce that
there exists a constant $C_{0}>0$ such that
\begin{equation}\label{par-of-uni}
\sum_{k=1}^{2}|\psi_{k}^{\prime}(t)|^{2}\leq \dfrac{C_{0}}{T^{2}}.
\end{equation}
By the IMS formula and the fact that $\psi_{1,T}^{2}(t)+\psi_{2,T}^{2}(t)=1$, we find for all $u\in\mathcal{D}(\mathcal{Q}^{L}_{\theta})$
\begin{equation}\label{Ims}
\mathcal{Q}^{L}_{\theta}(u)=\sum_{k=1}^{2}\Big(\mathcal{Q}^{L}_{\theta}(\psi_{k,T}u)-\int_{\Omega^{L}}
\mathcal{U}_{T}(t)|\psi_{k,T}u|^{2}drdsdt\Big),\qquad \mathcal{U}_{T}(t)=\sum_{k=1}^{2}|\psi^{\prime}_{k,T}(t)|^{2}.
\end{equation}
Let us denote by $\mathscr{P}^{L}_{\theta,1}$ and $\mathscr{P}^{L}_{\theta,2}$ the self-adjoint operators
 associated with the following quadratic forms~:
\begin{equation}
 \mathcal{Q}^{L}_{\theta,1}(u)=\Int{\Omega_{1}^{T}}{}\Big{[}|\partial_{t}u|^{2}+|\partial_{s}u|^{2}+
  |(-i\partial_{r}+t\cos(\theta)-s\sin(\theta))u|^{2}\Big{]}drdsdt,
\end{equation}
 \begin{multline}
  \mathcal{D}({\mathcal Q}^{L}_{\theta,1}):= \big\{u\in L^{2}(\Omega^{T}_{1})~:~
  (-i\nb+{\bf F}_{\theta})u\in L^{2}(\Omega^{T}_{1}),
  \quad u(2T,\cdot,\cdot)=0 ,\\\Big(-\frac{L}{2},\cdot,\cdot\Big)=u\Big(\frac{L}{2},\cdot,\cdot\Big)=0,\quad
u\Big(\cdot,-\frac{L}{2},\cdot\Big)=u\Big(\cdot, \frac{L}{2},\cdot\Big)=0\big\}\,,
\end{multline}
and
\begin{equation}\label{quad-2}
  \mathcal{Q}^{L}_{\theta,2}(u)=\Int{\Omega_{2}^{T}}{}\Big[|\partial_{t}u|^{2}+|\partial_{s}u|^{2}+
  |(-i\partial_{r}+t\cos(\theta)-s\sin(\theta))u|^{2}\Big]drdsdt,
\end{equation}
  \begin{multline}
  \mathcal{D}({\mathcal Q}^{L}_{\theta,2}):= \big\{u\in L^{2}(\Omega^{T}_{2})~:~
  (-i\nb+{\bf F}_{\theta})u\in L^{2}(\Omega^{T}_{2}),
  \quad u(T,\cdot,\cdot)=0,\\
   \Big(-\frac{L}{2},\cdot,\cdot\Big)=u\Big(\frac{L}{2},\cdot,\cdot\Big)=0,\quad
u\Big(\cdot,-\frac{L}{2},\cdot\Big)=u\Big(\cdot, \frac{L}{2},\cdot\Big)=0 \big\}\,,
\end{multline}
respectively.

It is clear from \eqref{Ims} that,
\begin{equation}\label{app-cdv}
\mathcal{Q}^{L}_{\theta}(u)\geq \mathcal{Q}^{L}_{\theta,1}(\psi_{1,T}u)+\mathcal{Q}^{L}_{\theta,2}(\psi_{2,T}u)- \dfrac{C_{0}}{T^{2}}\int_{\Omega^{L}}
|u|^{2}drdsdt.
\end{equation}
By the variational principle (cf. \cite[Lemma 5.1]{C-Ve}), we see that
\begin{equation}
 \mathcal N(\lambda;\mathscr{P}_{\theta}^{L},\Omega^{L})
   \leq  \mathcal N(\lambda+{C_{0}}{T^{-2}};\mathscr{P}_{\theta,1}^{L}\oplus \mathscr{P}_{\theta,2}^{L},\Omega^{L}).
\end{equation}
It follows that
\begin{align}\label{estimate}
 \mathcal N(\lambda;\mathscr{P}_{\theta}^{L},\Omega^{L})
   \leq  \mathcal N(\lambda+{C_{0}}{T^{-2}};\mathscr{P}_{\theta,1}^{L},\Omega_{1}^{T})+
  \mathcal N(\lambda+{C_{0}}{T^{-2}};\mathscr{P}_{\theta,2}^{L},\Omega_{2}^{T}).
\end{align}
The Dirichlet boundary conditions imposed at $r\in\big\{-\frac{L}{2},\frac{L}{2}\big\}$, $s\in\big\{\frac{L}{2},\frac{L}{2}\big\}$ and $t=T$ ensures that the estimate \eqref{estimate} remains true if we replace $\Omega_{2}^{T}$ by $\R^{3}$ in the definition of $\mathcal{Q}^{L}_{\theta,2}$.

Since the first eigenvalue of the Schr\"odinger operator with constant unit magnetic
 field in $L^{2}(\R^{3})$ is equal to $1$, we thus find
\begin{equation}\label{lar-ld}
\begin{aligned}
\mathcal{Q}_{\theta,2}^{L}(u)
  &\geq\int_{\R^{3}} |u|^{2}drdsdt.
\end{aligned}
\end{equation}
Choose $T=2 \sqrt{\dfrac{C_{0}}{1-\lambda}}$, it holds that $1>\lambda+C_{0}T^{-2}$ and
\[
 \mathcal{Q}_{\theta,2}^{L}(u)>( \lambda+ C_{0}T^{-2}) \int_{\R^{3}} |u|^{2}drdsdt.
\]
This clearly gives $\mathcal N(\lambda+ C_{0}T^{-2};\mathscr{P}_{\theta},\R^{3})=0$. Thus, it remains  to estimate $\mathcal N(\lambda+C_{0}T^{-2} ;\mathscr{P}_{\theta,1}^{L},\Omega^{T}_{1})$.
To do this, we introduce a potential $V$ satisfying
\begin{equation}\label{Ass-V}
\left\{\begin{array}{ll}
V\geq 0,\\
 {\rm supp}~V \subset \R^{3}_{+}\setminus \Omega_{1}^{T}.
\end{array}\right.
\end{equation}
Under these assumptions on $V$, we may write for all $u\in D(\mathcal{Q}^{L}_{\theta,1})$,
\begin{equation}
\int_{\Omega^{T}_{1}}\big|(-i\nb+{\bf F}_{\theta})u\big|^{2}drdsdt= \int_{\Omega^{T}_{1}}\big|(-i\nb+{\bf F}_{\theta})u\big|^{2}drdsdt
+\int_{\R^{3}_{+}}V(x)|u|^{2}drdsdt.
\end{equation}
Here, we have extended $u$ by $0$ to the whole of $\R^{3}_{+}$ in the last integral. Therefore, it follows from the min-max principle that~:
\begin{equation}\label{NB2}
\mathcal N(\lambda+C_{0}T^{-2};\mathscr{P}^{L}_{\theta,1},\Omega_{1}^{T})= \mathcal N(\lambda+C_{0}T^{-2};\mathscr{P}_{\theta,1}^{L}+V,\Omega_{1}^{T}).
\end{equation}
Since any function $u$ that belongs to the form domain of $\mathcal{Q}^{L}_{\theta,1}$ can be extended by $0$ to the half space
 $\R^{3}_{+}$, we get using the bound in \eqref{par-of-uni} and the min-max principle that
\begin{align}\label{Nb3}
\mathcal N(\lambda+C_{0}T^{-2};\mathscr{P}^{L}_{\theta,1}+V,\Omega_{1}^{T})\leq\mathcal  N(\lambda;\mathscr{P}^{N}_{\theta}+V_{1},\R^{3}_{+}),
\end{align}
where
\[
V_{1}=V- \dfrac{C_{0}}{T^{2}}.
\]
We select the  potential $V$  in $\R^{3}_{+}$ as follows,
$$V(r,s,t):=\left(1+\dfrac{C_{0}}{T^{2}}\right) {\bf 1}_{\R^{3}_{+}\setminus\Omega^{T}_{1}} .$$
It is easy to check that $V$ satisfies the assumptions in
\eqref{Ass-V}. To $V$, we associate the reflected potential in
$\R^{3}$ defined by
$$\widetilde V(r,s,t):=\left(1+\dfrac{C_{0}}{T^{2}}\right) {\bf 1}_{\R^{3}\setminus\widetilde\Omega^{T}_{1}},$$
with
\[
\widetilde\Omega^{T}_{1}= \Big\{ (r,s,t)\in\Big(-\frac{L}{2},\frac{L}{2}\Big)^{2}\times\R\,:\, |t|<2T\Big\}.
\]
In view of Lemma~\ref{ref-lem}, we have,
\begin{equation}\label{Nb4}
\mathcal N(\lambda;\mathscr{P}_{\theta}^{N}+V_{1},\R^{3}_{+})\leq C_{\rm CLR}\int_{\R^{3}}\big(\lambda-\widetilde V_{1}\big)_{+}^{3/2}drdsdt
\end{equation}
where
\[
\widetilde V_{1}=\widetilde V- \dfrac{C_{0}}{T^{2}}.
\]
Next, we compute the integral
\[
\int_{\R^{3}}\big(\lambda-\widetilde V_{1}\big)_{+}^{3/2}drdsdt = 2\lambda^{3/2}\int_{\widetilde\Omega^{T}_{1}}drdsdt=4\ld^{3/2} L^{2}T.
\]
Inserting this in \eqref{Nb4}, we obtain
\begin{align}\label{Nb5}
\mathcal N(\lambda;{\mathscr{P}}^{N}_{\theta}+ V_{1},\R^{3}_{+})\leq  4C_{\rm CLR}\ld^{3/2} L^{2}T.
\end{align}
Combining the estimates \eqref{estimate}, \eqref{NB2}, \eqref{Nb3} and \eqref{Nb5} gives \eqref{bnd-nb} upon inserting the choice $T=2\sqrt{\frac{C_{0}}{1-\lambda}}$.

In a similar fashion, we can prove \eqref{bnd-engy} by following the steps of the proof of \eqref{bnd-nb}, and using Lemma~\ref{ref-lem} (the energy case).
\end{proof}
\section{The large area limit}
Consider $\theta\in[0,\pi/2]$ and a large positive number $L>0$.  Recall the magnetic potential introduced in \eqref{Ftheta} and the magnetic Schr\"{o}dinger operator $\mathscr{P}_{\theta}^{L}$ given in \eqref{PL1L2}.
In accordance with the definition of $\mathcal{E}$ in \eqref{def-energy}, we write, for $ \lambda\in[0,1)$,
 $$\mathcal{E}(\lambda;\mathscr{P}_{\theta}^{L},
\Omega^{L})=\sum_{j}\big(\zeta_{j}^{L}(\theta)-\lambda\big)_{-},$$
where $\{\zeta_{j}^{L}(\theta)\}_{j}$ denotes the sequence of eigenvalues of $\mathscr{P}_{\theta}^{L}$.

We are interested in the behaviour of $\mathcal{E}(\lambda;\mathscr{P}_{\theta}^{L},
\Omega^{L})$ as $L$ approach $\infty$. We will obtain a limiting function $E(\theta,\lambda)$ (see Theorem~\ref{thm-trm-limit} below) such that the leading order asymptotics
\[
\mathcal{E}(\lambda;\mathscr{P}_{\theta}^{L},
\Omega^{L})\sim E(\theta,\lambda)L^{2}
\]
holds true as $L\rightarrow\infty$. This approach was adapted in \cite{FK3D,FKP} to prove the existence of several limiting functions related to the Ginzburg-Landau functional.
We aim to prove
\begin{thm}\label{thm-trm-limit}
Let $\theta\in[0,\pi/2]$ and
$\lambda\in[0,1)$. There exists a constant $E(\theta,\lambda)$ such that
\begin{equation}\label{trm-lim}
  \liminf_{L\rightarrow \infty} \dfrac{\mathcal{E}(\lambda;\mathscr{P}^{L}_{\theta},\Omega^{L})}{L^{2}}=
   \limsup_{L\rightarrow \infty} \dfrac{\mathcal{E}(\lambda;\mathscr{P}^{L}_{\theta},\Omega^{L})}{L^{2}}= E(\theta,\lambda).
\end{equation}
Moreover, for all $\lambda_{0}\in [0,1)$, there exist positive uniform constants $L_{0}$ and $C_0$ such that,
\begin{equation}\label{bnd-eng-tl}
    E(\theta,\lambda)-  \dfrac{2C_{0}}{L^{2/3}}\leq  \dfrac{\mathcal{E}(\lambda;\mathscr{P}^{L}_{\theta},\Omega^{L})}{L^{2}}\leq  E(\theta,\lambda),
\end{equation}
for all $\theta \in[0,\pi/2]$, $\lambda\in[0,\lambda_{0}]$, $L\geq 2 L_{0}$.
\end{thm}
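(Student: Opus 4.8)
Write $g(L):=L^{-2}\mathcal{E}(\lambda;\mathscr{P}^{L}_{\theta},\Omega^{L})\ge 0$. First I would obtain \eqref{trm-lim} by a Fekete-type argument combining the super-additivity of Lemma~\ref{monot} with the uniform bound of Lemma~\ref{u-b-N}. By Lemma~\ref{u-b-N}, $g$ is bounded above, so $E(\theta,\lambda):=\sup_{L>0}g(L)\in[0,\infty)$. Given $L_{0}>0$ and $L\ge L_{0}$, set $k:=\lfloor L/L_{0}\rfloor\ge 1$. Since $\Omega^{kL_{0}}\subset\Omega^{L}$ (both boxes centered) and imposing a Dirichlet condition on $\partial\Omega^{kL_{0}}$ decouples $\mathscr{P}^{L}_{\theta}$ into $\mathscr{P}^{kL_{0}}_{\theta}$ plus a Friedrichs operator on the surrounding region, the min-max principle together with $\mathcal{E}(\lambda;\cdot)\ge 0$ gives $\mathcal{E}(\lambda;\mathscr{P}^{L}_{\theta},\Omega^{L})\ge\mathcal{E}(\lambda;\mathscr{P}^{kL_{0}}_{\theta},\Omega^{kL_{0}})$; combined with Lemma~\ref{monot} (taking $n=k$) this yields $g(L)\ge(kL_{0}/L)^{2}g(L_{0})$. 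Letting $L\to\infty$ with $L_{0}$ fixed gives $\liminf_{L\to\infty}g(L)\ge g(L_{0})$, hence $\liminf_{L\to\infty}g(L)\ge E(\theta,\lambda)\ge\limsup_{L\to\infty}g(L)$, which is \eqref{trm-lim}; the right inequality of \eqref{bnd-eng-tl}, $g(L)\le E(\theta,\lambda)$, is immediate from the definition of $E(\theta,\lambda)$.

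\textbf{Approximate sub-additivity.} For the left inequality of \eqref{bnd-eng-tl} I would prove a quantitative converse to Lemma~\ref{monot}. Fix $L$ and a buffer width $\delta\in(0,L/2)$. Choose a smooth partition of unity $(\chi_{\mathbf m})_{\mathbf m}$ of the $(r,s)$-plane, $\sum_{\mathbf m}\chi_{\mathbf m}^{2}\equiv 1$, with each $\chi_{\mathbf m}$ supported in a square of side $L$ (centers on a lattice of step $L-\delta$) and $\sum_{\mathbf m}|\nabla\chi_{\mathbf m}|^{2}\le C_{0}'\delta^{-2}$. For $M>0$ let $\gamma=\sum_{k}|u_{k}\rangle\langle u_{k}|$ be the spectral projection of $\mathscr{P}^{M}_{\theta}$ onto $(-\infty,\lambda)$, which is finite-rank because $\mathcal{N}(\lambda;\mathscr{P}^{M}_{\theta},\Omega^{M})<\infty$ by Lemma~\ref{u-b-N}. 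Applying the magnetic IMS formula to each $u_{k}$, using that each $\chi_{\mathbf m}u_{k}$ (extended by zero) lies in the form domain of the Dirichlet realization of $(-i\nb+{\bf F}_{\theta})^{2}$ on the corresponding square (Neumann at $t=0$ retained), that this realization is unitarily equivalent to $\mathscr{P}^{L}_{\theta}$ by magnetic translation invariance in $(r,s)$ (and for squares meeting $\partial\Omega^{M}$ the Dirichlet domain is smaller, with smaller energy by domain monotonicity), and that $\gamma_{\mathbf m}:=\sum_{k}|\chi_{\mathbf m}u_{k}\rangle\langle\chi_{\mathbf m}u_{k}|$ satisfies $0\le\gamma_{\mathbf m}\le 1$, one gets $\mathrm{Tr}\big((\mathscr{P}^{M}_{\theta}-\lambda)\gamma\big)\ge-N_{\rm sq}\,\mathcal{E}(\lambda+C_{0}'\delta^{-2};\mathscr{P}^{L}_{\theta},\Omega^{L})$ with $N_{\rm sq}\le(M/(L-\delta)+1)^{2}$. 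Since the left-hand side equals $-\mathcal{E}(\lambda;\mathscr{P}^{M}_{\theta},\Omega^{M})$, dividing by $M^{2}$, letting $M\to\infty$, and invoking $\lim_{M\to\infty}g(M)=E(\theta,\lambda)$ from the first step gives $E(\theta,\lambda)\le(L-\delta)^{-2}\mathcal{E}(\lambda+C_{0}'\delta^{-2};\mathscr{P}^{L}_{\theta},\Omega^{L})$.

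\textbf{Optimizing the buffer.} Put $\varsigma:=C_{0}'\delta^{-2}$ and require $\delta$ large enough (depending on $\lambda_{0}$) that $\lambda+\varsigma\le\tfrac{1+\lambda_{0}}{2}<1$. Corollary~\ref{NB-vs-EN} and then Lemma~\ref{u-b-N} give $\mathcal{E}(\lambda+\varsigma;\mathscr{P}^{L}_{\theta},\Omega^{L})\le\mathcal{E}(\lambda;\mathscr{P}^{L}_{\theta},\Omega^{L})+\varsigma\,\mathcal{N}(\lambda+\varsigma;\mathscr{P}^{L}_{\theta},\Omega^{L})\le\mathcal{E}(\lambda;\mathscr{P}^{L}_{\theta},\Omega^{L})+C\varsigma L^{2}(1-\lambda_{0})^{-1/2}$. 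Dividing by $L^{2}$, using $(L/(L-\delta))^{2}\le 1+C_{2}\delta/L$ for small $\delta/L$ and $g(L)\le E(\theta,\lambda)\le C(1-\lambda_{0})^{-1/2}$, one obtains
\[
E(\theta,\lambda)-g(L)\ \le\ C_{2}\frac{\delta}{L}\,g(L)+\frac{C_{3}}{\delta^{2}}\ \le\ C_{4}\Big(\frac{\delta}{L}+\frac{1}{\delta^{2}}\Big),
\]
with $C_{2},C_{3},C_{4}$ depending only on $\lambda_{0}$. Choosing $\delta\asymp L^{1/3}$ balances the two terms and yields $E(\theta,\lambda)-g(L)\le C_{5}L^{-2/3}$; fixing $L_{0}$ so large that for $L\ge 2L_{0}$ all constraints on $\delta$ ($\delta<L/2$, $\lambda+C_{0}'\delta^{-2}\le\tfrac{1+\lambda_{0}}{2}$) hold gives the left inequality of \eqref{bnd-eng-tl} after renaming constants. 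All constants are independent of $\theta$ and of $\lambda\in[0,\lambda_{0}]$ because the partition of unity and Lemma~\ref{u-b-N} are uniform in these parameters.

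\textbf{Main obstacle.} The Fekete step is soft. The real work is the approximate sub-additivity: the IMS bookkeeping (checking $0\le\gamma_{\mathbf m}\le 1$, treating the squares that meet $\partial\Omega^{M}$ by domain monotonicity, and absorbing the level shift $\varsigma=C_{0}'\delta^{-2}$ through Corollary~\ref{NB-vs-EN} and Lemma~\ref{u-b-N}), together with the balancing of the buffer error $\delta^{-2}$ against the volume defect $\delta/L$ — which is precisely what forces the exponent $2/3$.
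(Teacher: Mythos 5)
Your proof is correct, and its analytic core is the same as the paper's: an IMS partition of a large box into size-$L$ subboxes with buffer $\delta$, magnetic translation invariance to identify each subbox with $\Omega^{L}$ (and domain monotonicity for boundary boxes), Lemma~\ref{lem-VP-3} applied to the test density matrices $\gamma_{\mathbf m}=\sum_k|\chi_{\mathbf m}u_k\rangle\langle\chi_{\mathbf m}u_k|$, absorption of the IMS level shift $C_0'\delta^{-2}$ through Corollary~\ref{NB-vs-EN} and the counting bound of Lemma~\ref{u-b-N}, and the balance $\delta\asymp L^{1/3}$ producing the $L^{-2/3}$ rate, with everything uniform in $\theta$ and $\lambda\in[0,\lambda_0]$. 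The organization, however, is genuinely different. The paper isolates the near-subadditivity as a standalone recursion $f(nL)\ge f((1+a)L)-C(a+a^{-2}L^{-2})$ (Lemma~\ref{lem-thdL}) and feeds it into the abstract convergence Lemma~\ref{gen-lem}, cited from \cite{FK3D}, which delivers both existence of the limit and the rate. You instead (i) get existence of the limit and the upper inequality $g(L)\le E(\theta,\lambda)$ by a soft Fekete argument using only Lemma~\ref{monot}, boundedness from Lemma~\ref{u-b-N}, and domain monotonicity; and (ii) obtain the quantitative lower inequality by passing $M\to\infty$ inside the IMS estimate, so that $\lim_{M}g(M)=E(\theta,\lambda)$ produces $E(\theta,\lambda)\le(L-\delta)^{-2}\mathcal{E}(\lambda+C_0'\delta^{-2};\mathscr{P}^{L}_{\theta},\Omega^{L})$ directly, and only then optimizing $\delta$. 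This bypasses the external \cite{FK3D} lemma entirely and cleanly separates the existence of the limit from the rate estimate; the trade-off is an extra limit variable $M$ carried inside the estimate, whereas the paper's modular approach lets the abstract lemma be reused. Both routes are rigorous and deliver the stated theorem, so the choice is mainly a matter of taste and dependencies.
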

The proof of Theorem~\ref{trm-lim} relies on the following lemma, which is proved in \cite[Lemma~2.2]{FK3D}.
\begin{lem}\label{gen-lem}
Consider a decreasing function $d:(0,\infty)\to(-\infty,0]$ such that the function\break
$f:(0,\infty)\ni L\mapsto \frac{d(L)}{L}\in\R$ is bounded.

Suppose that there exist constants $C>0$, $L_{0}>0$ such that  the estimate
\begin{equation}\label{eq-gen-lem-ass}
f(nL) \geq f((1+a)L)-C\left(a+\frac1{a^2L^{2}}\right),\end{equation}
holds true for all $a\in(0,1)$, $n\in{\mathbb N}$, $L\geq L_{0}$.
Then $f(L)$ has a limit $A$ as $L\to\infty$. Furthermore, for all $L\geq 2 L_{0}$, the following estimate holds true,
\begin{equation}\label{eq-gen-lem}
f(L)\leq A+\frac{2C}{L^{2/3}}\,.
\end{equation}
\end{lem}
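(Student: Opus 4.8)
The plan is to derive both conclusions from two ingredients: the approximate super-additivity \eqref{eq-gen-lem-ass}, exploited after optimising the free parameter at the scale $a\asymp L^{-2/3}$, and the monotonicity of $d$, which enters precisely because $n$ in \eqref{eq-gen-lem-ass} ranges over integers only. I would first record that $f\le 0$ (since $d\le 0$ and $L>0$), write $M_0:=\sup_{L>0}|f(L)|<\infty$, and set $A_-:=\liminf_{L\to\infty}f(L)\in[-M_0,0]$; the target is the inequality $\limsup_{L\to\infty}f(L)\le A_-$, which forces the limit $A:=A_-$ to exist, together with the rate \eqref{eq-gen-lem}.

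The first step — which I expect to be the main obstacle — is a \emph{progression lemma}: for every fixed $L>0$ one has $\liminf_{n\to\infty}f(nL)=A_-$. The inequality $\ge$ is automatic since $(nL)_n$ is a subsequence tending to $+\infty$. For the reverse, I would take $\Lambda_k\to\infty$ with $f(\Lambda_k)\to A_-$, put $m_k:=\lceil\Lambda_k/L\rceil$ (so that $(m_k-1)L<\Lambda_k\le m_kL$ and $m_k\to\infty$), and use that $d$ is decreasing: $\Lambda_k f(\Lambda_k)=d(\Lambda_k)\ge d(m_kL)=m_kL\,f(m_kL)$. Dividing by $m_kL>0$ and using $f\le 0$ together with $\Lambda_k/(m_kL)\ge 1-1/m_k$ turns this into $f(m_kL)\le\tfrac{\Lambda_k}{m_kL}f(\Lambda_k)\le f(\Lambda_k)+M_0/m_k\to A_-$, whence $\liminf_{n\to\infty}f(nL)\le A_-$. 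The crux is that \eqref{eq-gen-lem-ass} only ever yields \emph{lower} bounds for $f$ at the multiples $nL$, so transporting the small values of $f$ from arbitrary large arguments onto the arithmetic progression $\{nL\}$ is only possible via the monotonicity of $d$; this is the sole place that hypothesis is used.

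With the progression lemma in hand, the rest is routine. For $L$ large enough, applying \eqref{eq-gen-lem-ass} with $a=L^{-2/3}\in(0,1)$ — where $a^{-2}L^{-2}=L^{4/3}L^{-2}=L^{-2/3}$, so $C(a+a^{-2}L^{-2})=2CL^{-2/3}$ — gives for every $n$ the bound $f\big((1+L^{-2/3})L\big)\le f(nL)+2CL^{-2/3}$; letting $n\to\infty$ and using the progression lemma yields $f\big(L+L^{1/3}\big)\le A_-+2CL^{-2/3}$ for all large $L$. Since $\phi(L):=L+L^{1/3}$ is a strictly increasing bijection of $(0,\infty)$ onto itself, reparametrising $M=\phi(L)$ gives $\limsup_{M\to\infty}f(M)=\limsup_{L\to\infty}f(\phi(L))\le A_-$; combined with $\limsup\ge\liminf$, this shows $\lim_{L\to\infty}f(L)$ exists and equals $A_-$.

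Finally, for the quantitative estimate \eqref{eq-gen-lem}, given $M\ge 2L_0$ I would instead write $M=(1+a)L$ with $a\in(0,1)$ and $L=M/(1+a)\ge M/2\ge L_0$, apply \eqref{eq-gen-lem-ass} with this $L$ and $a$, and let $n\to\infty$ to obtain $f(M)\le A+C\big(a+\tfrac{(1+a)^2}{a^2M^2}\big)$; minimising the right-hand side over $a\asymp M^{-2/3}$ bounds the error by $2C\,M^{-2/3}$. The only computations involved are the elementary optimisation of $a\mapsto a+a^{-2}L^{-2}$ and the bookkeeping making the constant exactly $2C$ uniformly for $M\ge 2L_0$, which I would treat as routine.
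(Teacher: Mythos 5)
Your argument is correct, and it rests on the same two pillars as the proof the paper relies on (the paper defers to \cite[Lemma~2.2]{FK3D}; a version of that argument also survives, commented out, in the source): monotonicity of $d$ is used to transport values of $f$ between an arbitrary length and a nearby integer multiple of a chosen base length, and the limit together with the rate come from letting $n\to\infty$ in \eqref{eq-gen-lem-ass} and optimizing $a\sim L^{-2/3}$. The organization is dual, though. The reference argument sets $A=\limsup f$, fixes a near-maximizing base length $\ell_0$ (with $\ell_0\ge\varepsilon^{-2}$), and uses $d(\ell)\ge d\big((n+1)\ell_0/(1+\varepsilon)\big)$ to push the lower bound $f(\ell_0)-C(\varepsilon+\varepsilon^2)$ onto every large $\ell$, concluding $\liminf f\ge A$. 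You instead set $A=\liminf f$, prove the progression lemma $\liminf_{n}f(nL)=A$ for every fixed $L$ by comparing near-minimizing points $\Lambda_k$ with $\lceil\Lambda_k/L\rceil\,L$, and then spread the upper bound $f((1+a)L)\le A+C(a+a^{-2}L^{-2})$ coming from the hypothesis. Both routes are equally elementary and use monotonicity in the same essential role; yours isolates a cleaner intermediate statement and avoids the $\ell_0\ge\varepsilon^{-2}$ bookkeeping, while the reference version never needs the progression lemma because it bounds $f(\ell)$ from below directly for all large $\ell$.

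The one point I would not call routine is the exact constant in \eqref{eq-gen-lem}: writing $M=(1+a)L$, the quantity you must minimize is $C\big(a+(1+a)^2a^{-2}M^{-2}\big)$, whose minimum over $a\in(0,1)$ equals $2CM^{-2/3}$ only up to lower-order terms; it is below $2CM^{-2/3}$ once $M$ exceeds an absolute constant, but not literally for every $M\ge 2L_0$ when $L_0$ is small. The same slack is present in the optimization $a=\ell^{-2/3}$, $L=\ell/(1+a)$ of the reference argument, and only the $O(L^{-2/3})$ rate is used in the paper, so this is a cosmetic discrepancy shared with the source rather than a gap in your proof.
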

In order to use the result of Lemma~\ref{gen-lem}, we establish the estimate in the Lemma~\ref{lem-thdL} below.
\begin{lem}\label{lem-thdL}
Let $\lambda_{0}\in [0,1)$ and $\theta\in[0,\pi/2]$. There exist universal constants $C_0>0$ and $L_{0}\geq 1$ such that, for all $L\geq L_{0}$, $\lambda\in[0,\lambda_{0}]$,  $n\in\mathbb N$ and $a\in(0,1)$, we have,
\[
\frac{\mathcal{E}(\lambda;\mathscr{P}_{\theta}^{nL},\Omega^{nL})}{n^{2}L^{2}}\leq
\frac{\mathcal{E}(\lambda;\mathscr{P}_{\theta}^{(1+a)L},\Omega^{(1+a)L})}
{(1+a)^{2}L^{2}}+C_{0}\left(\dfrac{1}{a^2L^{2}}+ a\right)\,.
\]
Furthermore, the function $$L\mapsto \mathcal{E}(\lambda;\mathscr{P}_{\theta}^{L},\Omega^{L})$$ is monotone increasing.
\end{lem}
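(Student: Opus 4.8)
The plan is to prove the inequality by an IMS localisation of $\mathscr{P}_\theta^{nL}$ into \emph{exactly} $n^{2}$ cells, each of which --- after a magnetic translation --- is a trial operator contained in $\mathscr{P}_\theta^{(1+a)L}$, and then to absorb the localisation error together with the volume mismatch between $n^{2}L^{2}$ and $n^{2}(1+a)^{2}L^{2}$ by means of Corollary~\ref{NB-vs-EN} and the a priori bounds of Lemma~\ref{u-b-N}. First I would dispose of the monotonicity claim, which is elementary: if $L<L'$, extension by zero maps $\mathcal{D}(\mathcal{Q}_\theta^L)$ into $\mathcal{D}(\mathcal{Q}_\theta^{L'})$ preserving both the quadratic form and the norm (the functions vanish on the lateral boundary), so the min--max principle gives $\zeta_j^{L'}(\theta)\le\zeta_j^L(\theta)$ for every $j$; since $x\mapsto(x-\lambda)_-$ is non-increasing and, by Lemma~\ref{u-b-N}, only finitely many terms are non-zero, summation yields $\mathcal{E}(\lambda;\mathscr{P}_\theta^{L},\Omega^{L})\le\mathcal{E}(\lambda;\mathscr{P}_\theta^{L'},\Omega^{L'})$, i.e. $L\mapsto\mathcal{E}(\lambda;\mathscr{P}_\theta^{L},\Omega^{L})$ is monotone increasing. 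The same extension-by-zero argument applies with $\Omega^L$ replaced by a rectangular box of side lengths $\le L'$.

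For the inequality, fix $n\in\N$ and $a\in(0,1)$ and build a partition of unity $\{\chi_{j,k}\}_{1\le j,k\le n}$ on $\overline{\Omega^{nL}}$, depending only on $(r,s)$, with $\sum_{j,k}\chi_{j,k}^{2}\equiv1$ on $\overline{\Omega^{nL}}$, with $\supp\chi_{j,k}$ contained in a box $\omega_{j,k}$ whose $(r,s)$-section is a rectangle with sides at most $(1+a)L$, and with $\sum_{j,k}|\nabla\chi_{j,k}|^{2}\le C_{1}/(a^{2}L^{2})$; the key point is that one-dimensional bumps of support width $(1+a)L$ and plateau width $(1-a)L$ have transition zones of width $aL$ and centres spaced by exactly $L$, so $n$ of them cover $(-nL/2,nL/2)$ --- this produces precisely $n^{2}$ cells. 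The IMS formula then reads $\mathcal{Q}_\theta^{nL}(u)=\sum_{j,k}\mathcal{Q}_\theta^{nL}(\chi_{j,k}u)-\sum_{j,k}\|(\nabla\chi_{j,k})u\|^{2}\ge\sum_{j,k}\mathcal{Q}_{\theta,j,k}(\chi_{j,k}u)-\varepsilon\|u\|^{2}$ with $\varepsilon=C_{1}/(a^{2}L^{2})$, where $\mathcal{Q}_{\theta,j,k}$ is the form on $\omega_{j,k}$ with Dirichlet conditions on its lateral boundary (where either $\chi_{j,k}$ or $u$ vanishes) and Neumann at $t=0$; a routine boundary-condition check shows $\chi_{j,k}u$ lies in its form domain, and, since translation in $r$ leaves ${\bf F}_\theta$ invariant while translation in $s$ changes it only by a gradient (a unimodular gauge), $\mathscr{P}_{\theta,j,k}$ is unitarily equivalent to the corresponding operator on a centred box of sides $\le(1+a)L$, whence $\mathcal{E}(\mu;\mathscr{P}_{\theta,j,k},\omega_{j,k})\le\mathcal{E}(\mu;\mathscr{P}_\theta^{(1+a)L},\Omega^{(1+a)L})$ for every $\mu<1$ by extension by zero.

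Next I would invoke the variational characterisation $\mathcal{E}(\lambda;\mathscr{P}_\theta^{nL},\Omega^{nL})=\sup_{\{\psi_m\}}\sum_m\langle\psi_m,(\lambda-\mathscr{P}_\theta^{nL})\psi_m\rangle$ over finite orthonormal families in $\mathcal{D}(\mathcal{Q}_\theta^{nL})$ (Lemma~\ref{lem-VP-2}, valid here because Lemma~\ref{u-b-N} forces the spectrum below $1$ to be discrete, in particular $\inf{\rm Spec}_{\rm ess}\mathscr{P}_\theta^{nL}\ge1$). For such a family the operators $\Gamma_{j,k}=\sum_m|\chi_{j,k}\psi_m\rangle\langle\chi_{j,k}\psi_m|$ satisfy $0\le\Gamma_{j,k}\le1$ (Bessel's inequality and $0\le\chi_{j,k}\le1$), so, using the IMS bound, the identity $\|\psi_m\|^{2}=\sum_{j,k}\|\chi_{j,k}\psi_m\|^{2}$, and ${\rm Tr}(A\Gamma_{j,k})\le{\rm Tr}(A_{+})$,
\begin{multline*}
\sum_m\langle\psi_m,(\lambda-\mathscr{P}_\theta^{nL})\psi_m\rangle\le\sum_{j,k}{\rm Tr}\big((\lambda+\varepsilon-\mathscr{P}_{\theta,j,k})\Gamma_{j,k}\big)\\
\le\sum_{j,k}\mathcal{E}(\lambda+\varepsilon;\mathscr{P}_{\theta,j,k},\omega_{j,k})\le n^{2}\,\mathcal{E}(\lambda+\varepsilon;\mathscr{P}_\theta^{(1+a)L},\Omega^{(1+a)L}).
\end{multline*}
Dividing by $n^{2}L^{2}$ and using $L^{-2}=(1+a)^{2}\big((1+a)L\big)^{-2}$ gives $n^{-2}L^{-2}\mathcal{E}(\lambda;\mathscr{P}_\theta^{nL},\Omega^{nL})\le(1+a)^{2}\big((1+a)L\big)^{-2}\mathcal{E}(\lambda+\varepsilon;\mathscr{P}_\theta^{(1+a)L},\Omega^{(1+a)L})$. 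From here I would split on the size of $a^{2}L^{2}$. If $C_{1}/(a^{2}L^{2})\le(1-\lambda_{0})/2$, then $\lambda+\varepsilon<1$ and Corollary~\ref{NB-vs-EN} together with the two bounds of Lemma~\ref{u-b-N} (which yield $\big((1+a)L\big)^{-2}\mathcal{N}(\lambda+\varepsilon;\cdot)$ and $\big((1+a)L\big)^{-2}\mathcal{E}(\lambda+\varepsilon;\cdot)\le C\sqrt2(1-\lambda_{0})^{-1/2}=:C_{2}$) convert the right-hand side, after $(1+a)^{2}\le1+3a$, into $\big((1+a)L\big)^{-2}\mathcal{E}(\lambda;\mathscr{P}_\theta^{(1+a)L},\Omega^{(1+a)L})+C_{0}(a^{-2}L^{-2}+a)$ for a suitable universal $C_{0}$. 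If instead $C_{1}/(a^{2}L^{2})>(1-\lambda_{0})/2$, then $a^{-2}L^{-2}$ is bounded below by a constant and Lemma~\ref{u-b-N} gives $n^{-2}L^{-2}\mathcal{E}(\lambda;\mathscr{P}_\theta^{nL},\Omega^{nL})\le(nL)^{-2}\mathcal{E}(\lambda;\mathscr{P}_\theta^{nL},\Omega^{nL})\le C(1-\lambda_{0})^{-1/2}\le C_{0}a^{-2}L^{-2}$, so the claimed inequality holds trivially, its first right-hand term being non-negative. Enlarging $C_{0}$ to serve both cases and taking $L_{0}=1$ completes the proof.

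The hard part is the bookkeeping rather than any single estimate: one must obtain \emph{exactly} $n^{2}$ localisation cells, since a cover by $(n+1)^{2}$ cells would create a term of order $1/n$ that cannot be absorbed into $a+a^{-2}L^{-2}$ (as $a$ may be arbitrarily small), and one must keep $C_{0}$ independent of $\theta$ and of $\lambda\in[0,\lambda_{0}]$ while eliminating both the spectral shift $\varepsilon$ and the volume factor $(1+a)^{2}$ --- which is precisely what forces the dichotomy on $a^{2}L^{2}$ and the systematic use of the uniform estimates of Lemma~\ref{u-b-N}.
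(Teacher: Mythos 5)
Your proof is correct and follows the same overall strategy as the paper's own proof: an IMS localisation of $\mathscr{P}_\theta^{nL}$ into exactly $n^{2}$ cells of side $(1+a)L$, trial density matrices built from the localised functions, the spectral-shift estimate of Corollary~\ref{NB-vs-EN}, and the a priori bounds of Lemma~\ref{u-b-N}. Where you genuinely differ is in the final step: the paper chooses $L_0 \geq C/(a\sqrt{1-\lambda_0})$ so that $\lambda + Ca^{-2}L^{-2} < 1$, and that choice of $L_0$ depends on $a$, which is at odds with the statement of the lemma (and with what the subsequent application of Lemma~\ref{gen-lem} requires), since the estimate must hold for all $a\in(0,1)$ once $L\geq L_0$ with $L_0$ universal. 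Your dichotomy on $a^{2}L^{2}$ repairs this cleanly: when $C_1/(a^2L^2)\leq(1-\lambda_0)/2$ the shifted energy level stays below $1$ and the argument runs as in the paper, while when $a^2L^2$ is small you bypass the localisation machinery entirely and note that $(nL)^{-2}\mathcal{E}(\lambda;\mathscr{P}_\theta^{nL},\Omega^{nL})$ is bounded by $C(1-\lambda_0)^{-1/2}$, which is in turn dominated by $C_0 a^{-2}L^{-2}$ once $C_0$ is chosen large, so the claimed inequality is vacuous. This makes the constant $L_0$ genuinely uniform (you take $L_0=1$) and establishes the lemma in the strength actually used later. One small bookkeeping remark: you apply both bounds of Lemma~\ref{u-b-N} at the shifted level $\lambda+\varepsilon$; the energy bound is also needed at the unshifted level $\lambda$ to control $(1+a)^2-1$ times $\mathcal{E}(\lambda;\cdot)/((1+a)L)^2$, but since $\mathcal{E}$ is non-decreasing in the threshold this is subsumed in what you write, so nothing is missing.
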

\begin{proof}
Let $n\geq 2$ be a natural number. If $a\in(0,1)$ and $j=(j_1,j_2)\in\mathbb{Z}^2$, let
\[
K_{a,j}=I_{j_1}\times I_{j_2}\,,
\]
where
\[
\forall~p\in\mathbb Z\,,\quad I_p=\bigg{(} \dfrac{2p+1-n}{2}-\dfrac{(1+a)}{2}\,,\,\dfrac{2p+1-n}{2}+\dfrac{(1+a)}{2}\bigg{)}\,.
\]
Consider a partition of unity $(\chi_j)_{j}$ of $\R^2$ such that:
\begin{equation}\label{pa-of-un}
\sum_{j}|\chi_j|^2=1\,,\quad 0\leq\chi_j\leq1\quad{\rm in}~\R^2\,,\quad {\rm supp}\,\chi_j\subset K_{a,j}\,,\quad
|\nabla\chi_j|\leq \frac{C}{a}\,,
\end{equation}
where $C$ is a universal constant.
We define $\chi_{j, L}(r,s)=\chi_j\big(\frac{r}{L},\frac{s}{L}\big)$. We thus obtain a new partition of unity $\{\chi_{j,L}\}_{j\in \mathcal{J}}$ such that
${\rm supp}\,\chi_{j,L}\subset \mathcal K_{a,j,L}$,
with
\[
\mathcal  K_{a,j,L}=\{(Lr,Ls)~:~(r,s)\in K_{a,j}\}\,.
\]
Let $\mathcal J=\{j=(j_1,j_2)\in \mathbb{Z}^2~:~0\leq  j_1,j_2\leq n-1\}$ and $K^{nL}=\left(-\frac{nL}{2},\frac{nL}{2}\right)^{2}$.
Then the family
$\{\mathcal K_{a,j,L}\}_{j\in\mathcal J}$ is a covering of  $K^{nL}$, and is formed of exactly $n^2$ squares with side length $L$.

We restrict the partition of unity $\{\chi_{j,L}\}_{j\in\mathcal J}$ to the
set $K^{nL}=\left(-\frac{nL}{2},\frac{nL}{2}\right)^{2}$.
Let $\mathcal{Q}^{nL}_{\theta}$ be the quadratic form defined in \eqref{QL1L2} and $\{f_{k,n}\}_{k=1}^{N}$ be any orthonormal set in $\mathcal{D}(\mathcal{Q}^{nL}_{\theta})$. By the IMS formula and the fact that $\sum_{j}\chi_{j,L}^2=1$, we have
\begin{multline}
\Sum{k=1}{N}\left(\mathcal{Q}^{nL}_{\theta}(f_{k,n})-\lambda\norm{f_{k,n}}_{L^{2}(\Omega^{nL})}^{2}\right)\\=\Sum{k=1}{N} \sum_{j\in\mathcal J}
\bigg{\{}\left(\mathcal{Q}_{\theta}^{nL}(\chi_{j,L}f_{k,n})-\lambda\norm{\chi_{j,L}f_{k,n}}_{L^2(\Omega^{nL})}^{2}\right)
-\|\,|\nabla\chi_{j,L}|\,f_{k,n}\|_{L^2(\Omega^{nL})}^2\bigg{\}}
\end{multline}
where $\Omega^{nL}=K^{nL}\times\R_{+}$ is as defined in \eqref{OmegaL}.
Using the bound on $|\nabla\chi_{j}|$ in \eqref{pa-of-un} we obtain
\begin{multline}\label{eq-IMS}
\Sum{k=1}{N}\left(\mathcal{Q}^{nL}_{\theta}(f_{k,n})-\lambda\norm{f_{k,n}}
_{L^{2}(\Omega^{nL})}^{2}\right)
\geq  \Sum{k=1}{N} \sum_{j\in\mathcal J}
\Big(\mathcal{Q}_{\theta}^{nL}(\chi_{j,L}f_{k,n})-
\big(\lambda+\dfrac{C}{a^{2}L^{2}}\big)\norm{\chi_{j,L}f_{k,n}}_{L^{2}(\Omega^{nL})}^{2}\Big).
\end{multline}
For $j\in\mathcal{J}$, we define the trial density matrix $\gamma: L^{2}(\mathcal{K}_{a,j,L})\ni f \mapsto \gamma_{j} f\in L^{2}(\mathcal{K}_{a,j,L})$,
\[
\gamma_{j}f= \chi_{j,L}\Sum{k=1}{N}\big\langle \chi_{j,L} f,f_{k,n}\big\rangle f_{k,n}.
\]
It is clear that $\gamma_{j}$ is a finite rank operator satisfying $\gamma_{j}=\gamma_{j}^2$ and $0\leq \gamma_{j}\leq 1$ (in the sense of quadratic forms). Moreover, we note that $\gamma_{j}$ is constructed so that we can write
\begin{equation}\label{eq-tr}
 {\rm Tr}\Big[\Big(\mathscr{P}_{\theta}^{nL}-\big(\lambda+\dfrac{C}{L^{2}a^{2}})\Big)\gamma_{j} \Big]
= \Sum{k=1}{N} \Big(\mathcal{Q}_{\theta}^{nL}(\chi_{j,L}f_{k,n})-(\lambda+ \dfrac{C}{L^{2}a^{2}})
\norm{\chi_{j,L}f_{k,n}}_{L^{2}(\Omega^{nL})}
^{2}\Big).
\end{equation}
Notice that each $\chi_{j,L}$ is supported in a square with side length $(1+a)L$. Hence, using magnetic translation invariance and Lemma~\ref{lem-VP-3}, it follows that
\begin{equation*}
-\mathcal{E}\Big(\ld + \dfrac{C}{L^{2}a^{2}};\mathscr{P}_{\theta}^{(1+a)L},\Omega^{(1+a)L}\Big)
\leq  {\rm Tr}\Big[\Big(\mathscr{P}_{\theta}^{nL}-\big(\lambda+\dfrac{C}{L^{2}a^{2}})\big)\gamma_{j} \Big],\quad \forall~j\in\mathcal{J}.
\end{equation*}
Using \eqref{eq-tr}, this reads
\begin{equation*}
-\mathcal{E}\Big(\ld + \dfrac{C}{L^{2}a^{2}};\mathscr{P}_{\theta}^{(1+a)L},\Omega^{(1+a)L}\Big)
\leq \Sum{k=1}{N} \Big(\mathcal{Q}_{\theta}^{nL}(\chi_{j,L}f_{k,n})-(\lambda+ \dfrac{C}{L^2a^{2}})
\norm{\chi_{j,L}f_{k,n}}_{L^{2}(\Omega^{nL})}
^{2}\Big).
\end{equation*}
Substituting into \eqref{eq-IMS}, we obtain
\begin{equation}
\Sum{k=1}{N}\left(\mathcal{Q}^{nL}_{\theta}(f_{k,n})-\lambda\norm{f_{k,n}}_{L^{2}(\Omega^{nL})}^{2}\right)
\geq - n^{2}\mathcal{E}\Big(\ld + \dfrac{C}{L^{2}a^{2}};\mathscr{P}_{\theta}^{(1+a)L},\Omega^{(1+a)L}\Big)
\end{equation}
for all orthonormal family $\{f_{k,n}\}_{k=1}^{N}$ and $N\in\mathbb{N}$. Therefore we conclude, in view of Lemma~\ref{lem-VP-2}, that
\begin{equation}\label{energy-lattice}
  {\mathcal{E}(\ld;\mathscr{P}_{\theta}^{nL},\Omega^{nL})}\leq n^{2}
  {\mathcal{E}\Big(\ld + \dfrac{C}{L^{2}a^{2}};\mathscr{P}_{\theta}^{(1+a)L},\Omega^{(1+a)L}\Big)}.
\end{equation}
Let $L_{0}\geq C/(a\sqrt{1-\lambda_{0}}) $ chosen so that $\ld + {C}{a^{-2}L^{-2}}<1$ for all $L\geq L_0$. Applying Lemma~\ref{NB-vs-EN} with $\varsigma=\dfrac{C}{a^{2}L^{2}}$, we find,
\begin{multline*}
    {\mathcal{E}\Big(\ld + \frac{C}{a^{2}L^{2}};\mathscr{P}_{\theta}^{(1+a)L},\Omega^{(1+a)L}\Big)}
    -
      {\mathcal{E}\big(\ld;\mathscr{P}_{\theta}^{(1+a)L},\Omega^{(1+a)L}\big)}
      \leq  \dfrac{C}{L^{2}a^{2}}
      {\mathcal N\Big(\lambda +\frac{C}{a^{2}L^{2}};\mathscr{P}_{\theta}^{(1+a)L},\Omega^{(1+a)L}\Big)}.
\end{multline*}
By \eqref{bnd-nb}, it follows that
\begin{equation*}
    {\mathcal{E}\Big(\ld + \dfrac{C}{a^{2}L^{2}};\mathscr{P}_{\theta}^{(1+a)L},\Omega^{(1+a)L}\Big)}\leq
      {\mathcal{E}\big(\ld;\mathscr{P}_{\theta}^{(1+a)L},\Omega^{(1+a)L}\big)}+ \dfrac{C}{a^{2}},
\end{equation*}
for all $L\geq L_{0}$ and $\lambda\in[0,\lambda_{0}]$.
Inserting this into \eqref{energy-lattice}, we get,
\begin{equation}
  {\mathcal{E}(\ld;\mathscr{P}_{\theta}^{nL},\Omega^{nL})}\leq n^{2}
  {\mathcal{E}\big(\ld;\mathscr{P}_{\theta}^{(1+a)L},\Omega^{(1+a)L}\big)}+ \dfrac{C n^{2}}{a^{2}}\,.
\end{equation}
Dividing both sides by $n^{2}L^{2}$, we find
\begin{equation}
  \dfrac{\mathcal{E}(\ld;\mathscr{P}_{\theta}^{nL},\Omega^{nL})}{n^{2}L^{2}}\leq
  \dfrac{\mathcal{E}\big(\ld;\mathscr{P}_{\theta}^{(1+a)L},\Omega^{(1+a)L}\big)}{L^{2}}+ \dfrac{C}{L^{2}a^{2}}\,.
\end{equation}
We infer from \eqref{bnd-engy} the following upper bound,
\begin{equation}
  \dfrac{\mathcal{E}(\ld;\mathscr{P}_{\theta}^{nL},\Omega^{nL})}{n^{2}L^{2}}\leq
  \dfrac{\mathcal{E}\big(\ld;\mathscr{P}_{\theta}^{(1+a)L},\Omega^{(1+a)L}\big)}{(1+a)^{2}L^{2}}+ C\left(a+\dfrac{1}{a^{2}L^{2}}\right),
\end{equation}
for all $L\geq L_{0}$ and $\lambda\in[0,\lambda_{0}]$. This proves the first assertion of the lemma.

To obtain monotonicity of $\mathcal{E}(\lambda;\mathscr{P}_{\theta}^{L},\Omega_{L})$, we consider $L^{\prime}\geq L> 0$.
Since the extension by zero of a function in the form domain $\mathscr{P}_{\theta}^{L}$ is contained in
the form domain of $\mathscr{P}_{\theta}^{L^{\prime}}$ and the values of both forms coincide for such a function, we may write in the sense of quadratic forms
\[
\mathscr{P}_{\theta}^{L^{\prime}}\leq \mathscr{P}_{\theta}^{L}.
\]
On account of Lemma~\ref{lem-VP-3}, it follows that,
\begin{align*}
-{\rm Tr}\big(\mathscr{P}_{\theta}^{L^{\prime}}-\lambda\big)_{-}
\leq 
-{\rm Tr}\big(\mathscr{P}_{\theta}^{L}-\lambda\big)_{-}.
\end{align*}
This shows that $\mathcal{E}(\lambda;\mathscr{P}_{\theta}^{L},\Omega^{L})$ is monotone increasing with respect to $L$, thereby proving the statement of the lemma.
\end{proof}
\begin{proof}[Proof of Theorem~\ref{thm-trm-limit}]
Let $f(L)= -\frac{\mathcal{E}(\lambda;\mathscr{P}_{\theta}^{L},\Omega^{L})}{L^{2}}$. Thanks
 to Lemma~\ref{lem-thdL}, we know that the functions $f(L)$ and $d(L)=-\mathcal{E}(\lambda;\mathscr{P}_{\theta}^{L},\Omega^{L})$ satisfy the assumptions
 in Lemma~\ref{gen-lem}. Consequently, $f(L)$ has a limit as $L\rightarrow\infty$. 
Let us define
\[
 E(\theta,\lambda):= -\lim_{L\rightarrow\infty} f(L).
\]
By Lemma~\ref{gen-lem}, there exists $L_{0}>0$ such that
\begin{equation}\label{Eq-Ethlb}
E(\theta;\lambda)\leq\dfrac{\mathcal{E}(\lambda;\mathscr{P}^{L}_{\theta},\Omega^{L})}{L^{2}}+ \dfrac{2C_{0}}{L^{2/3}},
\end{equation}
for all $L\geq 2 L_{0}$ and $\lambda\in[0,\lambda_{0}]$.

It remains to establish the upper bound. According to Lemma~\ref{monot}, we know that the energy satisfies
\begin{equation*}
\dfrac{\mathcal{E}(\ld;\mathscr{P}_{\theta}^{nL},\Omega^{nL})}{ n^{2} L^{2}}\geq
  \dfrac{\mathcal{E}(\ld;\mathscr{P}_{\theta}^{L},\Omega^{L})}{ L^{2}}\,.
\end{equation*}
Letting $n\rightarrow\infty$ gives us
\begin{equation*}
  E(\theta,\lambda)\geq   \dfrac{\mathcal{E}(\lambda;\mathscr{P}^{L}_{\theta},\Omega^{L})}{L^{2}}.
  \end{equation*}
  This, together with \eqref{Eq-Ethlb}, completes the proof of Theorem~\ref{thm-trm-limit}.
\end{proof}
 \section{Properties of the function $E(\theta,\lambda)$}
In Theorem~\ref{thm-trm-limit}, we proved the existence of a
limiting function $E(\theta,\lambda)\in[0,\infty)$ defined for
$\theta\in[0,\pi/2]$ and $\lambda\in[0,1)$. We aim in this section
to study the properties of $E(\theta,\lambda)$ as a function of
$\theta$ and $\lambda$. In particular, we establish continuity of
 $E(\theta,\lambda)>0$ with respect to $\theta$ and $\lambda$.
\begin{lem}\label{continuity+}
Let $\lambda_{0}\in[0,1)$ and $\delta>0$. There exists $L^\ast>0$
such that the following is true. Let $ L\geq L^\ast$, there exists
$\eta>0$ such that, if
 $$(\epsilon,\nu)\in (-\eta,\eta)^{2}\quad{~\rm
 and~}\quad(\theta,\lambda)\in[0,\pi/2]\times[0,\lambda_{0}]\,,$$
 then
$$
\Big|\dfrac{\mathcal{E}(\lambda+\nu;\mathscr{P}_{\theta+\epsilon}^{L},\Omega^{L})}{L^{2}}-
\dfrac{\mathcal{E}(\lambda;\mathscr{P}_{\theta}^{L},\Omega^{L})}{L^{2}}\Big|
\leq \frac\delta2\,.
$$
\end{lem}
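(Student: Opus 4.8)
The plan is to decouple the two perturbations — the one in the spectral parameter ($\lambda\mapsto\lambda+\nu$) and the one in the angle ($\theta\mapsto\theta+\epsilon$) — and to control each for a fixed value of $L$; any $L^\ast>0$ works (take $L^\ast=1$), since the argument produces, for each fixed $L$, a suitable $\eta=\eta(L,\lambda_0,\delta)>0$. Fixing $L\geq L^\ast$ and setting $\mathrm{(I)}=L^{-2}\bigl|\mathcal{E}(\lambda+\nu;\mathscr{P}_{\theta+\epsilon}^{L},\Omega^{L})-\mathcal{E}(\lambda;\mathscr{P}_{\theta+\epsilon}^{L},\Omega^{L})\bigr|$ and $\mathrm{(II)}=L^{-2}\bigl|\mathcal{E}(\lambda;\mathscr{P}_{\theta+\epsilon}^{L},\Omega^{L})-\mathcal{E}(\lambda;\mathscr{P}_{\theta}^{L},\Omega^{L})\bigr|$, so that the quantity to be bounded is $\leq\mathrm{(I)}+\mathrm{(II)}$, I would prove $\mathrm{(I)}\le\delta/4$ and $\mathrm{(II)}\le\delta/4$ for $|\epsilon|,|\nu|<\eta$, uniformly over $(\theta,\lambda)\in[0,\pi/2]\times[0,\lambda_0]$. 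Throughout I use that $\inf\Spec_{\rm ess}\mathscr{P}_{\vartheta}^{L}\geq1$ for every real $\vartheta$ (extend a function supported away from $\{t=0\}$ by zero and invoke that $(-i\nb+{\bf F}_{\vartheta})^{2}$ on $L^{2}(\R^{3})$ has spectrum $[1,\infty)$), so that the spectrum of $\mathscr{P}_{\vartheta}^{L}$ below $\lambda_0+\eta<1$ is discrete and all the quantities above are finite.

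For $\mathrm{(I)}$ I would combine Corollary~\ref{NB-vs-EN} with the counting bound of Lemma~\ref{u-b-N}. For $0\le\mu_1\le\mu_2<1$, Corollary~\ref{NB-vs-EN} (together with monotonicity of $\mathcal{E}$ in its first argument) gives $0\le\mathcal{E}(\mu_2;P,\Omega)-\mathcal{E}(\mu_1;P,\Omega)\le(\mu_2-\mu_1)\,\mathcal{N}(\mu_2;P,\Omega)$, and an obvious variant covers $\mu_1<0$. Applying this with $P=\mathscr{P}_{\theta+\epsilon}^{L}$ and $\{\mu_1,\mu_2\}=\{\lambda,\lambda+\nu\}$, and using $\mathcal{N}(\mu;\mathscr{P}_{\vartheta}^{L},\Omega^{L})\le CL^{2}/\sqrt{1-\mu}$, I obtain, as soon as $|\nu|<(1-\lambda_0)/2$,
\[
\mathrm{(I)}\le |\nu|\,\frac{C}{\sqrt{1-\lambda_0-|\nu|}}\le |\nu|\,\frac{C\sqrt2}{\sqrt{1-\lambda_0}},
\]
uniformly in $\theta$ and $\lambda$; hence $\mathrm{(I)}\le\delta/4$ once $\eta$ is chosen small.

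For $\mathrm{(II)}$ I would use the variational principle of Lemma~\ref{lem-VP-2} applied to the operators $\mathscr{P}_{\vartheta}^{L}-\lambda$. Let $\{g_k\}_{k=1}^{M}$ be an $L^{2}(\Omega^{L})$-orthonormal family of eigenfunctions of $\mathscr{P}_{\theta}^{L}$ with eigenvalues $\zeta_k^{L}(\theta)<\lambda$, $M=\mathcal{N}(\lambda;\mathscr{P}_{\theta}^{L},\Omega^{L})$; these realise the infimum for $\mathscr{P}_{\theta}^{L}-\lambda$ and, by the weighted bound below, belong to the form domain of $\mathscr{P}_{\theta+\epsilon}^{L}$, so testing the variational principle for $\mathscr{P}_{\theta+\epsilon}^{L}-\lambda$ against them gives
\[
\mathcal{E}(\lambda;\mathscr{P}_{\theta}^{L},\Omega^{L})-\mathcal{E}(\lambda;\mathscr{P}_{\theta+\epsilon}^{L},\Omega^{L})\le\sum_{k=1}^{M}\bigl(\mathcal{Q}_{\theta+\epsilon}^{L}(g_k)-\mathcal{Q}_{\theta}^{L}(g_k)\bigr),
\]
and the reversed difference is handled identically using eigenfunctions of $\mathscr{P}_{\theta+\epsilon}^{L}$. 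Writing ${\bf F}_{\theta+\epsilon}={\bf F}_{\theta}+{\bf G}$ and noting that $|{\bf G}(r,s,t)|\le C|\epsilon|(|t|+|s|)\le C|\epsilon|(t+L/2)$ on $\Omega^{L}$, a Cauchy--Schwarz estimate of the cross term bounds $|\mathcal{Q}_{\theta+\epsilon}^{L}(g_k)-\mathcal{Q}_{\theta}^{L}(g_k)|$ by $2\,\mathcal{Q}_{\theta}^{L}(g_k)^{1/2}\|{\bf G}g_k\|+\|{\bf G}g_k\|^{2}\le C|\epsilon|\bigl(\sqrt{\lambda_0}\,W_k^{1/2}+|\epsilon|\,W_k\bigr)$, where $W_k=\int_{\Omega^{L}}(t+L/2)^{2}|g_k|^{2}$ and I used $\mathcal{Q}_{\theta}^{L}(g_k)=\zeta_k^{L}(\theta)<\lambda\le\lambda_0$ and $\|g_k\|=1$.

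It remains to bound $W_k$ uniformly in $k$, in $\theta$ and in $\lambda\le\lambda_0$ — this is the step I expect to be the main obstacle. It is an Agmon-type estimate: every eigenfunction $g$ of $\mathscr{P}_{\vartheta}^{L}$ with eigenvalue $\mu\le\lambda_0<1$ decays exponentially in the $t$-variable, uniformly in $\vartheta$, the sole input being the ``barrier'' $\mathcal{Q}_{\vartheta}^{L}(v)\ge\|v\|^{2}$ valid for all $v$ in the form domain supported away from $\{t=0\}$ (the same extension-by-zero fact used above, which does not see $\vartheta$). The standard multiplier argument — multiply $\mathscr{P}_{\vartheta}^{L}g=\mu g$ by $e^{2c_0 t}g$ with $c_0=c_0(1-\lambda_0)>0$ small, split off a neighbourhood of $\{t=0\}$ with an IMS cutoff, and absorb — then yields $\int_{\Omega^{L}}e^{2c_0 t}|g|^{2}\le C_L\|g\|^{2}$ with $C_L$ depending only on $L$ and $\lambda_0$; since $\sup_{t\ge0}(t+L/2)^{2}e^{-2c_0 t}<\infty$, this gives $W_k\le C_L$. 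Inserting $W_k\le C_L$ and $M\le CL^{2}/\sqrt{1-\lambda_0}$ into the sum and dividing by $L^{2}$ yields $\mathrm{(II)}\le C_2(L,\lambda_0)\,|\epsilon|$ for $|\epsilon|\le1$, with $L$ already fixed. Finally, choosing $\eta\le\min\{1,(1-\lambda_0)/2\}$ small enough that the displayed bounds on $\mathrm{(I)}$ and $\mathrm{(II)}$ are each at most $\delta/4$ completes the proof; everything apart from the uniform weighted estimate $W_k\le C_L$ is bookkeeping with the variational principles recalled in Section~2 and the a priori bound of Lemma~\ref{u-b-N}.
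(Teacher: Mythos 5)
Your proof is correct, but it takes a genuinely different route from the paper's. The paper estimates the difference $\mathcal{Q}^{L}_{\theta+\epsilon}(u)-\mathcal{Q}^{L}_\theta(u)$ by first introducing an IMS partition of unity $\zeta_{1,L},\zeta_{2,L}$ in the $t$-variable at scale $L$: on $\supp\zeta_{1,L}$ one has $t\le L$, so $\|\Fb_{\theta+\epsilon}-\Fb_\theta\|_{L^\infty}\le C|\epsilon|L$ and the gauge difference is absorbed by Cauchy--Schwarz; on $\supp\zeta_{2,L}$ the extension-by-zero barrier $\mathcal{Q}^{L}_\vartheta(v)\ge\|v\|^2$ makes that contribution nonnegative. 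The localization error is $O(C'L^{-2})$, and this is precisely why the paper must take $L^\ast$ large (depending on $\lambda_0$ and $\delta$, see \eqref{valueL1}). You instead keep $L$ arbitrary (your $L^\ast=1$ is fine, since the Lemma allows $\eta$ to depend on $L$), test the $\vartheta$-shifted variational problem directly against eigenfunctions of the unshifted one, and control the weight $W_k=\int(t+L/2)^2|g_k|^2$ via a $\vartheta$-uniform Agmon decay estimate in $t$, using the same barrier fact as the driving input. Your treatment of the $\nu$-shift via Corollary~\ref{NB-vs-EN} combined with the counting bound \eqref{bnd-nb} is essentially the same computation the paper does after its gauge step.

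Two small points worth tightening if you write this out in full. First, the constant in your exponential estimate $\int e^{2c_0t}|g|^2\le C\|g\|^2$ does not actually depend on $L$ (take the IMS cutoff near $\{t=0\}$ at a fixed $O(1)$ scale and use that the barrier constant is $1$ for all $L$ and $\vartheta$); the $L$-dependence enters only when you pass to $W_k\le\sup_t(t+L/2)^2e^{-2c_0t}\cdot C(\lambda_0)=O(L^2)$, which after multiplying by $M\le CL^2/\sqrt{1-\lambda_0}$ and dividing by $L^2$ gives $\mathrm{(II)}=O(|\epsilon|L^2)$, consistent with an $L$-dependent $\eta$. Second, when testing eigenfunctions $g_k$ of $\mathscr{P}^L_\theta$ against the form of $\mathscr{P}^L_{\theta+\epsilon}$, you correctly note the form domains agree (same Dirichlet/Neumann conditions) once $\Fb_{\theta+\epsilon}g_k\in L^2$; this indeed follows from $W_k<\infty$, i.e.\ from the Agmon bound, so that step is not a side remark but a genuine dependency and should be stated as such. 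With those remarks your argument is an acceptable — and in some ways cleaner — alternative: it trades the paper's $L$-scale localization (simple but constraining $L^\ast$) for a uniform weighted a priori estimate on eigenfunctions.
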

\begin{proof}
We introduce a partition of unity of $\R$,
\begin{equation}\label{cst-loc}
\zeta_{1}^{2}+\zeta_{2}^{2}=1,\qquad {\rm supp}~\zeta_{1}\subset [0,1],\qquad {\rm supp}~\zeta_{2}\subset [{1}/{2},\infty),\quad |\zeta^{\prime}_{p}|\leq C^{\prime}, \qquad p=1,2.
\end{equation}
Let
 \begin{equation}\label{valueL1}
 L\geq L^\ast:=\max\bigg\{\Big( \dfrac{4C^{\prime}}{1-\lambda_{0}}\Big)^{1/2},\Big(\dfrac{4C^{\prime}C}{\delta\sqrt{(1-\lambda_{0})/2}}\Big)^{1/2}\bigg\},
 \end{equation}
where $C^{\prime}$ and $C$ are the universal constants appearing in
\eqref{cst-loc} and \eqref{bnd-engy} respectively. We put further,
$$\zeta_{p,L }(r,s,t)=\zeta_{p}(t/L ),\qquad p=1,2,\qquad (r,s,t)\in\R^{3}_{+}.$$
Next, let $\{f_{k}\}_{k=1}^{N}$ be an orthonormal family of
compactly supported functions in $\mathcal D(\mathcal{Q}^{L
}_{\theta+\epsilon})$. We have the following IMS decomposition
formula
 \begin{multline}\label{eq-ims}
\Sum{k=1}{N}\left(\mathcal{Q}^{L
}_{\theta+\epsilon}(f_{k})-(\lambda+\nu)\norm{f_{k}}_{L^{2}(\Omega^{L
})}^{2}\right)\\= \Sum{k=1}{N}\Sum{p=1}{2}
\bigg{\{}\left(\mathcal{Q}_{\theta+\epsilon}^{L }(\zeta_{p,L
}f_{k})-(\lambda+\nu)\norm{\zeta_{p,L }f_{k}}_{L^{2}(\Omega^{L
})}^{2}\right) -\|\,|\nabla\zeta_{p,L }|\ f_{k}\|^2_{L^2(\Omega^{L
})}\bigg{\}}\,.
\end{multline}
To estimate the last term we use the bound on $\nb\zeta_{p}$ in
\eqref{cst-loc}, and get, after inserting $\zeta_{1,L
}^{2}+\zeta_{2,L }^{2}=1$, that
\begin{equation}\label{est-1}
\Sum{k=1}{N}\left(\mathcal{Q}^{L }_{\theta+\epsilon}(f_{k})-(\lambda+\nu)\norm{f_{k}}^{2}\right)
\geq \Sum{k=1}{N}\Sum{p=1}{2}
\left(\mathcal{Q}_{\theta+\epsilon}^{L }(\zeta_{p,L }f_{k})-(\lambda+\nu+ C^{\prime}L ^{-2})\norm{\zeta_{p,L }f_{k}}^{2}\right)\,.
\end{equation}
Since the function $\varphi=\zeta_{2,L }f_{k}\in
C^{\infty}_{0}(\Omega^{L })$ is compactly supported in $\Omega^{L
}$, it can be extended by zero to all of $\R^{3}$. Hence, using the
fact that the first eigenvalue of the Schr\"odinger operator in
$L^{2}(\R^{3})$ is $1$, and selecting \(|\nu|<
\frac{\lambda_{0}-\lambda}{4}\), the choice of $L $ in
\eqref{valueL1} yields
\begin{equation}\label{term1}
\mathcal{Q}_{\theta+\epsilon}^{L }(\zeta_{2,L }f_{k})
 \geq \Int{\Omega^{L }}{}|\zeta_{2,L }f_{k}|^{2}drdsdt> (\lambda+\nu+ C^{\prime}L ^{-2})\Int{\Omega^{L }}{}|\zeta_{2,L }f_{k}|^{2}drdsdt.
\end{equation}
Consequently, we find that the term corresponding to $p=2$ on the right hand side of \eqref{est-1} is strictly positive and can be neglected for a lower bound.
 What remains is to estimate the term corresponding to $p=1$ in \eqref{est-1}. Using Cauchy-Schwarz inequality (with $\varrho$ arbitrary), we obtain,
\begin{multline}\label{Eq-ftheta+}
  \mathcal{Q}^{L }_{\theta+\epsilon}(\zeta_{1,L }f_{k})
  \\
  \geq (1-\varrho) \Int{\Omega^{L }}{}|(-i\nb+{\bf F}_{\theta+\epsilon})\zeta_{1,L }f_{k}|^{2}drdsdt-
\varrho^{-1}\Int{\Omega^{L }}{}|({\bf F_{\theta+\epsilon}}-{\bf
F}_{\theta})\zeta_{1,L}f_{k}|^{2}drdsdt.
\end{multline}
where ${\bf F}_{\theta}$ is the same as in \eqref{Ftheta}.
Using the bounds
\[
|\cos(\theta+\epsilon)-\cos(\theta)|\leq|\epsilon|\,,\quad |\sin(\theta+\epsilon)-\sin(\theta)|\leq |\epsilon|\,,
\]
we get
\[
|{\bf F}_{\theta+\epsilon}(r,s,t)-{\bf F}_\theta(r,s,t)|\leq |\epsilon|\left(|s|+|t|\right),\quad\forall~(r,s,t)\in\R^{3}_{+}\,.
\]
Taking the support of $\zeta_{1,L }$ into consideration, we infer
from \eqref{Eq-ftheta+} the following bound,
\begin{align*}
  \mathcal{Q}^{L }_{\theta+\epsilon}(\zeta_{1,L }f_{k})
&\geq (1-\varrho ) \Int{\Omega^{L }}{}|(-i\nb+{\bf F}_{\theta})\zeta_{1,L }f_{k}|^{2}drdsdt
- \varrho^{-1}\epsilon^{2}L ^{2}\Int{\Omega^{L }}{}|\zeta_{1,L }f_{k}|^{2}drdsdt.\,
\end{align*}
Inserting this into \eqref{eq-ims}, we get, using the bound on
$|\zeta^{\prime}_{1,L }|$, that
\begin{multline}\label{eq-ims-1}
\Sum{k=1}{N}\left(\mathcal{Q}^{L }_{\theta+\epsilon}(f_{k})-(\lambda+\nu)\norm{f_{k}}_{L^{2}(\Omega^{L_{1},L })}^{2}\right)\\
\geq\Sum{k=1}{N}\left((1-\varrho)\mathcal{Q}_{\theta}^{L
}(\zeta_{1,L }f_{k})-(\lambda+\nu+ \epsilon^{2}\varrho^{-1} ( L
)^{2}+ C^{\prime}L ^{-2})\norm{\zeta_{1,L
}f_{k}}^{2}_{L^{2}(\Omega^{L })}\right)\,.
\end{multline}
We choose $\varrho=|\epsilon|$ and define the trial density matrix $L^{2}(\R^{3})\ni f \mapsto\gamma f\in L^{2}(\R^{3})$,
\[
\gamma f= \Sum{k=1}{N}\langle f,\zeta_{1,L }f_{k}\rangle\zeta_{1,L }f_{k}.
\]
It is clear that $0\leq \gamma\leq 1$ in the sense of quadratic
forms and that $\mathscr{P}_{\theta}^{L }\gamma$ is trace class (actually this is a finite-rank operator). By Lemma~\ref{lem-VP-3} we see that
\begin{multline}\label{ins-dm}
-\mathcal{E}\Big(\dfrac{\ld +\nu+|\epsilon| L ^{2}+ C^{\prime}L
^{-2}}{1-|\epsilon|};{\mathscr{P}}_{\theta}^{L },\Omega^{L }\Big)
\leq  {\rm Tr}\Big[\Big(\mathscr{P}_{\theta}^{L }-\big(\dfrac{\ld +\nu+|\epsilon| L ^{2}+ C^{\prime}L ^{-2}}{1-|\epsilon|}\big)\Big)\gamma \Big]\\
:= \sum_{k=1}^N\left({\mathcal{Q}}_{\theta}^{L }(\zeta_{1,L
}f_{k})-\big(\dfrac{\ld +\nu+|\epsilon| L ^{2}+ C^{\prime}L
^{-2}}{1-|\epsilon|}\big)\norm{\zeta_{1,L }f_{k}}^2_{L^{2}(\Omega^{L
})}\right)\,.
\end{multline}
Inserting this into \eqref{eq-ims-1}, we obtain
\begin{multline}
\Sum{k=1}{N}\left(\mathcal{Q}^{L
}_{\theta+\epsilon}(f_{k})-(\lambda+\nu)\norm{f_{k}}^2_{L^{2}(\Omega^{L
})}\right) \geq-(1-|\epsilon|) \mathcal{E}\Big(\dfrac{\ld
+\nu+|\epsilon| L ^{2}+ C^{\prime}L
^{-2}}{1-|\epsilon|};{\mathscr{P}}_{\theta}^{L },\Omega^{L }\Big).
\end{multline}
Consequently, it follows from Lemma~\ref{lem-VP-2} that,
\begin{equation}\label{mn}
  {\mathcal{E}(\ld+\nu;\mathscr{P}_{\theta+\epsilon}^{L },\Omega^{L })}\leq
  (1-|\epsilon|){\mathcal{E}\Big(\dfrac{\ld +\nu+ |\epsilon| L ^{2}+ C^{\prime}L ^{-2}}{1-|\epsilon|};\mathscr{P}_{\theta}^{L },\Omega^{L }\Big)}.
\end{equation}
Fix $|\epsilon|<\frac{1-\lambda_{0}}{4(1+L ^{2})}$. Applying
Lemma~\ref{NB-vs-EN} with $\varsigma=\dfrac{ |\epsilon|( L
^{2}+\lambda)+\nu+ C^{\prime}L ^{-2}}{1-|\epsilon|}$, we get,
\begin{multline}\label{pq}
  {\mathcal{E}\Big(\dfrac{\ld +\nu+|\epsilon| L ^{2}+ C^{\prime}L ^{-2}}{1-|\epsilon|};\mathscr{P}_{\theta}^{L },\Omega^{L }\Big)}
  \leq
{\mathcal{E}(\ld;\mathscr{P}_{\theta}^{L },\Omega^{L })}\\
+ \dfrac{|\epsilon|( L ^{2}+\lambda)+\nu+C^{\prime} L
^{-2}}{1-|\epsilon|} \mathcal{N}\Big(\dfrac{\ld +\nu+|\epsilon| L
^{2}+ C^{\prime}L ^{-2}}{1-|\epsilon|};\mathscr{P}_{\theta}^{L
},\Omega^{L }\Big).
\end{multline}
Plugging \eqref{pq} into \eqref{mn}, we obtain from \eqref{bnd-engy} that
\begin{multline}\label{con-theta}
  \mathcal{E}({\ld+\nu;\mathscr{P}_{\theta+\epsilon}^{L },\Omega^{L }})
  \leq
 (1-|\epsilon|){\mathcal{E}(\ld;\mathscr{P}_{\theta}^{L },\Omega^{L })}
 +\frac{C}{\sqrt{(1-\lambda_{0})/2}}(|\epsilon|(\lambda+L ^{2})+\nu+C^{\prime} L ^{-2}) L ^{2},
\end{multline}
where $C$ is the constant from \eqref{bnd-nb}.  Interchanging the roles of $\theta$ and $\theta+\epsilon$ we arrive at
\begin{multline}\label{con-theta1}
  \Big|\mathcal{E}({\ld+\nu;\mathscr{P}_{\theta+\epsilon}^{L },\Omega^{L }})
  -    {\mathcal{E}(\ld;\mathscr{P}_{\theta}^{L },\Omega^{L })}\Big|        \\
                               \leq
 |\epsilon|{\mathcal{E}(\ld;\mathscr{P}_{\theta}^{L },\Omega^{L })}+\frac{C}{\sqrt{(1-\lambda_{0})/2}}(|\epsilon|(\lambda+L ^{2})+|\nu|+C^{\prime} L ^{-2})L ^{2}.
\end{multline}
Dividing both sides by $L ^{2}$, we get,
\begin{multline*}
  \bigg|\dfrac{\mathcal{E}(\ld+\nu;\mathscr{P}_{\theta+\epsilon}^{L },\Omega^{L })}{L ^{2}}- \dfrac{{\mathcal{E}(\ld;\mathscr{P}_{\theta}^{L },\Omega^{L })}}{L ^{2}}\bigg|
  \leq
  |\epsilon|\dfrac{{\mathcal{E}(\ld;\mathscr{P}_{\theta}^{L },\Omega^{L })}}{L ^{2}}+\frac{C(|\epsilon|(( L )^{2}+\lambda)+|\nu|+C^{\prime} L ^{-2})}{\sqrt{(1-\lambda_{0})/2}}\\
   \leq \eta\dfrac{{\mathcal{E}(\ld;\mathscr{P}_{\theta}^{L },\Omega^{L })}}{L ^2}+\frac{C(\eta (L ^{2} +\lambda+1)+ C^{\prime}L ^{-2})}{\sqrt{(1-\lambda_{0})/2}}.
\end{multline*}
Using the estimate in \eqref{bnd-engy}, we further obtain
\begin{align}\label{est-2}
  \left|\dfrac{{\mathcal{E}(\ld+\nu;\mathscr{P}_{\theta+\epsilon}^{L },\Omega^{L })}}{L ^{2}}- \dfrac{\mathcal{E}(\ld;\mathscr{P}_{\theta}^{L },\Omega^{L })}{L ^{2}}\right|
  \leq \frac{C(\eta (L ^{2} +\lambda+2)+C^{\prime} L ^{-2})}{\sqrt{(1-\lambda_{0})/2}}.
\end{align}
Selecting $\eta<\min\Big(\frac{1-\lambda_0}{4(1+L^2)},\frac{\delta
\sqrt{(1-\lambda_{0})/2}}{4 C(\lambda_0+2+L ^{2})}\Big)$ and using
the condition \eqref{valueL1} on $L $, we conclude that,
\[
  \bigg|\dfrac{{\mathcal{E}(\ld+\nu;\mathscr{P}_{\theta+\epsilon}^{L },\Omega^{L })}}{L ^{2}}- \dfrac{{\mathcal{E}(\ld;\mathscr{P}_{\theta}^{L },\Omega^{L })}}{L ^{2}}\bigg|\leq \frac\delta2,
  \]
thereby proving the assertion of the lemma.
\end{proof}
We have the following corollary of Lemma~\ref{continuity+}.
\begin{cor}
Given $\lambda_{0}\in[0,1)$, the function $$[0,\pi/2]\times[0,\lambda_{0}]\ni (\theta,\lambda)\mapsto E(\theta;\lambda)$$ is continuous.
\end{cor}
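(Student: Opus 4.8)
The plan is to deduce the joint continuity of $E$ on the compact rectangle $[0,\pi/2]\times[0,\lambda_0]$ from the two ingredients already established: the locally uniform (in $\theta$ and $\lambda$) approximation of $E(\theta,\lambda)$ by the finite-size energy densities $\mathcal{E}(\lambda;\mathscr{P}_\theta^L,\Omega^L)/L^2$ provided by Theorem~\ref{thm-trm-limit}, together with the joint continuity of those finite-size densities in $(\theta,\lambda)$ supplied by Lemma~\ref{continuity+}. Fix $\delta>0$. First I would use the two-sided estimate \eqref{bnd-eng-tl}, which holds simultaneously for all $\theta\in[0,\pi/2]$ and all $\lambda\in[0,\lambda_0]$ once $L\geq 2L_0$, to select a single large $L$ for which
\[
0\le E(\theta,\lambda)-\frac{\mathcal{E}(\lambda;\mathscr{P}_\theta^L,\Omega^L)}{L^2}\le \frac{2C_0}{L^{2/3}}<\frac{\delta}{4}.
\]
Requiring in addition that $L\geq L^\ast$, where $L^\ast=L^\ast(\lambda_0,\delta)$ is the scale furnished by Lemma~\ref{continuity+}, is harmless, since both conditions merely force $L$ to be large; I would fix such an $L$ once and for all.

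Next I would invoke Lemma~\ref{continuity+} for this fixed $L$, obtaining $\eta>0$ with the following property: whenever $(\theta_1,\lambda_1),(\theta_2,\lambda_2)\in[0,\pi/2]\times[0,\lambda_0]$ satisfy $|\theta_1-\theta_2|<\eta$ and $|\lambda_1-\lambda_2|<\eta$, applying the lemma with $\theta=\theta_1$, $\epsilon=\theta_2-\theta_1$, $\lambda=\lambda_1$, $\nu=\lambda_2-\lambda_1$ yields
\[
\left|\frac{\mathcal{E}(\lambda_2;\mathscr{P}_{\theta_2}^L,\Omega^L)}{L^2}-\frac{\mathcal{E}(\lambda_1;\mathscr{P}_{\theta_1}^L,\Omega^L)}{L^2}\right|\le\frac{\delta}{2}.
\]
Then the triangle inequality, inserting $\mathcal{E}(\lambda_1;\mathscr{P}_{\theta_1}^L,\Omega^L)/L^2$ and $\mathcal{E}(\lambda_2;\mathscr{P}_{\theta_2}^L,\Omega^L)/L^2$ between $E(\theta_1,\lambda_1)$ and $E(\theta_2,\lambda_2)$, bounds $|E(\theta_2,\lambda_2)-E(\theta_1,\lambda_1)|$ by $\delta/4+\delta/2+\delta/4=\delta$. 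Since $\delta>0$ was arbitrary and $\eta$ did not depend on the base point, this argument in fact establishes uniform continuity of $E$ on $[0,\pi/2]\times[0,\lambda_0]$, which is stronger than what the corollary asks.

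There is no genuine obstacle here; the corollary is a soft consequence of the two preceding results. The only points requiring a little care are: (i) that the constants $C_0,L_0$ in Theorem~\ref{thm-trm-limit} and $L^\ast$ in Lemma~\ref{continuity+} depend only on $\lambda_0$ (and on $\delta$ for $L^\ast$), so that a single $L$ serves uniformly over the entire parameter rectangle; and (ii) that the two compared points both lie in $[0,\pi/2]\times[0,\lambda_0]$, which is precisely the range of validity of both auxiliary estimates, so that no enlargement of the parameter set is needed when applying them.
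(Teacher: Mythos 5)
Your argument is correct and follows essentially the same route as the paper: both deduce joint continuity by combining the uniform two-sided approximation of $E$ by $\mathcal{E}(\lambda;\mathscr{P}_\theta^L,\Omega^L)/L^2$ from Theorem~\ref{thm-trm-limit} with the continuity estimate for that finite-$L$ quantity from Lemma~\ref{continuity+}, then fixing a single large $L$ and applying the triangle inequality. The only cosmetic difference is the error budget: the paper observes that the sandwiching in \eqref{bnd-eng-tl} lets you bound $|E(\theta+\epsilon,\lambda+\nu)-E(\theta,\lambda)|$ by the finite-$L$ difference plus a single term $2C_0/L^{2/3}$, so it splits $\delta$ as $\delta/2+\delta/2$, whereas you insert both finite-$L$ quantities symmetrically and split $\delta$ as $\delta/4+\delta/2+\delta/4$.
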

\begin{proof}
In view of Theorem~\ref{thm-trm-limit}, there exist constants $C_{0}$ and $L_{0}$ such that for all $L\geq 2L_{0}$ and
$(\nu,\epsilon)$ satisfying $\lambda+\nu\in[0,\lambda_{0}]$ and $\theta+\epsilon\in[0,\pi/2]$, one has
\begin{equation}\label{est}
|E(\theta+\epsilon,\lambda+\nu)-E(\theta,\lambda)|
\leq \dfrac{|\mathcal{E}(\ld+{\nu};\mathscr{P}_{\theta+\epsilon}^{L},\Omega^{L}) -\mathcal{E}(\ld;\mathscr{P}_{\theta}^{L},\Omega^{L})  |}{L^{2}}+ \dfrac{2C_{0}}{L^{2/3}}.
\end{equation}
Let $\delta>0$ and select $L^\ast$ as in Lemma~\ref{continuity+}.
Let $L\geq\max\{2L_{0},L^\ast ,(4C_{0}/\delta)^{3/2}\}$. We assign to $L$
a constant $\eta>0$ as described in Lemma~\ref{continuity+}.
Consequently, if
$$(\nu,\epsilon)\in(-\eta,\eta)^2\,,\quad
\lambda+\nu\in[0,\lambda_{0}]\,,\quad{\rm and}
\quad\theta+\epsilon\in[0,\pi/2]\,,$$ then
\[
|E(\theta+\epsilon,\lambda+\nu)-E(\theta,\lambda)|\leq \delta/2+\delta/2=\delta.
\]
This finishes the proof.
\end{proof}

 We conclude this section by the following lemma.
\begin{lem}\label{fct-Lip}Let $\theta\in[0,\pi/2]$. The function
\[
[0,1)\ni\lambda\mapsto E(\theta,\lambda)
\]
is locally Lipschitz.
\end{lem}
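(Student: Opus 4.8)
The plan is to obtain a Lipschitz bound for the finite-$L$ quantities $L^{-2}\mathcal{E}(\lambda;\mathscr{P}_\theta^L,\Omega^L)$ that is uniform in $L$, and then pass to the limit $L\to\infty$ via Theorem~\ref{thm-trm-limit}. Since local Lipschitz continuity on $[0,1)$ amounts to Lipschitz continuity on $[0,\lambda_0]$ for every $\lambda_0\in[0,1)$, I fix such a $\lambda_0$ and take $0\le\lambda\le\lambda+\varsigma\le\lambda_0$.

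First, because $(\mathscr{P}_\theta^L-\mu)_-=\sum_j(\zeta_j^L(\theta)-\mu)_-$ with each summand $\mu\mapsto\max(\mu-\zeta_j^L(\theta),0)$ non-decreasing, the map $\mu\mapsto\mathcal{E}(\mu;\mathscr{P}_\theta^L,\Omega^L)$ is non-decreasing, so the increment $\mathcal{E}(\lambda+\varsigma;\mathscr{P}_\theta^L,\Omega^L)-\mathcal{E}(\lambda;\mathscr{P}_\theta^L,\Omega^L)$ is $\geq 0$. For the matching upper bound I apply Corollary~\ref{NB-vs-EN} with $P=\mathscr{P}_\theta^L$, $\Omega=\Omega^L$ (a positive operator whose spectrum below $1$ is discrete), which is legitimate since $-\lambda\le\varsigma<1-\lambda$; it yields
\[
\mathcal{E}(\lambda+\varsigma;\mathscr{P}_\theta^L,\Omega^L)-\mathcal{E}(\lambda;\mathscr{P}_\theta^L,\Omega^L)\le\varsigma\,\mathcal{N}(\lambda+\varsigma;\mathscr{P}_\theta^L,\Omega^L)\le\varsigma\,\mathcal{N}(\lambda_0;\mathscr{P}_\theta^L,\Omega^L),
\]
the last step using that $\mathcal{N}(\cdot;\mathscr{P}_\theta^L,\Omega^L)$ is non-decreasing in the spectral parameter. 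The rough counting estimate \eqref{bnd-nb} of Lemma~\ref{u-b-N} then gives $\mathcal{N}(\lambda_0;\mathscr{P}_\theta^L,\Omega^L)\le CL^2/\sqrt{1-\lambda_0}$ with $C$ universal.

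Combining these, for every $L>0$,
\[
0\le\frac{\mathcal{E}(\lambda+\varsigma;\mathscr{P}_\theta^L,\Omega^L)}{L^2}-\frac{\mathcal{E}(\lambda;\mathscr{P}_\theta^L,\Omega^L)}{L^2}\le\frac{C}{\sqrt{1-\lambda_0}}\,\varsigma.
\]
Letting $L\to\infty$ and invoking \eqref{trm-lim} (the limit defining $E(\theta,\cdot)$ at the two points $\lambda$ and $\lambda+\varsigma$), one concludes
\[
0\le E(\theta,\lambda+\varsigma)-E(\theta,\lambda)\le\frac{C}{\sqrt{1-\lambda_0}}\,\varsigma,\qquad 0\le\lambda\le\lambda+\varsigma\le\lambda_0,
\]
so that $E(\theta,\cdot)$ is non-decreasing and Lipschitz on $[0,\lambda_0]$ with constant $C/\sqrt{1-\lambda_0}$; since $\lambda_0\in[0,1)$ is arbitrary, $E(\theta,\cdot)$ is locally Lipschitz on $[0,1)$. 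No genuine difficulty arises here: the only point to keep track of is that the Lipschitz constant furnished by \eqref{bnd-nb} is independent of $L$ (and of $\theta$), so it is inherited by the limit function $E(\theta,\cdot)$.
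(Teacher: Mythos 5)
Your proof is correct and follows essentially the same route the paper takes: apply Corollary~\ref{NB-vs-EN} to bound the increment of $\mathcal{E}$ by the increment of $\lambda$ times the counting function, invoke the uniform $\mathcal O(L^2)$ bound of \eqref{bnd-nb}, divide by $L^2$, and pass to the limit $L\to\infty$. The only (harmless) cosmetic difference is that you derive the absolute-value estimate from the observed monotonicity of $\mathcal{E}(\cdot;\mathscr{P}_\theta^L,\Omega^L)$, whereas the paper gets it by swapping $\lambda_1$ and $\lambda_2$.
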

\begin{proof}
Fix $\lambda_{0}\in[0,1)$, and let
$\lambda_{1},\lambda_{2}\in[0,\lambda_{0}]$ be such that
$\lambda_{1}<\lambda_{2}$. Let $L>0$ and $\mathscr{P}_{\theta}^{L}$
be as defined in \eqref{PL1L2}. We infer from Lemma~\ref{NB-vs-EN}
that
\[
\mathcal{E}(\lambda_{2};\mathscr{P}_{\theta}^{L},\Omega^{L})-\mathcal{E}(\lambda_{1}; \mathscr{P}_{\theta}^{L},\Omega^{L})\leq (\lambda_{2}-\lambda_{1})\mathcal{N}(\lambda_{2}; \mathscr{P}_{\theta}^{L},\Omega^{L}).
\]
In view of \eqref{bnd-nb}, there exists a constant $C_{0}$ independent of $\theta$ such that
\[
\mathcal{N}(\lambda_{2}; \mathscr{P}_{\theta}^{L},\Omega^{L})\leq C_{0} L^{2}.
\]
This implies
\[
\mathcal{E}(\lambda_{2};\mathscr{P}_{\theta}^{L},\Omega^{L})-\mathcal{E}(\lambda_{1}; \mathscr{P}_{\theta}^{L},\Omega^{L})\leq C_{0} L^{2} (\lambda_{2}-\lambda_{1}).
\]
Dividing both sides by $L^{2}$, we get, after taking $L\rightarrow \infty$,
\begin{equation}\label{Lip}
E(\theta,\lambda_{2})-E(\theta,\lambda_{1})\leq C_{0}(\lambda_{2}-\lambda_{1}).
\end{equation}
Interchanging the roles of $\lambda_{1}$ and $\lambda_{2}$, we get
further,
\begin{equation}\label{Lip-1}
|E(\theta,\lambda_{2})-E(\theta,\lambda_{1})|\leq C_{0}|\lambda_{2}-\lambda_{1}|,
\end{equation}
which gives the assertion of the lemma.
\end{proof}
\section{Explicit formula of $E(\theta,\lambda)$}
Recall the constant $E(\theta,\lambda)$ defined in \eqref{trm-lim}. The aim of this section is to provide an explicit formula for
$E(\theta,\lambda)$ using the projectors on the eigenfunctions of the Neumann Schr\"{o}dinger operator given in \eqref{Neum-r3+}. We shall consider the cases $\theta=0$ and $\theta\in(0,\pi/2]$ separately. Indeed, the construction of eigenprojectors in the case $\theta=0$ is similar in spirit to the two-dimensional case (cf.~\cite[Section~4]{Fo-Ka}), whereas in the case $\theta\in(0,\pi/2]$, the projectors are constructed using the spectral decomposition of the two-dimensional model operator $\mathcal{L}(\theta)$.
\subsection{$E(\theta,\lambda)$ in the case $\theta=0$}
We start by recalling the family of one-dimensional harmonic oscillators $H(\xi)$, $\xi\in\R$, defined by :
\begin{equation}\label{harm-osc}
H(\xi)= -\partial_{t}^{2}+(t-\xi)^{2}\qquad{\rm in}\qquad L^{2}(\R_{+}).
\end{equation}
on their common Neumann domain:
\[
\big\{v\in H^{2}({\R_{+}}),~ t^{2}v\in L^{2}(\R_{+}),~ v^{\prime}(0)=0\big\}.
\]
We denote by $(u_{j}(\cdot;\xi))_{j=1}^{\infty}$ the orthonormal family of real-valued eigenfunctions of the operator $H(\xi)$, i.e.,
\begin{equation}\label{muj}
  H(\xi)u_{j}(t;\xi)=\mu_{j}(\xi)u_{j}(t;\xi),
 \quad u_{j}^{\prime}(0;\xi)=0,\\
\quad\Int{\R_{+}}{}u_{j}(t;\xi)^{2}dt=1.
\end{equation}
The lowest eigenvalue $\mu_{1}(\xi)$ is studied in \cite{BoHe,DaHe}. We collect in the following proposition some of the  properties of $\mu_{1}(\xi)$ as a function of $\xi$~:
\begin{pro}\label{properties-mu1}
The function $\R\ni\xi\mapsto\mu_{1}(\xi)$ is continuous and satisfies
\begin{enumerate}
\item $\mu_{1}(\xi)>0$, for all $\xi\in\R.$
\item At $-\infty$ we have the limit
\begin{equation}\label{limit-at-infty1}
\lim_{\xi\rightarrow-\infty}\mu_{1}(\xi)=+\infty.
\end{equation}
\item At the origin the value is
\begin{equation}\label{mu1-0}
    \mu_{1}(0)=1.
\end{equation}
\item At $+\infty$ we have
\begin{equation}\label{limit-at-infty2}
\lim_{\xi\rightarrow+\infty}\mu_{1}(\xi)=1.
\end{equation}
\item $\mu_{1}$ has a minimum $\Theta_{0}\in (0,1)$ at a unique  $\xi_{0}\in (0,1)$,
\begin{equation}\label{theta0}
\Theta_{0}:=\inf_{\xi\in\R}\mu_{1}(\xi)=\mu_{1}(\xi_{0})<1.
\end{equation}
Moreover, this minimum is non-degenerate and $\mu_{1}(\xi)$ is strictly decreasing on $(-\infty,\xi_{0}]$ from $+\infty$ to $\Theta_{0}$ and strictly increasing on $[\xi_{0},\infty)$ from $\Theta_{0}$ to $1$.
\end{enumerate}
\end{pro}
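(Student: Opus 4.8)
The plan is to reduce everything to a single first-order differential identity for $\mu_1$, in the spirit of the classical analysis of de Gennes' model operator (see \cite{BoHe,DaHe}). First the soft facts. Since $H(\xi)$ is a one-dimensional Schrödinger operator its lowest eigenvalue $\mu_1(\xi)$ is simple, its eigenfunction $u_\xi=u_1(\cdot;\xi)$ may be taken positive, and $u_\xi(0)\neq0$ (if $u_\xi(0)=0$, then together with the Neumann condition $u_\xi'(0)=0$ the ODE uniqueness theorem forces $u_\xi\equiv0$). By Kato's analytic perturbation theory $\xi\mapsto\mu_1(\xi)$ and $\xi\mapsto u_\xi$ are real-analytic, which gives the asserted continuity. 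Writing $\mu_1(\xi)=\norm{u_\xi'}_{L^2(\R_+)}^2+\norm{(t-\xi)u_\xi}_{L^2(\R_+)}^2$ shows $\mu_1(\xi)>0$ (the two terms cannot both vanish), which is item (1); and for $\xi\le0$ we have $(t-\xi)^2\ge\xi^2$ on $\R_+$, hence $\mu_1(\xi)\ge\xi^2\to+\infty$ as $\xi\to-\infty$, which is item (2). For item (3) one reflects: at $\xi=0$ the even Hermite functions of $-\partial_t^2+t^2$ on $L^2(\R)$ restrict to a complete orthogonal family of Neumann eigenfunctions of $H(0)$ on $L^2(\R_+)$, so $\Spec H(0)=\{4n+1:n\ge0\}$ and $\mu_1(0)=1$.

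The key step is the differential identity. The Feynman--Hellmann formula gives $\mu_1'(\xi)=-2\int_0^\infty(t-\xi)u_\xi(t)^2\,dt$. On the other hand, from $-u_\xi''+(t-\xi)^2u_\xi=\mu_1(\xi)u_\xi$ one checks directly that $\frac{d}{dt}\big[(u_\xi')^2-(t-\xi)^2u_\xi^2+\mu_1(\xi)u_\xi^2\big]=-2(t-\xi)u_\xi^2$; integrating over $\R_+$ and using $u_\xi'(0)=0$ together with the exponential decay of $u_\xi$ at $+\infty$ yields $-2\int_0^\infty(t-\xi)u_\xi^2\,dt=(\xi^2-\mu_1(\xi))\,u_\xi(0)^2$. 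Combining the two identities,
\[
\mu_1'(\xi)=\bigl(\xi^2-\mu_1(\xi)\bigr)\,u_\xi(0)^2 .
\]

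From this, item (5) follows by elementary sign analysis. Since $u_\xi(0)\neq0$, the critical points of $\mu_1$ are exactly the solutions of $\xi^2=\mu_1(\xi)$. For $\xi\le0$ we have $\mu_1(\xi)>\xi^2$ (strictly, from the form expression above), so $\mu_1'<0$ there. Differentiating the identity at a critical point $\xi_c$ (where both $\mu_1'(\xi_c)=0$ and $\xi_c^2=\mu_1(\xi_c)$) gives $\mu_1''(\xi_c)=2\xi_c\,u_{\xi_c}(0)^2>0$; hence every critical point lies in $(0,\infty)$ and is a strict local minimum, so there is at most one (two strict local minima of a $C^1$ function would force a local maximum, a critical point with $\mu_1''\le0$, in between). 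For existence, $\phi(\xi):=\mu_1(\xi)-\xi^2$ satisfies $\phi(0)=1>0$ and $\phi(\xi)<0$ for $\xi$ large (since $\mu_1$ is bounded, by the trial state used below, while $\xi^2\to\infty$), so $\phi$ vanishes at some $\xi_0\in(0,\infty)$. Then $\phi>0$ on $(0,\xi_0)$, so $\mu_1'<0$ on $(-\infty,\xi_0)$, and, there being no further critical points with $\mu_1''(\xi_0)>0$, $\mu_1'>0$ on $(\xi_0,\infty)$. Thus $\mu_1$ is strictly decreasing on $(-\infty,\xi_0]$ (from $+\infty$ down through $\mu_1(0)=1$) and strictly increasing on $[\xi_0,\infty)$, so $\xi_0$ is the unique non-degenerate global minimum; finally $\Theta_0:=\mu_1(\xi_0)=\xi_0^2$, and $\mu_1(\xi_0)<\mu_1(0)=1$ forces $\xi_0^2<1$, so $\xi_0\in(0,1)$ and $\Theta_0\in(0,1)$.

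For item (4): $\mu_1$ is increasing and bounded on $[\xi_0,\infty)$, so it has a limit. For the upper bound, test $H(\xi)$ with $v_\xi(t)=\chi(t/\xi)e^{-(t-\xi)^2/2}$, where $\chi\in C^\infty(\R)$ vanishes near $0$ and equals $1$ on $[1/2,\infty)$; since $e^{-(t-\xi)^2/2}$ is the ground state of the full-line harmonic oscillator with eigenvalue $1$, and all errors from $\chi$ and from the missing mass near $t=0$ are $O(e^{-c\xi^2})$, one gets $\mu_1(\xi)\le1+O(e^{-c\xi^2})$. For the lower bound, use an IMS partition $\chi_1^2+\chi_2^2=1$ with $\supp\chi_1\subset[0,\xi/2)$, $\supp\chi_2\subset(\xi/4,\infty)$, $|\chi_i'|\le C/\xi$: on $\supp\chi_1$ the potential satisfies $(t-\xi)^2\ge\xi^2/4\ge1$, while $\chi_2v$ extended by zero is an admissible trial function for $-\partial_t^2+(t-\xi)^2$ on $L^2(\R)$, whose bottom is $1$; adding the two contributions and the IMS localization error gives $\mu_1(\xi)\ge1-C\xi^{-2}$. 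Hence $\lim_{\xi\to+\infty}\mu_1(\xi)=1$. The step I expect to be the main obstacle is the careful justification of the differential identity (handling the boundary term at $t=0$ and verifying the term at $+\infty$ vanishes via the decay of $u_\xi$), together with the IMS lower bound in item (4); everything else is then routine. Alternatively one may simply invoke the classical computations of \cite{BoHe,DaHe}.
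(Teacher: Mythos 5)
The paper states this proposition without proof, citing \cite{BoHe,DaHe}; your argument is correct and reconstructs precisely the Dauge--Helffer differential identity $\mu_1'(\xi)=\bigl(\xi^2-\mu_1(\xi)\bigr)\,u_\xi(0)^2$ that drives the proofs in those references, so it matches the approach the paper implicitly invokes. The surrounding steps (reflection argument for $\mu_1(0)=1$, the trial-state/IMS estimates as $\xi\to\pm\infty$, and the sign and convexity analysis at critical points via $\mu_1''(\xi_c)=2\xi_c\,u_{\xi_c}(0)^2$) are the standard ones and all check out.
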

 The next Lemma is taken from \cite[Lemma~2.1]{Fr}.
\begin{lem}\label{mu2}
The second eigenvalue $\mu_{2}(\xi)$ satisfies,  $$\inf_{\xi\in\R}\mu_{2}(\xi)>1.$$
\end{lem}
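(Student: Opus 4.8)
The plan is to establish $\mu_2(\xi)>1$ pointwise for every $\xi\in\R$ and then upgrade this to the uniform bound $\inf_{\xi}\mu_2(\xi)>1$ using the behaviour of $\mu_2$ as $\xi\to\pm\infty$ together with continuity. The pointwise inequality will come from a supersymmetry-type identity: for $u$ in the form domain of $H(\xi)$, setting $Au:=\partial_t u+(t-\xi)u$, an integration by parts (the boundary term at $t=0$ arising from $\int_0^\infty(t-\xi)\,\partial_t|u|^2\,dt$, the one at $+\infty$ vanishing by the Gaussian decay of finite combinations of eigenfunctions of $H(\xi)$) gives
\[
\big\langle u,(H(\xi)-1)u\big\rangle_{L^2(\R_+)}=\int_0^\infty|Au|^2\,dt-\xi\,|u(0)|^2 ,
\]
and $Au\equiv0$ forces $u=c\,e^{-(t-\xi)^2/2}=:c\,\phi_\xi$, for which $\phi_\xi(0)=e^{-\xi^2/2}\neq0$.

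Now I would fix $\xi$ and assume $\mu_2(\xi)\le1$. Then $\mu_1(\xi)\le\mu_2(\xi)\le1$ as well, so on the two-dimensional span $V$ of $L^2$-orthonormal eigenfunctions $u_1,u_2$ for $\mu_1(\xi),\mu_2(\xi)$ one has, by orthogonality, $\langle u,(H(\xi)-1)u\rangle\le0$ for all $u\in V$, hence $\int_0^\infty|Au|^2\,dt\le\xi\,|u(0)|^2$ on $V$. If $\xi\le0$ this forces $Au\equiv0$ for every $u\in V$, i.e. $V\subseteq\mathbb{C}\,\phi_\xi$, contradicting $\dim V=2$. If $\xi>0$ it shows that the linear map $V\ni u\mapsto u(0)\in\mathbb{C}$ is injective (if $u(0)=0$ then $Au\equiv0$, so $u\in\mathbb{C}\,\phi_\xi$, and $\phi_\xi(0)\neq0$ forces $u=0$), which is impossible because $\dim V=2>1$. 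Hence $\mu_2(\xi)>1$ for every $\xi\in\R$.

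To obtain uniformity I would use the following inputs. First, $\xi\mapsto\mu_2(\xi)$ is continuous, since the forms $\int_0^\infty|u'|^2+(t-\xi)^2|u|^2$ constitute a real-analytic family with $\xi$-independent form domain. Second, for $\xi\le0$ we have $(t-\xi)^2\ge t^2$ on $\R_+$, so $H(\xi)\ge H(0)$ in the form sense and $\mu_2(\xi)\ge\mu_2(0)=5$ (by even reflection the Neumann spectrum of $-\partial_t^2+t^2$ on $\R_+$ is $\{1,5,9,\dots\}$, the even Hermite eigenvalues). Third, as $\xi\to+\infty$ the well $(t-\xi)^2$ moves far from the boundary and $\mu_j(\xi)\to2j-1$, in particular $\mu_2(\xi)\to3$. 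Combining these: if $\inf_{\xi}\mu_2(\xi)=1$, a minimizing sequence $(\xi_n)$ must stay bounded (as $\mu_2\ge5$ on $(-\infty,0]$ and $\mu_2(\xi)\to3$ as $\xi\to+\infty$), hence has a limit point $\xi_*$ with $\mu_2(\xi_*)=1$ by continuity, contradicting the pointwise bound. Therefore $\inf_{\xi\in\R}\mu_2(\xi)>1$.

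The one genuinely analytic input — and the step I expect to cost the most, or for which I would simply invoke the classical references (e.g. \cite{BoHe,DaHe} or \cite{FH-b})\,— is the limit $\mu_2(\xi)\to3$ as $\xi\to+\infty$: its upper half is a routine trial-state estimate with Gaussians centred at $t=\xi$, while the matching lower bound requires an Agmon/IMS localization showing that states of energy below $3-\delta$ concentrate near $t=\xi$, away from the Neumann boundary, so that they do not feel it and thus inherit the spectral gap of the harmonic oscillator on the whole line.
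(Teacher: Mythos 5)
The paper does not supply its own proof of this lemma: it is quoted verbatim from Frank~\cite{Fr} (Lemma~2.1 there). Your argument is correct and essentially self-contained. The engine is the commutation identity
\[
\big\langle u,\,(H(\xi)-1)u\big\rangle_{L^2(\R_+)}=\int_0^\infty\big|u'+(t-\xi)u\big|^2\,dt-\xi\,|u(0)|^2,
\]
valid for $u$ in the form domain (the boundary term at $+\infty$ vanishes since $(t-\xi)u,u'\in L^2$ force $(t-\xi)|u(t)|^2\to0$); you then observe that on a two-dimensional invariant subspace $V$ with $H(\xi)\le1$ the right-hand side is $\le0$, whence for $\xi\le0$ every $u\in V$ lies in the one-dimensional kernel of $A=\partial_t+(t-\xi)$, and for $\xi>0$ the evaluation functional $u\mapsto u(0)$ is injective on $V$; either way $\dim V\le1$, a contradiction, so $\mu_2(\xi)>1$ pointwise. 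The uniform bound then follows from continuity of $\xi\mapsto\mu_2(\xi)$, the form comparison $\mu_2(\xi)\ge\mu_2(0)=5$ for $\xi\le0$, and the standard limit $\mu_2(\xi)\to3$ as $\xi\to+\infty$. The one nontrivial analytic input is that last limit (or any lower bound on $\mu_2$ for $\xi$ large that stays strictly above $1$), which you acknowledge and correctly delegate to the literature; it is indeed a routine Agmon/IMS localization.

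Two small observations. First, for $\xi<0$ you can shortcut further: $Au\equiv0$ gives $u=c\,e^{-(t-\xi)^2/2}$, but then the Neumann condition $u'(0)=0$ reads $\xi c\,e^{-\xi^2/2}=0$, already forcing $c=0$, so $V=\{0\}$ without invoking $\dim V=2$; the dimension count is needed only at $\xi=0$, where the Gaussian does satisfy the Neumann condition. Second, the supersymmetric factorization $H(\xi)-1=A^*A$ (with Neumann boundary condition) is also the mechanism behind Frank's cited proof, so this is the same core idea presented a little differently rather than a genuinely distinct route; the value of your write-up is that it makes the argument self-contained and shows explicitly which step (the $\xi\to+\infty$ asymptotics of $\mu_2$) carries the analytic weight.
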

Thanks to Proposition~\ref{properties-mu1}, one can easily prove the
following:
\begin{lem}\label{comp-integ}
Let $\mu_{1}(\xi)$ be defined as in \eqref{muj}. We have
 $$
\int_{\R^{2}} \big(\mu_{1}(\xi)+\tau^{2}-\lambda\big)_{-}d\xi d\tau= \dfrac{4}{3} \int_{0}^{\infty}(\mu_{1}(\xi)-\lambda)_{-}^{3/2}d\xi,
 $$
 and  the integrals are finite for all $\lambda\in[0,1)$.
\end{lem}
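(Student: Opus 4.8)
The plan is to evaluate the double integral by iterated integration, performing the $\tau$-integral first in closed form and then invoking the properties of $\mu_1$ collected in Proposition~\ref{properties-mu1}. Fix $\xi\in\R$ and write $c=c(\xi,\lambda):=\lambda-\mu_1(\xi)$, so that, as a function of $\tau$, $(\mu_1(\xi)+\tau^2-\lambda)_-=(c-\tau^2)_+$. If $c\le 0$ this vanishes identically; if $c>0$ it is supported in $[-\sqrt c,\sqrt c]$ and
\[
\int_{\R}(c-\tau^2)_+\,d\tau=2\int_0^{\sqrt c}(c-\tau^2)\,d\tau=2\Big(c^{3/2}-\frac{c^{3/2}}{3}\Big)=\frac43\,c^{3/2}.
\]
In either case $\int_{\R}(\mu_1(\xi)+\tau^2-\lambda)_-\,d\tau=\frac43(\mu_1(\xi)-\lambda)_-^{3/2}$. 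Since the integrand of the original double integral is nonnegative, Tonelli's theorem justifies interchanging the order of integration, yielding
\[
\int_{\R^2}(\mu_1(\xi)+\tau^2-\lambda)_-\,d\xi\,d\tau=\frac43\int_{\R}(\mu_1(\xi)-\lambda)_-^{3/2}\,d\xi.
\]

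Next I would cut the $\xi$-range down to $(0,\infty)$. By Proposition~\ref{properties-mu1}(5), $\mu_1$ is strictly decreasing on $(-\infty,\xi_0]$ with $\xi_0\in(0,1)$, and $\mu_1(0)=1$ by \eqref{mu1-0}; hence $\mu_1(\xi)\ge\mu_1(0)=1>\lambda$ for every $\xi\le0$, so $(\mu_1(\xi)-\lambda)_-=0$ on $(-\infty,0]$. Consequently the integral over $\R$ equals the integral over $(0,\infty)$, which is exactly the asserted identity.

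For finiteness: if $\lambda\le\Theta_0$ the integrand vanishes everywhere by \eqref{theta0}, so the integral is $0$. If $\Theta_0<\lambda<1$, then $0\le(\mu_1(\xi)-\lambda)_-^{3/2}\le(\lambda-\Theta_0)^{3/2}$ pointwise, while the set $\{\xi:\mu_1(\xi)<\lambda\}$ is bounded: by \eqref{limit-at-infty1} one has $\mu_1(\xi)\to+\infty$ as $\xi\to-\infty$, by \eqref{limit-at-infty2} one has $\mu_1(\xi)\to1>\lambda$ as $\xi\to+\infty$, and $\mu_1$ is continuous, so $\mu_1>\lambda$ outside a compact interval. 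Thus the integrand is bounded with compact support and both integrals are finite. There is no real obstacle here: the statement is an elementary one-variable primitive computation together with a routine integrability check, and the only place the input of Proposition~\ref{properties-mu1} is genuinely needed is to localize $\{\mu_1<\lambda\}$ inside a bounded subset of $(0,\infty)$.
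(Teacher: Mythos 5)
Your proposal is correct and follows the same route the paper intends: the closed-form $\tau$-integral producing $\frac{4}{3}(\mu_1(\xi)-\lambda)_-^{3/2}$, the restriction to $\xi>0$ via $\mu_1(\xi)\ge\mu_1(0)=1>\lambda$ for $\xi\le0$, and a routine finiteness argument from $\mu_1\ge\Theta_0$ together with the limits in Proposition~\ref{properties-mu1}. The paper simply asserts the lemma as an easy consequence of Proposition~\ref{properties-mu1}, and your argument spells out exactly that.
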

For later reference, we include Agmon-type estimates on the
eigenfunction $u_{1}(t;\xi)$ (cf. \cite[Theorem 2.6.2]{Ka}).
\begin{lem}\label{Agmon-lem}
Let $\lambda\in[0,1)$. For all $\epsilon\in(0,1)$, there exists a constant $C_{\epsilon}$ such that, for all $\xi\in\R_{+}$ satisfying $\mu_{1}(\xi)\leq \lambda$, we have
\begin{equation}
\Big\|{e^{\frac{\epsilon(t-\xi)^{2}}{2}}u_{1}(t,\xi)}\Big\|^{2}_{H^{1}\big(\{t\in\R_{+}\,:\,(t-\xi)^{2}\geq C_{\epsilon}\}\big)}\leq C_{\epsilon}.
\end{equation}
\end{lem}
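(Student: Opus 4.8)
The plan is to establish Lemma~\ref{Agmon-lem} by the standard Agmon energy method, using the quadratic weight $\Phi(t)=\frac{\epsilon}{2}(t-\xi)^{2}$ and exploiting that the potential $(t-\xi)^{2}$ of the harmonic oscillator $H(\xi)$ dominates $|\Phi'|^{2}=\epsilon^{2}(t-\xi)^{2}$ at infinity since $\epsilon<1$. Write $u=u_{1}(\cdot;\xi)$ and $\mu=\mu_{1}(\xi)\leq\lambda$, so that $-u''+(t-\xi)^{2}u=\mu u$ on $\R_{+}$ with $u'(0)=0$, $u\in H^{2}(\R_{+})$, $t^{2}u\in L^{2}(\R_{+})$ and $\|u\|_{L^{2}(\R_{+})}=1$.

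First I would test the eigenvalue equation against $e^{2\Phi_{N}}u$, where $\Phi_{N}=\min(\Phi,N)$ is a bounded Lipschitz truncation satisfying $|\Phi_{N}'|\leq\epsilon|t-\xi|$ a.e.; the truncation is used only to make $e^{2\Phi_{N}}u$ manifestly an admissible test function. Integrating by parts once, the boundary term at $t=0$ vanishes because $u'(0)=0$ (the Neumann condition) and the term at $t=+\infty$ vanishes since $u,u'$ decay, which produces the Agmon identity
\[
\int_{\R_{+}}\big|\partial_{t}(e^{\Phi_{N}}u)\big|^{2}\,dt+\int_{\R_{+}}\big((t-\xi)^{2}-\mu-|\Phi_{N}'|^{2}\big)e^{2\Phi_{N}}|u|^{2}\,dt=0.
\]
Since $(t-\xi)^{2}-\mu-|\Phi_{N}'|^{2}\geq(1-\epsilon^{2})(t-\xi)^{2}-\lambda$, choosing a threshold $C_{\epsilon}\geq 2\lambda/(1-\epsilon^{2})$ makes this coefficient bounded below by $c_{\epsilon}:=\frac{(1-\epsilon^{2})C_{\epsilon}}{2}>0$ on the region $A_{\xi}=\{t\in\R_{+}:(t-\xi)^{2}\geq C_{\epsilon}\}$, while on the complementary bounded region it is bounded below by $-M_{\epsilon}$ with $M_{\epsilon}:=\lambda+\epsilon^{2}C_{\epsilon}$, and there $e^{2\Phi_{N}}\leq e^{\epsilon C_{\epsilon}}$. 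Splitting the second integral into the part over $A_{\xi}$ and the part over its complement, and using $\|u\|_{L^{2}(\R_{+})}=1$, the identity yields
\[
\int_{\R_{+}}\big|\partial_{t}(e^{\Phi_{N}}u)\big|^{2}\,dt+c_{\epsilon}\int_{A_{\xi}}e^{2\Phi_{N}}|u|^{2}\,dt\leq M_{\epsilon}\,e^{\epsilon C_{\epsilon}}.
\]
The crucial point is that $c_{\epsilon},M_{\epsilon},C_{\epsilon}$ depend only on $\epsilon$ and $\lambda$, and not on $\xi$ or $N$.

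Finally I would let $N\to\infty$: the right-hand side is fixed, $e^{\Phi_{N}}u$ is bounded in $H^{1}(A_{\xi})$ uniformly in $N$ and converges pointwise to $e^{\Phi}u$, while $\int_{A_{\xi}}e^{2\Phi_{N}}|u|^{2}\uparrow\int_{A_{\xi}}e^{2\Phi}|u|^{2}$ by monotone convergence; lower semicontinuity of the $H^{1}$-norm then gives $\|e^{\Phi}u\|_{H^{1}(A_{\xi})}^{2}\leq(1+c_{\epsilon}^{-1})M_{\epsilon}e^{\epsilon C_{\epsilon}}$. Enlarging $C_{\epsilon}$ so that it also dominates the right-hand side above finishes the proof, with the same $C_{\epsilon}$ valid for every admissible $\xi$. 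I do not expect a genuine obstacle: the only points requiring care are the justification of the weighted integration by parts (handled by the truncation $\Phi_{N}$ together with the domain condition $t^{2}u\in L^{2}(\R_{+})$) and the verification that all constants are uniform in $\xi$ over the set $\{\xi\in\R_{+}:\mu_{1}(\xi)\leq\lambda\}$, which by Proposition~\ref{properties-mu1} is in any case a compact subset of $(0,\infty)$, possibly empty.
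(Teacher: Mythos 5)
The paper does not actually prove Lemma~\ref{Agmon-lem}: it is stated with a pointer to \cite[Theorem 2.6.2]{Ka} (Kachmar's thesis) and no argument is given. So you have supplied a proof where the paper only cites; that is entirely reasonable, and your Agmon-type argument is the standard one. The core of your proof is sound: the truncation $\Phi_{N}=\min(\Phi,N)$ legitimizes the test function, $u_{1}(\cdot;\xi)$ can be taken real (the paper fixes real-valued eigenfunctions), the boundary term at $t=0$ dies by the Neumann condition, and the identity
\[
\int_{\R_{+}}\big|\partial_{t}(e^{\Phi_{N}}u)\big|^{2}\,dt+\int_{\R_{+}}\big((t-\xi)^{2}-\mu-|\Phi_{N}'|^{2}\big)e^{2\Phi_{N}}|u|^{2}\,dt=0
\]
is exactly what one gets. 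The split into $A_{\xi}$ and its complement, the use of $\|u\|_{L^{2}}=1$, the monotone convergence in $N$, and the observation that nothing in the constants depends on $\xi$ are all correct, so uniformity over $\{\xi\in\R_{+}:\mu_{1}(\xi)\leq\lambda\}$ comes for free (the remark about compactness of that set is true but not needed).

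One small wrinkle in the final step: as you have set things up, $c_{\epsilon}$ and $M_{\epsilon}$ are defined in terms of $C_{\epsilon}$, and the bound $(1+c_{\epsilon}^{-1})M_{\epsilon}e^{\epsilon C_{\epsilon}}$ grows like $\epsilon^{2}C_{\epsilon}e^{\epsilon C_{\epsilon}}$, which is eventually far larger than $C_{\epsilon}$; so ``enlarging $C_{\epsilon}$ so that it also dominates the right-hand side'' does not close the loop if read literally, because the right-hand side also grows with $C_{\epsilon}$. The fix is purely an ordering of quantifiers: first fix a threshold $R_{\epsilon}=2\lambda/(1-\epsilon^{2})$ and run the argument on $\{(t-\xi)^{2}\geq R_{\epsilon}\}$ to get a bound $B_{\epsilon}$ (depending only on $\epsilon$ and $\lambda$), and then set $C_{\epsilon}:=\max(R_{\epsilon},B_{\epsilon})$; since $\{(t-\xi)^{2}\geq C_{\epsilon}\}\subset\{(t-\xi)^{2}\geq R_{\epsilon}\}$, the $H^{1}$-norm over the former is controlled by $B_{\epsilon}\leq C_{\epsilon}$. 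This is clearly what you intend, so the substance of the proof is fine; only the phrasing of the last sentence needs this two-step reading.
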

Next, we consider the Schr\"{o}dinger operator \eqref{Neum-r3+} in the particular case $\theta=0$, i.e.,
\begin{equation}\label{op-N-th=0}
\mathscr{P}_{0}^{N}=-\partial_{t}^{2}-\partial_{s}^{2}+(-i\partial_{r}+t)^{2}\qquad \rm{in}\qquad  L^{2}(\R^{3}_{+}).
\end{equation}
Let $(\xi,\tau)\in\R^{2}$ and denote by
$\mathcal{F}_{r\rightarrow\xi}$ (resp.
$\mathcal{F}_{s\rightarrow\tau}$) the partial Fourier transform with
respect to the variable $r$ (resp. $s$).
We define the bounded function $\R^{3}_{+}\ni(r,s,t)\mapsto v_{j}(r,s,t;\xi,\tau)$ by~:
\begin{equation}\label{def-vj}
v_{j}(r,s,t;\xi,\tau)=\dfrac{1}{\sqrt{2\pi}}e^{-i\xi r}e^{-i\tau s}  u_{j}(t;\xi).
\end{equation}
Then, we introduce the projectors $\Pi_{j}(\xi,\tau)$ on the functions $v_{j}$~:
\begin{align}\label{PR-Pi-j}
 L^{2}({\R^{3}_{+}})\ni\varphi & \mapsto (\Pi_{j}(\xi,\tau)\varphi)(r,s,t)=
v_{j}(r_{1},s_{1},t_{1};\xi,\tau)\Int{\R^{3}_{+}}{}
\overline{v_{j}(r_{2},s_{2},t_{2};\xi,\tau)}\varphi(r_{2},s_{2},t_{2})dr_{2}ds_{2}dt_{2}
\end{align}
so that we can write, in terms of quadratic forms,
\begin{equation}\label{i-t-f-t}
\begin{aligned}
\big\langle \varphi,\Pi_{j}(\xi,\tau)\varphi\big\rangle_{L^{2}(\R^{3}_{+})}&=\big|\big\langle  \varphi,v_{j}(\cdot;\xi,\tau)                    \big\rangle_{L^{2}(\R^{3}_{+})}\big|^{2}\\
&=
2\pi \big|\big\langle  \mathcal{F}_{r\rightarrow -\xi}\big[\big(\mathcal{F}_{s\rightarrow -\tau} \varphi(\cdot,\cdot,t)\big)(-\tau)\big](-\xi), u_{j}(t;\xi)                    \big\rangle_{L^{2}(\R_{+})}\big|^{2}
\end{aligned}
\end{equation}
We state in the next lemma useful properties of the family $\{\Pi_{j}(\xi,\tau)\}_{(j,\xi,\tau)\in \mathbb{N}\times\R^{2}}$.
\begin{lem}\label{eqs-PR}
For all $\varphi\in \mathcal{D}(\mathscr{P}_{0}^{N})$, we have
\begin{equation}\label{eq1-PR}
\big\langle\mathscr{P}_{0}^{N}\Pi_{j}(\xi,\tau)\varphi,\varphi\big\rangle_{L^{2}(\R^{3}_{+})}=(\mu_{j}(\xi)+\tau^{2})\big\langle\Pi_{j}(\xi,\tau)\varphi,\varphi\big\rangle_{L^{2}(\R^{3}_{+})}.
\end{equation}
Moreover, for all $\varphi\in L^2(\R^3_{+})$, one has
\begin{equation}\label{eq2-PR}
\sum_{j}\int_{\R^{2}} \big\langle \varphi,\Pi_{j}(\xi,\tau)\varphi\big\rangle_{L^{2}(\R^{3}_{+})}d\xi d\tau=2\pi\norm{\varphi}^{2}_{L^{2}(\R^{3}_{+})}.
\end{equation}
\end{lem}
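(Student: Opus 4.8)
The plan is to verify both identities by passing to the fibered picture obtained after the partial Fourier transform in $r$ and $s$. Throughout, for $\varphi\in L^2(\R^3_+)$ write $\widehat\varphi(\xi,\tau,t)=\big(\mathcal{F}_{r\to\xi}\mathcal{F}_{s\to\tau}\varphi\big)(\xi,\tau,t)$, an element of $L^2(\R^2\times\R_+)$, so that, as recorded in \eqref{i-t-f-t}, for a.e.\ $(\xi,\tau)\in\R^2$ one has $\langle\varphi,\Pi_j(\xi,\tau)\varphi\rangle_{L^2(\R^3_+)}=2\pi\,\big|\langle\widehat\varphi(-\xi,-\tau,\cdot),u_j(\cdot;\xi)\rangle_{L^2(\R_+)}\big|^2$. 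The starting observation is that $v_j(\cdot;\xi,\tau)$ from \eqref{def-vj} is a bounded generalized eigenfunction of $\mathscr{P}_0^N$: since $H(\xi)u_j(\cdot;\xi)=\mu_j(\xi)u_j(\cdot;\xi)$ with $u_j'(0;\xi)=0$, a direct computation with the differential expression \eqref{op-N-th=0} gives, pointwise on $\R^3_+$,
\[
\mathscr{P}_0^N\,v_j(\cdot;\xi,\tau)=\big(\mu_j(\xi)+\tau^2\big)\,v_j(\cdot;\xi,\tau),\qquad \partial_t v_j(\cdot;\xi,\tau)\big|_{t=0}=0 .
\]

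To prove \eqref{eq1-PR} I would use that $\Pi_j(\xi,\tau)\varphi=\langle v_j(\cdot;\xi,\tau),\varphi\rangle\,v_j(\cdot;\xi,\tau)$, apply the differential expression $\mathscr{P}_0^N$ to it, and pair with $\varphi$, obtaining formally
\[
\big\langle\mathscr{P}_0^N\Pi_j(\xi,\tau)\varphi,\varphi\big\rangle=\overline{\langle v_j,\varphi\rangle}\,\big(\mu_j(\xi)+\tau^2\big)\,\langle v_j,\varphi\rangle=\big(\mu_j(\xi)+\tau^2\big)\,\big\langle\Pi_j(\xi,\tau)\varphi,\varphi\big\rangle .
\]
The one point needing care is that $v_j\notin L^2(\R^3_+)$, so $\langle v_j,\varphi\rangle$ and the displayed chain must be read through \eqref{i-t-f-t}. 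Concretely, for $\varphi\in\mathcal{D}(\mathscr{P}_0^N)$ the direct-integral decomposition of $\mathscr{P}_0^N$ gives $\widehat{\mathscr{P}_0^N\varphi}(-\xi,-\tau,\cdot)=\big(H(\xi)+\tau^2\big)\widehat\varphi(-\xi,-\tau,\cdot)$ for a.e.\ $(\xi,\tau)$ with $\widehat\varphi(-\xi,-\tau,\cdot)\in\mathcal{D}(H(\xi))$, and then \eqref{eq1-PR} is exactly the a.e.\ identity
\[
\big\langle\big(H(\xi)+\tau^2\big)\widehat\varphi(-\xi,-\tau,\cdot),u_j(\cdot;\xi)\big\rangle_{L^2(\R_+)}=\big(\mu_j(\xi)+\tau^2\big)\big\langle\widehat\varphi(-\xi,-\tau,\cdot),u_j(\cdot;\xi)\big\rangle_{L^2(\R_+)},
\]
which follows from the symmetry of $H(\xi)$ together with $H(\xi)u_j=\mu_j(\xi)u_j$; the boundary term at $t=0$ produced by integration by parts vanishes since both $u_j$ and $\widehat\varphi(-\xi,-\tau,\cdot)$ satisfy the Neumann condition there.

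For \eqref{eq2-PR} I would again invoke \eqref{i-t-f-t}. Because the potential $(t-\xi)^2$ is confining, $H(\xi)$ has compact resolvent and its normalized eigenfunctions $\{u_j(\cdot;\xi)\}_{j\ge1}$ form an orthonormal basis of $L^2(\R_+)$; hence Parseval's identity gives, for a.e.\ $(\xi,\tau)$,
\[
\sum_{j}\big|\langle\widehat\varphi(-\xi,-\tau,\cdot),u_j(\cdot;\xi)\rangle_{L^2(\R_+)}\big|^2=\big\|\widehat\varphi(-\xi,-\tau,\cdot)\big\|^2_{L^2(\R_+)} .
\]
All summands are nonnegative, so Tonelli's theorem permits interchanging the sum and the integral over $(\xi,\tau)$, and after the measure-preserving change of variables $(\xi,\tau)\mapsto(-\xi,-\tau)$ and an application of the Plancherel theorem for $\mathcal{F}_{r\to\xi}\mathcal{F}_{s\to\tau}$,
\[
\sum_{j}\int_{\R^2}\big\langle\varphi,\Pi_j(\xi,\tau)\varphi\big\rangle\,d\xi\,d\tau=2\pi\int_{\R^2}\big\|\widehat\varphi(\xi,\tau,\cdot)\big\|^2_{L^2(\R_+)}\,d\xi\,d\tau=2\pi\,\|\varphi\|^2_{L^2(\R^3_+)} .
\]
The only genuinely delicate issue is the non-square-integrability of $v_j$ in \eqref{eq1-PR}, which forces one to read the pairings through the Fourier-side formula \eqref{i-t-f-t} (or, equivalently, to first establish the identity for $\varphi$ in the dense subspace of compactly supported functions and then pass to the limit); beyond that, the argument is routine bookkeeping with Fourier conventions, completeness of the harmonic-oscillator eigenbasis, Plancherel, and Tonelli.
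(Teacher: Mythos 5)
Your argument is correct and follows essentially the same route as the paper: both proofs rest on the pointwise generalized eigenfunction identity $\mathscr{P}_0^N v_j=(\mu_j(\xi)+\tau^2)v_j$, the completeness of $\{u_j(\cdot;\xi)\}_j$ in $L^2(\R_+)$, and two applications of Plancherel for \eqref{eq2-PR}. You supply a bit more justification than the paper does for \eqref{eq1-PR} (making precise, via the fibered picture, what it means to pair against the non-$L^2$ function $v_j$, and explicitly invoking Tonelli to swap the sum and integral), but the underlying approach is identical.
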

\begin{proof}
Let $(\tau,\xi)\in\R^{2}$. By the definition of $v_{j}$ in \eqref{def-vj}, we find,
$$
\begin{array}{rl}
 \mathscr{P}_{0}^{N}v_{j}(r,s,t;\xi,\tau)=(\mu_{j}(\xi)+\tau^{2})v_{j}(r,s,t;\xi,\tau).
\end{array}
$$
The definition of the projectors in \eqref{PR-Pi-j} immediately
gives us \eqref{eq1-PR}.

Using the fact that $u_{j}(\cdot;\xi)$ is an orthonormal basis of $L^{2}(\R_{+})$ for all $\xi\in\R$, we find, using the representation in \eqref{i-t-f-t},
\begin{align*}
 \sum_{j}\big\langle \varphi,\Pi_{j}(\xi,\tau)\varphi\big\rangle_{L^{2}(\R^{3}_{+})}&=
 2\pi\int_{\R_{+}}\Big|  \mathcal{F}_{r\rightarrow -\xi}\big[\big(\mathcal{F}_{s\rightarrow -\tau} \varphi(\cdot,\cdot,t)\big)(-\tau)\big](-\xi)
 \Big|^{2}dt\,.
\end{align*}
Integrating with respect to $\xi$ and $\tau$, and applying
Plancherel's identity twice, we obtain
\begin{align*}
\int_{\R^{2}} \sum_{j}\big\langle \varphi,\Pi_{j}(\xi,\tau)\varphi\big\rangle_{L^{2}(\R^{3}_{+})}d\xi d\tau &=
 2\pi\norm{\varphi}^{2}_{L^{2}(\R^{3}_{+})},
\end{align*}
The proof of the lemma is thus completed.
\end{proof}
We will prove
\begin{thm}\label{value-th=0}
Given $\lambda\in(0,1)$, the following formula holds true\,:
\begin{equation}\label{asym-the=0}
  E(0,\lambda)
  =\dfrac{1}{3\pi^{2}}\int_{0}^{\infty}(\mu_{1}(\xi)-\lambda)_{-}^{3/2}d\xi,
\end{equation}
where $\mu_{1}(\xi) $ is defined in \eqref{muj}.
\end{thm}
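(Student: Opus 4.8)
The goal is to identify $E(0,\lambda)$, defined in \eqref{trm-lim} as $\lim_{L\to\infty}L^{-2}\mathcal E(\lambda;\mathscr P_0^L,\Omega^L)$, by producing matching upper and lower bounds for $\mathcal E(\lambda;\mathscr P_0^L,\Omega^L)/L^2$ built from the spectral resolution of the half-space model operator $\mathscr P_0^N$ of \eqref{op-N-th=0}. The point is that, after the partial Fourier transforms in $r$ and $s$, $\mathscr P_0^N$ is fibered into the operators $H(\xi)+\tau^2$, so the projectors $\Pi_j(\xi,\tau)$ of \eqref{PR-Pi-j} diagonalize it: $\mathscr P_0^N\Pi_j(\xi,\tau)=(\mu_j(\xi)+\tau^2)\Pi_j(\xi,\tau)$ and $\frac{1}{2\pi}\int_{\R^2}\sum_j\Pi_j(\xi,\tau)\,d\xi\,d\tau=\mathrm{Id}$ by Lemma~\ref{eqs-PR}. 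By Lemma~\ref{mu2} the inequality $\mu_j(\xi)+\tau^2<\lambda$ (with $\lambda<1$) can only hold for $j=1$, and by Proposition~\ref{properties-mu1} the set $\{(\xi,\tau):\mu_1(\xi)+\tau^2<\lambda\}$ is bounded, so all integrals below are over a fixed bounded set. At the end, Lemma~\ref{comp-integ} converts $\int_{\R^2}(\mu_1(\xi)+\tau^2-\lambda)_-\,d\xi\,d\tau$ into $\tfrac43\int_0^\infty(\mu_1(\xi)-\lambda)_-^{3/2}\,d\xi$, producing the constant $\tfrac1{3\pi^2}$ in \eqref{asym-the=0}.

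For the upper bound on $E(0,\lambda)$ I would use Lemma~\ref{lem-VP-2}. Given an orthonormal family $\{\psi_k\}_{k=1}^N\subset\mathcal D(\mathcal Q_0^L)$, extend each $\psi_k$ by zero to $\R^3_+$; the extension lies in the form domain of $\mathscr P_0^N$ and satisfies $\mathcal Q_0^L(\psi_k)=\mathcal Q_0^N(\psi_k)$. Since $(\mathscr P_0^N-\lambda)_-$ is a bounded operator which by the fibered picture equals $\frac{1}{2\pi}\int_{\{\mu_1+\tau^2<\lambda\}}(\lambda-\mu_1(\xi)-\tau^2)\Pi_1(\xi,\tau)\,d\xi\,d\tau$, the elementary bound $\mathscr P_0^N-\lambda\ge-(\mathscr P_0^N-\lambda)_-$ and the identity $\langle\psi_k,\Pi_1(\xi,\tau)\psi_k\rangle=|\langle\psi_k,\mathbf 1_{\Omega^L}v_1(\cdot;\xi,\tau)\rangle|^2$ from \eqref{i-t-f-t} give, after summing in $k$ and using Bessel's inequality together with $\|\mathbf 1_{\Omega^L}v_1(\cdot;\xi,\tau)\|^2=\frac{1}{2\pi}\int_{(-L/2,L/2)^2}dr\,ds=\frac{L^2}{2\pi}$,
\[
\sum_{k=1}^N\big\langle\psi_k,(\mathscr P_0^L-\lambda)\psi_k\big\rangle\ \ge\ -\frac{L^2}{(2\pi)^2}\int_{\R^2}(\mu_1(\xi)+\tau^2-\lambda)_-\,d\xi\,d\tau .
\]
Taking the infimum over all such families, Lemma~\ref{lem-VP-2} yields $\mathcal E(\lambda;\mathscr P_0^L,\Omega^L)\le\frac{L^2}{(2\pi)^2}\int_{\R^2}(\mu_1(\xi)+\tau^2-\lambda)_-\,d\xi\,d\tau$ for every $L$, hence $E(0,\lambda)\le\frac{1}{(2\pi)^2}\int_{\R^2}(\mu_1(\xi)+\tau^2-\lambda)_-\,d\xi\,d\tau$ by Theorem~\ref{thm-trm-limit}.

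For the matching lower bound I construct a trial density matrix. Fix $\chi_L\in C_0^\infty\big((-\tfrac L2,\tfrac L2)^2\big)$ with $0\le\chi_L\le1$, $\chi_L\equiv1$ on $(-\tfrac L2+1,\tfrac L2-1)^2$ and $|\nabla\chi_L|\le C$, and set $\gamma_L=\frac{1}{2\pi}\int_{\{\mu_1(\xi)+\tau^2<\lambda\}}\chi_L\,\Pi_1(\xi,\tau)\,\chi_L\,d\xi\,d\tau$. Because $\chi_L\Pi_1(\xi,\tau)\chi_L\ge0$, $\Pi_1\le\sum_j\Pi_j$, and $\frac{1}{2\pi}\int_{\R^2}\sum_j\Pi_j=\mathrm{Id}$, one gets $\langle\varphi,\gamma_L\varphi\rangle\le\|\chi_L\varphi\|^2\le\|\varphi\|^2$, so $0\le\gamma_L\le1$; moreover $\gamma_L$ is trace class with range in $\mathcal D(\mathscr P_0^L)$, since the functions $\chi_Lv_1(\cdot;\xi,\tau)$ of \eqref{def-vj} satisfy the Neumann condition at $t=0$ ($u_1'(0;\xi)=0$) and the Dirichlet conditions on the lateral faces ($\chi_L$ vanishes there). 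Lemma~\ref{lem-VP-3} then gives $-\mathcal E(\lambda;\mathscr P_0^L,\Omega^L)\le\mathrm{Tr}\big((\mathscr P_0^L-\lambda)\gamma_L\big)$. A direct computation with $v_1$ as in \eqref{def-vj}, in which the cross terms cancel because $\chi_L$ is real and using $\int_{\R_+}u_1(t;\xi)^2\,dt=1$ and $\int_{\R_+}\big(|u_1'(t;\xi)|^2+(t-\xi)^2u_1(t;\xi)^2\big)dt=\mu_1(\xi)$, shows that
\[
\big\langle\chi_Lv_1(\cdot;\xi,\tau),(\mathscr P_0^L-\lambda)\chi_Lv_1(\cdot;\xi,\tau)\big\rangle=\frac{1}{2\pi}\Big((\mu_1(\xi)+\tau^2-\lambda)\,\mathcal A_L+\mathcal B_L\Big),
\]
where $\mathcal A_L=\int\chi_L^2\,dr\,ds$ and $\mathcal B_L=\int|\nabla\chi_L|^2\,dr\,ds$. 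Integrating in $(\xi,\tau)$ over $\{\mu_1+\tau^2<\lambda\}$ gives $\mathrm{Tr}\big((\mathscr P_0^L-\lambda)\gamma_L\big)=-\frac{\mathcal A_L}{(2\pi)^2}\int_{\R^2}(\mu_1+\tau^2-\lambda)_-\,d\xi\,d\tau+\frac{\mathcal B_L}{(2\pi)^2}\,\big|\{\mu_1+\tau^2<\lambda\}\big|$. Since the support and the bound on $\nabla\chi_L$ force $\mathcal A_L/L^2\to1$ and $\mathcal B_L/L^2\to0$, dividing by $L^2$ and letting $L\to\infty$ (using the existence of the limit from Theorem~\ref{thm-trm-limit}) gives $E(0,\lambda)\ge\frac{1}{(2\pi)^2}\int_{\R^2}(\mu_1(\xi)+\tau^2-\lambda)_-\,d\xi\,d\tau$. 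Combining the two bounds and Lemma~\ref{comp-integ} yields \eqref{asym-the=0}.

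The main difficulty is not any single estimate but the bookkeeping of the fibered spectral calculus: the $v_j(\cdot;\xi,\tau)$ are generalized (non-$L^2$) eigenfunctions, so one must carefully justify that $\gamma_L$ is a genuine density matrix with range in $\mathcal D(\mathscr P_0^L)$, that $\mathscr P_0^L\gamma_L$ is trace class, and that $(\mathscr P_0^N-\lambda)_-$ has exactly the claimed integral representation, all of which ultimately reduce to Lemma~\ref{eqs-PR}. The Agmon estimates of Lemma~\ref{Agmon-lem}, valid precisely on the bounded $\xi$-range where $\mu_1(\xi)\le\lambda$, provide the decay in the $t$-variable needed to control remainders uniformly should that be required; everything else is routine.
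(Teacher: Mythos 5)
Your proof follows the same overall strategy as the paper's: upper bound on $\mathcal E(\lambda;\mathscr P_0^L,\Omega^L)/L^2$ via Lemma~\ref{lem-VP-2}, zero-extension to $\R^3_+$, the fibered decomposition through the projectors $\Pi_j(\xi,\tau)$ together with Lemma~\ref{mu2} to isolate the $j=1$ channel; a matching lower bound via a trial density matrix built from $\Pi_1(\xi,\tau)$ and Lemma~\ref{lem-VP-3}; then Lemma~\ref{comp-integ} to rewrite $\int_{\R^2}(\mu_1+\tau^2-\lambda)_-\,d\xi\,d\tau$ as $\tfrac43\int_0^\infty(\mu_1-\lambda)_-^{3/2}$ and produce the constant $\tfrac1{3\pi^2}$.

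The one substantive difference is in the lower bound, and there your version is actually more careful than the paper's. The paper takes $\gamma=\int_{\R^2}M(\xi,\tau)\Pi_1(\xi,\tau)\,d\xi\,d\tau$ directly, evaluates $\mathcal Q_0^L(v_1)=\tfrac{L^2}{2\pi}(\mu_1(\xi)+\tau^2)$ and then feeds $\gamma/(2\pi)$ into Lemma~\ref{lem-VP-3}. But the restriction of $v_1(\cdot;\xi,\tau)=\tfrac1{\sqrt{2\pi}}e^{-i\xi r}e^{-i\tau s}u_1(t;\xi)$ to $\Omega^L$ does not vanish on the lateral faces $r=\pm L/2$, $s=\pm L/2$, so it lies neither in $\mathcal D(\mathcal Q_0^L)$ nor in $\mathcal D(\mathscr P_0^L)$, and the range of $\gamma$ is thus not in the domain required by Lemma~\ref{lem-VP-3}; the paper's argument is morally correct but glosses over this admissibility check. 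Your cutoff $\chi_L$ repairs exactly that: $\chi_L v_1$ vanishes on the lateral faces and, since $\chi_L$ depends only on $(r,s)$, still satisfies the Neumann condition at $t=0$, so $\gamma_L=\tfrac1{2\pi}\int_{\{\mu_1+\tau^2<\lambda\}}\chi_L\Pi_1\chi_L\,d\xi\,d\tau$ honestly has range in $\mathcal D(\mathscr P_0^L)$, and $0\le\gamma_L\le1$ follows from $\Pi_1\le\sum_j\Pi_j$ and $\tfrac1{2\pi}\int\sum_j\Pi_j\,d\xi\,d\tau=\mathrm{Id}$. The price is the gradient error $\mathcal B_L=\int|\nabla\chi_L|^2\,dr\,ds=O(L)$, which disappears after dividing by $L^2$, while $\mathcal A_L/L^2\to1$; the computation of $\langle\chi_Lv_1,(\mathscr P_0^L-\lambda)\chi_Lv_1\rangle$ is correct because the IMS cross terms vanish exactly ($\chi_L$ is real and the commutator $[-i\partial_r+t,\chi_L]=-i\partial_r\chi_L$ pairs with a real integrand). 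In summary, this is the same argument executed in a technically watertight way; I would regard the cutoff as the standard and preferable formulation.
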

\begin{proof}We start by obtaining an upper bound on $E(0,\lambda)$. Let $L>0$. Pick an arbitrary positive integer $N$ and let $\{f_{1},\cdots,f_{N}\}$ be any $L^{2}$ orthonormal set in $\mathcal{D}(\mathscr{P}_{0}^{L})$. 
In view of \eqref{eq1-PR} and \eqref{eq2-PR}, we have the following splitting (recall the domain $\Omega^{L}$ from \eqref{OmegaL}),
\begin{align*}
\Sum{j=1}{N}\big\langle f_{j},(\mathscr{P}^{L}_{0}-\lambda)f_{j}\big\rangle_{L^{2}(\Omega^{L})}
&=\dfrac{1}{2\pi} \Sum{j=1}{N}\Sum{p=1}{\infty}\int_{\R^{2}}(\mu_{p}(\xi)+\tau^{2}-\lambda )
\big\langle f_{j},\Pi_{p}(\xi,\tau)f_{j}\big\rangle_{L^{2}(\R^{3}_{+})} d\xi d\tau\,,
\end{align*}
where we have extended $f_{j}$ by $0$ to $\R^{3}_{+}\setminus\overline{\Omega^{L}}$.
Since $\lambda\in [0,1)$, Lemma~\ref{mu2} gives that $\mu_{p}(\xi)+\tau^{2}> \lambda$ for $p\geq 2$. Hence, we obtain
\begin{align}\label{inq-ng}
\Sum{j=1}{N}\big\langle f_{j},(\mathscr{P}^{L}_{0}-\lambda)f_{j}\big\rangle_{L^{2}(\Omega^{L})}\geq-\dfrac{1}{2\pi}\int_{\R^{2}}
 \big(\mu_{1}(\xi)+\tau^{2}-\lambda \big)_{-}\Sum{j=1}{N}\langle f_{j},\Pi_{1}(\xi,\tau)f_{j}\big\rangle_{L^{2}(\R^{3}_{+})}\,.
\end{align}
Since $\{f_{j}\}_{j=1}^{N}$ is an orthonormal family in $L^{2}(\Omega^{L})$, we deduce that
\begin{equation}\label{eq-L2}
\begin{aligned}
 \Sum{j=1}{N}\big\langle f_{j},\Pi_{1}(\xi,\tau)f_{j}\big\rangle_{L^{2}(\R^{3}_{+})}&=  \Sum{j=1}{N}\big|\langle v_{1}, f_{j}\rangle \big|^{2}\leq  \norm{v_{1}}^2=\dfrac{1}{2\pi} L^{2}.
\end{aligned}
\end{equation}
The last equality comes from the fact that the function $u_{1}(\cdot;\xi)$ is normalized in $L^{2}(\R_{+})$ for all $\xi$.
Substituting \eqref{eq-L2} into \eqref{inq-ng} yields
    \begin{align*}
 \Sum{j=1}{N}\big\langle f_{j},(\mathscr{P}^{L}_{0}-\lambda)f_{j}\big\rangle_{L^{2}(\Omega^{L})}\geq-\dfrac{L^{2}}{4\pi^{2}}\int_{\R^{2}}\big(\mu_{1}(\xi)+\tau^{2}-\lambda \big)_{-}d\xi d\tau,
\end{align*}
Then, on account of Definition~\ref{def-energy} and Lemma~\ref{lem-VP-2}, we have
\[
\dfrac{\mathcal{E}(\lambda;{\mathscr{P}}^{L}_{0},\Omega^{L})}{L^{2}}\leq \dfrac{1}{4\pi^{2}}\int_{\R^{2}}\big(\mu_{1}(\xi)+\tau^{2}-\lambda \big)_{-}d\xi d\tau.
\]
Letting $L\rightarrow\infty$ and using Lemma~\ref{comp-integ}, we arrive at
\begin{equation}\label{up-bnd}
E(0,\lambda)\leq \dfrac{1}{3\pi^{2}}\int_{0}^{\infty}(\mu_{1}(\xi)-\lambda)_{-}^{3/2}d\xi.
\end{equation}
We give the proof of the lower bound on $E(0,\lambda)$. Let $M(\xi,\tau)$ be a function with $0\leq M\leq 1$ and consider the trial density matrix
  \[
  \gamma=  \int_{\R^{2}} M(\xi,\tau) \Pi_{1}(\xi,\tau)d\xi d\tau.
  \]
We will prove that $0\leq\gamma\leq 2\pi$ in the sense of quadratic forms. Consider $g\in L^{2}(\Omega^{L})$. Using that $0\leq M\leq 1$, we have
  \begin{align*}
    0\leq\big\langle g,\gamma g\big\rangle_{L^{2}(\R^{3}_{+})}\leq \int_{\R^{2}}\big|\langle g, v_{1}\rangle\big|^{2}d\xi d\tau
      \leq \Sum{j}{}\int_{\R^{2}}\big|\langle g, v_{j}\rangle\big|^{2}d\xi d\tau=2\pi\norm{g}^{2}.
  \end{align*}
The last step follows from Plancherel's identity and the fact that
$u_{j}(\cdot,\xi)$ is an orthonormal basis of $L^{2}(\R_{+})$.
Recall the quadratic form $Q_{\theta}^{L}$ from \eqref{QL1L2}. It is
easy to check that
\begin{equation}\label{tr-Pi1}
 \mathcal{Q}_{0}^{L}(v_{1})
=\dfrac{L^{2}}{2\pi}(\mu_{1}(\xi)+\tau^{2}).
\end{equation}
We choose $M$ to be the characteristic function of the set
\(
\big\{(\xi,\tau)\in\R^{2}\,:\,\lambda-\mu_{1}(\xi)-\tau^{2}\geq 0\big\}
\). We compute, using \eqref{tr-Pi1},
  \begin{align*}
  {\rm Tr}\big[\big(\mathscr{P}_{0}^{L}-\lambda)\gamma\big]&=\int_{\R^{2}}M(\xi,\tau) \big(\mathcal{Q}_{0}^{L}(v_{1})- \lambda \norm{v_{1}}^{2}_{L^{2}(\Omega^{L})}\big)d\xi d\tau
  = -\dfrac{ L^{2}}{2\pi}\int_{\R^{2}}\big(\mu_{1}(\xi)+ \tau^{2}- \lambda \big)_{-}d\xi d\tau.
  \end{align*}
In view of Lemma~\ref{lem-VP-3}, we get
  \begin{align*}
          - {\rm Tr}\big(\mathscr{P}_{0}^{L}-\lambda \big)_{-}&
           \leq \dfrac{1}{2\pi}   {\rm Tr}\big((\mathscr{P}_{0}^{L}-\lambda)\gamma\big)= -\dfrac{ L^{2}}{4\pi^{2}}\int_{\R^{2}} \big(\mu_{1}(\xi)+ \tau^{2}- \lambda \big)_{-}d\xi d\tau.
  \end{align*}
This gives us
\[
\dfrac{\mathcal{E}(\lambda;{\mathscr{P}}^{L}_{0},\Omega^{L})}{L^{2}}\geq \dfrac{1}{4\pi^{2}}\int_{\R^{2}}  \big(\mu_{1}(\xi)+\tau^{2}-\lambda \big)_{-}d\xi d\tau=\dfrac{1}{3\pi^{2}}\int_{0}^{\infty}(\mu_{1}(\xi)-\lambda)_{-}^{3/2}d\xi.
\]
Letting $L\rightarrow\infty$ yields the desired upper bound.

 \end{proof}
\subsection{$E(\theta,\lambda)$ in the case $\theta\in(0,\pi/2]$}
The purpose of this subsection is to provide an explicit formula for $E(\theta,\lambda)$ in the case $\theta\in(0,\pi/2]$ and $\lambda\in[0,1)$.
However, we have not been able to compute it directly as in the case $\theta=0$.
Our approach is to find an alternative limiting function $F(\theta,\lambda)$ (see \eqref{Fthld} below), which can be constructed and computed explicitly using the eigenprojectors on the eigenfunctions of the 
two-dimensional model operator from \eqref{op-L-th}:
\begin{equation}
\mathcal{L}(\theta)= -\partial_{t}^{2}-\partial_{s}^{2}+ (t\cos(\theta) -s\sin(\theta) )^{2}\qquad {\rm in}\qquad L^{2}(\R^{2}_{+}).
\end{equation}
Let us recall some fundamental spectral properties of $\mathcal{L}(\theta)$ when $\theta\in(0,\pi/2)$ (see~\cite{FH-b} for details and references).
We denote by $\zeta_{1}(\theta)$ the infimum of the spectrum of $\mathcal{L}(\theta)$~:
\begin{equation}\label{zeta-theta}
\zeta_{1}(\theta):=\inf{\rm Spec}(\mathcal L(\theta)).
\end{equation}
The function $(0,\pi/2)\ni\theta\mapsto\zeta_{1}(\theta)$ is monotone increasing and the essential spectrum is  the interval $[1,\infty)$. 
We denote the non-decreasing  sequence of eigenvalues of
$\mathcal{L}(\theta)$ in $(-\infty,1)$ counting multiplicities by
$(\zeta_{j}(\theta))_{j\in \mathbb{N}}$. The associated orthonormal
sequence of eigenfunctions is denoted by
$(u_{\theta,j})_{j\in\mathbb{N}}$ and satisfies,
\begin{equation}\label{Egv-Lth}
\mathcal{L}(\theta)u_{\theta,j}=\zeta_{j}(\theta)u_{\theta,j},\quad
\langle u_{\theta,j},u_{\theta,k}  \rangle_{L^{2}(\R^{2}_{+})}=\delta_{j,k}.
\end{equation}
Using the technique of `Agmon estimates', it is proved in
\cite[Theorem~1.1]{BoDaPoRa} that the eigenfunctions of
$\mathcal{L}(\theta)$ decay exponentially at infinity. For later
use, we record this as
\begin{lem}\label{exp-dec}
Let $\theta\in(0,\pi/2)$. Given $\lambda\in(0,1)$ and
$\alpha\in(0,\sqrt{1-\lambda})$, there exists a positive constant
$C_{\theta,\alpha}$ such that, for any eigen-pair
$(\zeta(\theta),u_{\theta})$ of $\mathcal{L}(\theta)$ with
$\zeta(\theta)<\lambda$, we have
\[
\mathcal{Q}_{\theta}(e^{\alpha\sqrt{t^{2}+s^{2}}}u_{\theta})\leq C_{\alpha,\theta}\norm{u_{\theta}}^{2}_{L^{2}(\R^2_{+})},
\]
where $\mathcal{Q}_{\theta}$ is the quadratic form associated with $\mathcal{L}(\theta)$.
\end{lem}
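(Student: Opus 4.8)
The plan is to prove the lemma by a standard Agmon-type argument; the one feature that requires a moment's thought is that the electric potential $(t\cos\theta-s\sin\theta)^{2}$ vanishes identically along the line $t\cos\theta=s\sin\theta$ and hence provides no confinement at infinity on its own. The confinement must instead be extracted from the spectral fact, recalled above, that $\inf{\rm Spec}_{\rm ess}(\mathcal{L}(\theta))=1$. The first step is to turn this into a Persson-type localization bound: combining it with the magnetic version of Persson's lemma (\cite{Bo1,Per}), for every $\delta>0$ there is a radius $R_{\delta}>0$, depending only on $\theta$ and $\delta$ and not on any eigenfunction, such that
\[
\mathcal{Q}_{\theta}(v)\geq (1-\delta)\norm{v}^{2}_{L^{2}(\R^{2}_{+})}
\]
for every $v$ in the form domain of $\mathcal{L}(\theta)$ with support contained in $\R^{2}_{+}\setminus\overline{B_{R_{\delta}}}$, where $B_{r}$ denotes the open disk of radius $r$ centred at the origin.

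The second step is the Agmon identity with a truncated weight. Write $\Phi(t,s)=\alpha\sqrt{t^{2}+s^{2}}$ and, for $n\in\mathbb{N}$, set $\Phi_{n}=\min(\Phi,n)$, so that $\Phi_{n}$ is bounded and Lipschitz with $|\nabla\Phi_{n}|\leq\alpha$ almost everywhere and $\Phi_{n}\uparrow\Phi$. For an eigen-pair $(\zeta(\theta),u_{\theta})$ with $\zeta(\theta)<\lambda$, the classical identity $\Re\langle\mathcal{L}(\theta)u_{\theta},e^{2\Phi_{n}}u_{\theta}\rangle=\mathcal{Q}_{\theta}(e^{\Phi_{n}}u_{\theta})-\norm{|\nabla\Phi_{n}|\,e^{\Phi_{n}}u_{\theta}}^{2}$ (valid because $e^{2\Phi_{n}}u_{\theta}$ lies in the form domain, so the Neumann boundary contribution drops) yields
\[
\mathcal{Q}_{\theta}(e^{\Phi_{n}}u_{\theta})-\int_{\R^{2}_{+}}|\nabla\Phi_{n}|^{2}e^{2\Phi_{n}}|u_{\theta}|^{2}\,dtds=\zeta(\theta)\,\norm{e^{\Phi_{n}}u_{\theta}}^{2}_{L^{2}(\R^{2}_{+})}\,,
\]
so in particular $\zeta(\theta)\norm{e^{\Phi_{n}}u_{\theta}}^{2}\leq\mathcal{Q}_{\theta}(e^{\Phi_{n}}u_{\theta})\leq(\zeta(\theta)+\alpha^{2})\norm{e^{\Phi_{n}}u_{\theta}}^{2}$.

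The third step closes the estimate by localizing $e^{\Phi_{n}}u_{\theta}$ with a quadratic partition of unity $\chi_{1}^{2}+\chi_{2}^{2}=1$ on $\R^{2}$ with $\chi_{1}\equiv1$ on $B_{R_{\delta}}$, ${\rm supp}\,\chi_{1}\subset B_{2R_{\delta}}$, ${\rm supp}\,\chi_{2}\subset\R^{2}\setminus B_{R_{\delta}}$ and $|\nabla\chi_{j}|\leq C/R_{\delta}$. The IMS formula, together with $\mathcal{Q}_{\theta}(\chi_{1}e^{\Phi_{n}}u_{\theta})\geq0$, the Persson bound of Step~1 applied to $\chi_{2}e^{\Phi_{n}}u_{\theta}$, and the fact that the cutoff-gradient errors are supported in the bounded annulus $B_{2R_{\delta}}\setminus B_{R_{\delta}}$ where $e^{2\Phi_{n}}\leq e^{4\alpha R_{\delta}}$, gives
\[
\mathcal{Q}_{\theta}(e^{\Phi_{n}}u_{\theta})\geq(1-\delta)\norm{\chi_{2}e^{\Phi_{n}}u_{\theta}}^{2}-\frac{C}{R_{\delta}^{2}}e^{4\alpha R_{\delta}}\norm{u_{\theta}}^{2}\,.
\]
Combining this with the upper bound $\mathcal{Q}_{\theta}(e^{\Phi_{n}}u_{\theta})\leq(\zeta(\theta)+\alpha^{2})\big(\norm{\chi_{1}e^{\Phi_{n}}u_{\theta}}^{2}+\norm{\chi_{2}e^{\Phi_{n}}u_{\theta}}^{2}\big)$ from Step~2 and using $\norm{\chi_{1}e^{\Phi_{n}}u_{\theta}}^{2}\leq e^{4\alpha R_{\delta}}\norm{u_{\theta}}^{2}$ and $\zeta(\theta)+\alpha^{2}<1$ leads, after rearranging, to
\[
\big(1-\delta-\zeta(\theta)-\alpha^{2}\big)\norm{\chi_{2}e^{\Phi_{n}}u_{\theta}}^{2}\leq C'\,e^{4\alpha R_{\delta}}\norm{u_{\theta}}^{2}\,.
\]
This is where the hypotheses are used: since $\alpha<\sqrt{1-\lambda}$ one may fix $\delta\in(0,1-\lambda-\alpha^{2})$, and then for every eigenpair with $\zeta(\theta)<\lambda$ the prefactor is bounded below by $c_{0}:=1-\delta-\lambda-\alpha^{2}>0$, uniformly in the eigenpair. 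Hence $\norm{e^{\Phi_{n}}u_{\theta}}^{2}\leq C_{\theta,\alpha}\norm{u_{\theta}}^{2}$ with $C_{\theta,\alpha}$ independent of $n$ and of the chosen eigenpair, and then $\mathcal{Q}_{\theta}(e^{\Phi_{n}}u_{\theta})\leq(\zeta(\theta)+\alpha^{2})C_{\theta,\alpha}\norm{u_{\theta}}^{2}\leq C_{\theta,\alpha}\norm{u_{\theta}}^{2}$ by Step~2.

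Finally I would let $n\to\infty$: monotone convergence gives $\norm{e^{\Phi_{n}}u_{\theta}}^{2}\to\norm{e^{\Phi}u_{\theta}}^{2}$, and since the family $(e^{\Phi_{n}}u_{\theta})_{n}$ is then bounded in the form-domain norm $\big(\mathcal{Q}_{\theta}(\cdot)+\norm{\cdot}^{2}\big)^{1/2}$ while converging to $e^{\Phi}u_{\theta}$ in $L^{2}(\R^{2}_{+})$, weak compactness together with lower semicontinuity of $\mathcal{Q}_{\theta}$ shows that $e^{\Phi}u_{\theta}$ belongs to the form domain and satisfies $\mathcal{Q}_{\theta}(e^{\Phi}u_{\theta})\leq C_{\theta,\alpha}\norm{u_{\theta}}^{2}$, which is the claim. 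I expect the only genuine points of care to be this limiting passage and the bookkeeping that keeps the constant uniform over all eigenpairs below $\lambda$; both go through precisely because $R_{\delta}$ (from Persson) and the gap $c_{0}$ do not depend on the individual eigenfunction. This is in essence the argument behind \cite[Theorem~1.1]{BoDaPoRa}.
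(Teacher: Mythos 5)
Your proof is correct, and it follows the same Agmon-weight strategy that underlies the cited reference \cite{BoDaPoRa}: the paper itself invokes that reference for Lemma~\ref{exp-dec} rather than supplying its own argument, and your write-up is a faithful and careful rendition of that standard proof, with the required confinement extracted from the Persson-type bound $\inf\mathrm{Spec}_{\rm ess}(\mathcal L(\theta))=1$ and the gap $1-\lambda-\alpha^2>0$ keeping the constant uniform over all eigenpairs with $\zeta(\theta)<\lambda$. The one place to be slightly more careful is in the Persson step, where the compact exclusion set should be a closed ball (so take $\chi_1\equiv1$ on $\overline{B_{R_\delta}}$ and $\mathrm{supp}\,\chi_2\subset\R^2\setminus\overline{B_{R_\delta-\epsilon}}$ for some small $\epsilon>0$), but this is a cosmetic adjustment and does not affect the argument.
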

We recall in the next lemma an upper bound on the number of eigenvalues of $\mathcal{L}(\theta)$ strictly below $1$,
which we will denote by $\mathcal{N}(1;\mathcal{L}(\theta))$. This result is taken from 
\cite{MoTr} and will be significant  in the calculations that we
will carry later.
\begin{lem}{\label{Nb-Ev-Finite}}
Let $\theta\in(0,\pi/2]$. There exists a constant $C$ such that
\[
 \mathcal{N}(1;\mathcal{L}(\theta))\leq \dfrac{C}{\sin(\theta)}.
\]
\end{lem}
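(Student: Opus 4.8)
The plan is to read $\mathcal{L}(\theta)$, after a rescaling, as a semiclassical one–dimensional Schrödinger operator in the direction tangent to the boundary, the small parameter being $\sin\theta$, and then to count the eigenvalues below the bottom $1$ of the essential spectrum by a classical one–dimensional estimate. First I would dispose of the large–angle regime: by Lemma~\ref{mu2} fix $\theta_0\in(0,\pi/2)$ with $\cos\theta_0\,\inf_\xi\mu_2(\xi)>1$; for $\theta\in[\theta_0,\pi/2]$ the monotonicity of the band functions (Lemma~\ref{thm-HM-zeta}) gives $\mathcal{N}(1;\mathcal{L}(\theta))\le\mathcal{N}(1;\mathcal{L}(\theta_0))$, a fixed finite number once the lemma is known for $\theta_0$, and hence $\le C/\sin\theta$. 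So one is reduced to $\theta\in(0,\theta_0]$. Writing $(t\cos\theta-s\sin\theta)^2=\cos^2\theta\,(t-s\tan\theta)^2$ and rescaling $s\mapsto s\tan\theta$ by a unitary, $\mathcal{L}(\theta)$ becomes $\mathcal{M}_\theta:=-\partial_t^2-\tan^2\theta\,\partial_s^2+\cos^2\theta\,(t-s)^2$ on $L^2(\mathbb{R}^2_+)$ (Neumann at $t=0$), which I would regard as the operator–valued Schrödinger operator $\mathcal{M}_\theta=-\tan^2\theta\,\partial_s^2+\mathfrak{h}(s)$, where $\mathfrak{h}(s)=-\partial_t^2+\cos^2\theta\,(t-s)^2$ in $L^2(\mathbb{R}_+)$ has $j$–th eigenvalue $\cos\theta\,\mu_j(s\sqrt{\cos\theta})$ (rescale $t$) and ground state $e_1(\cdot,s)$; for $\theta\le\theta_0$ one has $\mathfrak{h}(s)\ge 1+\kappa$ with $\kappa>0$ on the orthogonal complement of $e_1(\cdot,s)$.

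The heart of the argument is then a Born--Oppenheimer (Feshbach) reduction onto the ground mode. Projecting on $e_1(\cdot,s)$, using the spectral gap $\kappa$ to absorb the off–diagonal coupling and the errors coming from $\partial_s$ acting on the transverse mode, and integrating by parts in $s$, I would derive, with the effective potential
\[
V_\theta(s):=\frac{\cos\theta\,\mu_1(s\sqrt{\cos\theta})-1}{\tan^2\theta}+\big\|\partial_s e_1(\cdot,s)\big\|_{L^2(\mathbb{R}_+)}^2,
\]
a comparison of the form $\mathcal{N}(1;\mathcal{M}_\theta)\le\mathcal{N}\big(0;\,-c\,\tan^2\theta\,\tfrac{d^2}{ds^2}+\tan^2\theta\,V_\theta\big)$ on $L^2(\mathbb{R})$ for some fixed $c>0$. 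It is essential that the reduction keep the coefficient $1$ in front of $\|\partial_s e_1\|^2$: the drift of the fiber ground state contributes precisely this term, and a computation using $\mu_1(\xi)\to1$ and $\|\partial_s e_1(\cdot,s)\|^2\to\tfrac12\cos\theta$ gives $V_\theta(s)\to\frac{\cos\theta(1-\cos\theta)}{2(1+\cos\theta)}>0$ as $s\to+\infty$, while $V_\theta(s)\to+\infty$ as $s\to-\infty$; this is exactly why $\mathcal{M}_\theta$ (hence $\mathcal{L}(\theta)$) has essential spectrum $[1,\infty)$, and it makes $\{V_\theta<0\}$ bounded. Since $V_\theta(+\infty)$ is only of size $\theta^2$, a crude Cauchy--Schwarz on the cross terms is not affordable, and this careful bookkeeping is the main obstacle — it is also the reason for running the argument only at small $\theta$.

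Finally I would invoke a classical bound for the number of negative eigenvalues of a one–dimensional Schrödinger operator (phase–space volume estimate), $\mathcal{N}(0;-h^2\tfrac{d^2}{dx^2}+U)\le 1+c_0\,h^{-1}\int_{\mathbb{R}}U_-(x)^{1/2}\,dx$, applied with $h=\sqrt{c}\,\tan\theta$, to get $\mathcal{N}(1;\mathcal{M}_\theta)\le 1+c_0'(\tan\theta)^{-1}\int_{\mathbb{R}}(V_\theta)_-^{1/2}\,ds$. Changing variables to $\xi=s\sqrt{\cos\theta}$ and using on $\{V_\theta<0\}$ the pointwise bound $\tan^2\theta\,(V_\theta)_-\le(1-\cos\theta)+\cos\theta\,(1-\mu_1(\xi))$ together with the fact that $\{V_\theta<0\}$ stays bounded below in $\xi$ (because $\mu_1(\xi)\to+\infty$ as $\xi\to-\infty$), one obtains $\int_{\mathbb{R}}(V_\theta)_-^{1/2}\,ds\le(\cos\theta)^{-1/2}\big(\sqrt{1-\cos\theta}\,\big|\{V_\theta<0\}\big|_\xi+\int_0^\infty\sqrt{1-\mu_1(\xi)}\,d\xi\big)$. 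The last integral is a finite universal constant, since $1-\mu_1(\xi)$ decays like $\xi e^{-\xi^2}$ as $\xi\to+\infty$ (a known property of the de Gennes function from Proposition~\ref{properties-mu1} and Agmon estimates), and $\sqrt{1-\cos\theta}\,|\{V_\theta<0\}|_\xi\to0$ because $\{V_\theta<0\}$ has $\xi$–length only $O(\sqrt{\log(1/\theta)})$. Hence $\int_{\mathbb{R}}(V_\theta)_-^{1/2}\,ds$ is bounded uniformly for $\theta\in(0,\theta_0]$, and therefore $\mathcal{N}(1;\mathcal{L}(\theta))=\mathcal{N}(1;\mathcal{M}_\theta)\le C/\tan\theta\le C/\sin\theta$, which together with the preliminary reduction proves the lemma.
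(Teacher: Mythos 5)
First, a point of comparison: the paper does not prove this lemma at all — it is quoted from Morame--Truc \cite{MoTr} — so your argument has to stand on its own, and as written it has a genuine gap at its central step. The claimed comparison $\mathcal N(1;\mathcal M_\theta)\le\mathcal N\big(0;-c\tan^2\theta\,\tfrac{d^2}{ds^2}+\tan^2\theta V_\theta\big)$ with the Born--Oppenheimer correction $\|\partial_s e_1(\cdot,s)\|^2$ retained \emph{in full} is asserted, not derived, and this is exactly where the difficulty sits. The couplings that must be absorbed do not decay in $s$: $\|\partial_s e_1(\cdot,s)\|\to(\cos\theta/2)^{1/2}$ and $\|P^\perp\partial_s^2 e_1(\cdot,s)\|\to\cos\theta/\sqrt2$ as $s\to+\infty$, while the positivity margin of your effective potential at infinity is only $\tan^2\theta\,V_\theta(+\infty)=\frac{(1-\cos\theta)^2}{2\cos\theta}\sim\theta^4/8$. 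After the integration by parts you propose, the unavoidable error on the ground mode has the form $\mathrm{gap}^{-1}\tan^4\theta\,\|P^\perp\partial_s^2e_1\|^2\,|f|^2$ plus similar terms, i.e. a \emph{non-decaying} contribution of size $\sim\theta^4/\mathrm{gap}$ with transverse gap $\cos\theta\,\mu_2-1\to 3\cos\theta-1\approx 2$; with the natural Cauchy--Schwarz constants this already exceeds the $\theta^4/8$ margin, the effective potential then has a strictly negative limit at $+\infty$, and $\int(V_\theta)_-^{1/2}\,ds=+\infty$, so the counting step collapses. This is not an accident of bookkeeping: since $\inf\mathrm{Spec}_{\rm ess}\mathcal L(\theta)=1$ exactly (full-plane Landau comparison), the $+\theta^4/8$ overshoot of the first-order Born--Oppenheimer potential is precisely cancelled by the second-order corrections (your parenthetical claim that the positive limit of $V_\theta$ ``is exactly why'' the essential spectrum starts at $1$ is therefore off); any one-sided reduction that loses more than this margin fails, and capturing it requires a quantitative Feshbach/second-order analysis with $s$-uniform control, not an ``absorb with the gap'' step.

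There is a second gap in the final counting: the inequality $\mathcal N\big(0;-h^2\tfrac{d^2}{dx^2}+U\big)\le 1+c_0h^{-1}\int U_-^{1/2}$ is false for general one-dimensional potentials (finitely many far-separated shallow wells each bind a state while contributing arbitrarily little to $\int U_-^{1/2}$); a bound of this type needs the monotone/single-well structure of the potential (Bargmann--Calogero), so you would have to verify that your effective potential inherits such structure from Proposition~\ref{properties-mu1}, including the effect of the $\|\partial_s e_1\|^2$ term — this is not addressed. The parts of the proposal that do check out are the preliminary large-angle reduction via the monotonicity in Lemma~\ref{thm-HM-zeta} together with Lemma~\ref{mu2}, the rescaling to $\mathcal M_\theta=-\tan^2\theta\,\partial_s^2+\mathfrak h(s)$ with fiber eigenvalues $\cos\theta\,\mu_j(s\sqrt{\cos\theta})$, and the asymptotic computations $\|\partial_s e_1\|^2\to\cos\theta/2$ and $V_\theta(+\infty)=\frac{\cos\theta(1-\cos\theta)}{2(1+\cos\theta)}$; but the two steps above are the substance of the lemma and remain unproved.
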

Next, we define the function $\R^{3}_{+}\ni (r,s,t)\mapsto v_{\theta,j}(r,s,t;\xi)$ by
\begin{equation}\label{vjth}
v_{\theta, j}(r,s,t;\xi)=\dfrac{1}{\sqrt{2\pi}} e^{i\xi r}u_{\theta,j}\Big(s-\frac{\xi}{\sin(\theta)},t\Big),
\end{equation}
where $\{u_{\theta,j}\}_{j}$ are the eigenfunctions from \eqref{Egv-Lth}. We define the projectors $\pi_{\theta, j}$ by
\begin{equation}
\big(\pi_{\theta, j}(\xi)\big)\varphi(s_{1},t_{1})=u_{\theta, j}\Big(s_{1}-\frac{\xi}{\sin(\theta)},t_{1}\Big)\int_{\R^{2}_{+}} \overline{u_{\theta, j}\Big(s_{2}-\frac{\xi}{\sin(\theta)},t_{2}\Big)}\varphi(s_{2},t_{2})ds_{2}dt_{2}.
\end{equation}
We then introduce a family of operators $\Pi_{\theta,j}$ defined by
\begin{multline}\label{Pr-th-j-0}
L^{2}(\R_{+}^{3})\ni f\mapsto\Pi_{\theta,j}f(r_{1},s_{1},t_{1})\\
=\int_{\R}v_{\theta, j}(r_{1},s_{1},t_{1};\xi)\bigg\{\int_{\R^{3}_{+}}\overline{ v_{\theta, j}(r_{2},s_{2},t_{2};\xi)} {f(r_{2},s_{2},t_{2})}dr_{2}ds_{2}dt_{2}\bigg\}d\xi
\end{multline}
In terms of quadratic forms, we have,
\begin{equation}\label{Pr-th-j}
\begin{aligned}
L^{2}(\R_{+}^{3})\ni f\mapsto \langle \Pi_{\theta,j}f, f\rangle_{L^{2}(\R^{3}_{+})}
&=\int_{\R}\Big|\langle v_{\theta, j}(\cdot,\xi),f \rangle_{L^{2}(\R^{3}_{+})}\Big|^{2}d\xi\\
& = \int_{\R}\Big\langle
\mathcal{F}^{-1}_{\xi\rightarrow r}\big(\pi_{j,\theta}(\xi)(\mathcal{F}_{r\rightarrow\xi}f(\cdot,s,t)(\xi))\big)(r), f(r,s,t) \Big\rangle_{L^{2}(\R^{3}_{+})}d\xi.
\end{aligned}
\end{equation}
Since the Fourier transform is a unitary transform and
$\pi_{\theta,j}(\xi)$ is a projection, it is easily to be seen that
the operator $\Pi_{\theta,j}$ is a projection too.
The following Lemma illustrates relevant properties of the family of projectors $\{\Pi_{\theta,j}\}_{j}$.
\begin{lem}{\label{computations}} For all $f\in \mathcal{D}({\mathscr{P}_{\theta}^{N}})$, we have
\begin{equation}\label{p-pi-th-j}
\big\langle\mathscr{P}_{\theta}^{N}\Pi_{\theta,j}f,f\big\rangle_{L^{2}(\R^{3}_{+})}
=\zeta_{j}(\theta) \big \langle\Pi_{\theta,j}f,f\big \rangle_{L^{2}(\R^{3}_{+})},
\end{equation}
and for all $f\in L^2(\R^3_{+})$, one has
\begin{align}\label{proj-sum-j-Big-Pi}
\Big\langle\sum_{j} \Pi_{\theta,j}f,f \Big\rangle_{L^{2}(\R^{3}_{+})}
\leq \norm{f}_{L^{2}(\R^{3}_{+})}^{2}.
\end{align}
Moreover, for any smooth cut-off function $\chi\in C^{\infty}_{0}(\R^{2})$, it holds true that
\begin{equation}\label{tr-pi-th-j-1}
{\rm Tr}(\chi \Pi_{\theta,j}\chi)= \frac{\sin(\theta)}{2\pi}\int_{\R^{2}}\chi^{2}(r,s)drds.
\end{equation}
\end{lem}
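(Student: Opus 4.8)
The plan is to diagonalise in the variable $r$: under the partial Fourier transform $\mathcal{F}_{r\to\xi}$ both $\mathscr{P}_{\theta}^{N}$ and the family $\{\Pi_{\theta,j}\}_{j}$ become fibered over $\xi\in\R$, which reduces all three identities to elementary facts about the rank-one projections $\pi_{\theta,j}(\xi)$. Concretely, the operator $\mathscr{P}_{\theta}^{N}$ of \eqref{Neum-r3+} acts as $-\partial_{s}^{2}-\partial_{t}^{2}+(-i\partial_{r}+t\cos\theta-s\sin\theta)^{2}$ on $L^{2}(\R^{3}_{+})$ with Neumann condition at $t=0$, so, writing $\widehat{f}(\xi,s,t)=(\mathcal{F}_{r\to\xi}f)(\xi,s,t)$, one has $\mathcal{F}_{r\to\xi}\,\mathscr{P}_{\theta}^{N}\,\mathcal{F}_{r\to\xi}^{-1}=\int_{\R}^{\oplus}h_{\theta}(\xi)\,d\xi$, where $h_{\theta}(\xi)=-\partial_{s}^{2}-\partial_{t}^{2}+(\xi+t\cos\theta-s\sin\theta)^{2}$ on $L^{2}(\R^{2}_{+})$ (Neumann at $t=0$). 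The translation $T_{\xi}\colon\psi\mapsto\psi(\cdot-\xi/\sin\theta,\cdot)$ is unitary on $L^{2}(\R^{2}_{+})$ and conjugates $h_{\theta}(\xi)$ to $\mathcal{L}(\theta)$ of \eqref{op-L-th}; hence, by \eqref{Egv-Lth}, $T_{\xi}u_{\theta,j}=u_{\theta,j}(\cdot-\xi/\sin\theta,\cdot)$ is an eigenfunction of $h_{\theta}(\xi)$ with eigenvalue $\zeta_{j}(\theta)$, and $\pi_{\theta,j}(\xi)=|T_{\xi}u_{\theta,j}\rangle\langle T_{\xi}u_{\theta,j}|$ is its spectral projection. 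Comparing \eqref{Pr-th-j-0} with \eqref{vjth} (equivalently, reading off \eqref{Pr-th-j}) gives $\mathcal{F}_{r\to\xi}\,\Pi_{\theta,j}\,\mathcal{F}_{r\to\xi}^{-1}=\int_{\R}^{\oplus}\pi_{\theta,j}(\xi)\,d\xi$. I would record these facts first.

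Then \eqref{p-pi-th-j} is a fiberwise statement: $h_{\theta}(\xi)\pi_{\theta,j}(\xi)=\zeta_{j}(\theta)\pi_{\theta,j}(\xi)$, and the range of $\pi_{\theta,j}(\xi)$, being spanned by an eigenfunction, lies in $\mathcal{D}(h_{\theta}(\xi))$; since $\zeta_{j}(\theta)$ is a fixed constant, integrating over $\xi$ shows that $\Pi_{\theta,j}$ maps $L^{2}(\R^{3}_{+})$ into $\mathcal{D}(\mathscr{P}_{\theta}^{N})$ with $\mathscr{P}_{\theta}^{N}\Pi_{\theta,j}f=\zeta_{j}(\theta)\Pi_{\theta,j}f$, and pairing with $f$ yields \eqref{p-pi-th-j}. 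For \eqref{proj-sum-j-Big-Pi}, \eqref{Egv-Lth} says the $u_{\theta,j}$ — hence their translates $T_{\xi}u_{\theta,j}$ — form an orthonormal system in $L^{2}(\R^{2}_{+})$, so the rank-one projections $\pi_{\theta,j}(\xi)$ are pairwise orthogonal, and by Lemma~\ref{Nb-Ev-Finite} there are only finitely many of them; hence $\sum_{j}\pi_{\theta,j}(\xi)$ is an orthogonal projection $\le I$ on $L^{2}(\R^{2}_{+})$, so $\sum_{j}\Pi_{\theta,j}=\mathcal{F}_{r\to\xi}^{-1}\big(\int_{\R}^{\oplus}\sum_{j}\pi_{\theta,j}(\xi)\,d\xi\big)\mathcal{F}_{r\to\xi}$ is an orthogonal projection on $L^{2}(\R^{3}_{+})$ and $\langle\sum_{j}\Pi_{\theta,j}f,f\rangle=\norm{\sum_{j}\Pi_{\theta,j}f}^{2}\le\norm{f}^{2}$. (Alternatively one expands the left-hand side as $\sum_{j}\int_{\R}\norm{\pi_{\theta,j}(\xi)\widehat{f}(\xi,\cdot,\cdot)}_{L^{2}(\R^{2}_{+})}^{2}\,d\xi$, interchanges $\sum_{j}$ and $\int_{\R}$ by positivity, uses $\sum_{j}\pi_{\theta,j}(\xi)\le I$, and applies Plancherel in $r$.)

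Finally, for \eqref{tr-pi-th-j-1} I would write $\chi\Pi_{\theta,j}\chi=\int_{\R}|\chi\,v_{\theta,j}(\cdot;\xi)\rangle\langle\chi\,v_{\theta,j}(\cdot;\xi)|\,d\xi$, a nonnegative operator, and compute its trace against an arbitrary orthonormal basis $\{e_{k}\}$ of $L^{2}(\R^{3}_{+})$: by Tonelli and Parseval, $${\rm Tr}(\chi\Pi_{\theta,j}\chi)=\sum_{k}\int_{\R}|\langle\chi\,v_{\theta,j}(\cdot;\xi),e_{k}\rangle|^{2}\,d\xi=\int_{\R}\norm{\chi\,v_{\theta,j}(\cdot;\xi)}_{L^{2}(\R^{3}_{+})}^{2}\,d\xi,$$ which in particular is finite, so the operator is trace class. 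Inserting \eqref{vjth}, the $r$- and $s$-integrals contribute $\int_{\R^{2}}\chi^{2}(r,s)\,dr\,ds$, while in the remaining $\xi$-integral the change of variables $\eta=s-\xi/\sin\theta$ (Jacobian $\sin\theta$), together with $\norm{u_{\theta,j}}_{L^{2}(\R^{2}_{+})}=1$ from \eqref{Egv-Lth}, produces the factor $\sin\theta$ and gives ${\rm Tr}(\chi\Pi_{\theta,j}\chi)=\frac{\sin\theta}{2\pi}\int_{\R^{2}}\chi^{2}(r,s)\,dr\,ds$; note that this $\sin\theta$ is precisely the Jacobian, matching the $\sin\theta$ in the formula for $E(\theta,\lambda)$.

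The computations are all routine — this is the one-parameter analogue of the $\theta=0$ argument carried out in Lemma~\ref{eqs-PR} — and I do not anticipate a genuine obstacle. The only points requiring a little care are the bookkeeping with the Fourier/translation conventions, so that \eqref{Pr-th-j} really is the fibered form of $\Pi_{\theta,j}$ and $\pi_{\theta,j}(\xi)$ the spectral projection of $h_{\theta}(\xi)$, and the (standard) verification that an operator of the form $\int_{\R}|w_{\xi}\rangle\langle w_{\xi}|\,d\xi$ is trace class with trace $\int_{\R}\norm{w_{\xi}}^{2}\,d\xi$ whenever the latter is finite.
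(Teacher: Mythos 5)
Your proposal is correct and follows essentially the same route as the paper's proof: apply $\mathscr{P}_{\theta}^{N}$ to $v_{\theta,j}$ (equivalently, use the eigenvalue equation fiberwise) for \eqref{p-pi-th-j}; pass through $\mathcal{F}_{r\to\xi}$, observe $\sum_{j}\pi_{\theta,j}(\xi)\le I$, and apply Plancherel for \eqref{proj-sum-j-Big-Pi}; and compute the trace explicitly with the change of variable $\nu=s-\xi/\sin\theta$ and $\norm{u_{\theta,j}}_{L^{2}(\R^2_+)}=1$ for \eqref{tr-pi-th-j-1}. The only cosmetic difference is that you frame everything systematically as a direct integral $\int_\R^\oplus$ from the start, whereas the paper proves the first identity by acting on $v_{\theta,j}$ directly; the appeal to Lemma~\ref{Nb-Ev-Finite} for finiteness of the $j$-sum is not needed (pairwise orthogonal projections always sum to an operator $\le I$) but is harmless.
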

\begin{proof}
Applying the operator ${\mathscr P}_{\theta}^{N}$ to the function $v_{\theta,j}$, we find
\begin{align*}
\mathscr{P}_{\theta}^{N}v_{\theta, j}(r,s,t;\xi)
&= \zeta_{j}(\theta)v_{\theta, j}(r,s,t;\xi).
\end{align*}
The assertion \eqref{p-pi-th-j} then follows from the definition of $\Pi_{\theta,j}$ in \eqref{Pr-th-j-0}.

To prove \eqref{proj-sum-j-Big-Pi}, we rewrite \eqref{Pr-th-j} as
\begin{align}\label{Sum-Pi}
\Big\langle \Pi_{\theta, j}f,f \Big\rangle_{L^{2}(\R^{3}_{+})}
&= \int_{\R}\Big\langle \pi_{\theta,j}(\xi) \big(\mathcal{F}_{r\rightarrow\xi}f(\cdot,s,t)\big)(\xi), \big(\mathcal{F}_{r\rightarrow\xi}f(\cdot,s,t)(\xi)\big) \Big\rangle_{L^{2}(\R^{2}_{+})}d\xi.
\end{align}
It can be easily shown that $\sum_{j}\pi_{\theta,j}$ is a projection. Hence, by Plancherel's identity, we see that
\begin{align*}
\Big\langle\sum_{j} \Pi_{\theta,j}f,f \Big\rangle_{L^{2}(\R^{3}_{+}}&\leq  \int_{\R^{2}_{+}}\int_{\R}| \mathcal{F}_{r\rightarrow\xi}f(\cdot,s,t)|^{2}d\xi dsdt= \int_{\R^{3}_{+}}|f(r,s,t)|^{2}drdsdt.
\end{align*}
We come to the proof of \eqref{tr-pi-th-j-1}. For this, we notice that
\begin{align}
\label{Tr-chi-Pi-chi}
{\rm Tr}\big(\chi\Pi_{\theta,j}\chi\big)
\nonumber&=\dfrac{1}{2\pi} \int_{\R^{3}_{+}}\chi^{2}(r,s)\Big(\int_{\R} \big|e^{i\xi r} u_{\theta,j}\big(s-\frac{\xi}{\sin(\theta)},t\big)\big|^{2}d\xi \Big)drdsdt.
\end{align}
Performing the change of variable $\nu=s-\frac{\xi}{\sin\theta}$ and using that the functions $\{u_{\theta,j}\}_{j}$ are normalized, we get
\begin{equation}\label{calcul-kern}
\begin{aligned}
{\rm Tr}\big(\chi\Pi_{\theta, j}\chi\big)&=\dfrac{1}{{2\pi}}\sin(\theta) \int_{\R^{2}}\chi^{2}(r,s) dr ds\int_{\R^{2}_{+}} |u_{\theta,j}(\nu,t)|^{2}d\nu dt= \dfrac{1}{{2\pi}}\sin(\theta) \int_{\R^{2}}\chi^{2}(r,s)drds.
\end{aligned}
\end{equation}
Thereby completing the proof of the lemma.
\end{proof}
Let $a>0$. In order to define $F(\theta,\lambda)$ below, we need to
introduce the cut-off function $\chi_{a}\in C^{\infty}_{0}(\R^{2})$,
which satisfies
\begin{equation}
0\leq \chi_{a}\leq 1,~\mbox{in}~\R^{2},\quad \mbox{supp}~\chi_{a}\in \Big(-\dfrac{1+a}{2}, \dfrac{1+a}{2}\Big)^{2},
\quad \chi_{a}=1\quad{\rm in}~ \Big(-\dfrac{1}{2},\dfrac{1}{2}\Big)^{2},~|\nb \chi_{a}|\leq Ca^{-1}.
\end{equation}
Let $L>0$. We set
\begin{equation}\label{def-chiL}
\chi_{a,L}(r,s)=\chi_{a}\bigg(\dfrac{r}{L},\dfrac{s}{L}\bigg), \quad (r,s)\in\R^{2},
\end{equation}
and,
\begin{equation}\label{def-mu}
\mu_{a}=\int_{\R^{2}}\chi_{a}^{2}(r,s)drds.
\end{equation}
Notice that $\mu_a$ satisfies
\begin{equation}\label{eq:mua}
\mu_a=1+\mathcal O(a)\,,\quad (a\to0_+)\,.
\end{equation}
We define $F(\theta,\lambda;a)$ to be
\begin{equation}\label{Fthld}
F(\theta,\lambda;a):=\limsup_{L\rightarrow\infty}\dfrac{{\rm Tr}\big(\chi_{a,L} ({\mathscr{P}}_{\theta}^{N}-\lambda)\chi_{a,L}\big)_{-}}{L^{2}}\,,
\end{equation}
where ${\mathscr{P}}_{\theta}^{N}$ is the self-adjoint operator given in \eqref{Neum-r3+}. We now formulate the main theorem of this section.
\begin{thm}{\label{thm-exp-for-Eth}} Let $\theta\in(0,\pi/2]$, $\lambda\in[0,1)$ and $E(\theta,\lambda)$ be as introduced in \eqref{trm-lim}.
We have the following explicit formula of $E(\theta,\lambda)$
\begin{equation}\label{E-th-ld-th-n-z}
E(\theta,\lambda)= \dfrac{1}{2\pi}\sin(\theta) \sum_{j}(\zeta_{j}(\theta)-\lambda)_{-},
\end{equation}
where the $\{\zeta_{j}(\theta)\}_{j}$ are the eigenvalues from \eqref{Egv-Lth}.
\end{thm}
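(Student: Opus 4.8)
The strategy is to deduce the formula from two facts, taking for granted (from Theorem~\ref{thm-trm-limit}) that $E(\theta,\lambda)$ exists and is continuous: first, that $E(\theta,\lambda)=\lim_{a\to0^+}F(\theta,\lambda;a)$, with $F(\theta,\lambda;a)$ the localized quantity of \eqref{Fthld}; and second, that $F(\theta,\lambda;a)=\tfrac{\sin\theta}{2\pi}\,\mu_a\sum_j(\zeta_j(\theta)-\lambda)_-$ for each fixed $a>0$, which combined with \eqref{eq:mua} yields the claim on letting $a\to0^+$. The reason for working with $F$, i.e.\ with the smoothly cut-off \emph{Neumann} half-space operator $\mathscr{P}_\theta^N$ rather than directly with $\mathscr{P}_\theta^L$, is that the sharp Dirichlet cut at the lateral faces of $\Omega^L$ is incompatible with the eigenprojectors $\Pi_{\theta,j}$, whereas the smooth cut-off $\chi_{a,L}$ of \eqref{def-chiL} interacts cleanly with them through the spectral identities \eqref{p-pi-th-j}--\eqref{tr-pi-th-j-1} of Lemma~\ref{computations}. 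Throughout, write $A_{a,L}:=\chi_{a,L}(\mathscr{P}_\theta^N-\lambda)\chi_{a,L}$ on $L^2(\R^3_+)$, whose negative spectrum is finite and discrete since $\chi_{a,L}$ is compactly supported (bound of the type of Lemma~\ref{u-b-N}).

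To prove $E(\theta,\lambda)=\lim_{a\to0^+}F(\theta,\lambda;a)$: since $\chi_{a,L}\equiv1$ on $(-L/2,L/2)^2$, any $L^2$-orthonormal family in the Dirichlet form domain $\mathcal{D}(\mathcal{Q}_\theta^L)$, extended by zero to $\R^3_+$, is an admissible family for $A_{a,L}$ with unchanged quadratic-form values; so Lemma~\ref{lem-VP-2} gives $\mathcal{E}(\lambda;\mathscr{P}_\theta^L,\Omega^L)\le{\rm Tr}(A_{a,L})_-$, and dividing by $L^2$ and letting $L\to\infty$ yields $E(\theta,\lambda)\le F(\theta,\lambda;a)$. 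For the reverse inequality let $\gamma=\mathbf 1_{(-\infty,0)}(A_{a,L})$ (finite rank) and set $\gamma'=\chi_{a,L}\gamma\chi_{a,L}$; then $0\le\gamma'\le1$, the range of $\gamma'$ is supported in $(-(1+a)L/2,(1+a)L/2)^2\times\R_+$ hence lies in the Dirichlet form domain of $\Omega^{(1+a)L}$, and ${\rm Tr}\big((\mathscr{P}_\theta^{(1+a)L}-\lambda)\gamma'\big)={\rm Tr}(A_{a,L}\gamma)=-{\rm Tr}(A_{a,L})_-$ because the Neumann and Dirichlet quadratic forms coincide on functions supported in $\Omega^{(1+a)L}$. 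The variational principle of Lemma~\ref{lem-VP-3} applied to $\mathscr{P}_\theta^{(1+a)L}-\lambda$ (admissible since $\inf{\rm Spec}_{\rm ess}\mathscr{P}_\theta^{(1+a)L}\ge1>\lambda$ and its negative part is finite rank) then gives ${\rm Tr}(A_{a,L})_-\le\mathcal{E}(\lambda;\mathscr{P}_\theta^{(1+a)L},\Omega^{(1+a)L})$, and Theorem~\ref{thm-trm-limit} rewrites the right-hand side as $(1+a)^2E(\theta,\lambda)L^2+o(L^2)$; letting $L\to\infty$ and then $a\to0^+$ gives $\limsup_{a\to0^+}F(\theta,\lambda;a)\le E(\theta,\lambda)$, which with the first inequality proves the identity.

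To compute $F(\theta,\lambda;a)$ I establish matching bounds on ${\rm Tr}(A_{a,L})_-$, following the two halves of the proof of Theorem~\ref{value-th=0}. For the upper bound, using that $\sum_j\Pi_{\theta,j}$ is the spectral projection of $\mathscr{P}_\theta^N$ below $1$ and \eqref{p-pi-th-j}, one has the operator inequality $\mathscr{P}_\theta^N-\lambda\ge\sum_j(\zeta_j(\theta)-\lambda)\Pi_{\theta,j}+(1-\lambda)(1-\sum_j\Pi_{\theta,j})\ge-\sum_j(\zeta_j(\theta)-\lambda)_-\Pi_{\theta,j}$, the sum over $j$ being finite by Lemma~\ref{Nb-Ev-Finite}; hence for an orthonormal family $\{f_k\}$ with $\Gamma=\sum_k|f_k\rangle\langle f_k|\le1$, $\sum_k\langle f_k,A_{a,L}f_k\rangle\ge-\sum_j(\zeta_j(\theta)-\lambda)_-\,{\rm Tr}(\chi_{a,L}\Gamma\chi_{a,L}\Pi_{\theta,j})\ge-\sum_j(\zeta_j(\theta)-\lambda)_-\,{\rm Tr}(\chi_{a,L}\Pi_{\theta,j}\chi_{a,L})=-\tfrac{\sin\theta}{2\pi}\mu_aL^2\sum_j(\zeta_j(\theta)-\lambda)_-$ by \eqref{tr-pi-th-j-1} and \eqref{def-mu}, so Lemma~\ref{lem-VP-2} gives ${\rm Tr}(A_{a,L})_-\le\tfrac{\sin\theta}{2\pi}\mu_aL^2\sum_j(\zeta_j(\theta)-\lambda)_-$. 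For the lower bound, use the trial density matrix $\Gamma_0=\sum_{j:\zeta_j(\theta)<\lambda}\Pi_{\theta,j}$, for which $0\le\Gamma_0\le1$ by \eqref{proj-sum-j-Big-Pi}; writing $\Pi_{\theta,j}=\int_\R|v_{\theta,j}(\cdot;\xi)\rangle\langle v_{\theta,j}(\cdot;\xi)|\,d\xi$ with $v_{\theta,j}$ from \eqref{vjth}, the IMS identity $\mathcal{Q}_\theta^N(\chi_{a,L}v_{\theta,j}(\cdot;\xi))=\zeta_j(\theta)\|\chi_{a,L}v_{\theta,j}(\cdot;\xi)\|^2+\|\,|\nabla\chi_{a,L}|\,v_{\theta,j}(\cdot;\xi)\|^2$ and \eqref{tr-pi-th-j-1} give ${\rm Tr}(A_{a,L}\Gamma_0)=-\tfrac{\sin\theta}{2\pi}\mu_aL^2\sum_j(\zeta_j(\theta)-\lambda)_-+R_{a,L}$ with $R_{a,L}=\sum_{j:\zeta_j<\lambda}\int_\R\|\,|\nabla\chi_{a,L}|\,v_{\theta,j}(\cdot;\xi)\|^2\,d\xi$; the change of variable $\nu=s-\xi/\sin\theta$ and the normalization of $u_{\theta,j}$ collapse this to $\tfrac{\sin\theta}{2\pi}\big(\int_{\R^2}|\nabla\chi_{a,L}|^2\big)\,{\rm Card}\{j:\zeta_j(\theta)<\lambda\}$, and since $\int_{\R^2}|\nabla\chi_{a,L}|^2=\int_{\R^2}|\nabla\chi_a|^2$ is independent of $L$, $R_{a,L}=O_a(1)=o(L^2)$; Lemma~\ref{lem-VP-3} then yields ${\rm Tr}(A_{a,L})_-\ge\tfrac{\sin\theta}{2\pi}\mu_aL^2\sum_j(\zeta_j(\theta)-\lambda)_--O_a(1)$. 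Dividing by $L^2$ and letting $L\to\infty$ gives $F(\theta,\lambda;a)=\tfrac{\sin\theta}{2\pi}\mu_a\sum_j(\zeta_j(\theta)-\lambda)_-$, and \eqref{eq:mua} finishes the proof.

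The main obstacle is the operator-theoretic bookkeeping behind the passage between the cut-off Neumann operator $A_{a,L}$ and the Dirichlet cylinder operator $\mathscr{P}_\theta^{(1+a)L}$: one must check that $\gamma'=\chi_{a,L}\gamma\chi_{a,L}$ is genuinely admissible in Lemma~\ref{lem-VP-3} (range in the right domain, trace-class negative part), which rests on the rough counting bound of Lemma~\ref{u-b-N} and on $\inf{\rm Spec}_{\rm ess}\mathscr{P}_\theta^{(1+a)L}\ge1$, and that the two quadratic forms truly agree on the support of $\chi_{a,L}$. A second delicate point is the control of the cut-off error $R_{a,L}$: it is negligible after dividing by $L^2$ only because the plane-wave factor $e^{i\xi r}$ in $v_{\theta,j}$ lets the $\xi$-integral collapse, via $\nu=s-\xi/\sin\theta$, to a quantity depending on $a$ but not on $L$; without this structure the localization error would not be $o(L^2)$.
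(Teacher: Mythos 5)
Your proof is correct and reproduces the paper's two-step strategy: compute the localized quantity $F(\theta,\lambda;a)=\tfrac{\mu_a\sin\theta}{2\pi}\sum_j(\zeta_j(\theta)-\lambda)_-$ (the paper's Lemma~\ref{lem-exca}), squeeze $E$ against $F$ (the paper's Theorem~\ref{th-rewith-E}), and send $a\to0^+$. Your computation of $F$ --- the operator inequality $\mathscr{P}_\theta^N-\lambda\ge-\sum_j(\zeta_j(\theta)-\lambda)_-\Pi_{\theta,j}$ together with the trace identity \eqref{tr-pi-th-j-1} for the upper bound, and the trial density matrix $\Gamma_0=\sum_{j:\zeta_j(\theta)<\lambda}\Pi_{\theta,j}$ with the IMS identity for the lower bound --- tracks the paper's argument essentially line by line, as does your bound $F(\theta,\lambda;a)\le(1+a)^2E(\theta,\lambda)$ via $\gamma'=\chi_{a,L}\gamma\chi_{a,L}$ (the paper phrases this as an infimum over density matrices $0\le\widetilde\gamma\le1$, which is exactly your $\gamma\mapsto\chi_{a,L}\gamma\chi_{a,L}$ step).

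The one genuine departure is the other half of the squeeze, the bound $E(\theta,\lambda)\le F(\theta,\lambda;a)$. The paper derives the slightly stronger inequality $\mu_a E(\theta,\lambda)\le F(\theta,\lambda;a)$ by tiling the plane with small boxes of side $\ell\ll L$, invoking the per-box estimate \eqref{bnd-eng-tl} of Theorem~\ref{thm-trm-limit}, and recognizing a lower Riemann sum of $\chi_{a,L}^2$; this is the bulk of the proof of Theorem~\ref{th-rewith-E}. You replace all of this with a two-line observation: since $\chi_{a,L}\equiv1$ on $(-L/2,L/2)^2$, the extensions by zero of functions in $\mathcal{D}(\mathcal{Q}_\theta^L)$ lie in the form domain of $\mathscr{P}_\theta^N$ with unchanged quadratic-form values, so restricting the infimum of Lemma~\ref{lem-VP-2} to that class gives ${\rm Tr}\big(\chi_{a,L}(\mathscr{P}_\theta^N-\lambda)\chi_{a,L}\big)_-\ge\mathcal{E}(\lambda;\mathscr{P}_\theta^L,\Omega^L)$ and hence $F\ge E$. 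You lose the factor $\mu_a\ge1$, but since $\mu_a\to1$ as $a\to0^+$ (equation \eqref{eq:mua}) you still close the squeeze, and the argument is substantially shorter than the paper's Riemann-sum machinery.
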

The proof of Theorem \ref{thm-exp-for-Eth} is splitted into two
lemmas.
\begin{lem}\label{lem-exca}
Let $a\in(0,1)$, $\lambda\in[0,1)$ and $\theta\in(0,\pi/2]$. The
following limit exists, $$\lim_{L\rightarrow\infty}\dfrac{{\rm
Tr}\big(\chi_{a,L}
({\mathscr{P}}_{\theta}^{N}-\lambda)\chi_{a,L}\big)_{-}}{L^{2}}\,,$$
and its value is  $F(\theta,\lambda;a)$ introduced in \eqref{Fthld}.
Furthermore, there holds,
\begin{equation}
F(\theta,\lambda;a)= \dfrac{1}{2\pi}\mu_{a}\sin(\theta) \sum_{j}(\zeta_{j}(\theta)-\lambda)_{-},
\end{equation}
where $\mu_{a}$ is the constant defined in \eqref{def-mu}.
\end{lem}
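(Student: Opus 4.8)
The plan is to prove the two matching bounds
\[
\limsup_{L\to\infty}\frac{{\rm Tr}\big(\chi_{a,L}(\mathscr P_\theta^N-\lambda)\chi_{a,L}\big)_-}{L^{2}}\ \le\ \frac{\sin\theta}{2\pi}\,\mu_a\sum_j(\zeta_j(\theta)-\lambda)_-\ \le\ \liminf_{L\to\infty}\frac{{\rm Tr}\big(\chi_{a,L}(\mathscr P_\theta^N-\lambda)\chi_{a,L}\big)_-}{L^{2}},
\]
which at once gives the existence of the limit and identifies it with $F(\theta,\lambda;a)$ from \eqref{Fthld} and with the announced value; all sums over $j$ here are finite by Lemma~\ref{Nb-Ev-Finite}, and for $\theta=\pi/2$ they are empty, so ${\rm Tr}(T_L)_-\equiv0$ and the statement is trivial. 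Write $T_L:=\chi_{a,L}(\mathscr P_\theta^N-\lambda)\chi_{a,L}$, $\Pi_\theta:=\sum_j\Pi_{\theta,j}$ and $\Pi_\theta^{<\lambda}:=\sum_{j:\zeta_j(\theta)<\lambda}\Pi_{\theta,j}$; by \eqref{proj-sum-j-Big-Pi} and the remarks preceding it these are orthogonal projections with $0\le\Pi_\theta^{<\lambda}\le\Pi_\theta\le I$. For the upper bound, I take an orthonormal family $\{\psi_k\}_{k=1}^{N}$ in the domain of $T_L$ and set $\phi_k:=\chi_{a,L}\psi_k\in\mathcal D(\mathcal Q_\theta^N)$, so that $\langle\psi_k,T_L\psi_k\rangle=\mathcal Q_\theta^N(\phi_k)-\lambda\|\phi_k\|^{2}$. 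Since the spectrum of $\mathscr P_\theta^N$ below $1$ consists exactly of the values $\zeta_j(\theta)$ with spectral projectors $\Pi_{\theta,j}$ (the fibered picture behind \eqref{p-pi-th-j}), one has, as quadratic forms, $\mathscr P_\theta^N\ge\sum_j\zeta_j(\theta)\Pi_{\theta,j}+(I-\Pi_\theta)$; hence, using $\langle\Pi_{\theta,j}\phi_k,\phi_k\rangle\ge0$ and $\lambda<1$,
\[
\mathcal Q_\theta^N(\phi_k)-\lambda\|\phi_k\|^{2}\ \ge\ \sum_j(\zeta_j(\theta)-\lambda)\langle\Pi_{\theta,j}\phi_k,\phi_k\rangle\ \ge\ -\sum_j(\zeta_j(\theta)-\lambda)_-\,\langle\chi_{a,L}\Pi_{\theta,j}\chi_{a,L}\psi_k,\psi_k\rangle.
\]
Summing over $k$, using $\chi_{a,L}\Pi_{\theta,j}\chi_{a,L}\ge0$ so that $\sum_k\langle\chi_{a,L}\Pi_{\theta,j}\chi_{a,L}\psi_k,\psi_k\rangle\le{\rm Tr}(\chi_{a,L}\Pi_{\theta,j}\chi_{a,L})=\frac{\sin\theta}{2\pi}\int_{\R^2}\chi_{a,L}^{2}=\frac{\sin\theta}{2\pi}L^{2}\mu_a$ by \eqref{tr-pi-th-j-1} and \eqref{def-mu}, and then invoking Lemma~\ref{lem-VP-2}, I obtain ${\rm Tr}(T_L)_-\le\frac{\sin\theta}{2\pi}L^{2}\mu_a\sum_j(\zeta_j(\theta)-\lambda)_-$, which is the first inequality after dividing by $L^{2}$.

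For the lower bound I use the trial density matrix $\gamma:=\Pi_\theta^{<\lambda}$, an orthogonal projection with $0\le\gamma\le I$. Thanks to the exponential decay of the eigenfunctions $u_{\theta,j}$ (Lemma~\ref{exp-dec}), the range of $\gamma$ lies in the domain of $T_L$ and $T_L\gamma$ is trace class, and ${\rm Tr}(T_L)_-$ is finite by the upper bound, so Lemma~\ref{lem-VP-3} applies and gives ${\rm Tr}(T_L)_-\ge-{\rm Tr}(T_L\gamma)$. To evaluate ${\rm Tr}(T_L\gamma)={\rm Tr}\big(\chi_{a,L}(\mathscr P_\theta^N-\lambda)\chi_{a,L}\Pi_\theta^{<\lambda}\big)$, I pass $(\mathscr P_\theta^N-\lambda)$ onto the generalized eigenfunctions $v_{\theta,j}(\cdot;\xi)$ of \eqref{vjth} through
\[
(\mathscr P_\theta^N-\lambda)\big(\chi_{a,L}v_{\theta,j}(\cdot;\xi)\big)=(\zeta_j(\theta)-\lambda)\,\chi_{a,L}v_{\theta,j}(\cdot;\xi)+[\mathscr P_\theta^N,\chi_{a,L}]v_{\theta,j}(\cdot;\xi),
\]
and compute the trace via the integral kernel of $T_L\gamma$. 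The main term reproduces, by the computation behind \eqref{tr-pi-th-j-1} now integrated against $\chi_{a,L}^{2}$, exactly $\sum_{j:\zeta_j(\theta)<\lambda}(\zeta_j(\theta)-\lambda)\frac{\sin\theta}{2\pi}L^{2}\mu_a=-\frac{\sin\theta}{2\pi}L^{2}\mu_a\sum_j(\zeta_j(\theta)-\lambda)_-$; the commutator contribution is $O(L)$, since $[\mathscr P_\theta^N,\chi_{a,L}]$ is first order with coefficients supported in ${\rm supp}\,\nabla\chi_{a,L}$, of $(r,s)$-area $O(aL^{2})$ and with $\|\nabla\chi_{a,L}\|_\infty=O((aL)^{-1})$, $\|\Delta\chi_{a,L}\|_\infty=O((aL)^{-2})$, and after the substitution $\nu=s-\xi/\sin\theta$ the factor $u_{\theta,j}(\nu,t)$ confines the remaining $(\nu,t)$-integration to a bounded region by Lemma~\ref{exp-dec}, the number of indices $j$ involved being finite by Lemma~\ref{Nb-Ev-Finite}. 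Hence ${\rm Tr}(T_L\gamma)=-\frac{\sin\theta}{2\pi}L^{2}\mu_a\sum_j(\zeta_j(\theta)-\lambda)_-+O(L)$, so ${\rm Tr}(T_L)_-\ge\frac{\sin\theta}{2\pi}L^{2}\mu_a\sum_j(\zeta_j(\theta)-\lambda)_--O(L)$, which yields the second inequality.

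The main obstacle is exactly this lower bound: one has to check that $\Pi_\theta^{<\lambda}$ is an admissible trial state for Lemma~\ref{lem-VP-3} (range in the domain, $T_L\gamma$ trace class) and, above all, that the localization error generated by $[\mathscr P_\theta^N,\chi_{a,L}]$ is of lower order $o(L^{2})$ and therefore disappears after dividing by $L^{2}$; it is precisely here that the exponential decay of the eigenfunctions of $\mathcal L(\theta)$ (Lemma~\ref{exp-dec}) and the bound on the number of its eigenvalues below $1$ (Lemma~\ref{Nb-Ev-Finite}) are used. The matching upper bound, by contrast, is routine given the spectral decomposition \eqref{p-pi-th-j}, the trace identity \eqref{tr-pi-th-j-1}, and the variational principle of Lemma~\ref{lem-VP-2}.
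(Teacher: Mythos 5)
Your proof is correct and follows essentially the same strategy as the paper: matching upper and lower bounds, the upper one via the fibered spectral decomposition and the trace identity \eqref{tr-pi-th-j-1} combined with Lemma~\ref{lem-VP-2}, and the lower one via the trial density matrix $\gamma=\sum_{\{j:\zeta_j(\theta)\le\lambda\}}\Pi_{\theta,j}$ in Lemma~\ref{lem-VP-3} and control of the cut-off error through Lemmas~\ref{exp-dec} and~\ref{Nb-Ev-Finite}. The only minor deviation is in the lower-bound error estimate: you split off the full commutator $[\mathscr P_\theta^N,\chi_{a,L}]$ and bound it by $O(L)$, whereas the paper absorbs the localization error by a Cauchy–Schwarz/IMS manipulation on the kernel integral, producing the sharper $O(1)$ remainder $\frac{\sin\theta}{2\pi}\sum_j\int|\nabla\chi_a|^2$; both are $o(L^2)$, so this does not affect the conclusion.
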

\begin{proof}
Let ${\mathscr{P}}_{\theta}^{N}$ be the self-adjoint operator given in \eqref{Neum-r3+},
 and let $\{g_{1},\cdots, g_{N}\}$ be any orthonormal set in $\mathcal{D}(\mathscr{P}_{\theta}^{N})$. It follows from Lemma~\ref{computations} that
\begin{equation}\label{P-OS}
\sum_{k=1}^{N}\big\langle \chi_{a,L}g_{k}, (\mathscr{P}_{\theta}^{N}-\lambda)\chi_{a,L}g_{k} \big\rangle_{L^{2}(\R^{3}_{+})}
\geq- \sum_{j}(\zeta_{j}(\theta)-\lambda)_{-}\sum_{k=1}^{N}\big\langle \chi_{a,L}g_{k}, \Pi_{\theta,j}\chi_{a,L}g_{k}\big \rangle_{L^{2}(\R^{3}_{+})}.
\end{equation}
Since $\{g_{k}\}_{k=1}^{N}$ is an orthonormal family in $L^{2}(\R_{+}^{3})$ and performing a similar calculation to that in\eqref{calcul-kern}, we deduce that
\begin{align*}
\sum_{k=1}^{N}\big\langle \chi_{a,L}g_{k}, \Pi_{\theta, j}\chi_{a,L}g_{k} \big\rangle_{L^{2}(\R^{3}_{+})}
&\leq \int_{\R}\sum_{k=1}^{N}|\langle g_{k},v_{j,\theta}\chi_{a,L}\rangle_{L^{2}(\R^{3}_{+})}\big|^{2}d\xi\leq  \frac{1}{2\pi}\mu_{a}L^{2}\sin(\theta).
\end{align*}
Implementing this in \eqref{P-OS}, we obtain
\begin{equation}\label{Eq-vp-ub}
\sum_{k=1}^{N}\big\langle \chi_{a,L}g_{k}, (\mathscr{P}_{\theta}^{N}-\lambda)\chi_{a,L}g_{k} \big\rangle_{L^{2}(\R^{3}_{+})}\geq-\dfrac{1}{2\pi} L^{2}\mu_{a} \sin(\theta)\sum_{j}(\zeta_{j}(\theta)-\lambda)_{-}.
\end{equation}
By the variational principle in Lemma~\ref{lem-VP-2}, we find
\begin{equation}
{\rm Tr}\big(\chi_{a,L} ({\mathscr{P}}_{\theta}^{N}-\lambda\big)\chi_{a,L})_{-}
\leq \dfrac{1}{2\pi}  L^{2}\mu_{a} \sin(\theta)\sum_{j}(\zeta_{j}(\theta)-\lambda)_{-}.
\end{equation}
Dividing by $L^{2}$ on both sides, we get after passing to the limit $L\rightarrow \infty$,
\begin{equation}\label{ub-ethld}
\limsup_{L\rightarrow\infty}\dfrac{{\rm Tr}\big(\chi_{a,L} ({\mathscr{P}}_{\theta}^{N}-\lambda\big)\chi_{a,L})_{-}}{L^{2}}\leq\dfrac{1}{2\pi} \mu_{a} \sin(\theta)\sum_{j}(\zeta_{j}(\theta)-\lambda)_{-}.
\end{equation}
To prove a lower bound, 
we consider the density matrix
\begin{equation}
\gamma= \sum_{{\{ j\,:\,\zeta_{j}(\theta)\leq \lambda\}}} \Pi_{\theta,j}.
\end{equation}
It is easy to see that $\gamma\geq 0$, and in view of \eqref{proj-sum-j-Big-Pi}, it follows that
\begin{equation}
\langle f, \gamma f\rangle_{L^{2}(\R^{3}_{+})} \leq \norm{f}^{2}_{L^{2}(\R^{3}_{+})}.
\end{equation}
Next, by Cauchy-Schwarz inequality, we have,
\begin{multline}\label{Eq111}
{\rm Tr}\big(\chi_{a,L}(\mathscr{P}_{\theta}^{N}-\lambda)\chi_{a,L}\Pi_{\theta,j}\big)= \int_{\R}\int_{\R_{+}^{3}}\Big(|(-i\nabla + {\bf F}_{\theta})\chi_{a,L} v_{\theta, j}|^{2} -\lambda |\chi_{a,L}v_{\theta,j}|^{2}\Big)drdsdt d\xi\\
\leq \int_{\R}\int_{\R_{+}^{3}}\Big\{\chi_{a,L}^{2}(r,s)\Big(|(-i\nabla + {\bf F}_{\theta})v_{\theta, j}|^{2} - \lambda |v_{\theta,j}|^{2}\Big)
+|\nabla \chi_{a,L}(r,s)|^{2}|v_{\theta,j}|^{2}\Big\}drdsdt d\xi.
\end{multline}
Performing the change of variable $\nu=s-\frac{\xi}{\sin(\theta)}$ in \eqref{Eq111} and using that the function $u_{j,\theta}$ is normalized, we arrive at
\begin{multline}
 {\rm Tr}\big(\chi_{a,L}(\mathscr{P}_{\theta}^{N}-\lambda)\chi_{a,L}\gamma\big)
 \leq\dfrac{\sin(\theta)}{2\pi}\sum_{\{j\,:\,\zeta_{j}(\theta)\leq \lambda\}}  \Big\{(\zeta_{j}(\theta)-\lambda) \mu_{a} L^{2}
 +\int_{\R^{2}}|\nabla\chi_{a}(r,s)|^{2}drds\Big\},
\end{multline}
where $\mu_{a}$ is the constant from \eqref{def-mu}. Dividing both
sides of the aforementioned inequality by $L^{2}$, we see that,
 \begin{multline}\label{eq100}
\dfrac{ {\rm Tr}\big(\chi_{a,L}(\mathscr{P}_{\theta}^{N}-\lambda)\chi_{a,L}\gamma\big)}{L^{2}}
\leq \dfrac{1}{2\pi} \sin(\theta)\sum_{\{j\,:\,\zeta_{j}(\theta)\leq \lambda\}}  \Big\{\mu_{a}(\zeta_{j}(\theta) -\lambda)+ L^{-2}\int_{\R^{2}}|\nabla \chi_{a}|^{2}drds\Big\}.
\end{multline}
Here we point out that the number $\mathcal{N}(\lambda;\mathcal{L}(\theta),\R^{2}_{+})$ is controlled by $C/\sin(\theta)$ according to Lemma~\ref{Nb-Ev-Finite}. From Lemma~\ref{lem-VP-3}, it follows that
\begin{multline*}
-\dfrac{ {\rm Tr}\big(\chi_{a,L}(\mathscr{P}_{\theta}^{N}-\lambda)\chi_{a,L}\big)_{-}}{L^{2}}
\leq -\dfrac{1}{2\pi}\mu_{a} \sin(\theta)\sum_{j}  (\zeta_{j}(\theta)-\lambda)_{-}
+C(2\pi)^{-1} L^{-2}\int_{\R^{2}}|\nabla \chi_{a}|^{2}drds.
\end{multline*}
Taking  $\displaystyle\liminf_{L\rightarrow\infty}$, we deduce that
\begin{equation*}
\liminf_{L\rightarrow\infty}\dfrac{ {\rm Tr}\big(\chi_{a,L}(\mathscr{P}_{\theta}^{N}-\lambda)\chi_{a,L}\big)_{-}}{L^{2}}
\geq \dfrac{1}{2\pi}\mu_{a} \sin(\theta)\sum_{j}  ( \zeta_{j}(\theta)-\lambda)_{-}.
\end{equation*}
This together with \eqref{ub-ethld} and the definition of
$F(\theta,\lambda;a)$ in \eqref{Fthld}, finish the proof of the
Lemma.
 \end{proof}
 Our next goal is to establish a connection between the function $F(\theta,\lambda)$ obtained in Lemma~\ref{lem-exca} and $E(\theta,\lambda)$ from \eqref{trm-lim}.
 \begin{thm}\label{th-rewith-E}
Let $a\in(0,1)$, $\theta\in(0,\pi/2]$ and $\lambda\in[0,1)$. There
holds,
 \[
 \mu_a E(\theta,\lambda)\leq F(\theta,\lambda;a)\leq (1+a)^2  E(\theta,\lambda)\,,
 \]
 where $\mu_{a}$ is the constant from \eqref{def-chiL}.
 \end{thm}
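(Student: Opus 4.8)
The plan is to prove the two inequalities separately, each by a direct operator comparison between the cut‑off half‑space operator $\chi_{a,L}\mathscr{P}_\theta^N\chi_{a,L}$ entering the definition \eqref{Fthld} of $F(\theta,\lambda;a)$ and the Dirichlet box operator $\mathscr{P}_\theta^{L'}$ from \eqref{PL1L2} (for a suitable side length $L'$) entering the definition \eqref{trm-lim} of $E(\theta,\lambda)$. The inputs are the two variational principles of Section~2 (Lemma~\ref{lem-VP-2} and Lemma~\ref{lem-VP-3}, used in their density‑matrix form as in the proofs of Lemma~\ref{lem-thdL} and Lemma~\ref{continuity+}), the a priori bounds of Lemma~\ref{u-b-N}, magnetic translation invariance of $\mathscr{P}_\theta^{L'}$, and the existence of the limit defining $E(\theta,\lambda)$ from Theorem~\ref{thm-trm-limit}. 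As in the proof of Lemma~\ref{lem-exca} one first checks that $\inf{\rm Spec}_{\rm ess}\big(\chi_{a,L}(\mathscr{P}_\theta^N-\lambda)\chi_{a,L}\big)\ge 0$, so that the variational principles apply; finiteness of the relevant traces will follow from the upper bound.

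\emph{Upper bound $F(\theta,\lambda;a)\le(1+a)^2E(\theta,\lambda)$.} Since $\supp\chi_{a,L}\subset\big(-\tfrac{(1+a)L}{2},\tfrac{(1+a)L}{2}\big)^2$ and $\chi_{a,L}$ vanishes on the lateral boundary of that box, for any orthonormal family $\{\psi_k\}_{k=1}^{N}$ in $L^2(\R^3_+)$ the functions $\chi_{a,L}\psi_k$ belong, after restriction, to $\mathcal D(\mathcal Q_\theta^{(1+a)L})$ with $\mathcal Q_\theta^N(\chi_{a,L}\psi_k)=\mathcal Q_\theta^{(1+a)L}(\chi_{a,L}\psi_k)$ and equal norms. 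By Bessel's inequality the operator $\Gamma=\sum_{k=1}^N|\chi_{a,L}\psi_k\rangle\langle\chi_{a,L}\psi_k|$ on $L^2(\Omega^{(1+a)L})$ satisfies $0\le\Gamma\le 1$, so Lemma~\ref{lem-VP-3} applied to $\mathscr{P}_\theta^{(1+a)L}-\lambda$ gives
\[
-\mathcal E(\lambda;\mathscr{P}_\theta^{(1+a)L},\Omega^{(1+a)L})\le{\rm Tr}\big((\mathscr{P}_\theta^{(1+a)L}-\lambda)\Gamma\big)=\sum_{k=1}^N\Big(\mathcal Q_\theta^N(\chi_{a,L}\psi_k)-\lambda\|\chi_{a,L}\psi_k\|^2\Big).
\]
Taking the infimum over all such $\{\psi_k\}$ and invoking Lemma~\ref{lem-VP-2} yields ${\rm Tr}\big(\chi_{a,L}(\mathscr{P}_\theta^N-\lambda)\chi_{a,L}\big)_-\le\mathcal E(\lambda;\mathscr{P}_\theta^{(1+a)L},\Omega^{(1+a)L})$. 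Dividing by $L^2$, letting $L\to\infty$, and using Theorem~\ref{thm-trm-limit} (so that $\mathcal E(\lambda;\mathscr{P}_\theta^{(1+a)L},\Omega^{(1+a)L})/((1+a)L)^2\to E(\theta,\lambda)$) gives the claim.

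\emph{Lower bound $\mu_aE(\theta,\lambda)\le F(\theta,\lambda;a)$.} Fix an integer $n\ge1$ and set $\ell=L/n$. Tile $\R^2$ by closed squares $\{K_j\}$ of side $\ell$, and let $\mathcal J$ be the finite index set with $K_j\cap\supp\chi_{a,L}\ne\emptyset$; then $|\mathcal J|\le Cn^2$ (with $C$ depending on $a$). For $j\in\mathcal J$, magnetic translation invariance provides an orthonormal family $\{g^{(j)}_k\}_{k=1}^{M}$ of eigenfunctions of a translate of $\mathscr{P}_\theta^{\ell}$, supported in $K_j\times\R_+$, with eigenvalues $\zeta^{\ell}_k(\theta)<\lambda$, where $M=\mathcal N(\lambda;\mathscr{P}_\theta^{\ell},\Omega^{\ell})$ and $\sum_{k=1}^M(\zeta^{\ell}_k(\theta)-\lambda)=-\mathcal E(\lambda;\mathscr{P}_\theta^{\ell},\Omega^{\ell})$. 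Extended by zero, $\{g^{(j)}_k\}_{j\in\mathcal J,\,1\le k\le M}$ is orthonormal in $L^2(\R^3_+)$. A direct integration by parts (using that $g^{(j)}_k$ vanishes on the lateral boundary of $K_j$ and satisfies the Neumann condition at $t=0$) gives $\mathcal Q_\theta^N(\chi_{a,L}g^{(j)}_k)=\zeta^{\ell}_k(\theta)\|\chi_{a,L}g^{(j)}_k\|^2+\big\|\,|\nabla\chi_{a,L}|\,g^{(j)}_k\big\|^2$. Feeding this family into Lemma~\ref{lem-VP-2}, using $\|\chi_{a,L}g^{(j)}_k\|^2\ge\min_{K_j}\chi_{a,L}^2$ (legitimate as $\zeta^{\ell}_k(\theta)-\lambda<0$) and $\big\|\,|\nabla\chi_{a,L}|\,g^{(j)}_k\big\|^2\le\|\nabla\chi_{a,L}\|_\infty^2$, one obtains
\[
-{\rm Tr}\big(\chi_{a,L}(\mathscr{P}_\theta^N-\lambda)\chi_{a,L}\big)_-\le-\mathcal E(\lambda;\mathscr{P}_\theta^{\ell},\Omega^{\ell})\sum_{j\in\mathcal J}\min_{K_j}\chi_{a,L}^2+|\mathcal J|\,M\,\|\nabla\chi_{a,L}\|_\infty^2.
\]
Since $\bigcup_{j\in\mathcal J}K_j\supset\supp\chi_{a,L}$ and $\|\nabla(\chi_{a,L}^2)\|_\infty=O(L^{-1})$, a Riemann sum estimate with mesh $\ell$ gives $\sum_{j\in\mathcal J}\min_{K_j}\chi_{a,L}^2\ge\ell^{-2}\int_{\R^2}\chi_{a,L}^2-Cn=n^2\mu_a-Cn$ with $\mu_a$ as in \eqref{def-mu}; moreover $|\mathcal J|\,M\le Cn^2\cdot C\ell^2(1-\lambda)^{-1/2}=CL^2(1-\lambda)^{-1/2}$ by Lemma~\ref{u-b-N}, so $|\mathcal J|\,M\,\|\nabla\chi_{a,L}\|_\infty^2=O(1)$ uniformly in $L$. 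Rearranging and dividing by $L^2=n^2\ell^2$,
\[
\frac{{\rm Tr}\big(\chi_{a,L}(\mathscr{P}_\theta^N-\lambda)\chi_{a,L}\big)_-}{L^2}\ge\frac{\mathcal E(\lambda;\mathscr{P}_\theta^{\ell},\Omega^{\ell})}{\ell^2}\Big(\mu_a-\frac{C}{n}\Big)-\frac{C}{L^2}.
\]
Choosing $n=n(L)=\lfloor\sqrt{L}\rfloor$, so that $n\to\infty$ and $\ell=L/n\to\infty$ as $L\to\infty$, and using Theorem~\ref{thm-trm-limit} ($\mathcal E(\lambda;\mathscr{P}_\theta^{\ell},\Omega^{\ell})/\ell^2\to E(\theta,\lambda)$), the right‑hand side tends to $\mu_aE(\theta,\lambda)$; hence $F(\theta,\lambda;a)\ge\mu_aE(\theta,\lambda)$. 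Combined with the explicit value of $F(\theta,\lambda;a)$ from Lemma~\ref{lem-exca} and $\mu_a\to1$ as $a\to0_+$, this double inequality will identify $E(\theta,\lambda)$ and prove Theorem~\ref{thm-exp-for-Eth}.

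The main difficulty is the bookkeeping of error terms in the lower bound: one must simultaneously let the tiling scale $\ell=L/n$ tend to infinity (so that the box energies $\mathcal E(\lambda;\mathscr{P}_\theta^{\ell},\Omega^{\ell})/\ell^2$ converge to $E(\theta,\lambda)$) and let $n$ tend to infinity (so that the oscillation error in $\sum_{j\in\mathcal J}\min_{K_j}\chi_{a,L}^2=n^2\mu_a+O(n)$ disappears after dividing by $n^2$), while checking that the localization error $|\mathcal J|\,M\,\|\nabla\chi_{a,L}\|_\infty^2$ stays bounded uniformly in $L$ — which holds precisely because the total number of model states used is $O(L^2)$ whereas $\|\nabla\chi_{a,L}\|_\infty^2=O(L^{-2})$. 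The remaining points (the domain technicalities in the variational principles, the identification $\mathcal Q_\theta^N(\chi u)=\zeta\|\chi u\|^2+\|\,|\nabla\chi|\,u\|^2$ on eigenfunctions, and the $\supp\chi_{a,L}\subset$ box of side $(1+a)L$ used for the upper bound) are routine.
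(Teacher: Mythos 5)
Your proof is correct and follows essentially the same route as the paper: for the lower bound you tile $\supp\chi_{a,L}$ with boxes of side $\ell$, use eigenfunctions of the Dirichlet box operators (extended by zero) as trial states, compute the localized quadratic form via the IMS identity, and combine a Riemann-sum approximation of $\int\chi_{a,L}^2$ with the a priori bound of Lemma~\ref{u-b-N} on the number of box eigenvalues; for the upper bound you observe that $\chi_{a,L}$ is supported in a box of side $(1+a)L$ and transfer trial density matrices. The only presentational difference is that the paper passes through the quadratic-form inequality $\chi_{a,L}(\mathscr{P}^N_\theta-\lambda)\chi_{a,L}\le\bigoplus_{j,k}\chi_{a,L}(\mathscr{P}^\ell_{\theta,j,k}-\lambda)\chi_{a,L}$ before invoking Lemma~\ref{lem-VP-3} box by box, whereas you feed the extended eigenfunctions directly into Lemma~\ref{lem-VP-2}; these are equivalent.
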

 \begin{proof}
Let $L\gg \ell\gg 1$. We consider the domain
 \begin{equation*}
 \Omega_{j,k,\ell}=(j \ell,(j+1)\ell)\times (k\ell,(k+1)\ell)\times\R_{+}, \qquad (j,k)\in\mathbb{Z}^{2}.
 \end{equation*}
 We will denote by $\mathscr{N}_{B}$ the number of boxes of the form $\Omega_{j,k,\ell}$ intersecting ${\rm supp}\,\chi_{a,L}$\,:
 \begin{equation}
 \mathscr{N}_{B}=\sharp \{(j,k)\in \mathbb{Z}^{2}\,:\, {\rm supp}\,\chi_{a,L}\cap \Omega_{j,k,\ell}\neq \emptyset\}.
 \end{equation}
Recall the magnetic potential ${\bf F}_{\theta}$ defined in
\eqref{Ftheta}. Consider the self-adjoint operator
$\mathscr{P}_{\theta,j,k}^{\ell}$ defined  by the closed quadratic
form
\[
\mathcal{Q}_{\theta,j,k}^{\ell}(u)
=\int_{\Omega_{j,k,\ell}}|(-i\nb+{\bf F}_{\theta})u|^{2} drdsdt,
\]
with domain,
\begin{multline}
\mathcal{D}(\mathcal{Q}_{\theta,j,k}^{\ell})=\Big\{u\in L^{2}(\Omega_{j,k,\ell})\,:\, (-i\nb+{\bf F}_{\theta})u\in L^{2}(\Omega_{j,k,\ell}) ,\\
\quad u(j\ell,\cdot,\cdot)=u((j+1)\ell,\cdot,\cdot)=0,
u(\cdot,k\ell,\cdot)=u(\cdot, (k+1)\ell,\cdot)=0 \Big\}.
\end{multline}
Since any function that belongs to the form domain $\bigoplus_{j,k}\mathcal{D}(\mathcal{Q}_{\theta,j,k}^{\ell})$ lies in the form domain $\mathcal{D}(\mathcal{Q}_{\theta}^{N})$ and the values of both quadratic forms coincide for such a function, we have the operator inequality
 \begin{equation}
 \chi_{a,L}(\mathscr{P}_{\theta}^{N}-\lambda)\chi_{a,L}\leq \bigoplus_{j,k}\chi_{a,L}(\mathscr{P}_{\theta,j,k}^{\ell}-\lambda)\chi_{a,L}.
 \end{equation}
 It follows that
 \begin{equation}\label{op-inq}
 -{\rm Tr}(\chi_{a,L}(\mathscr{P}_{\theta}^{N}-\lambda)\chi_{a,L})_{-}\leq - \sum_{j,k}{\rm Tr}(\chi_{a,L}(\mathscr{P}_{\theta,j,k}^{\ell}-\lambda)\chi_{a,L})_{-}.
 \end{equation}
 Let $S_{jk}\in \N$ be the number of eigenvalues of ${\mathscr{P}^{\ell}_{\theta,j,k}}$ that are below $\lambda$, denoted by $\{\lambda_{m}\}_{m=1}^{S_{jk}}$, and let $\{f_{s}\}_{m=1}^{S_{jk}}\in\mathcal{D}(\mathcal{Q}_{\theta,j,k}^{\ell})$ be associated (normalized) eigenfunctions. We consider the density matrix
 \[
 \gamma_{j,k} f=\sum_{m=1}^{S_{jk}}\langle f,f_{m}\rangle f_{m}, \quad f\in L^{2}(\Omega_{jk}^{\ell}).
 \]
We compute
\begin{multline}\label{tr-chi-Pth-chi}
{\rm Tr}\big(\chi_{a,L}(\mathscr{P}_{\theta,j,k}^{\ell}-\lambda)\chi_{a,L}\gamma_{j,k}\big)= \sum_{m=1}^{S_{jk}} (\mathcal{Q}_{\theta,j,k}^{\ell}(\chi_{a,L}f_{m})-\lambda\|\chi_{a,L}f_{m}\|^{2}) \\
=\sum_{m=1}^{S_{jk}} \Big\{(\lambda_{m}-\lambda)\| \chi_{a,L}f_{m}\|^{2}+\||\nb\chi_{a,L}|f_{m}\|^{2} \Big\},
\end{multline}
where the last step follows since $\{f_{m}\}_{m=1}^{S_{jk}}$ are eigenfunctions.

Let us denote by $x_{j,k,\ell}^{\star}$ the point belonging to the interval $(j\ell,(j+1)\ell)\times(k\ell,(k+1)\ell)$ where the function $\chi_{a,L}^{2}$ attains its minimum\,:
 \[
\chi_{a,L}^{2}(x_{j,k,\ell}^{\star})=\min_{(r,s)\in(j\ell,(j+1)\ell)\times(k\ell,(k+1)\ell) }\chi_{a,L}^{2}(r,s).
 \]
It follows that
\begin{equation}\label{tr-chi-pth-chi-gamma-ij}
{\rm Tr}\big(\chi_{a,L}(\mathscr{P}_{\theta,j,k}^{\ell}-\lambda)\chi_{a,L}\gamma_{j,k}\big)
\leq     \chi_{a,L}^{2}(x_{j,k,\ell}^{\star}) \sum_{m=1}^{S_{jk}}     (\lambda_{m}-\lambda)+\sum_{m=1}^{S_{jk}}    \||\nb\chi_{a,L}|f_{m}\|^{2},
\end{equation}
where have used that the term $\sum_{m=1}^{S_{jk}}(\lambda_{m}-\lambda)$ is negative. Inserting this into \eqref{tr-chi-pth-chi-gamma-ij} and using the bound $|\nb \chi_{a,L}|\leq {C}(aL)^{-1}$, we find
\begin{equation*}
{\rm Tr}\big(\chi_{a,L}(\mathscr{P}_{\theta,j,k}^{\ell}-\lambda)\chi_{a,L}\gamma_{j,k}\big)
\leq      \chi_{a,L}^{2}(x_{j,k,\ell}^{\star})  \sum_{m=1}^{S_{jk}}    (\lambda_{m}-\lambda)+ CS_{jk}(a L)^{-2}.
\end{equation*}
By \eqref{bnd-nb}, we have $S_{jk}\leq C\ell^{2}$. Using \eqref{bnd-eng-tl}, we obtain
\begin{equation}
{\rm Tr}\big(\chi_{a,L}(\mathscr{P}_{\theta,j,k}^{\ell}-\lambda)\chi_{a,L}\gamma_{j,k}\big)
\leq\chi_{a,L}^{2}(x_{j,k,\ell}^{\star})(- E(\theta,\lambda) +C\ell^{-2/3})\ell^{2}
+C\ell^{2}(a L)^{-2}.
\end{equation}
By \eqref{op-inq} and Lemma~\ref{lem-VP-3}, it follows that
 \begin{equation} \label{Eq-TR}
 -{\rm Tr}\big(\chi_{a,L}(\mathscr{P}_{\theta}^{N}-\lambda)\chi_{a,L}\big)_{-} 
 \leq \Big\{\sum_{j,k}\chi_{a,L}^{2}(x_{j,k,\ell}^{\star})\ell^{2}\Big\}(- E(\theta,\lambda) +C\ell^{-2/3})
 +C\mathscr{N}_{B}\ell^{2}(aL)^{-2}.
 \end{equation}
 The sum $\sum_{j,k}\chi_{a,L}^{2}(x_{j,k,\ell}^{\star})\ell^{2}$ is a (lower) Riemannian sum. Thus, we have
\begin{align*}
\Big| \sum_{j,k}\chi_{a,L}^{2}(x_{j,k,\ell}^{\star})\ell^{2}-\int_{\R^{2}}\chi_{a,L}^{2}(r,s)drds(=\mu_{a}L^{2})\Big| \leq C\ell L.
\end{align*}
Substituting this into \eqref{Eq-TR}, we obtain
\begin{equation*}
 -{\rm Tr}\big(\chi_{a,L}(\mathscr{P}_{\theta}^{N}-\lambda)\chi_{a,L}\big)_{-}
 \leq  E(\theta,\lambda) L^{2}\big(-\mu_{a}+C\ell L^{-1}\big) +C\ell^{-2/3}(\mu_{a}L^{2}+C\ell L)+  C\mathscr{N}_{B}\ell^{2} (aL)^{-2}.
 \end{equation*}
Dividing both sides by $L^{2}$, we get
\begin{equation*}
 -\dfrac{{\rm Tr}\big(\chi_{a,L}(\mathscr{P}_{\theta}^{N}-\lambda)\chi_{a,L}\big)_{-}}{L^{2}}
 \leq  E(\theta,\lambda)\big(-\mu_{a}+C\ell L^{-1}\big)
+C\ell^{-2/3}(\mu_{a}+C\ell L^{-1})+  C\mathscr{N}_{B}\ell^{-2}a^{-2} L^{-4}.
 \end{equation*}
We make the following choice of $\ell$,
$$
 \ell = L^{\eta},\quad \eta<1.$$
Since $\mathscr{N}_{B}\sim ((1+a))^{2}L^{2}\ell^{-2}$ as
$L\rightarrow\infty$, we get, after taking
$\displaystyle\liminf_{L\rightarrow\infty}$, the following lower
bound,
 \begin{equation}\label{upb}
 \liminf_{L\rightarrow\infty}  \dfrac{{\rm Tr} \big(\chi_{a,L }(\mathscr{P}_{\theta}^{N}-\lambda)\chi_{a,L}\big)_{-}}{L^{2}}\geq E(\theta,\lambda)\mu_{a}.
 \end{equation}
It remains to prove the upper bound. Using Lemma~\ref{lem-VP-3} and that the trace is cyclic, we see that
 \begin{align*}
 -{\rm Tr}\big(\chi_{a,L}(\mathscr{P}_{\theta}^{N}- \lambda)\chi_{a,L}\big)_{-}
 &=\inf_{0\leq\gamma\leq 1} {\rm Tr}\big((\mathscr{P}_{\theta}^{N}- \lambda)\chi_{a,L}\gamma\chi_{a,L}\big).
  \end{align*}
 Since the function $\chi_{a,L}$ is supported in $\big(-\frac{(1+a)L}{2},\frac{(1+a)L}{2}\big)^{2}$, it follows that
 \begin{equation}
 -{\rm Tr}\big(\chi_{a,L}(\mathscr{P}_{\theta}^{N}- \lambda)\chi_{a,L}\big)_{-}
 \geq \inf_{0\leq\widetilde\gamma\leq 1} {\rm Tr}\big[(\mathscr{P}_{\theta}^{(1+a)L}- \lambda)\widetilde\gamma\big]
\geq - ((1+a)L)^{2}E(\theta,\lambda).
  \end{equation}
By taking $\displaystyle\limsup_{L\to\infty}$,  this yields that,
  \begin{equation}\label{lob}
   \limsup_{L\rightarrow\infty}  \dfrac{{\rm Tr} \big(\chi_{a,L}(\mathscr{P}_{\theta}^{N}-\lambda)\chi_{a,L}\big)_{-}}{L^{2}}\leq E(\theta,\lambda)(1+a)^{2}.
  \end{equation}
 Combining \eqref{upb} and \eqref{lob},  we obtain
   \begin{multline}
 \mu_{a} E(\theta,\lambda)\leq \liminf_{L\rightarrow\infty}  \dfrac{{\rm Tr} \big(\chi_{a,L}(\mathscr{P}_{\theta}^{N}-\lambda)\chi_{a,L}\big)_{-}}{L^{2}}
 \\
 \leq \limsup_{L\rightarrow\infty}  \dfrac{{\rm Tr} \big(\chi_{a,L}(\mathscr{P}_{\theta}^{N}-\lambda)\chi_{a,L}\big)_{-}}{L^{2}}\leq (1+a)^2E(\theta,\lambda)\,.
  \end{multline}
%
%
Recalling the definition of $F(\theta,\lambda)$ in \eqref{Fthld} and Lemma~\ref{lem-exca} finishes the proof of Lemma~\ref{th-rewith-E}.
 \end{proof}
 \begin{proof}[Proof of Theorem~\ref{thm-exp-for-Eth}]
The proof follows easily by combining the results of
Lemmas~\ref{lem-exca},~\ref{th-rewith-E}, and by sending the
parameter $a$ to $0_+$.
 \end{proof}

\subsection{Dilatation}
 Let us define the unitary operator
 \begin{equation}\label{unit-tran}
  U_{h,b}:L^{2}(\R^{3}_{+})\ni u\mapsto U_{h,b}u(z)= h^{-3/4}b^{3/4}u(h^{-1/2}b^{1/2}z) \in L^{2}(\R^{3}_{+}),
\end{equation}
 Let $h,b>0$ and $\theta\in[0,\pi/2]$. We introduce the self-adjoint operator
\begin{equation}\label{Phb-th}
\mathscr{P}_{\theta, h,b}^{N}= (-ih\nabla +b{\bf F}_{\theta})^{2},\quad \mbox{in}\quad L^{2}(\R^{3}_{+}),
\end{equation}
with Neumann boundary conditions at $t=0$.
With $\mathscr{P}_{\theta}^{N}$ being the operator from \eqref{Neum-r3+}, it is easy to check that
\begin{equation}\label{ch-hb-op}
\mathscr{P}_{\theta, h,b}^{N}=hb U_{h,b}\mathscr{P}_{\theta}^{N}U_{h,b}^{-1}.
\end{equation}
For $j\in\mathbb{N}$ and $(\xi,\tau)\in\R^{2}$, we introduce the family of projectors
\begin{equation}\label{Pr-resc-th0}
\Pi_{j}(\xi,\tau;h,b)=U_{h,b}\Pi_{j}(\xi,\tau)U_{h,b}^{-1}
\end{equation}
and, for $\theta\in(0,\pi/2]$,
\begin{equation}\label{Pr-theta-j}
\Pi_{\theta,j}(h,b)=U_{h,b}\Pi_{\theta,j}U_{h,b}^{-1}
\end{equation}
where, $\Pi_{j}(\xi,\tau)$ and $\Pi_{\theta,j}$ are introduced in \eqref{PR-Pi-j} and \eqref{Pr-th-j} respectively. 
We deduce the following two generalizations of Lemma \ref{eqs-PR} and Lemma \ref{computations}.
 \begin{lem}\label{eqs-PR-cor}
For all $\varphi\in\mathcal{D}(\mathscr{P}_{0,h,b}^{N})$, we have
\begin{equation}\label{eq1-PR-cor}
\big\langle \mathscr{P}_{0,h,b}^{N}(\Pi_{j}(\xi,\tau;h,b)\varphi), \varphi\big\rangle_{L^{2}(\R^{3}_{+})} =hb(\mu_{j}(\xi)+\tau^{2})\big\langle \Pi_{j}(\xi,\tau;h,b)\varphi,\varphi\big\rangle_{L^{2}(\R^{3}_{+})},
\end{equation}
and for all $f\in L^2(\R^3_{+})$,
\begin{equation}\label{eq2-PR-cor}
\sum_{j}\int_{\R^{2}} \big\langle \varphi,\Pi_{j}(\xi,\tau;h,b)\varphi\big\rangle_{L^{2}(\R^{3}_{+})}d\xi d\tau=2\pi \norm{\varphi}^{2}_{L^{2}(\R^{3}_{+})}.
\end{equation}
Moreover, for any smooth cut-off function $\chi\in C^{\infty}_{0}(\R^{2})$, it holds true that
\begin{equation}\label{tr-pi-0-j-cor}
{\rm Tr}(\chi \Pi_{j}(\xi,\tau;h,b)\chi)= bh^{-1}(2\pi)^{-1}\int_{\R^{2}}\chi^{2}(r,s)drds.
\end{equation}
\end{lem}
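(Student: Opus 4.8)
The plan is to obtain all three identities from their unscaled counterparts in Lemma~\ref{eqs-PR} by conjugating with the dilation $U_{h,b}$. Write $\mu:=h^{-1/2}b^{1/2}$, so that $U_{h,b}u(z)=\mu^{3/2}u(\mu z)$ (with $\mu^{3/2}=h^{-3/4}b^{3/4}$) and $U_{h,b}$ is unitary on $L^{2}(\R^{3}_{+})$ with $U_{h,b}^{-1}=U_{h,b}^{\ast}$. Combining \eqref{ch-hb-op} (with $\theta=0$) and the definition \eqref{Pr-resc-th0}, the middle factor $U_{h,b}^{-1}U_{h,b}$ cancels and one gets the intertwining relation
\[
\mathscr{P}_{0,h,b}^{N}\,\Pi_{j}(\xi,\tau;h,b)=hb\,U_{h,b}\big(\mathscr{P}_{0}^{N}\Pi_{j}(\xi,\tau)\big)U_{h,b}^{-1}.
\]
For \eqref{eq1-PR-cor} I would set $\psi:=U_{h,b}^{-1}\varphi$; since \eqref{ch-hb-op} shows $U_{h,b}$ maps $\mathcal{D}(\mathscr{P}_{0}^{N})$ onto $\mathcal{D}(\mathscr{P}_{0,h,b}^{N})$, we have $\varphi\in\mathcal{D}(\mathscr{P}_{0,h,b}^{N})$ iff $\psi\in\mathcal{D}(\mathscr{P}_{0}^{N})$, and unitarity turns the intertwining relation into
\[
\big\langle\mathscr{P}_{0,h,b}^{N}\Pi_{j}(\xi,\tau;h,b)\varphi,\varphi\big\rangle=hb\,\big\langle\mathscr{P}_{0}^{N}\Pi_{j}(\xi,\tau)\psi,\psi\big\rangle.
\]
Applying \eqref{eq1-PR} and then $\langle\Pi_{j}(\xi,\tau)\psi,\psi\rangle=\langle\Pi_{j}(\xi,\tau;h,b)\varphi,\varphi\rangle$ (again by unitarity) yields the claimed factor $hb(\mu_{j}(\xi)+\tau^{2})$. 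For \eqref{eq2-PR-cor} the same substitution gives $\langle\varphi,\Pi_{j}(\xi,\tau;h,b)\varphi\rangle=\langle\psi,\Pi_{j}(\xi,\tau)\psi\rangle$ for every $j$ and every $(\xi,\tau)$, so summing in $j$, integrating in $(\xi,\tau)$ and invoking \eqref{eq2-PR} produces $2\pi\|\psi\|^{2}=2\pi\|\varphi\|^{2}$.

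The trace identity \eqref{tr-pi-0-j-cor} is the only one needing a genuine computation, because the cut-off $\chi=\chi(r,s)$ does not commute with $U_{h,b}$. From \eqref{PR-Pi-j}--\eqref{def-vj}, $\Pi_{j}(\xi,\tau)$ is the rank-one operator $\varphi\mapsto v_{j}(\cdot;\xi,\tau)\langle v_{j}(\cdot;\xi,\tau),\varphi\rangle$, hence by \eqref{Pr-resc-th0} its rescaled version equals $|U_{h,b}v_{j}(\cdot;\xi,\tau)\rangle\langle U_{h,b}v_{j}(\cdot;\xi,\tau)|$; since $\chi$ is real, compactly supported in $(r,s)$, and $v_{j}$ is bounded and decaying in $t$, the vector $\chi\,U_{h,b}v_{j}(\cdot;\xi,\tau)$ lies in $L^{2}(\R^{3}_{+})$ and $\chi\,\Pi_{j}(\xi,\tau;h,b)\,\chi$ is rank-one with
\[
{\rm Tr}\big(\chi\,\Pi_{j}(\xi,\tau;h,b)\,\chi\big)=\big\|\chi\,U_{h,b}v_{j}(\cdot;\xi,\tau)\big\|_{L^{2}(\R^{3}_{+})}^{2}.
\]
Inserting $(U_{h,b}v_{j})(r,s,t)=\mu^{3/2}v_{j}(\mu r,\mu s,\mu t;\xi,\tau)$ and $|v_{j}(\cdot;\xi,\tau)|^{2}=(2\pi)^{-1}|u_{j}(t;\xi)|^{2}$, I would integrate first in $t$, where the substitution $t\mapsto\mu t$ together with the normalization $\int_{\R_{+}}u_{j}(t;\xi)^{2}\,dt=1$ from \eqref{muj} contributes a factor $\mu^{-1}$, leaving $\int_{\R^{2}}\chi^{2}(r,s)\,dr\,ds$. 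Collecting the powers of $\mu$, namely $\mu^{3}\cdot\mu^{-1}=\mu^{2}=bh^{-1}$, gives exactly $bh^{-1}(2\pi)^{-1}\int_{\R^{2}}\chi^{2}(r,s)\,dr\,ds$.

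I do not expect any real obstacle: the whole statement is a transcription of Lemma~\ref{eqs-PR} through the dilation $U_{h,b}$, and the only points requiring a little attention are the bookkeeping of the scaling factor $\mu=\sqrt{b/h}$ --- the extra $hb$ in \eqref{eq1-PR-cor} coming from \eqref{ch-hb-op}, whereas the $\mu^{2}=b/h$ in \eqref{tr-pi-0-j-cor} arises from the mismatch between the dilation of $\chi^{2}$ and that of the integral kernel --- and the domain remark noted above. The stated generalization of Lemma~\ref{computations} to $\theta\in(0,\pi/2]$ is handled by the identical argument, with Lemma~\ref{computations}, \eqref{Pr-theta-j} and \eqref{Ftheta} replacing Lemma~\ref{eqs-PR}, \eqref{Pr-resc-th0} and the $\theta=0$ data.
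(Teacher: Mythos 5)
Your proof is correct and follows the approach the paper intends: the first two identities drop out of the unitary conjugation $\mathscr{P}_{0,h,b}^{N}=hb\,U_{h,b}\mathscr{P}_{0}^{N}U_{h,b}^{-1}$ and $\Pi_{j}(\xi,\tau;h,b)=U_{h,b}\Pi_{j}(\xi,\tau)U_{h,b}^{-1}$ applied to Lemma~\ref{eqs-PR}, and the trace identity --- which, as you observe, has no counterpart in the unscaled Lemma~\ref{eqs-PR} and cannot follow from conjugation alone because the cut-off $\chi(r,s)$ does not commute with $U_{h,b}$ --- is obtained by the exact kernel/scaling computation you describe, yielding the factor $\mu^2=bh^{-1}$. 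The paper states these corollaries without proof ("we deduce"), and your argument, including the domain remark and the bookkeeping of the dilation factor, is precisely the deduction it presupposes.
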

 \begin{lem}{\label{computations-cor}} Let $f\in\mathcal{D}(\mathscr{P}_{\theta,h,b}^{N})$, we have
\begin{equation}\label{p-pi-th-j-cor}
\big\langle\mathscr{P}_{\theta,h,b}^{N}\Pi_{\theta,j}(h,b)f,f\big\rangle_{L^{2}(\R^{3}_{+})}
= hb \zeta_{j}(\theta)\big\langle \Pi_{\theta,j}(h,b)f,f\big\rangle_{L^{2}(\R^{3}_{+})},
\end{equation}
and for all $f\in L^2(\R^{3}_{+})$,
\begin{align}\label{proj-sum-j-Big-Pi-cor}
\Big\langle\sum_{j} \Pi_{\theta,j}(h,b)f,f \Big\rangle_{L^{2}(\R^{3}_{+})}
\leq \norm{f}_{L^{2}(\R^{3}_{+})}^{2}.
\end{align}
Moreover, for any smooth cut-off function $\chi\in C^{\infty}_{0}(\R^{2})$, it holds true that
\begin{equation}\label{tr-pi-th-j-cor}
{\rm Tr}(\chi \Pi_{\theta,j}(h,b)\chi)= bh^{-1}(2\pi)^{-1}{\sin(\theta)}\int_{\R^{2}}\chi^{2}(r,s)drds.
\end{equation}
\end{lem}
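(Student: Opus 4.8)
The plan is to obtain all three assertions from Lemma~\ref{computations} by conjugating with the dilation $U_{h,b}$ introduced in \eqref{unit-tran}. First I would record the elementary consequence of the identity \eqref{ch-hb-op} that $\mathcal{D}(\mathscr{P}_{\theta,h,b}^{N})=U_{h,b}\,\mathcal{D}(\mathscr{P}_{\theta}^{N})$; in particular, for $f\in\mathcal{D}(\mathscr{P}_{\theta,h,b}^{N})$ the function $\varphi:=U_{h,b}^{-1}f$ lies in $\mathcal{D}(\mathscr{P}_{\theta}^{N})$, and since $U_{h,b}$ is unitary it preserves norms and the trace class. I would also note that $\Pi_{\theta,j}(h,b)=U_{h,b}\Pi_{\theta,j}U_{h,b}^{-1}$ is again an orthogonal projection by \eqref{Pr-theta-j}.

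For \eqref{p-pi-th-j-cor} I would substitute \eqref{ch-hb-op} and \eqref{Pr-theta-j}, cancel the internal factors $U_{h,b}^{-1}U_{h,b}$, and move the outer $U_{h,b}$'s onto the second slot of the inner product (using $U_{h,b}^{\ast}=U_{h,b}^{-1}$), which gives
\[
\big\langle\mathscr{P}_{\theta,h,b}^{N}\Pi_{\theta,j}(h,b)f,f\big\rangle_{L^{2}(\R^{3}_{+})}
=hb\,\big\langle\mathscr{P}_{\theta}^{N}\Pi_{\theta,j}\varphi,\varphi\big\rangle_{L^{2}(\R^{3}_{+})}.
\]
Applying \eqref{p-pi-th-j} to the right-hand side and reversing the conjugation in $\langle\Pi_{\theta,j}\varphi,\varphi\rangle$ then yields $hb\,\zeta_{j}(\theta)\,\langle\Pi_{\theta,j}(h,b)f,f\rangle$. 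The bound \eqref{proj-sum-j-Big-Pi-cor} follows in the same manner: $\langle\sum_{j}\Pi_{\theta,j}(h,b)f,f\rangle=\langle\sum_{j}\Pi_{\theta,j}\varphi,\varphi\rangle\le\|\varphi\|^{2}=\|f\|^{2}$ by \eqref{proj-sum-j-Big-Pi} and unitarity of $U_{h,b}$.

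The only step carrying genuinely new content is \eqref{tr-pi-th-j-cor}, and this is where I expect the only real point of care. Using invariance of the trace under the unitary conjugation by $U_{h,b}$, I would write ${\rm Tr}(\chi\Pi_{\theta,j}(h,b)\chi)={\rm Tr}(\chi_{h,b}\Pi_{\theta,j}\chi_{h,b})$, where a short computation from \eqref{unit-tran} shows that $U_{h,b}^{-1}\chi U_{h,b}$ is multiplication by $\chi_{h,b}(r,s)=\chi\big(\sqrt{h/b}\,r,\sqrt{h/b}\,s\big)$. Then \eqref{tr-pi-th-j-1} gives ${\rm Tr}(\chi_{h,b}\Pi_{\theta,j}\chi_{h,b})=\frac{\sin(\theta)}{2\pi}\int_{\R^{2}}\chi_{h,b}^{2}(r,s)\,drds$, and the planar change of variables $(r,s)\mapsto(\sqrt{h/b}\,r,\sqrt{h/b}\,s)$ produces the Jacobian factor $b/h$, so that ${\rm Tr}(\chi\Pi_{\theta,j}(h,b)\chi)=bh^{-1}(2\pi)^{-1}\sin(\theta)\int_{\R^{2}}\chi^{2}(r,s)\,drds$, as claimed. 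The subtlety to watch is that although $U_{h,b}$ dilates all three variables of $\R^{3}_{+}$, the cut-off $\chi$ depends only on $(r,s)$, so the Jacobian appearing in the final integral is the two-dimensional one $(h/b)^{-1}$ and not a three-dimensional one; the remaining $t$-variable scaling has already been absorbed in the normalization of the $u_{\theta,j}$ used to derive \eqref{tr-pi-th-j-1}. The companion Lemma~\ref{eqs-PR-cor} for $\theta=0$ is proved by the identical conjugation argument starting from Lemma~\ref{eqs-PR}, with $\mathscr{P}_{0,h,b}^{N}=hb\,U_{h,b}\mathscr{P}_{0}^{N}U_{h,b}^{-1}$ playing the role of \eqref{ch-hb-op}.
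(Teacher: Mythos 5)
Your proposal is correct and takes exactly the (implicit) route the paper intends: the paper gives no written proof, merely saying ``We deduce the following two generalizations'' after the dilation $U_{h,b}$ and the conjugated projectors are introduced, and the conjugation argument you spell out — using \eqref{ch-hb-op}, \eqref{Pr-theta-j}, unitarity of $U_{h,b}$, invariance of the trace under unitary conjugation, and the two-dimensional change of variables for $\chi$ — is precisely what fills in that gap. Your remark that only the planar Jacobian $b/h$ appears (not the three-dimensional one) is the right point to check and you resolve it correctly.
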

%
%
\section{Boundary coordinates} \label{bndcoord}
\subsection{Local coordinates}
We denote the standard coordinates on $\R^{3}$ by $x=(x_{1},x_{2},x_{3})$. The standard Euclidean metric is given by
      \begin{equation}\label{stan-metr}
     g_{0}=dx_{1}^{2}+dx_{2}^{2}+dx_{3}^{2}.
     \end{equation}
We introduce a system of coordinates valid near a point of the boundary. These coordinates are used in \cite{HeMo2} and then in \cite{RA1} in order to estimate the ground state energy of a magnetic Schr\"odinger operator with large magnetic field (or with small semi-classical parameter).
Consider a point $x_{0}\in\partial\Omega$. 
Let $\mathcal{V}_{x_{0}}$ be a neighbourhood of $x_{0}
$ such that there exist local boundary coordinates $(r,s)$ in $W=\mathcal{V}_{x_{0}}\cap\partial\Omega$, i.e., there exist an open subset $U$ of $\R^{2}$ and a
diffeomorphism $\phi_{x_{0}}:W\rightarrow U,\, \phi_{x_{0}}(x)=(r,s)$, such that $\phi_{x_0}(x_0)=0$ and $D\phi_{x_0}(x_0)={\rm Id}_2$ where ${\rm Id}_2$ is the $2\times 2$ identity matrix. Then for $t_{0}>0$ small enough, we define the coordinate transformation $\Phi^{-1}_{x_{0}}$ as
    \begin{equation}\label{bigfi}
U\!\times\!(0,t_{0})\ni (r,s,t)\mapsto x:=\Phi^{-1}_{x_0}(r,s,t)=\phi_{x_{0}}^{-1}(r,s)+t\nu,
    \end{equation}
 where $\nu$ is the interior normal unit vector at the point $\phi_{x_{0}}^{-1}(r,s)\in\partial\Omega$.
This defines a diffeomorphism of  $U\!\times\!(0,t_{0})$ onto $\mathcal{V}_{x_{0}}$ and its inverse $\Phi_{x_0}$ defines local coordinates on $\mathcal{V}_{x_{0}}$, $\mathcal{V}_{x_{0}}\ni x\mapsto \Phi_{x_0}(x)= (r(x),s(x),t(x))$ such that
 \[
t(x)={\rm dist}\,(x,\partial\Omega).
 \]
It is easily to be seen that the $\Phi_{x_0}$ is constructed so that
\begin{equation}\label{asp-Phix0}
\Phi_{x_0}(x_0)=0,\qquad D\Phi_{x_0}(x_0)={\rm Id}_3,
\end{equation}
where ${\rm Id}_3$ denotes the $3\times3$ identity matrix.
For convenience, we shall henceforth write $(y_{1},y_{2},y_{3})$ instead of $(r,s,t)$. Let us consider the matrix
 \begin{equation}
   \label{gij} g:=g_{x_{0}}= (g_{pq})_{p,q=1}^{3},
 \end{equation}
 with
    \begin{equation}
   g_{pq} =\left \langle \frac{\partial x}{\partial y_{p}}, \frac{\partial x}{\partial y_{q}}\right\rangle, \qquad  \langle X,Y  \rangle=\Sum{1\leq p,q\leq 3}{}g_{pq}\widetilde X_{p}\widetilde Y_{q},
    \end{equation}
     where $X=\Sum{p}{}\widetilde X_{p}\frac{\partial}{\partial y_{p}}$ and $Y=\Sum{q}{}\widetilde Y_{q}\frac{\partial}{\partial y_{q}}$.\\
The Euclidean metric \eqref{stan-metr} transforms to
\begin{align*}
g_{0}&=\Sum{1\leq p,q\leq 3}{}g_{pq}dy_{p}\otimes dy_{q}
\\
&=dy_{3}\otimes dy_{3}+\Sum{1\leq p,q\leq 2}{}\big[ G_{pq}(y_{1},y_{2})-2y_{3}K_{pq}(y_{1},y_{2})+y_{3}^{2}L_{pq}(y_{1},y_{2})\big]dy_{p}\otimes dy_{q},
\end{align*}
where
    \begin{eqnarray*}
    G&=\Sum{1\leq p,q\leq 2}{}G_{pq}dy_{p}\otimes dy_{q} &=\Sum{1\leq p,q\leq 2}{}\left\langle\frac{\partial x}{\partial y_{p}},\frac{\partial
    x}{\partial y_{q}}\right\rangle dy_{p}\otimes dy_{q},\\
    K&= \Sum{1\leq p,q\leq 2}{}K_{pq}dy_{p}\otimes dy_{q} &=\Sum{1\leq p,q\leq 2}{}\left\langle\frac{\partial \nu}{\partial y_{p}},\frac{\partial x}{\partial
    y_{q}}\right\rangle dy_{p}\otimes dy_{q},\\
    L&= \Sum{1\leq p,q\leq 2}{}L_{pq}dy_{p}\otimes dy_{q} &=\Sum{1\leq p,q\leq 2}{}\left\langle\frac{\partial \nu}{\partial y_{p}},\frac{\partial \nu}{\partial
    y_{q}}\right\rangle dy_{p}\otimes dy_{q}
    \end{eqnarray*}
are the first, second and third fundamental forms on $\partial\Omega$.

Note that if $x\in\mathcal{V}_{x_{0}}\cap \partial\Omega$, i.e., $t(x)=0$, $g_{0}$ reduces to
\begin{equation}\label{metric-on-boundary}
g_{0}= dy_{3}\otimes dy_{3}+ G.
\end{equation}
We denote by $g^{-1}:=(g^{pq})_{p,q=1}^{3}$ the matrix inverse of $g$. Let $y_{0}$ be such that $\Phi^{-1}_{x_{0}}(y_{0})\in\mathcal{V}_{x_{0}}\cap \partial \Omega$. By virtue of \eqref{asp-Phix0}, we may assume, by taking $\mathcal{V}_{x_{0}}$ small enough, that
\begin{equation}\label{Ass-m-j}
\frac{1}{2}\,{\rm Id}_3\leq g^{-1}(y_{0})\leq 2\,{\rm Id}_3.
\end{equation}
Using this and the Taylor expansion of the matrix $g^{-1}=(g^{pq})_{p=1}^{3}$ about $y_{0}$, we find
\begin{align}
\label{app-jac}
(1-2c|y-y_0|)g^{-1}(y_0)\leq g^{-1}(y)\leq (1+2 c|y-y_{0}| )g^{-1}(y_{0}).
\end{align}
Let $|g|={\rm det}(g)$. The Lebesgue measure transforms into $dx=|g|^{1/2}dy$. The Taylor expansion of $|g|^{1/2}$ in $\mathcal{V}_{x_{0}}$, together with \eqref{Ass-m-j}, gives us~:
\begin{equation}\label{app-mf}
 (1-2 c|y-y_{0}|) |g|^{1/2}(y_{0})\leq |g|^{1/2}(y) \leq (1+2 c|y-y_{0}|) |g|^{1/2}(y_{0}).
\end{equation}
Here the constant $c$ appearing in \eqref{app-jac} and \eqref{app-mf} can be chosen uniformly (i.e., independently of $x_{0}$) by compactness and regularity of $\partial\Omega$.
The magnetic potential ${\bf A}=({\bf A}_{1},{\bf A}_{2},{\bf A}_{3})$ is transformed to a magnetic potential in the new coordinates $\widetilde{\bf A}=(\widetilde{\bf A}_{1},\widetilde{\bf A}_{2},\widetilde{\bf A}_{3})$ given by
\begin{equation}{\label{tildeA}}
\widetilde{\bf A}_{p}(y)= \sum_{k=1}^{3}{\bf A}_{k}(\Phi_{x_{0}}^{-1}(y))\dfrac{\partial x_{k}}{\partial y_{p}},\quad p=1,2,3.
\end{equation}
The approximation of the magnetic potential in the new coordinates is done by replacing $\widetilde{\bf A}$ by its linear part at $y_{0}$, which we denote $\widetilde {\bf A}^{\rm lin}=(\widetilde{\bf A}^{\rm lin}_{1},\widetilde {\bf A}^{\rm lin}_{2},\widetilde{\bf A}^{\rm lin}_{3})$, so that
\begin{equation}\label{app-MF}
|\widetilde{\bf A}_{p}(y)- \widetilde{\bf A}^{\rm lin}_{p}(y)|\leq C|y-y_{0}| ^{2},
\end{equation}
for all $p=1,2,3$, where
\begin{equation}\label{Alin}
{\bf\widetilde A}^{\rm lin}(y)= {\bf\widetilde A}(y_{0})+\sum_{p=1}^{3}(y_{p}-y_{0p})\dfrac{\partial{\bf \widetilde A}}{\partial y_{p}}(y_{0}).
\end{equation}
The following identity (cf.~\cite[formula (7.23)]{HeMo3}) gives the strength of the magnetic field in terms of the new coordinates,
\begin{equation}\label{relation-nc}
\big|{\bf B}(\Phi^{-1}_{x_{0}}(y_{0}))\big|^{2}=|\widetilde {\bf B}(y_{0})|^{2}= |g(y_{0})|^{-1}\bigg[ \sum_{p,q=1}^{3}g_{pq}(y_{0})\alpha_{p}\alpha_{q}\bigg],
\end{equation}
where $\alpha={\rm curl}({\bf\widetilde A}^{\rm lin})=(\alpha_{1},\alpha_{2},\alpha_{3})$ is given by
\begin{equation}\label{def-alphas}
\alpha_{1}=\dfrac{\partial \widetilde{\bf A}_{3}}{\partial y_{2}}(y_{0})- \dfrac{\partial \widetilde{\bf A}_{2}}{\partial y_{3}}(y_{0}),\quad
\alpha_{2}=\dfrac{\partial \widetilde{\bf A}_{1}}{\partial y_{3}}(y_{0})- \dfrac{\partial \widetilde{\bf A}_{3}}{\partial y_{1}}(y_{0}),\quad
\alpha_{3}= \dfrac{\partial \widetilde{\bf A}_{2}}{\partial y_{1}}(y_{0})- \dfrac{\partial \widetilde{\bf A}_{1}}{\partial y_{2}}(y_{0}).
\end{equation}
Let $u\in L^{2}(\mathcal{V}_{x_{0}})$, we define the map~:
 \begin{equation}\label{app-tild}
y\mapsto \widetilde u(y): =u(\Phi^{-1}_{x_{0}}(y))\,.
   \end{equation}
The next Lemma expresses, in terms of the new coordinates, the
quadratic form and the $L^2$-norm of a function $u$ supported in a
neighborhood of $x_0$.
\begin{lem}\label{lem:qf'}
Let $u\in\mathcal{D}(\mathcal{Q}_{h})$ such that ${\rm supp}\,u\subset \mathcal{V}_{x_{0}}$. We have
    \begin{equation}\label{new-QF}
    \begin{aligned}
    \mathcal{Q}_{h}(u)&=\Int{\mathcal{V}_{x_{0}}}{}|(-ih\nb+{\bf{A}})u|^{2}dx=\Int{\R^{3}_{+}}{}
     \Sum{p,q=1}{3}g^{pq}\big[(-ih\nb_{y_{p}}+\widetilde{\bf A}_{p})\,\widetilde{u}\big]\,\big[\overline{(-ih\nb_{y_{q}}+\widetilde{\bf A}_{q})\,\widetilde{u}}\big]\,|g|^{1/2}dy,
    \end{aligned}
    \end{equation}
    and
    \begin{align}\label{new-norm}
    \norm{u}_{L^{2}(\Omega)}^{2}&=\int_{\Omega}|u(x)|^{2}dx=\int_{\R^{3}_{+}}|g|^{1/2}|\widetilde u(y)|^{2}dy.
    \end{align}
\end{lem}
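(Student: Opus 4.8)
The plan is to reduce both identities to the ordinary change-of-variables formula under the diffeomorphism $\Phi_{x_0}$, the only substantive point being that the magnetic gradient $(-ih\nabla+{\bf A})$ transforms as a $1$-form under $\Phi_{x_0}$, in the same way the vector potential does in \eqref{tildeA}. First I would dispose of \eqref{new-norm}: since $u$ is supported in $\mathcal V_{x_0}$, which $\Phi_{x_0}$ maps diffeomorphically onto an open subset of $U\times(0,t_0)\subset\R^3_+$, and since $dx=|g|^{1/2}dy$, the substitution $x=\Phi^{-1}_{x_0}(y)$ gives $\int_\Omega|u(x)|^2\,dx=\int|\widetilde u(y)|^2|g|^{1/2}\,dy$ over that subset; extending $\widetilde u$ by zero yields the stated integral over $\R^3_+$.

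The heart of \eqref{new-QF} is the pointwise identity
\[
(-ih\partial_{x_k}+{\bf A}_k(x))\,u(x)=\sum_{p=1}^3\frac{\partial y_p}{\partial x_k}\,\big[(-ih\partial_{y_p}+\widetilde{\bf A}_p(y))\,\widetilde u(y)\big],\qquad y=\Phi_{x_0}(x),\quad k=1,2,3.
\]
For its first-order part this is just the chain rule applied to $u(x)=\widetilde u(\Phi_{x_0}(x))$; for the zero-order part, multiplying the defining relation $\widetilde{\bf A}_p=\sum_k{\bf A}_k\,\partial x_k/\partial y_p$ by $\partial y_p/\partial x_j$ and summing over $p$ gives $\sum_p\frac{\partial y_p}{\partial x_j}\widetilde{\bf A}_p={\bf A}_j$, because $\sum_p\frac{\partial x_k}{\partial y_p}\frac{\partial y_p}{\partial x_j}=\delta_{kj}$. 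Setting $X_p:=(-ih\partial_{y_p}+\widetilde{\bf A}_p)\widetilde u$ and using that $\partial y_p/\partial x_k$ is real, we obtain
\[
\sum_{k=1}^3\big|(-ih\partial_{x_k}+{\bf A}_k)u\big|^2=\sum_{p,q=1}^3\Big(\sum_{k=1}^3\frac{\partial y_p}{\partial x_k}\frac{\partial y_q}{\partial x_k}\Big)X_p\,\overline{X_q}=\sum_{p,q=1}^3g^{pq}\,X_p\,\overline{X_q},
\]
where the last step is the linear-algebra identity $g^{pq}=\sum_k\frac{\partial y_p}{\partial x_k}\frac{\partial y_q}{\partial x_k}$, equivalently $g^{-1}=(D\Phi_{x_0})(D\Phi_{x_0})^{T}$, which follows from $g_{pq}=\sum_k\frac{\partial x_k}{\partial y_p}\frac{\partial x_k}{\partial y_q}$ in \eqref{gij}. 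Multiplying by $|g|^{1/2}$, integrating over $\mathcal V_{x_0}$ with $dx=|g|^{1/2}dy$, and extending $\widetilde u$ by zero as before produces \eqref{new-QF}.

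The computation is entirely routine; essentially the only thing to watch is the bookkeeping of the Jacobian matrices $D\Phi_{x_0}$ and $D\Phi_{x_0}^{-1}$ and the index contractions, so that what is paired with the two magnetic-gradient covectors is the \emph{inverse} metric $g^{-1}$ and not $g$ itself, and that the transposes land on the correct side. One should also note in passing that $u\mapsto\widetilde u$ carries $\mathcal D(\mathcal Q_h)$, restricted to functions supported in $\mathcal V_{x_0}$, into the natural $|g|^{1/2}$-weighted form domain on $\R^3_+$, which is immediate from \eqref{new-QF}--\eqref{new-norm} together with the bounds \eqref{Ass-m-j} on $g^{-1}$.
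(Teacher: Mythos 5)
Your proof is correct. The paper itself does not spell out a proof of this lemma (it records it as a standard fact about the boundary coordinates from \cite{HeMo2,RA1}), and what you wrote is the standard argument. The two points you flag as needing care — that the magnetic covariant derivative transforms as a $1$-form, so that its components get contracted against $g^{-1}=(D\Phi_{x_0})(D\Phi_{x_0})^{T}$ rather than $g$, and that the Jacobian factor $|g|^{1/2}$ supplies the volume form — are exactly the right ones, and your verification of the pointwise identity $(-ih\partial_{x_k}+\mathbf A_k)u=\sum_p\frac{\partial y_p}{\partial x_k}\big[(-ih\partial_{y_p}+\widetilde{\mathbf A}_p)\widetilde u\big]$ via the chain rule together with $\sum_p\frac{\partial y_p}{\partial x_k}\widetilde{\mathbf A}_p=\mathbf A_k$ is valid. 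The extension of $\widetilde u$ by zero to all of $\R^3_+$ and the observation about form domains are also handled correctly.
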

 \subsection{Approximation of the quadratic form}
The starting point is to simplify the expression of the quadratic form given in \eqref{new-QF} in terms of the new coordinates. To achieve this, we proceed as follows. Let $\ell,T>0$ ($T$ and $\ell$ depend on $h$ and tend to $0$ as $h\rightarrow 0$). Consider the sets
\begin{equation}
Q_{0,\ell,T}=(-\ell/2,\ell/2)^{2}\times(0,T),\qquad  Q_{0,\ell}=(-\ell/2,\ell/2)^{2}\times\{0\},
\end{equation}
such that $\Phi_{x_{0}}^{-1}\big(Q_{0,\ell,T}\big)\subset \mathcal{V}_{x_{0}}$.

Next, consider an arbitrary point $y_{0}\in Q_{0,\ell}$. Then $\Phi^{-1}_{x_{0}}(y_{0})$ lies on the boundary and the metric $g_{0}$ has the form \eqref{metric-on-boundary}. Consequently, the matrix $g(y_{0})$ can be orthogonally diagonalized (being symmetric in this case), and such a diagonalization amounts to a rotation of the coordinate system. After performing such a diagonalization, we may assume that the matrix $g(y_{0})$ is a diagonal matrix given as follows
\begin{equation}{\label{sm}}
 g(y_{0})=
\begin{pmatrix}
\lambda_{1}&0&0\\
0&\lambda_{2}&0\\
0&0& 1
\end{pmatrix}.
\end{equation}
By virtue of \eqref{Ass-m-j}, it is easy to see that $\lambda_{1},\lambda_{2}>0$. Moreover, we have $|g(y_{0})|^{1/2}=\sqrt{\lambda_{1}\lambda_{2}}$.
Denote
\[
\widetilde{Q}_{0,\ell,T}:= \Big(-\frac{\lambda_{1}^{1/2}\ell}{2}, \frac{\lambda_{1}^{1/2}\ell}{2}\Big)\times \Big(-\frac{\lambda_{2}^{1/2}\ell}{2}, \frac{\lambda_{2}^{1/2}\ell}{2}\Big)\times(0,T).
\]
Let $z=(z_{1},z_{2},z_{3})= (\lambda_{1}^{1/2}y_{1},\lambda_{2}^{1/2}y_{2},y_{3})$ and consider a function $u\in L^{2}(\mathcal{V}_{x_{0}})$ such that $\widetilde u$, defined in \eqref{app-tild}, satisfies
 \begin{equation}\label{Asp-sup-u}
 {\rm supp}\,\widetilde u\in {Q}_{0,\ell,T}.
 \end{equation}
 We then define
\begin{equation}\label{not-dot}
\breve{u}(z):=\widetilde u(\lambda_{1}^{-1/2}z_{1},\lambda_{2}^{-1/2}z_{2},z_{3})
\end{equation}
We will approximate $Q_{h}(u)$ via the quadratic form in the half-space corresponding to a constant magnetic field.
\begin{lem}\label{Lem-App}
Let ${\bf F}_{\theta}$ be the magnetic potential given in \eqref{Ftheta} and let $y_{0}\in Q_{0,\ell}$. There exists a constant $C>0$ (independent of $y_{0}$) and a function $\phi_{0}:=\phi_{y_{0}}\in C^{\infty}(\widetilde{Q}_{0,\ell,T})$ such that, for all $\varepsilon\in(0,1]$ satisfying $\varepsilon\geq (\ell+T)$ and for all $u$ satisfying \eqref{Asp-sup-u} one has
\begin{multline}\label{Ap-QF}
\Big| \mathcal{Q}_{h}(u)- \Int{\widetilde{Q}_{0,\ell,T}}{}|(-ih\nb_{z}+b_{0}{\bf F}_{\theta_{0}})e^{i\phi_{0}/h}\,\breve{{u}}|^{2}\,dz     \Big|\\
\leq C\varepsilon\Int{\widetilde{Q}_{0,\ell,T}}{}|(-ih\nb_{z}+b_{0}{\bf F}_{\theta_{0}})e^{i\phi_{0}/h}\,\breve{{u}}|^{2}\,dz
     + C(\ell^{2}+T^{2})^{2} \varepsilon^{-1}\Int{\widetilde{Q}_{0,\ell,T}}{}
|\breve{{u}}|^{2}\,dz,
\end{multline}
and,
\begin{equation}\label{Ap-norm}
  (1-C(\ell+T)) \Int{\widetilde{Q}_{0,\ell,T}}{}
|\breve{{u}}|^{2}\,dz
\leq \norm{u}_{L^{2}(\mathcal{V}_{x_{0}})}^{2}
\leq (1+C(\ell+T))   \Int{\widetilde{Q}_{0,\ell,T}}{}
|\breve{{u}}|^{2}\,dz.
\end{equation}
Here $b_{0}=|\widetilde{B}(y_{0})|$, $\theta_{0}=\widetilde{\theta}(y_{0})$, and to a function $v(x)$ we associate the functions $\widetilde{v}(y)$ and $\breve{v}(z)$ by means of \eqref{app-tild} and \eqref{not-dot} respectively.
\end{lem}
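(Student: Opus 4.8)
The plan is to pass through two successive changes of variables --- the boundary coordinates $\Phi_{x_0}$ and the linear dilation $z=(\lambda_1^{1/2}y_1,\lambda_2^{1/2}y_2,y_3)$ --- and then a gauge transformation, controlling the errors in terms of $\ell+T$ and an auxiliary parameter $\varepsilon$. First I would use Lemma~\ref{lem:qf'} to write $\mathcal{Q}_h(u)=\int_{\R^3_+}\sum_{p,q}g^{pq}\big[(-ih\nb_{y_p}+\widetilde{\bf A}_p)\widetilde u\big]\big[\overline{(-ih\nb_{y_q}+\widetilde{\bf A}_q)\widetilde u}\big]|g|^{1/2}\,dy$, with $\widetilde u$ supported in $Q_{0,\ell,T}$. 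On this small set, $|y-y_0|\le C(\ell+T)$, so \eqref{app-jac} and \eqref{app-mf} let me replace $g^{pq}(y)$ by $g^{pq}(y_0)$ and $|g|^{1/2}(y)$ by $|g|^{1/2}(y_0)$ up to a multiplicative factor $1+C(\ell+T)$; similarly \eqref{app-MF} lets me replace $\widetilde{\bf A}$ by its linear part $\widetilde{\bf A}^{\rm lin}$ up to an additive error bounded by $C(\ell+T)^2$ pointwise. The replacement of the magnetic potential is where a Cauchy--Schwarz (with the parameter $\varepsilon$, exactly as in \eqref{Eq-ftheta+}) is used: $|(-ih\nb+\widetilde{\bf A})\widetilde u|^2$ and $|(-ih\nb+\widetilde{\bf A}^{\rm lin})\widetilde u|^2$ differ by at most $\varepsilon|(-ih\nb+\widetilde{\bf A}^{\rm lin})\widetilde u|^2+C\varepsilon^{-1}(\ell+T)^4|\widetilde u|^2$, which accounts for the two error terms in \eqref{Ap-QF}. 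Diagonalizing $g(y_0)$ as in \eqref{sm} (a rotation of the $(y_1,y_2)$-plane, which does not change the form of a constant-coefficient magnetic Laplacian) and then dilating to the $z$-variables turns $\sum g^{pq}(y_0)\partial_{y_p}\partial_{y_q}$ into the flat Laplacian $\sum_p \partial_{z_p}^2$, at the cost of transforming $\widetilde{\bf A}^{\rm lin}$ into a new linear potential $\widehat{\bf A}$ with constant curl.

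Next, I would identify the constant magnetic field associated with $\widehat{\bf A}$. Its strength is $b_0=|\widetilde B(y_0)|$ by \eqref{relation-nc}, and since $y_0$ lies on the boundary ($t(y_0)=0$) the relevant angle between the field and the boundary plane $\{z_3=0\}$ is $\theta_0=\widetilde\theta(y_0)$ in the sense of \eqref{def-theta}. Any two linear vector potentials with the same constant curl $b_0\beta$ (where $\beta=(0,\cos\theta_0,\sin\theta_0)$ after a further in-plane rotation aligning the tangential projection of the field) differ by a gradient $\nb\phi_0$ of a quadratic (hence smooth) function $\phi_0\in C^\infty(\widetilde Q_{0,\ell,T})$; this is the standard fact that on a simply connected set a closed $1$-form is exact, and for linear $1$-forms the primitive is quadratic. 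Gauging by $e^{i\phi_0/h}$ then replaces $\widehat{\bf A}$ by $b_0{\bf F}_{\theta_0}$ with ${\bf F}_{\theta_0}$ as in \eqref{Ftheta}, while $\breve u$ becomes $e^{i\phi_0/h}\breve u$ --- and since $|e^{i\phi_0/h}|=1$, norms are untouched and the gauge transformation commutes with everything in the error analysis. Collecting the multiplicative $(1+C(\ell+T))$ factors from the metric replacements and the $|g|^{1/2}(y_0)/(\lambda_1\lambda_2)^{1/2}=1$ cancellation from the dilation Jacobian gives \eqref{Ap-norm} directly, and feeding the same factors through the quadratic-form estimate --- absorbing the $(1+C(\ell+T))$ prefactor into the $C\varepsilon$ term since $\varepsilon\ge\ell+T$ --- gives \eqref{Ap-QF}.

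The main obstacle, and the step I would be most careful about, is bookkeeping the errors so that every one of them lands in one of the two terms on the right-hand side of \eqref{Ap-QF} with the stated structure: the metric and Jacobian perturbations are $O(\ell+T)$ \emph{relative} errors and so are naturally absorbed into $C\varepsilon\int|(-ih\nb+b_0{\bf F}_{\theta_0})e^{i\phi_0/h}\breve u|^2$ once one knows $\varepsilon\ge\ell+T$, whereas the magnetic-potential linearization error is genuinely of mixed type and must be split by Cauchy--Schwarz with weight $\varepsilon$ into a relative piece and an absolute piece $C(\ell^2+T^2)^2\varepsilon^{-1}\int|\breve u|^2$. One subtlety is that after the first few replacements the reference quadratic form is $\int|(-ih\nb+b_0{\bf F}_{\theta_0})e^{i\phi_0/h}\breve u|^2$ and not the original $\mathcal{Q}_h(u)$, so some of the relative errors are controlled by the \emph{approximate} form; one then uses the rough bound $\mathcal{Q}_h(u)\le (1+C\varepsilon)\int|(-ih\nb+b_0{\bf F}_{\theta_0})e^{i\phi_0/h}\breve u|^2+C\varepsilon^{-1}(\ell^2+T^2)^2\int|\breve u|^2$ (which the same estimate yields in the other direction) to convert freely between the two, exactly as is routinely done in this type of localization argument. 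The only other point requiring care is checking that the rotation diagonalizing $g(y_0)$ and the subsequent in-plane rotation aligning the magnetic field are compatible --- i.e.\ that one can arrange both simultaneously --- which follows because both act only on the $(y_1,y_2)$ (resp.\ $(z_1,z_2)$) variables and the normal direction $y_3=z_3$ is fixed throughout, so composing them is harmless.
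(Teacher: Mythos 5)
Your proposal follows essentially the same path as the paper's appendix proof: new coordinates via Lemma~\ref{lem:qf'}, Taylor approximation of $g^{pq}$, $|g|^{1/2}$ using \eqref{app-jac}--\eqref{app-mf}, Cauchy--Schwarz with weight $\varepsilon$ to linearize the magnetic potential via \eqref{app-MF}, diagonalization/dilation to flat coordinates, gauge transformation to $b_0\mathbf{F}_{\theta_0}$, and absorption of the $(1+C(\ell+T))$ prefactors into $C\varepsilon$ thanks to the hypothesis $\varepsilon\geq\ell+T$. You even flag, more explicitly than the paper does, the point that an additional in-plane rotation is needed to align the tangential component of the constant field with the $z_2$-axis before the gauge step, and correctly observe that this rotation is compatible with the earlier diagonalization because both fix the $y_3=z_3$ direction.
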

The proof of Lemma~\ref{Ap-norm} is given in the appendix.

\section{Proof of Theorem~\ref{main-thm}}
\subsection{Lower bound}
In this section, we shall prove the lower bound in Theorem~\ref{main-thm}.
\subsubsection{Splitting into bulk and surface terms}
Let
\begin{equation}\label{varsigma}
h^{1/2}\ll\varsigma\ll1
\end{equation}
be a positive number to be chosen later (see \eqref{choice} below) as a positive power of $h$. We consider smooth real-valued functions $\psi_{1}$ and $\psi_{2}$ satisfying
\begin{equation}\label{part-bl-bd-1}
\psi_{1}^{2}(x)+\psi_{2}^{2}(x)=1\qquad {\rm in} \qquad\Omega,
\end{equation}
where
\begin{equation}\label{Psi-1}
\psi_{1}(x):=\left\{\begin{array}{lcl}
1  &{\rm if}&{\rm dist}(x,\partial\Omega)<\varsigma/2\\
0 &{\rm if}&{\rm dist}(x,\partial\Omega)>\varsigma,
\end{array}\right.
\end{equation}
and such that there exists a constant $C_{1}>0$ so that
 \begin{equation}\label{bnd-gr-psi}
\sum_{k=1}^{2}| \nb \psi_{k}|^{2}\leq C_{1}\varsigma^{-2}.
 \end{equation}
Let $\{f_{j}\}_{j=1}^{N}$ be any $L^{2}$ orthonormal set in $\mathcal{D}(\mathcal{P}_{h})$ and $\mathcal{Q}_{h}$ be the quadratic form introduced in \eqref{main-qf}. To prove a lower bound for $\sum_{j}(e_{j}(h)-\Lambda h)_{-}$, we use the variational principle in Lemma~\ref{lem-VP-2}. Namely, we seek a uniform lower bound of
\[
\Sum{j=1}{N}\big(\mathcal{Q}_{h}(f_{j})-\Lambda h\big).
\]
The following Lemma shows that the bulk contribution is negligible compared to the expected leading order term.
\begin{lem}{\label{neg-bulk}}Let $\Lambda\in[0,b)$ with $b$ from \eqref{intensity+min}. The following lower bound holds true
\begin{equation}\label{Res-Bnd}
\Sum{j=1}{N}\big(\mathcal{Q}_{h}(f_{j})-\Lambda h\big)\geq \Sum{j=1}{N}\left(\mathcal{Q}_{h}(\psi_{1}f_{j}) -(\Lambda h+C_{1} h^{2}\varsigma^{-2})
\norm {\psi_{1}f_{j}}^{2}\right),
\end{equation}
where $\{f_{j}\}_{j=1}^{N}$ is an $L^{2}$ orthonormal set in $\mathcal{D}(\mathcal{P}_{h})$ and $\psi_{1}$ is the function from \eqref{Psi-1}.
\end{lem}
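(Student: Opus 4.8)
The plan is to use the IMS localization formula with the partition of unity $(\psi_1,\psi_2)$ to split the quadratic form into a piece supported near the boundary (the $\psi_1$ piece) and a bulk piece (the $\psi_2$ piece), and then show that the bulk piece contributes a nonnegative amount up to negligible error, while keeping only the boundary piece for the lower bound. First I would recall the IMS formula: for any $u\in\mathcal D(\mathcal Q_h)$,
\[
\mathcal Q_h(u)=\mathcal Q_h(\psi_1 u)+\mathcal Q_h(\psi_2 u)-h^2\Int{\Omega}{}\Big(|\nabla\psi_1|^2+|\nabla\psi_2|^2\Big)|u|^2\,dx,
\]
which follows from expanding $(-ih\nabla+\mathbf A)(\psi_k u)=\psi_k(-ih\nabla+\mathbf A)u-ih(\nabla\psi_k)u$, using $\psi_1^2+\psi_2^2=1$ and $\sum_k\psi_k\nabla\psi_k=0$ to cancel the cross terms. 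Applying this to $u=f_j$ and summing over $j$, and using the bound $\sum_k|\nabla\psi_k|^2\le C_1\varsigma^{-2}$ from \eqref{bnd-gr-psi}, gives
\[
\Sum{j=1}{N}\big(\mathcal Q_h(f_j)-\Lambda h\big)\ge \Sum{j=1}{N}\Big(\mathcal Q_h(\psi_1 f_j)-(\Lambda h+C_1h^2\varsigma^{-2})\norm{\psi_1 f_j}^2\Big)+\Sum{j=1}{N}\Big(\mathcal Q_h(\psi_2 f_j)-(\Lambda h+C_1h^2\varsigma^{-2})\norm{\psi_2 f_j}^2\Big),
\]
using that each $\norm{f_j}=1$, so $-\Lambda h=-\Lambda h\norm{\psi_1f_j}^2-\Lambda h\norm{\psi_2f_j}^2$, and distributing the localization error.

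The core of the argument is then to show the $\psi_2$-sum is nonnegative, which lets us discard it. Here I would invoke the lower bound \eqref{est-HM} of Helffer--Mohamed: since $\psi_2 f_j$ is supported in the region where ${\rm dist}(x,\partial\Omega)>\varsigma/2$, in particular $\psi_2 f_j\in C^\infty_0(\Omega)$ (after noting $f_j$ is smooth enough and the support avoids the boundary), we get
\[
\mathcal Q_h(\psi_2 f_j)\ge h\Int{\Omega}{}\big(|\mathbf B(x)|-C_d h^{1/4}\big)|\psi_2 f_j|^2\,dx\ge h\big(b-C_d h^{1/4}\big)\norm{\psi_2 f_j}^2.
\]
Since $\Lambda<b$, for $h$ small enough $h(b-C_dh^{1/4})>\Lambda h+C_1h^2\varsigma^{-2}$, provided $h^2\varsigma^{-2}\to 0$, i.e. $\varsigma\gg h$, which is guaranteed by the assumption $h^{1/2}\ll\varsigma\ll 1$ in \eqref{varsigma}. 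Hence each term in the $\psi_2$-sum is $\ge 0$ and the whole sum can be dropped for a lower bound, leaving exactly \eqref{Res-Bnd}.

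The main obstacle, though mild, is the regularity/support bookkeeping needed to apply \eqref{est-HM}: that estimate is stated for $u\in C^\infty_0(\Omega)$, whereas $\psi_2 f_j$ is only in the form domain $\mathcal D(\mathcal Q_h)$. I would handle this by a density argument — functions in $\mathcal D(\mathcal Q_h)$ that are compactly supported away from $\partial\Omega$ can be approximated in the form norm by $C^\infty_0$ functions with support in the same region, so \eqref{est-HM} extends to $\psi_2 f_j$ by continuity of both sides. (If $\Omega$ is bounded one may also simply note the inequality extends to $H^1_0$-type form domains directly.) A secondary point is to double-check the signs and the exact constant $C_1$ in the localization error match the statement; this is a routine consequence of \eqref{bnd-gr-psi}. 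With these in place the lemma follows.
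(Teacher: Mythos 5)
Your proof matches the paper's argument: an IMS decomposition with the cutoff pair $(\psi_1,\psi_2)$, followed by discarding the bulk $\psi_2$-term via the Helffer--Mohamed estimate \eqref{est-HM} and $\Lambda<b$. One small slip: absorbing the localization error $C_1h^2\varsigma^{-2}$ into $h(b-\Lambda-C_dh^{1/4})$ actually requires $h\varsigma^{-2}\to0$, i.e.\ $\varsigma\gg h^{1/2}$ (not merely $h^2\varsigma^{-2}\to 0$ as you wrote), but since \eqref{varsigma} supplies exactly this stronger condition your conclusion stands.
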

\begin{proof}
 By the IMS formula, we find
\begin{align}
\nonumber\mathcal{Q}_{h}(f_{j})&=
\Sum{k=1}{2}\left(\mathcal{Q}_{h}(\psi_{k}f_{j})  -h^{2}{}\norm{|\nabla \psi_{k}|{f_{j}}}^{2}\right).
\end{align}
Using the fact that $\psi_{1}^{2}+\psi_{2}^{2}=1$ and the bound on $|\nb \psi_{k}|$ in \eqref{bnd-gr-psi}, it follows that
\begin{align}\label{boundary-bulk}
\Sum{j=1}{N}\big(\mathcal{Q}_{h}(f_{j})-\Lambda h\big)&\geq\Sum{k=1}{2}\Sum{j=1}{N}\left(\mathcal{Q}_{h}(\psi_{k}f_{j}) -(\Lambda h+C_{1} h^{2}\varsigma^{-2})
\norm {\psi_{k}f_{j}}^{2}\right).
\end{align}
Let us now examine the term corresponding to $k=2$ in the right hand side of \eqref{boundary-bulk}. Using the inequality \eqref{est-HM} for $u:=\psi_{2}f_{j} $, we see that
 \[
 \int_{\Omega}|(-ih\nb +{\bf A})\psi_{2}f_{j}|^{2}dx\geq h(b-Ch^{1/4}) \int_{\Omega}|\psi_{2}f_{j}|^{2}dx. 
\]
We write
\[
 h(b-Ch^{1/4})\Int{\Omega}{}|\psi_{2}f_{j}|^{2}dx= h\Lambda\Int{\Omega}{}|\psi_{2}f_{j}|^{2}dx+h (b-\Lambda-Ch^{1/4})\Int{\Omega}{}|\psi_{2}f_{j}|^{2}dx.
\]
This yields, in view of \eqref{varsigma},
\begin{equation}\label{int-term}
\mathcal{Q}_{h}(\psi_{2}f_{j})\geq (\Lambda h +C_{1} h^{2}\varsigma^{-2})\Int{\Omega}{}|\psi_{2}f_{j}|^{2}dx.
\end{equation}
This gives that the bulk term in \eqref{boundary-bulk} is positive, and the lemma follows.
\end{proof}
\subsubsection{Partition of unity of the boundary}
Recall the cut-off function $\psi_{1}$ from \eqref{Psi-1}, which is supported near a neighborhood of the boundary $\partial\Omega$. Let
\begin{equation}\label{supp-psi}
{\mathcal{O}}_{1}:= {\rm supp}~\psi_{1}= \{x\in\Omega~:~ {\rm dist}(x,\partial\Omega)\leq \varsigma\},
\end{equation}
where $\varsigma$ is, as introduced in \eqref{varsigma}.

Given a point $x$ of the boundary, we let $\Phi_{x}^{-1}$ be the coordinate transformation valid near a small neighbourhood of $x$ (these coordinates are introduced in Section~\ref{bndcoord}). Since the boundary is smooth, there exists $\delta_{x}>0$ such that
\[
\Phi_{x}^{-1}: \widetilde\Omega_{\delta_{x}}\to \mathcal{O}_{x},
\]
where,
$$
\widetilde\Omega_{\delta_{x}}:=\left(-{\delta}_{x},{\delta}_{x}\right)^{2}\times(0,\delta_{x}),\quad\mathcal{O}_{x}=\Phi_{x}^{-1}(\widetilde{\Omega}_{\delta_{x}}).$$
Next, we consider the subset $\Omega_{\delta_{x}}$ of $\widetilde\Omega_{\delta_{x}}$ to be
\[
\Omega_{\delta_{x}}:=\left(-\frac{\delta_{x}}{2},\frac{\delta_{x}}{2}\right)^{2}\times(0,\delta_{x}),
\]
and a covering of $\mathcal{O}_{1}$ by open sets $\{\mathcal{O}_{x}\}_{x\in \partial\Omega}$. Using the compactness of the boundary, it follows that there exists an integer $K$ and an index set $J=\{1,\cdots ,K\}$, such that the sets $\{\mathcal{O}_{x_{l}}\}_{l\in J}$ form a finite covering of $\mathcal{O}_{1}$. For ease of notation, we write $\delta_{l}$ (respectively $\mathcal{O}_{{l}},\Phi_{l}$) instead of $\delta_{x_{l}}$ (respectively $\mathcal{O}_{x_l},\Phi_{x_{l}}$). We emphasize here that the $\delta_{l}$'s are fixed and independent of $h$. Thus, by choosing $\varsigma=\varsigma(h)$ sufficiently small (see \eqref{choice} below), we may assume that
\begin{equation}\label{sec-asp}
 \varsigma\ll \delta_{0}:=\min_{l\in J}\delta_{l}.
\end{equation}
Next, we choose $\{\chi_{l}\}_{l\in J}$ to be non-negative, smooth, compactly supported functions such that
 \begin{equation}\label{def-chi-l}
 \sum_{l\in J}{} \chi_{l}^{2}(x)\equiv1\quad{\rm in}\quad\mathcal{O}_{1}, \qquad {\rm supp}~\chi_{l}\subset {\mathcal{O}}_{l},
 \end{equation}
and such that there exists a constant $C_{2}>0$ (independent of $h$) so that
 \begin{equation}\label{bn-gr}
 \Sum{l\in J}{}|\nb \chi_{l}(x)|^{2}\leq C_{2},
 \end{equation}
 for all $x\in \Omega$.
Next, consider the lattice $\{F_{\varsigma}^{m}\}_{m\in \mathbb{Z}^{2}}$ of $\R^{2}$ generated by the square:
\[
F_{\varsigma}=\Big(-\frac{\varsigma}{2},\frac{\varsigma}{2}\Big)^{2}.
\]
If $m\in\mathbb{Z}^{2}$, denote by $(r_{m},s_{m})=m\varsigma\in\R^{2}$ the center of the square  $F_{\varsigma}^{m}$ so that we can write
\[
F_{\varsigma}^{m}=\left(-\dfrac{\varsigma}{2}+r_{m},\dfrac{\varsigma}{2}+r_{m}\right)\times \left(-\dfrac{\varsigma}{2}+s_{m},\dfrac{\varsigma}{2}+s_{m}\right).
\]
We let $\mathcal{I}_{l}=\{m\in \mathbb{Z}^{2}~:~F_{\varsigma}^{m}\cap (-\frac{\delta_{l}}{2},\frac{\delta_{l}}{2})^{2}\neq \emptyset\}$. If $m\in \mathcal{I}_{l}$ and $\eta>0$, we will write
\begin{equation}\label{setsFml}
F^{m,l}_{\eta}=\left(-\dfrac{\eta}{2}+r_{m},\dfrac{\eta}{2}+r_{m}\right)\times \left(-\dfrac{\eta}{2}+s_{m},\dfrac{\eta}{2}+s_{m}\right),\qquad Q^{m,l}_{\eta}:=F^{m,l}_{\eta}\times(0,\varsigma).
\end{equation}
Let $a\ll 1$ to be chosen later as a positive power of $h$ (see \eqref{choice} below). We introduce a new partition of unity of the square $ (-\frac{\delta_{l}}{2},\frac{\delta_{l}}{2})^{2}$ by smooth functions $\{\widetilde\varphi_{m,l}\}_{m\in\mathcal{I}_{l}}$ with the following properties
\begin{equation}\label{Overlap-part}
\sum_{m\in \mathcal{I}_{l}} \widetilde\varphi_{m,l}^{2}\equiv1\qquad{\rm in }\quad \Big(-\frac{\delta_{l}}{2},\frac{\delta_{l}}{2}\Big)^{2},\quad
{\rm supp}~\widetilde\varphi_{m,l}\subset F^{m,l}_{(1+a)\varsigma},\quad
\widetilde\varphi_{m,l}= 1\quad{\rm in}\quad F^{m,l}_{(1-a)\varsigma},
\end{equation}
and such that there exists a constant $C_{3}>0$ so that
\begin{equation}\label{bnd-grad-phi}
\sum_{m\in \mathcal{I}_{l}}|\nb\widetilde\varphi_{m,l}|^{2}\leq C_{3} (a\varsigma)^{-2}.
\end{equation}
We set
\[
\varphi_{m,l}(x)=\widetilde\varphi_{m,l}(\Phi_{l}(x)).
\]
Let $y_{m,l}$ be an arbitrary point of $Q_{(1+a)\varsigma}^{m,l}$.
As we did in Section~\ref{bndcoord}, we may assume, after performing
a diagonalization, that $g_{l}(y_{m,l})$ ($g_{l}$ is the short
notation of $g_{x_{l}}$) is a diagonal matrix given by
\begin{equation}{\label{sml}}
 g_{l}(y_{m,l})=
\begin{pmatrix}
\lambda_{m,l,1}&0&\\
0&\lambda_{m,l,2}&0\\
0&0& 1
\end{pmatrix}.
\end{equation}
For $y=(y_{1},y_{2},y_{3})\in \R^{3}_{+}$, we denote $y^{\perp}=(y_{1},y_{2})\in \R^{2}$. Applying \eqref{app-mf} with $y_{0}:=y_{m,l}=(y_{m,l}^{\perp},0)\in F_{(1+a)\varsigma}^{m,l}\times\{0\}$, we immediately see that
\begin{equation}\label{metr-approximated}
\big||g_{l}|^{1/2}(y)- \lambda_{m,l,1}^{1/2}\lambda_{m,l,2}^{1/2}\big|\leq c\varsigma \lambda_{m,l,1}^{1/2}\lambda_{m,l,2}^{1/2}.
\end{equation}
We also note that we can approximate the function $\widetilde\chi_{l}^{2}$ within the domain $Q_{(1+a)\varsigma}^{m,l}$ by $\widetilde\chi_{l}^{2}(y_{m,l})$. Indeed, by Taylor expansion, we obtain that for some positive constant $c_{5}>0$
\begin{equation}\label{chi-l-app}
\big|\widetilde\chi_{l}^{2}(y)-\widetilde\chi_{l}^{2}(y_{m,l}) \big|\leq c_{5}\varsigma.
\end{equation}
Put $z=(z_{1},z_{2},z_{3})= (\lambda_{m,l,1}^{1/2}y_{1},\lambda_{m,l,2}^{1/2}y_{2},y_{3})$ and denote by
\[
\widetilde{Q}^{m,l}_{(1+a)\varsigma}:= \Big(-\frac{\varsigma_{m,l,1}}{2}, \frac{\varsigma_{m,l,1}}{2}\Big)\times  \Big(-\frac{\varsigma_{m,l,2}}{2}, \frac{\varsigma_{m,l,2}}{2}\Big)\times(0,\varsigma), \quad \varsigma_{m,l,k}=\frac{{\lambda^{1/2}_{m,l,k}}(1+a)\varsigma}{2},\quad k=1,2.
\]
In the following lemma, we apply localization formulas to restrict the analysis into small boxes where we can approximate the quadratic form using Lemma \ref{Lem-App}.
\begin{lem}\label{lem:app'}
Let $\Lambda\in[0,b)$,  $b$ the constant in \eqref{intensity+min},
${\bf F}_{\theta}$ the magnetic potential given in \eqref{Ftheta}
and $y_{m,l}\in F_{(1+a)\varsigma}^{m,l}\times\{0\}$. There exists a
function $\phi_{m,l}:=\phi_{y_{m,l}}\in
C^{\infty}(\widetilde{Q}^{m,l}_{(1+a)\varsigma})$ and a constant
$\widetilde{C}>0$ such that for all $\varepsilon\in(0,1]$ satisfying
$\varepsilon\gg\varsigma$ one has
\begin{multline}\label{eq-int-M}
\sum_{j=1}^{N}\big(\mathcal{Q}_{h}(\psi_{1}f_{j})-\Lambda h\big)\\
\geq (1-\widetilde{C}\varepsilon)\sum_{j=1}^{N} \sum_{l\in J}\sum_{m\in \mathcal{I}_{l}}
  \bigg\{ \Int{\widetilde{Q}^{m,l}_{(1+a)\varsigma}}{}|(-ih\nb_{z}+b_{m,l}{\bf F}_{\theta_{m,l}})\,e^{i\phi_{m,l}/h}\breve{\varphi }_{m,l}\breve{\psi}_{1}\breve{\chi_{l}}\breve{f}_{j}|^{2}                 dz
   \\ -\Lambda_{1}(h,a,\varsigma,\varepsilon)
     \Int{{\widetilde{Q}^{m,l}_{(1+a)\varsigma}}}{}
|\breve{\varphi }_{m,l}\breve{\psi}_{1}\breve{\chi_{l}}\breve{f}_{j}|^{2}\,dz \bigg\},
\end{multline}
where
\begin{equation}\label{mod-Lambda}
\Lambda_{1}(h,a,\varsigma,\varepsilon)= \dfrac{(\Lambda h+ \widetilde{C} h^{2}(a\varsigma)^{-2})(1+\widetilde{C}\varsigma)+ \widetilde{C}\varsigma^{4}\varepsilon^{-1}}{1-\widetilde{C}\varepsilon},
\end{equation}
$b_{m,l}=|\widetilde{B}(y_{m,l})|$, $\theta_{m,l}=\widetilde{\theta}(y_{m,l})$ and to a function $v(x)$, we associate the function $\breve{v}(z)$ by means of \eqref{not-dot}.
\end{lem}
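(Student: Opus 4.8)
The plan is to establish the lower bound in Lemma~\ref{lem:app'} by chaining together three localization steps, each controlled by an IMS-type formula, followed by a single application of the metric/magnetic-potential approximation of Lemma~\ref{Lem-App}. First I would localize near the boundary using Lemma~\ref{neg-bulk}, which already reduces the problem to estimating $\sum_j(\mathcal{Q}_h(\psi_1 f_j)-(\Lambda h+C_1h^2\varsigma^{-2})\|\psi_1 f_j\|^2)$ from below. Next I would insert the partition of unity $\{\chi_l\}_{l\in J}$ subordinate to the coordinate patches $\{\mathcal{O}_l\}$: by the IMS formula together with \eqref{bn-gr} and the fact that $\sum_l\chi_l^2\equiv1$ on $\mathcal{O}_1$, the quadratic form splits as $\sum_l\mathcal{Q}_h(\chi_l\psi_1 f_j)$ up to an error $h^2\sum_l|\nabla\chi_l|^2\le C_2h^2$, which is absorbed into the modified spectral parameter (note $h^2\ll h^2(a\varsigma)^{-2}$, so this error is harmless). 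Then, within each patch, I would insert the finer partition $\{\widetilde\varphi_{m,l}\}_{m\in\mathcal{I}_l}$, pulled back to $\varphi_{m,l}(x)=\widetilde\varphi_{m,l}(\Phi_l(x))$; the IMS formula again yields $\sum_m\mathcal{Q}_h(\varphi_{m,l}\chi_l\psi_1 f_j)$ minus an error governed by \eqref{bnd-grad-phi}, namely $h^2\sum_m|\nabla\varphi_{m,l}|^2\le C_3h^2(a\varsigma)^{-2}$, which is exactly the size already present in $\Lambda_1$.

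Once the problem is reduced, patch by patch and box by box, to estimating $\mathcal{Q}_h(\varphi_{m,l}\chi_l\psi_1 f_j)$ where the argument is supported in $\Phi_l^{-1}(Q^{m,l}_{(1+a)\varsigma})$, I would pass to the boundary coordinates via Lemma~\ref{lem:qf'} and then apply Lemma~\ref{Lem-App} with $y_0:=y_{m,l}$, with the small parameters $\ell:=(1+a)\varsigma$ and $T:=\varsigma$, and with the free parameter $\varepsilon$ of that lemma. This produces the lower bound
$$\mathcal{Q}_h(\varphi_{m,l}\chi_l\psi_1 f_j)\ge (1-C\varepsilon)\int_{\widetilde{Q}^{m,l}_{(1+a)\varsigma}}|(-ih\nabla_z+b_{m,l}{\bf F}_{\theta_{m,l}})e^{i\phi_{m,l}/h}\breve{\varphi}_{m,l}\breve{\chi}_l\breve{\psi}_1\breve{f}_j|^2\,dz - C(\ell^2+T^2)^2\varepsilon^{-1}\int|\breve{\cdots}|^2\,dz,$$
and simultaneously \eqref{Ap-norm} controls $\|\varphi_{m,l}\chi_l\psi_1 f_j\|^2_{L^2}$ above and below by $(1\pm C(\ell+T))\int|\breve{\cdots}|^2\,dz$. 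Since $\ell+T=O(\varsigma)$ and $(\ell^2+T^2)^2=O(\varsigma^4)$, combining these two estimates converts the bound $\mathcal{Q}_h-(\Lambda h+\dots)\|\cdot\|^2$ into the claimed form with spectral parameter $\Lambda_1(h,a,\varsigma,\varepsilon)$ as defined in \eqref{mod-Lambda}: the factor $(1+\widetilde{C}\varsigma)$ comes from the ratio of the two norm comparisons, the term $\widetilde{C}\varsigma^4\varepsilon^{-1}$ from the last error in Lemma~\ref{Lem-App}, and division by $(1-\widetilde{C}\varepsilon)$ absorbs the relative error on the kinetic term.

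There is one subtlety to handle carefully: after the first two localizations the cutoffs $\chi_l$ and $\varphi_{m,l}$ are functions of $x$, but Lemma~\ref{Lem-App} deals with $\breve{u}$, so I must check that the pull-back to $z$-coordinates commutes appropriately, i.e.\ that $\breve{(\varphi_{m,l}\chi_l\psi_1 f_j)}=\breve{\varphi}_{m,l}\breve{\chi}_l\breve{\psi}_1\breve{f}_j$, which is immediate since pull-back by a diffeomorphism is multiplicative. A second point is that the functions $\varphi_{m,l}\chi_l\psi_1 f_j$ need not be orthonormal or even mutually orthogonal across different $(m,l)$—but this does not matter, because at this stage I am only proving a pointwise (in $j$) lower bound on a sum of quadratic forms, and the orthonormality of $\{f_j\}$ will only be used later when applying the variational principle of Lemma~\ref{lem-VP-2} to the model operators.

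The main obstacle is bookkeeping the hierarchy of small parameters $h^{1/2}\ll\varsigma\ll1$, $a\ll1$, $\varsigma\ll\varepsilon$, and $\varsigma\ll\delta_0$ so that all error terms are genuinely lower order and, crucially, so that the errors introduced by the two IMS splittings (of orders $h^2$ and $h^2(a\varsigma)^{-2}$) and by Lemma~\ref{Lem-App} (of order $\varsigma^4\varepsilon^{-1}$ relative to the $L^2$ norm, and $\varepsilon$ relative to the kinetic energy) all fit inside the single modified parameter $\Lambda_1$ without any of them blowing up. The condition $\varepsilon\ge\ell+T$ required by Lemma~\ref{Lem-App} translates to $\varepsilon\gg\varsigma$, which is exactly the hypothesis imposed in the statement; the ultimate choice of $\varsigma$, $a$, $\varepsilon$ as explicit powers of $h$ (deferred to \eqref{choice}) must then be shown to be consistent, but that verification belongs to the subsequent subsection rather than to the proof of this lemma.
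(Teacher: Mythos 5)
Your proposal follows essentially the same route as the paper's Appendix~B proof: reduce via Lemma~\ref{neg-bulk}, apply the IMS formula twice (first with $\{\chi_l\}$, absorbing the $O(h^2)$ error since $\varsigma^{-2}\gg1$, then with $\{\varphi_{m,l}\}$, producing the $O(h^2(a\varsigma)^{-2})$ term), and finally invoke Lemma~\ref{Lem-App} with $y_0=y_{m,l}$, $\ell=(1+a)\varsigma$, $T=\varsigma$, collecting the resulting $\varepsilon$-relative error on the kinetic term and the $O(\varsigma^4\varepsilon^{-1})$ relative $L^2$-error into $\Lambda_1$. Your remarks on the multiplicativity of pull-backs and on the fact that orthonormality of $\{f_j\}$ plays no role at this step are both correct and match the structure of the paper's argument.
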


\subsubsection{The leading order term}
 For $h,\mathfrak{b}>0$
and $\theta\in[0,\pi/2]$, we recall the operator
$\mathcal{P}_{\theta,h,\mathfrak{b}}^{N}$ from \eqref{Phb-th}. Let
us rewrite \eqref{eq-int-M} as
\begin{equation}\label{Eq-Split}
  \sum_{j=1}^{N}(\mathcal{Q}_{h}(f_{j})-\Lambda h)\geq I_{1}+I_{2},
\end{equation}
where
\begin{multline}
I_{1}=(1-\widetilde{C}\varepsilon)\times\\
\sum_{j=1}^{N}\sum_{l\in J}\!\!\!\!\sum_{\substack{m\in \mathcal{I}_{l}\\
\theta_{m,l}\in(0,\pi/2]}}\!\!\!\!\!\!\!\!\!
 \big\langle e^{i\phi_{m,l}/h}\breve{\varphi}_{m,l}\breve{{\chi}_{l}}\breve{{\psi}_{1}}
\breve{ f_{j}}     , \big(\mathscr{P}_{\theta_{m,l}, h,b_{m,l}}^{N}-   \Lambda_{1}(h,a,\varsigma,\varepsilon)\big)  e^{i\phi_{m,l}/h}\breve{\varphi}_{m,l}\breve{{\chi}_{l}}\breve{{\psi}_{1}}
\breve{ f_{j}}  \big\rangle,
\end{multline}
and
\begin{multline}
I_{2}=(1-\widetilde{C}\varepsilon)\times\\
\sum_{j=1}^{N}\sum_{l\in J}\sum_{\substack{m\in \mathcal{I}_{l}\\
\theta_{m,l}=0}}
\big\langle e^{i\phi_{m,l}/h}\breve{\varphi}_{m,l}\breve{{\chi}_{l}}\breve{{\psi}_{1}}
\breve{ f_{j}}     , \big(\mathscr{P}_{\theta_{m,l}, h,b_{m,l}}^{N}-   \Lambda_{1}(h,a,\varsigma,\varepsilon)\big)  e^{i\phi_{m,l}/h}\breve{\varphi}_{m,l}\breve{{\chi}_{l}}\breve{{\psi}_{1}}
\breve{ f_{j}}  \big\rangle.
\end{multline}
Below in \eqref{choice}, the parameters $a,\varsigma$ and $\varepsilon$ are chosen so that, when $h$ is sufficiently small, one has
\begin{equation}\label{ass-lbd}
h^{-1}\Lambda_{1}(h,a,\varsigma,\varepsilon)< b,
\end{equation}
where $b$ is defined in \eqref{intensity+min}. We first start by estimating $I_{1}$. Using Lemma~\ref{computations-cor}, we see that
\begin{multline}\label{eq-int-M-1}
 I_{1}
  \geq - h ( 1-\widetilde{C}\varepsilon) \sum_{l\in J}\sum_{\substack{m\in \mathcal{I}_{l}\\ \theta_{m,l}\in[0,\pi/2)}}  b_{m,l}\sum_{k}\big(\zeta_{k}(\theta_{m,l})-h^{-1}b_{m,l}^{-1}\Lambda_{1}(h,a,\varsigma,\varepsilon)\big)_{-}\times\\
 \sum_{j=1}^{N}\Big\langle e^{i\phi_{m,l}/h}\breve{{\psi}}_{1}\breve{{\chi}}_{l}\breve{{ \varphi}}_{m,l}\breve{ f}_{j}  , \Pi_{\theta_{m,l},k} (h,b_{m,l})e^{i\phi_{m,l}/h}\breve{\psi_{1}}\breve{ \chi_{l}}\breve{\varphi}_{m,l}\breve{ f_{j}}\Big\rangle_{L^{2}(\widetilde{Q}^{m,l}_{(1+a)\varsigma})}.
\end{multline}
Here, for $\theta\in(0,\pi/2]$ and $\mathfrak{b}>0$, $\{\zeta_{k}(\theta)\}_{k}$ are the eigenvalues from \eqref{Egv-Lth} and $\Pi_{\theta,k}(h,\mathfrak{b})$ is the projector defined in \eqref{Pr-theta-j}.
Using \eqref{metr-approximated} and that $dz=\lambda_{m,l,1}^{1/2}\lambda_{m,l,2}^{1/2}dy$, we obtain that for some constant $C_{4}>0$
\begin{multline}\label{mid-eq}
 \sum_{j=1}^{N}\Big\langle e^{i\phi_{m,l}/h}\breve{{\psi}}_{1}\breve{{\chi}}_{l}\breve{{ \varphi}}_{m,l}\breve{ f}_{j}  , \Pi_{\theta_{m,l},k} (h,b_{m,l})e^{i\phi_{m,l}/h}\breve{\psi_{1}}\breve{ \chi_{l}}\breve{\varphi}_{m,l}\breve{ f_{j}}\Big\rangle_{L^{2}(\widetilde{Q}^{m,l}_{(1+a)\varsigma})}\\
 \leq (1+ C_{4}\varsigma)\sum_{j=1}^{N}\langle f_{j}, H(m,l,k,h,b_{m,l},\theta_{m,l}) f_{j}
 \rangle_{L^{2}(\Omega)}\,.
\end{multline}
Here $H(m,l,k,h,b_{m,l},\theta_{m,l})$ is a positive operator, which is given by,
\begin{equation*}
H(m,l,k,h,b_{m,l},\theta_{m,l}):= \psi_{1}\chi_{l}\varphi_{m,l}U_{\Phi_{l}}
V_{z\to y}e^{i\phi_{m,l}/h}\Pi_{\theta_{m,l},k}(h,b_{m,l})e^{i\phi_{m,l}/h}V^{-1}_{z\rightarrow y}U^{-1}_{\Phi_{l}}\psi_{1}\chi_{l}\varphi_{m,l}\,,
\end{equation*}
where, for a function $v$, $V_{z\to y}$ is defined by
\begin{equation}\label{def-Vzy}
(V_{z\to y}{v})(y)=v(\lambda_{m,l,1}^{1/2}y_{1},\lambda_{m,l,2}^{1/2}y_{2},y_{3}),
\end{equation}
and, for a function $u$, the transformation $U_{\Phi_{l}}$ is given by
\begin{equation}\label{def-Uphi}
(U_{\Phi_{l}}u)(x)=u(\Phi_{l}(x))\,.
\end{equation}
Since $\{f_{j}\}_{j=1}^{N}$ is an orthonormal family in $L^{2}(\Omega)$, we deduce that
\begin{equation}\label{endeq}
\sum_{j=1}^{N}\langle f_{j}, H(m,l,k,h,b_{m,l},\theta_{m,l}) f_{j} \rangle_{L^{2}(\Omega)}
\leq {\rm Tr}(H(m,l,k,h, b_{m,l},\theta_{m,l})).
\end{equation}
Combining \eqref{eq-int-M-1}, \eqref{mid-eq} and \eqref{endeq}, and using that $\varepsilon
\gg \varsigma$ (see \eqref{choice} below), we obtain that for some constant $C_{5}>0$
\begin{multline}\label{eq-int-M-2}
 I_{1}
  \geq -( 1-{C}_{5}\varepsilon) h     \sum_{l\in J}\sum_{\substack{m\in \mathcal{I}_{l}\\ \theta_{m,l}\in(0,\pi/2]}}b_{m,l}\sum_{k}\big(\zeta_{k}(\theta_{m,l})-h^{-1}b_{m,l}^{-1}\Lambda_{1}(h,a,\varsigma,\varepsilon)\big)_{-}\times\\
{\rm Tr}(H(m,l,k,h, b_{m,l},\theta_{m,l})).
\end{multline}
It is straightforward to show that
\begin{multline}
{\rm Tr}(H(m,l,k,h, b_{m,l},\theta_{m,l}))\\
=b_{m,l}^{3/2}h^{-3/2}\int_{\R}\int_{\R^{3}_{+}}|g_{l}(y)|^{1/2}\widetilde{\psi}_{1}^{2}(y)\widetilde{\chi}_{l}^{2}(y){\widetilde\varphi}_{m,l}^{2}(y_{1},y_{2},0)|V_{z\to y}\big(v_{\theta_{m,l},k}(h^{-1/2}b_{m,l}^{1/2}z;\xi)\big)|^{2}dyd\xi,
\end{multline}
where, for $\theta\in[0,\pi/2]$, $v_{\theta,k}(\cdot,\xi)$ is the function defined in \eqref{vjth}.
Using \eqref{metr-approximated} and \eqref{chi-l-app}, and that $\psi_{1}(x)\leq 1$ for all $x\in\Omega$, it follows that
\begin{multline}
{\rm Tr}(H(m,l,k,h, b_{m,l},\theta_{m,l}))
\leq  (2\pi)^{-1}(1+c\varsigma)(\widetilde{\chi}^{2}(y_{m,l})+c_{5}\varsigma)\lambda_{m,l,1}^{1/2}\lambda_{m,l,2}^{1/2} h^{-3/2}b_{m,l}^{3/2}\times\\
\int_{\R}\int_{\R^{3}_{+}}\widetilde\varphi_{m,l}^{2}(y_{1},y_{2},0)|u_{\theta_{m,l},k}(h^{-1/2}b_{m,l}^{1/2}\lambda_{m,l,2}^{1/2}y_{2}-\frac{\xi}{\sin(\theta_{m,l})})|^{2}dyd\xi,
\end{multline}
where, for $\theta\in[0,\pi/2]$, the functions $\{u_{\theta,k}\}_{k}$ are introduced in \eqref{Egv-Lth}. Performing similar calculations to that in \eqref{calcul-kern}, we deduce that
\begin{multline}\label{cal-Tr-th-n0}
{\rm Tr}(H(m,l,k,h, b_{m,l},\theta_{m,l}))\\ \leq (2\pi)^{-1} h^{-1}b_{m,l} \lambda_{m,l,1}^{1/2}\lambda_{m,l,2}^{1/2} (\chi_{l}^{2}(y_{m,l})+c_{5}\varsigma)(1+c\varsigma) \sin(\theta_{m,l}) \int_{\R^{2}} |\widetilde\varphi_{m,l}|^{2}dy_{1}dy_{2}\\
\leq (2\pi)^{-1} h^{-1}b_{m,l} \lambda_{m,l,1}^{1/2}\lambda_{m,l,2}^{1/2} (\chi_{l}^{2}(y_{m,l})+c_{5}\varsigma)(1+c\varsigma) \sin(\theta_{m,l})(1+a)^{2}\varsigma^{2},
\end{multline}
where in the last step, we used that the function $\widetilde\varphi_{m,l}$ is less than one and supported in the square $F^{m,l}_{(1+a)\varsigma}$.
Recalling \eqref{E-th-ld-th-n-z} and substituting \eqref{cal-Tr-th-n0} into \eqref{eq-int-M-2}, we obtain that for some positive constant $C_{6}>0$
\begin{multline}\label{eq-int-M-5}
 I_{1}
  \geq
-(1-C_{6}\varepsilon) (1+a)^{2}
\sum_{l\in J}\!\!\!\!\sum_{\substack{m\in \mathcal{I}_{l}\\ \theta_{m,l}\in(0,\pi/2]}}\!\!\!\!\!\!E\big(\theta_{m,l}, h^{-1}b_{m,l}^{-1}\Lambda_{1}(h,a,\varsigma,\varepsilon)\big)
b_{m,l}^{2}\lambda_{m,l,1}^{1/2}\lambda_{m,l,2}^{1/2}(\chi_{l}^{2}(y_{m,l})+c_{5}\varsigma) \varsigma^{2}.
\end{multline}
We now proceed in a similar manner to get a lower bound on $I_{2}$.
By virtue of Lemma~\ref{eqs-PR-cor}, it follows that
\begin{multline}\label{eq-int-M-1-1}
 I_{2}
  \geq - h (2\pi)^{-1}(1-\widetilde{C}\varepsilon) \sum_{l\in J}\sum_{\substack{m\in \mathcal{I}_{l}\\ \theta_{m,l}=0}}  b_{m,l}\int_{\R^{2}}\sum_{k}\big(\mu_{k}(\xi)  +\tau^{2}-h^{-1}b_{m,l}^{-1}\Lambda_{1}(h,a,\varsigma,\varepsilon)   \big)_{-}\times\\
\sum_{j=1}^{N}\Big\langle e^{i\phi_{m,l}/h}\breve{{\psi}}_{1}\breve{ \chi_{l}}\breve{\varphi}_{m,l}\breve{ f}_{j},\Pi_{k} (h,b_{m,l};\xi,\tau)e^{i\phi_{m,l}/h}\breve{\psi_{1}}\breve{{\chi}}_{l} \breve{{ \varphi}}_{m,l}\breve{ f_{j}}\Big\rangle_{L^{2}(\widetilde{Q}^{m,l}_{(1+a)\varsigma})}d\xi d\tau,
\end{multline}
where, for $(\xi,\tau)\in\R^{2}$ and $\mathfrak{b}>0$, $\{\mu_{k}(\xi)\}_{k}$ are the eigenvalues from \eqref{muj} and $\Pi_{1}(\xi,\tau;h,\mathfrak{b})$ is the projector defined in \eqref{Pr-resc-th0}. Using Lemma~\ref{mu2}, it follows that
 \begin{multline}\label{eq-int-M-1-1-1}
 I_{2}
  \geq - h (2\pi)^{-1}(1-\widetilde{C}\varepsilon) \sum_{l\in J}\sum_{\substack{m\in \mathcal{I}_{l}\\ \theta_{m,l}=0}}  b_{m,l}\int_{\R^{2}}\big(\mu_{1}(\xi)  +\tau^{2}-h^{-1}b_{m,l}^{-1}\Lambda_{1}(h,a,\varsigma,\varepsilon)   \big)_{-}\times\\
\sum_{j=1}^{N}\Big\langle e^{i\phi_{m,l}/h}\breve{{\psi}}_{1}\breve{ \chi_{l}}\breve{\varphi}_{m,l}\breve{ f}_{j},\Pi_{k} (h,b_{m,l};\xi,\tau)e^{i\phi_{m,l}/h}\breve{\psi_{1}}\breve{{\chi}}_{l} \breve{{ \varphi}}_{m,l}\breve{ f_{j}}\Big\rangle_{L^{2}(\widetilde{Q}^{m,l}_{(1+a)\varsigma})}d\xi d\tau,
\end{multline}
By an equality similar to that
in \eqref{eq-int-M-2}, we find 
\begin{multline}\label{eq-int-M-1-2}
 I_{2}
  \geq - h(1-C_{5}\varepsilon) (2\pi)^{-1}\sum_{l\in J}\sum_{\substack{m\in \mathcal{I}_{l}\\ \theta_{m,l}=0}}  b_{m,l}\times\\
  \int_{\R^{2}}\sum_{k}\big(\mu_{k}(\xi)  +\tau^{2}-h^{-1}b_{m,l}^{-1}\Lambda_{1}(h,a,\varsigma,\varepsilon)   \big)_{-}
 {\rm Tr}(H^{\prime}(m,l,h,b_{m,l};\xi,\tau))d\xi d\tau.
\end{multline}
Here $H^{\prime}(m,l,h,b_{m,l};\xi,\tau)$ is a positive operator, which is given by,
\begin{equation*}
H^{\prime}(m,l,h,b_{m,l};\xi,\tau):= \psi_{1}\chi_{l}\varphi_{m,l}U_{\Phi_{l}}V_{z\to y}e^{i\phi_{m,l}/h}\Pi_{1}(\xi,\tau;h,b_{m,l})e^{i\phi_{m,l}/h}V^{-1}_{z\rightarrow y}U^{-1}_{\Phi_{l}}\psi_{1}\chi_{l}\varphi_{m,l}
\,.
\end{equation*}
where $U_{\Phi_{l}}$ and $V_{z\to y}$ are the same as defined in \eqref{def-Uphi} and \eqref{def-Vzy} respectively.
It is easy to see that
\begin{multline}\label{eq-int-M-1-3}
{\rm Tr}(H^{\prime}(m,l,h,b_{m,l};\xi,\tau))\\
=b_{m,l}^{3/2}h^{-3/2}\int_{\R^{3}_{+}}|g_{l}(y)|^{1/2}\widetilde{\psi}_{1}^{2}(y)\widetilde{\chi}_{l}^{2}(y){\widetilde\varphi}_{m,l}^{2}(y_{1},y_{2},0)|V_{z\to y}\big(v_{1}(h^{-1/2}b_{m,l}^{1/2}z;\xi,\tau)\big)|^{2}dy,
\end{multline}
where the function $v_{1}$ is defined in \eqref{def-vj}.
Using \eqref{metr-approximated} and \eqref{chi-l-app}, and that $\psi_{1}(x)\leq 1$ for all $x\in\Omega$, it follows that
\begin{multline}
{\rm Tr}(H^{\prime}(m,l,h,b_{m,l};\xi,\tau))
\leq  (2\pi)^{-1}(1+c\varsigma)(\widetilde{\chi}^{2}(y_{m,l})+c_{5}\varsigma)\lambda_{m,l,1}^{1/2}\lambda_{m,l,2}^{1/2} h^{-3/2}b_{m,l}^{3/2}\times\\
\int_{\R^{3}_{+}}\widetilde\varphi_{m,l}^{2}(y_{1},y_{2},0)|u_{1}(h^{-1/2}b_{m,l}^{1/2}y_{3},\xi)|^{2}dy,
\end{multline}
Using that the function $u_{1}(\cdot;\xi)$ (from \eqref{muj}) is normalized in $L^{2}(\R_{+})$, we get
\begin{multline}\label{cal-Tr-th-n0-2}
{\rm Tr}(H^{\prime}(m,l,h,b_{m,l};\xi,\tau))
\leq (2\pi)^{-1} h^{-1}b_{m,l} \lambda_{m,l,1}^{1/2}\lambda_{m,l,2}^{1/2} (\chi_{l}^{2}(y_{m,l})+c_{5}\varsigma)(1+c\varsigma) (1+a)^{2}\varsigma^{2}.
\end{multline}
Inserting \eqref{cal-Tr-th-n0-2} in \eqref{eq-int-M-1-2} yields
\begin{multline}\label{eq-int-M-1-4}
 I_{2}
  \geq - h (1-C_{6}\varepsilon)(4\pi^{2})^{-1} \sum_{l\in J}\sum_{\substack{m\in \mathcal{I}_{l}\\ \theta_{m,l}=0}}  b_{m,l}^{2}
\lambda_{m,l,1}^{1/2}\lambda_{m,l,2}^{1/2} (\widetilde\chi_{l}^{2}(y_{m,l})+C\varsigma) (1+a)^{2}\varsigma^{2}\times\\
 \int_{\R^{2}}\Big(\mu_{1}(\xi)  +\tau^{2}-h^{-1}b_{m,l}^{-1}\Lambda_{1}(h,a,\varsigma,\varepsilon)   \Big)_{-}  d\xi d\tau.
\end{multline}
Using \eqref{asym-the=0}, it follows that
\begin{multline}\label{eq-int-M-1-5}
 I_{2}
  \geq -(1-C_{6}\varepsilon) (1+a)^{2}
    \sum_{l\in J}\sum_{\substack{m\in \mathcal{I}_{l}\\ \theta_{m,l}=0}}E(0,\Lambda_{1}(h,a,\varsigma,\varepsilon))
b_{m,l}^{2}\lambda_{m,l,1}^{1/2}\lambda_{m,l,2}^{1/2}(\chi_{l}^{2}(y_{m,l})+c_{5}\varsigma)\varsigma^{2}.
\end{multline}
Therefore, combining \eqref{eq-int-M-5} and \eqref{eq-int-M-1-5}, and using \eqref{Eq-Split}, we obtain
\begin{multline}\label{eq-int-M-3}
  \Sum{j=1}{N}\big( \mathcal{Q}_{h}(f_{j})-\Lambda h\big)
  \geq -(1-C_{6}\varepsilon)  (1+a)^{2}  \times\\
  \sum_{l\in J}\sum_{m\in \mathcal{I}_{l}}b_{m,l}^{2} \lambda_{m,l,1}^{1/2}\lambda_{m,l,2}^{1/2}(\chi_{l}^{2}(y_{m,l})+c_{5}\varsigma) E\big(\theta_{m,l}, h^{-1}b_{m,l}^{-1} \Lambda_{1}(h,a,\varsigma,\varepsilon)\big)\varsigma^{2}.
\end{multline}
Using the fact that for all $\lambda_{0}\in(0,1)$, the function $(0,\lambda_0]\times[0,\pi/2]\mapsto E(\theta,\lambda)$ is bounded by Lemma~\ref{continuity+}, we see that the term
\[
  C\varsigma(1+a)^{2}\sum_{l\in J}\sum_{m\in \mathcal{I}_{l}}  b_{m,l}^{2}E(\theta_{m,l}, h^{-1}b_{m,l}^{-1} \Lambda_{1}(h,a,\varsigma,\varepsilon))\varsigma^{2}
\]
is bounded by $C\varsigma \sum_{l\in J}\sum_{m\in \mathcal{I}_{l}}\lambda_{m,l,1}^{1/2}\lambda_{m,l,2}^{1/2}\varsigma^{2}\sim  C\varsigma |\partial\Omega|$. This leads to
\begin{multline}\label{eq-int-M-4}
  \Sum{j=1}{N}\big( \mathcal{Q}_{h}(f_{j})-\Lambda h\big)
  \geq -( 1-C_{6}\varepsilon)  (1+a)^{2} \times\\ \sum_{l\in J}\sum_{m\in \mathcal{I}_{l}}\lambda_{m,l,1}^{1/2}\lambda_{m,l,2}^{1/2}\widetilde\chi_{l}^{2}(y_{m,l}) b_{m,l}^{2}E(\theta_{m,l}, h^{-1}b_{m,l}^{-1} \Lambda_{1}(h,a,\varsigma,\varepsilon))\varsigma^{2}+O(\varsigma).
\end{multline}
By \eqref{sm}, we have $\lambda_{m,l,1}^{1/2}\lambda_{m,l,2}^{1/2}=|g_{l}(y_{m,l})|^{1/2}$. For $y=(y^{\perp},0)\in F_{\varsigma}^{m,l}\times\{0\}$, we define the function
\begin{equation}
G(y):=|g_{l}(y)|^{1/2}\widetilde\chi_{l}^{2}(y) |\widetilde{\bf B}(y)|^{2}
E\big(\widetilde \theta(y), h^{-1}| \widetilde{\bf B}(y)|^{-1} \Lambda_{1}(h,a,\varsigma,\varepsilon)\big).
\end{equation}
We pick $y_{m,l}\in F_{\varsigma}^{m,l}\times\{0\}$ so that
\[
\min_{y\in F^{m,l}_{\varsigma}\times\{0\}} G(y)= G(y_{m,l}).
\]
Then the right-hand side of \eqref{eq-int-M-4} is a lower Riemann sum. Hence, we find
\begin{multline}\label{RS}\sum_{m\in \mathcal{I}_{l}}|g_{l}(y_{m,l})|^{1/2} \widetilde\chi_{l}^{2}(y_{m,l}) b_{m,l}^{2}E\big(\theta_{m,l}, h^{-1}b_{m,l} \Lambda_{1}(h,a,\varsigma,\varepsilon,\eta)\big)\varsigma^{2} = \sum_{m\in \mathcal{I}_{l}} G(y_{m,l})\varsigma^{2}           \leq \\
\int_{(-\delta_{l},\delta_{l})^{2}}G(y_{1},y_{2},0)dy_{1}dy_{2}=\int_{ \partial\Omega} \chi_{l}^{2}(x) |{\bf B}(x)|^{2}E\big(\theta(x), h^{-1}|{\bf B}(x)|^{-1} \Lambda_{1}(h,a,\varsigma,\varepsilon)\big)d\sigma(x).
\end{multline}
Plugging this into \eqref{eq-int-M-4}, and using that $\sum_{l\in J}\chi_{l}^{2}(x)=1$, we obtain
\begin{multline}\label{eq-int-M-3}  \Sum{j=1}{N}\big( \mathcal{Q}_{h}(f_{j})-\Lambda h\big) \\
  \geq -( 1-C_{6}\varepsilon)  (1+a)^{2}  \int_{\partial\Omega}  |{\bf B}(x)|^{2}E\big(\theta(x), h^{-1}|{\bf B}(x)|^{-1} \Lambda_{1}(h,a,\varsigma,\varepsilon)\big)d\sigma(x) +{O}(\varsigma).
\end{multline}
We make the following choice of $\varepsilon$, $a$ and $\varsigma$,
\begin{equation}\label{choice}
\varepsilon= h^{1/4},\qquad  a=h^{1/16} \qquad \varsigma=h^{3/8}.
\end{equation}
This choice yields that for some constant $C_{7}>0$, one has
\[
 h^{-1}\Lambda_{1}(h,a,\varsigma,\varepsilon) \sim \dfrac{\Lambda + C_{7}h^{1/8}}{1-\widetilde{C}h^{1/4}} \qquad {\rm as}\quad h\rightarrow 0.
\]
The function $[0,1)\times [0,\pi/2]\mapsto E(\theta,\lambda)$ is
locally Lipschitz according to Lemma~\ref{fct-Lip}. This gives
\begin{equation}
\Big|E\big(\theta(x),h^{-1}\Lambda_{1}(h,a,\varsigma,\varepsilon)|{\bf B}(x)|^{-1})-E(\theta(x),\Lambda |{\bf B}(x)|^{-1}\big)\Big|\leq C_{8}h^{1/8}b^{-1},
\end{equation}
for some constant $C_{8}>0$. The constant  $b$ is introduced in
\eqref{intensity+min}.
It follows that for some constant $C_{9}>0$, we have
  \begin{equation}
   \Sum{j=1}{N}\big( \mathcal{Q}_{h}(f_{j})-\Lambda h\big)\geq -(1+C_{9}h^{1/8}) \int_{\partial\Omega}  |{\bf B}(x)|^{2}E\big(\theta(x), \Lambda |{\bf B}(x)|^{-1} )\big)dx +O(h^{1/8}),
\end{equation}
uniformly with respect to $N$ and the orthonormal family
$\{f_{j}\}$. As a consequence, we may use Lemma~\ref{lem-VP-2} to
obtain
 the desired lower bound.
\subsection{Upper bound}
Let $\varsigma>0$ be as in \eqref{varsigma}  and $F_{\varsigma}^{m,l}$ be the set defined in \eqref{setsFml} with $l\in J$ and $m\in \mathcal{I}_{l}$ being the indices corresponding to the partitions $\{\chi_{l}\}_{l\in J}$ and $\{\widetilde \varphi_{m,l}\}_{m\in \mathcal I_{l}}$ introduced in \eqref{def-chi-l} and \eqref{Overlap-part} respectively.  Let $\{y_{m,l}\}$ be a finite family of points in $ F_{\varsigma}^{m,l}\times\{0\}$ to be specified later at the end of this section. To each point $y_{m,l}$ we associate $b_{m,l}=| \widetilde{\bf B}(y_{m,l})|$ and $\theta_{m,l}=\widetilde\theta(y_{m,l})$ defined in \eqref{mf-nc} and \eqref{theta-ang-0} respectively (with $y_{0}$ replaced by $y_{m,l}$). Let $y\in  Q_{(1+a)\varsigma}^{m,l}$ (see the definition of the set in \eqref{setsFml}) and $\lambda_{m,l,1},\lambda_{m,l,2}$ be the diagonal components of the matrix $g_{l}(y_{m,l})$ from \eqref{sml}.
We put $z=(\lambda_{m,l,1}^{1/2}y_{1},\lambda_{m,l,2}^{1/2}y_{2},y_{3})$. Let $(\xi,\tau)\in\R^{2}$. Recall the tilde-notation from \eqref{app-tild} and define the functions
\begin{align*}
\widetilde f_{j,l,m}(y,\xi;h)&:=h^{-3/4}b_{m,l}^{3/4}v_{j,\theta_{m,l}}(h^{-1/2}b_{m,l}^{1/2}z;\xi)\big({\widetilde\varphi}_{m,l}{\widetilde\chi_{l}}{
\widetilde\psi}_{1}\big)(y) & {\rm if}&\quad\theta_{m,l}\in(0,\pi/2]\\
\widetilde g_{l,m}(y;\xi,\tau,h)&:= (2\pi)^{-1/2}h^{-3/4}b_{m,l}^{3/4}v_{1}(h^{-1/2}b_{m,l}^{1/2}z;\xi,\tau)\big({\widetilde\varphi}_{m,l}{\widetilde\chi_{l}}{
\widetilde\psi}_{1}\big)(y)&{\rm if}&\quad\theta_{m,l}=0.
\end{align*}
where $v_{j,\theta}(\cdot;\xi)$, $v_{1}(\cdot;\xi,\tau)$ and $\widetilde\psi_{1}$ are the functions found in \eqref{vjth}, \eqref{def-vj} and \eqref{Psi-1} respectively.

Recall the coordinate transformation $\Phi_{l}$ valid near a neighborhood of the point $x_{l}$ (see Subsection~\ref{bndcoord}), and let $x=\Phi_{l}^{-1}(y)$. We define $f_{j,l,m}(x,\xi;h):={ \widetilde f_{j,l,m}}(y,\xi;h)$ and $g_{l,m}(x,\xi,\tau;h):= \widetilde g_{l,m}(y,\xi,\tau;h)$.
Let $\Lambda\in[0,b)$ and $L>0$ be a sufficiently large number to be selected later so that $\varsigma h^{-1/2}\gg L$. We put
\[
M_{j,m,l,\xi}=
{\bf 1}_{\big\{(j,m,l,\xi)\in\mathbb{N}\times I_{l}\times J\times\R \,:\, \zeta_{j}(\theta_{m,l})- b_{m,l}^{-1}\Lambda\leq 0, \quad |\frac{\xi}{\sin(\theta_{m,l})}|\leq L  \big\}}
\,,\]
and
\[
M^{\prime}_{m,l,\xi,\tau}= {\bf 1}_{\big\{(j,m,l,\xi,\tau)\in\mathbb{N}\times I_{l}\times J\times\R^{2}~:~\quad\mu_{1}(\xi)+\tau^{2}- b_{m,l}^{-1}\Lambda\leq 0\big\}}.
\]
Here we note that the condition $\mu_{1}(\xi)+\tau^{2}- b_{m,l}^{-1}\Lambda
\leq  0$ implies, in view of Proposition~\ref{properties-mu1} and the fact that $b_{m,l}^{-1}\Lambda\leq b^{-1}\Lambda<1$, that there exists a constant $K>0$ (independent of $m,l$) such that
\begin{equation}\label{int-xi-tau}
(\xi,\tau)\in I_{\xi,\tau}:=(0,K)\times(-1,1).
\end{equation}
Define, for $f\in L^{2}(\Omega)$,
\begin{align*}
(\gamma_{1} f)(x)&=\sum_{l\in J}\sum_{\substack{m\in\mathcal{I}_{l}\\ \theta_{m,l}\in(0,\pi/2]}}\sum_{j}\int_{\R}  M_{j,m,l,\xi}\big\langle f, {f_{j,l,m}}(\cdot,\xi;h)\big\rangle {f_{j,l,m}}(x,\xi;h) d\xi,
\end{align*}
and
\begin{align*}
(\gamma_{2} f)(x)&=\sum_{l\in J}\sum_{\substack{m\in\mathcal{I}_{l}\\
\theta_{m,l}=0}}\int_{\R^{2}}M^{\prime}_{m,l,\xi,\tau}\big\langle f, g_{l,m}(\cdot,\xi,\tau;h)\big\rangle g_{l,m}(x,\xi,\tau;h)d\xi d\tau.
\end{align*}
We have the following lemma.
\begin{lem}\label{Pro-dm}
Let $f\in L^{2}(\Omega)$ and define the operator $\gamma$ by
\[
\gamma f=\gamma_{1}f+\gamma_{2}f.
\]
There exists a constant $C_{10}>0$ such that the quadratic form associated to $\gamma$ satisfies
\begin{equation}\label{gam-dm}
0\leq \langle \gamma f, f\rangle_{L^{2}(\Omega)} \leq (1+C_{10}\varsigma)\norm{f}^{2}_{L^{2}(\Omega)}.
\end{equation}
\end{lem}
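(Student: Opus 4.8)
The plan is to establish the two inequalities in \eqref{gam-dm} separately, reducing the upper bound, box by box, to the projector estimates \eqref{proj-sum-j-Big-Pi-cor} and \eqref{eq2-PR-cor} already proved in Lemmas~\ref{computations-cor} and~\ref{eqs-PR-cor}. Positivity is immediate: both $\gamma_1$ and $\gamma_2$ are sums over $(l,m,j)$ and integrals over $\xi$ (resp.\ $(\xi,\tau)$) of operators of the form $f\mapsto M\,\langle f,\phi\rangle\,\phi$ with nonnegative weights $M_{j,m,l,\xi}$, $M'_{m,l,\xi,\tau}$; hence $\gamma=\gamma_1+\gamma_2\ge0$ in the form sense, and $\langle\gamma f,f\rangle_{L^2(\Omega)}=\sum_{l,m}\langle\gamma_{1,m,l}f,f\rangle_{L^2(\Omega)}+\sum_{l,m}\langle\gamma_{2,m,l}f,f\rangle_{L^2(\Omega)}\ge0$, where $\gamma_{i,m,l}$ denotes the summand of $\gamma_i$ indexed by $(l,m)$.

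For the upper bound I would first treat a single box with $\theta_{m,l}\in(0,\pi/2]$. Since $0\le M_{j,m,l,\xi}\le1$, discarding this cutoff only decreases the quadratic form, so
\[
\langle\gamma_{1,m,l}f,f\rangle_{L^2(\Omega)}\le\sum_j\int_\R|\langle f,f_{j,l,m}(\cdot,\xi;h)\rangle_{L^2(\Omega)}|^2\,d\xi.
\]
Now I would unwind the definitions: $f_{j,l,m}(x,\xi;h)=\widetilde f_{j,l,m}(\Phi_l(x),\xi;h)$, and in the anisotropically dilated variable $z=(\lambda_{m,l,1}^{1/2}y_1,\lambda_{m,l,2}^{1/2}y_2,y_3)$ the function $\widetilde f_{j,l,m}$ equals the cutoff $\widetilde\varphi_{m,l}\widetilde\chi_l\widetilde\psi_1$ times $(U_{h,b_{m,l}}v_{j,\theta_{m,l}})(\,\cdot\,;\xi)$, with $U_{h,\mathfrak b}$ the dilation \eqref{unit-tran}. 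Using \eqref{new-norm} for the $L^2(\Omega)$ pairing in boundary coordinates, the change of variables $y\leftrightarrow z$, and the pointwise metric comparison \eqref{metr-approximated} (a consequence of \eqref{app-mf}), which costs a factor $1+\mathcal O(\varsigma)$ uniformly on $Q^{m,l}_{(1+a)\varsigma}$ — all supports in play lie inside this box, whose diameter is $\mathcal O(\varsigma)$ — one rewrites $\langle f,f_{j,l,m}(\cdot,\xi;h)\rangle_{L^2(\Omega)}$, up to such a factor, as the $L^2(\R^3_+)$ pairing of the zero extension of $\breve{(\varphi_{m,l}\chi_l\psi_1 f)}$ with $(U_{h,b_{m,l}}v_{j,\theta_{m,l}})(\,\cdot\,;\xi)$. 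Summing over $j$ and integrating over $\xi$ then recognizes the left-hand side of \eqref{proj-sum-j-Big-Pi-cor} — the operators $\Pi_{\theta_{m,l},j}(h,b_{m,l})$ already carrying the $\xi$-integral and $U_{h,\mathfrak b}$ being unitary — yielding $\langle\gamma_{1,m,l}f,f\rangle_{L^2(\Omega)}\le(1+C\varsigma)\|\varphi_{m,l}\chi_l\psi_1 f\|^2_{L^2(\Omega)}$, with $C$ independent of $h,l,m$ by compactness of $\partial\Omega$. The case $\theta_{m,l}=0$ is handled in the same way, with $v_1(\,\cdot\,;\xi,\tau)$ in place of $v_{j,\theta_{m,l}}$, the cutoff $M'_{m,l,\xi,\tau}\le1$, and the identity \eqref{eq2-PR-cor} of Lemma~\ref{eqs-PR-cor} in place of \eqref{proj-sum-j-Big-Pi-cor}; here the factor $(2\pi)^{-1/2}$ built into $\widetilde g_{l,m}$ is exactly what cancels the $2\pi$ on the right of \eqref{eq2-PR-cor}, so one again obtains $\langle\gamma_{2,m,l}f,f\rangle_{L^2(\Omega)}\le(1+C\varsigma)\|\varphi_{m,l}\chi_l\psi_1 f\|^2_{L^2(\Omega)}$.

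Finally I would add all the local bounds and invoke the partition-of-unity identities from \eqref{def-chi-l} and \eqref{Overlap-part}, namely $\sum_{l\in J}\sum_{m\in\mathcal I_l}\varphi_{m,l}^2\chi_l^2\psi_1^2=\psi_1^2$, together with $0\le\psi_1\le1$, to get
\[
\langle\gamma f,f\rangle_{L^2(\Omega)}\le(1+C\varsigma)\sum_{l\in J}\sum_{m\in\mathcal I_l}\|\varphi_{m,l}\chi_l\psi_1 f\|^2_{L^2(\Omega)}=(1+C\varsigma)\|\psi_1 f\|^2_{L^2(\Omega)}\le(1+C_{10}\varsigma)\|f\|^2_{L^2(\Omega)},
\]
which is \eqref{gam-dm}. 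The genuinely delicate point is purely technical bookkeeping: one must verify that passing from the $L^2(\Omega)$ pairing to the $L^2(\R^3_+)$ pairing — through the diffeomorphism $\Phi_l$, the dilation $y\mapsto z$ and the semiclassical dilation $U_{h,b_{m,l}}$ — introduces only multiplicative errors $1+\mathcal O(\varsigma)$ with constants uniform in $h$, $l$, $m$; this is a Cauchy--Schwarz argument absorbing the metric perturbation $|g_l(y)|^{1/2}-\lambda_{m,l,1}^{1/2}\lambda_{m,l,2}^{1/2}=\mathcal O(\varsigma)$ into the main term, using \eqref{new-norm}, \eqref{metr-approximated} and \eqref{app-mf}. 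Everything else is a direct invocation of the already established estimates, the discarded nonnegative cutoffs $M_{j,m,l,\xi}$, $M'_{m,l,\xi,\tau}$ being harmless for an upper bound.
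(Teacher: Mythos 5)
Your proposal follows the same route as the paper's own proof: drop the nonnegative cutoffs $M_{j,m,l,\xi}\le1$, $M'_{m,l,\xi,\tau}\le1$; pass to boundary coordinates and the $z$-variable via \eqref{new-norm} and \eqref{metr-approximated} at a cost of $1+\mathcal O(\varsigma)$; invoke the Bessel-type bounds \eqref{proj-sum-j-Big-Pi-cor} and \eqref{eq2-PR-cor} (the paper states the same content slightly more directly, via orthonormality of $\{v_{j,\theta}\}_j$ and $\{u_j(\cdot,\xi)\}_j$); return to $L^2(\Omega)$ picking up another $1+\mathcal O(\varsigma)$; and finish with the partitions of unity \eqref{def-chi-l}, \eqref{Overlap-part}. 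The bookkeeping you flag, including the cancellation of the built-in $(2\pi)^{-1/2}$ against the $2\pi$ in \eqref{eq2-PR-cor}, is correct.
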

\begin{proof}
Consider $f\in L^{2}(\Omega)$. It is easy to see that $\langle \gamma f,f\rangle \geq 0$. Next, using that $M_{j,m,l,\xi}\leq 1$, we see that
\begin{equation}\label{gamma-1-dm}
\big\langle f,\gamma_{1} f\big\rangle_{L^{2}(\Omega)}
\leq \sum_{l\in J}\sum_{\substack{m\in \mathcal{I}_{l}\\ \theta_{m,l}\in (0,\pi/2]}}   \sum_{j}\int_{\R}\Big| \big\langle f, {f_{j,l,m}}(x,\xi;h) \big\rangle_{L^{2}(\Omega)} \Big|^{2} d\xi.
\end{equation}
By \eqref{new-norm} and \eqref{metr-approximated}, it follows that there exists a constant $C_{11}>0$ such that
\begin{equation}\label{Pr-des-mtrx}
\begin{aligned}
\Big| \big\langle f, {f_{j,l,m}}(x,\xi;h) \big\rangle_{L^{2}(\Omega)} \Big|^{2}
& \leq (1+C_{11}\varsigma)\lambda_{m,l,1}\lambda_{m,l,2}\Big| \int_{\R^{3}_{+}}{\widetilde f}\,\,\overline {\widetilde{ \chi}_{l}\widetilde{\psi}_{1}\widetilde \varphi_{m,l} \big(U_{h,b_{m,l}} v_{j,\theta_{m,l}}(z;\xi)\big) }  dy                                                                                 \Big|^{2}\\
&= (1+C_{11}\varsigma)\Big| \int_{\R^{3}_{+}}{ \breve{ f}}~\overline{\breve{{ \chi}_{l}}\breve{{\psi}_{1}}\breve{ \varphi}_{m,l} (U_{h,b_{m,l}}v_{j,\theta_{m,l}}(z;\xi)\big)  } dz \Big|^{2}.
 \end{aligned}
 \end{equation}
Here, the transformation $U_{h,b_{m,l}}$ is defined in
\eqref{unit-tran} and for a function $u$, $\breve u$ is associated
to $u$ using \eqref{not-dot} and \eqref{app-tild}.

Substituting \eqref{Pr-des-mtrx} into \eqref{gamma-1-dm}, we find
\begin{multline}\label{gamma-1-dm-2}
\big\langle f,\gamma_{1} f\big\rangle_{L^{2}(\Omega)}
\leq  (1+C_{11}\varsigma)\sum_{l\in J}\sum_{\substack{m\in\mathcal{I}_{l}\\ \theta_{m,l}\in(0,\pi/2]}}\sum_{j} \Big| \int_{\R^{3}_{+}} \breve{ f}~\overline{\breve{{ \chi}_{l}}\breve{{\psi}_{1}}\breve{\varphi}_{m,l}U_{h,b_{m,l}} v_{j,\theta_{m,l}}(z;\xi)  } dz \Big|^{2}d\xi.
\end{multline}
In a similar fashion, one can show that
 \begin{multline*}
\big\langle f,\gamma_{2} f\big\rangle_{L^{2}(\Omega)}\\
\leq (1+C_{11}\varsigma)\sum_{l\in J}\sum_{\substack{m\in\mathcal{I}_{l}\\ \theta_{m,l}=0}}\sum_{j}\int_{\R^2}\Big| \int_{\R^{3}_{+}}{ \breve{ f}}~\overline{\breve{{ \chi}_{l}}\breve{{\psi}_{1}}\breve{\varphi}_{m,l} \big(U_{h,b_{m,l}}(2\pi)^{-1/2}\big(v_{j}(h^{-1/2}b_{m,l}^{1/2}z;\xi,\tau)\big)\big) }  dz \Big|^{2}d\xi d\tau.
 \end{multline*}
Next, we recall the definition of $v_{j,\theta}(y;\xi)$ from \eqref{vjth} (resp. $v_{j}$ from \eqref{def-vj}) and use the fact that $\{v_{j,\theta_{m,l}}\}_{j}$ (resp. $u_{j}(\cdot,\xi)$ for all $\xi\in\R$) is an orthonormal set of eigenfunctions,
we thus find
 \begin{equation}\label{gam-dm-c2}
\big\langle f,\gamma f\big\rangle_{L^{2}(\Omega)}
\leq (1+C_{11}\varsigma)\sum_{l\in J}\sum_{m\in\mathcal{I}_{l}}\Int{\R^{3}_{+}}{}\big| \breve{{ \chi}}_{l}\breve{{\psi}}_{1}\breve{ \varphi}_{m,l}\breve{ f}(z)\big|^{2}dz\,.
\end{equation}
Similar reasoning to that in \eqref{Pr-des-mtrx} yields that for some constant $C_{12}>0$
 \begin{align*}
 \Int{\R^{3}_{+}}{}\big| \breve{{ \chi}}_{l}\breve{{\psi}}_{1}\breve{ \varphi}_{m,l}\breve{ f}(z)\big|^{2}dz
&= \lambda_{m,l,1}^{1/2}\lambda_{m,l,2}^{1/2}\Int{\R^{3}_{+}}{}|g_{l}|^{-1/2}(y)|g_{l}|^{1/2}(y)\big| {\widetilde{ \chi}}_{l}{\widetilde{\psi}}_{1}{\widetilde \varphi}_{m,l}\widetilde f(y)\big|^{2}dy\\
&\leq  (1+C_{12}\varsigma)\Int{\R^{3}_{+}}{}|g_{l}|^{1/2}\big| {\widetilde{ \chi}}_{l}{\widetilde{\psi}}_{1}{\widetilde \varphi}_{m,l} \widetilde f(y)\big|^{2}dy\\
&=(1+C_{12}\varsigma)\Int{\Omega}{}\big| \chi_{l}\varphi_{m,l}\psi _{1}f(x)\big|^{2}dx\,.
\end{align*}
Implementing this into \eqref{gam-dm-c2}, and using \eqref{def-chi-l} and \eqref{Overlap-part}, yields the claim of the lemma.
\end{proof}
By the variational principle in Lemma~\ref{lem-VP-3}, an upper bound of the sum of eigenvalues of $\mathcal{P}_{h}$ below $\Lambda h$ follows if we can prove an upper bound on
$$ (1+C_{12}\varsigma)^{-1}{\rm Tr}\big[(\mathcal{P}_{h}-\Lambda h){\gamma}\big]= (1+C_{12}\varsigma)^{-1}\Big({\rm Tr}\big[(\mathcal{P}_{h}-\Lambda h){\gamma_{1}}\big]+ {\rm Tr}\big[(\mathcal{P}_{h}-\Lambda h){\gamma_{2}}\big]\Big)$$
Recall the quadratic form $\mathcal{Q}_{h}$ defined in \eqref{main-qf}. We start by estimating
\begin{equation}\label{def-tr-gamma-1}
{\rm Tr}\big[(\mathcal{P}_{h}-\Lambda h){\gamma_{1}}\big]\\
 := \sum_{l\in J}\sum_{\substack{m\in \mathcal{I}_{l}\\ \theta_{m,l}\in(0,\pi/2]}} \sum_{j}\int_{\R}M_{j,m,l,\xi}\big(\mathcal{Q}_{h}(f_{j,l,m}(x,\xi;h))-\Lambda h\norm{f_{j,l,m}(x,\xi;h)}^{2}\big)d\xi.
\end{equation}
Recall the transformation $V_{z\to y}$ introduced in \eqref{def-Vzy}. Using \eqref{new-norm}, it follows from \eqref{metr-approximated} and \eqref{chi-l-app} that there exists a constant $C_{13}>0$ such that
\begin{multline}\label{norm-fjl-0}
\int_{\Omega}{}|f_{j,l,m}(x,\xi;h)|^{2}dx
= \Int{\R^{3}_{+}}{}|g_l|^{1/2}(y)|\widetilde f_{j,l,m}(y,\xi;h)|^{2}dy\\
\geq (\widetilde{\chi}_{l}^{2}(y_{m,l})-C_{13}\varsigma)
\lambda_{m,l,1}^{1/2}\lambda_{m,l,2}^{1/2}b_{m,l}^{3/2}h^{-3/2}\Int{\R^{3}_{+}}{}|V_{z\to
y}\big(v_{j,\theta_{m,l}}(h^{-1/2}b_{m,l}^{1/2}z;\xi)\big){\widetilde\varphi}_{m,l}{
\widetilde\psi}_{1} |^{2}  dy\,.
\end{multline}
Let us write the last integral as
\begin{multline}\label{norm-fjl-2}
\Int{Q_{(1+a)\varsigma}^{m,l}}{}\!\!\!\!\!\!|V_{z\to y}\big(v_{j,\theta_{m,l}}(h^{-1/2}b_{m,l}^{1/2}z;\xi)\big){\widetilde\varphi}_{m,l}{
\widetilde\psi}_{1} |^{2}  dy= \Int{Q_{(1+a)\varsigma}^{m,l}}{}|V_{z\to y}\big(v_{j,\theta_{m,l}}(h^{-1/2}b_{m,l}^{1/2}z;\xi)\big) |^{2}  dy \\
+ \Int{Q_{(1+a)\varsigma}^{m,l}}{}\!\!\!\!\!\! \big[-1+{\widetilde\varphi}^{2}_{m,l}{{\widetilde\psi}}_{1}^{2}            \big]|V_{z\to y}
\big(v_{j,\theta_{m,l}}(h^{-1/2}b_{m,l}^{1/2}z;\xi)\big) |^{2} dy.
\end{multline}
As we shall work on the support of $M_{j,m,l}$ in view of \eqref{def-tr-gamma-1}, we may restrict ourselves to the indices $(j,m,l,\xi)$ satisfying $\zeta_{j}(\theta_{m,l})\leq \Lambda b_{m,l}^{-1}<\Lambda b^{-1}$ and $|\xi/\sin(\theta_{m,l})|\leq L$. Recalling that $L$ is chosen so that $L\ll h^{-1/2}\varsigma$ and using Lemma~\ref{exp-dec}, it follows that for all $\alpha \in \sqrt{1-\Lambda b^{-1}}$, there exists a constant $C_{14}>0$ such that
\begin{multline}\label{norm-fjl-2:1}
\Int{Q_{(1+a)\varsigma}^{m,l}}{}\!\!\!\!\!\!|V_{z\to y}\big(v_{j,\theta_{m,l}}(h^{-1/2}b_{m,l}^{1/2}z;\xi)\big)\big({\widetilde\varphi}_{m,l}{
\widetilde\psi}_{1}\big)(y) |^{2}  dy\\
\geq (1-e^{-\alpha (C_{14}\varsigma h^{-1/2}-L)})\Int{Q_{(1+a)\varsigma}^{m,l}}{} |V_{z\to y}\big(v_{j,\theta_{m,l}}(h^{-1/2}b_{m,l}^{1/2}z;\xi) \big)|^{2}  dy,
\end{multline}
where we have used \eqref{supp-psi} and \eqref{Overlap-part}.

Implementing \eqref{norm-fjl-2:1} in \eqref{norm-fjl-0}, we obtain
\begin{multline}\label{norm-fjl-1}
\int_{\Omega}{}|f_{j,l,m}(x,\xi;h)|^{2}dx
 \geq (1-e^{-\alpha(C_{14}\varsigma h^{-1/2}-L)})(\widetilde{\chi}_{l}^{2}(y_{m,l})-C_{13}\varsigma)\lambda_{m,l,1}^{1/2}\lambda_{m,l,2}^{1/2} b_{m,l}^{3/2}h^{-3/2}\times\\
 \Int{Q_{(1+a)\varsigma}^{m,l}}{} |V_{z\to y}\big(v_{j,\theta_{m,l}}(h^{-1/2}b_{m,l}^{1/2}z;\xi) \big)|^{2}
 dy\,.
\end{multline}
Let us estimate $\mathcal{Q}_{h}( f_{j,l,m})$.
Applying Lemma~\ref{Lem-App} with $u= f_{j,l,m}$, we find, for all $\varepsilon\gg \varsigma$,
\begin{multline}\label{ub-Q-bnd}
 \mathcal{Q}_{h}(f_{j,l,m})\\
\leq (1+{C}\varepsilon)
\Int{ \widetilde{Q}_{(1+a)\varsigma}^{m,l}}{}
 \big|(-ih\nb_{z}+b_{m,l}{\bf F}_{\theta_{m,l}})e^{i\phi_{m,l}/h}\,\breve{f}_{j,l,m}\big|^{2}\,dz
     +{C}\varsigma^{4} \varepsilon^{-1}\Int{ \widetilde{Q}_{(1+a)\varsigma}^{m,l}}{}
|\breve{{f}}_{j,l,m}|^{2}\,dz\\
\leq
(1+{C}\varepsilon)h^{-3/2}b_{m,l}^{3/2} \Int{\widetilde{Q}_{(1+a)\varsigma}^{m,l}}{}
   ( \breve{{\varphi}}_{m,l}\breve{\psi}_{1}\breve{\chi}_{l} )^{2} \big|(-ih\nb_{z}+b_{m,l}{\bf F}_{\theta_{m,l}})       v_{j,\theta_{m,l}}(h^{-1/2}b_{m,l}^{1/2}z;\xi)\big|^{2}\,dz\\
 + h^{-3/2}b_{m,l}^{3/2}\int_{\widetilde{Q}_{(1+a)\varsigma}^{m,l}}\!\!\!\!\!\!\!\Big(|\nabla( \breve{\psi}_{1}\breve{\chi_{l}}\breve{\varphi}_{m,l})|^{2}+{C}\varsigma^{4}\varepsilon^{-1}
 \breve{\varphi}_{m,l}^{2}\breve{\psi}_{1}^{2}
 \breve{\chi}_{l}^{2}  \Big)\big|     v_{j,\theta_{m,l}}(h^{-1/2}b_{m,l}^{1/2}z;\xi)\big|^{2}dz,
 \end{multline}
where $C$ is the constant from Lemma \ref{Lem-App}.

By \eqref{bnd-gr-psi}, \eqref{bn-gr}, \eqref{bnd-grad-phi}, and approximating $\widetilde\chi_{l}^{2}$ using \eqref{chi-l-app}, it follows that for some constant $C_{15}>0$,
 \begin{multline}\label{Q-fjml}
 \mathcal{Q}_{h}(f_{j,l,m})\\
\leq h^{-3/2}b_{m,l}^{3/2}({\widetilde\chi}_{l}^{2}(y_{m,l})+C_{15}\varepsilon)\Int{\widetilde{Q}_{(1+a)\varsigma}^{m,l}}{}
  |(-ih\nb_{z}+b_{m,l}{\bf F}_{\theta_{m,l}})\, v_{j,\theta_{m,l}}(h^{-1/2}b_{m,l}^{1/2}z;\xi)|^{2}\,dz
     \\
     +C_{15}h^{-3/2}b_{m,l}^{3/2}(\varsigma^{4} \varepsilon^{-1}+h^{2}(a\varsigma)^{-2})\Int{\widetilde{Q}_{(1+a)\varsigma}^{m,l}}{}
| v_{j,\theta_{m,l}}(h^{-1/2}b_{m,l}^{1/2}z;\xi)|^{2}\,dz.
\\ \leq \lambda_{m,l,1}^{1/2}\lambda_{m,l,2}^{1/2}\big(h^{-1/2}b_{m,l}^{5/2}({\widetilde\chi}_{l}^{2}(y_{m,l})+C_{15}\varepsilon)\zeta_{j}(\theta_{m,l})
     +C_{15}h^{-3/2}b_{m,l}^{3/2}(\varsigma^{4} \varepsilon^{-1}+h^{2}(a\varsigma)^{-2}\big)\times\\ 
     \Int{{Q}_{(1+a)\varsigma}^{m,l}}{}
|V_{z\rightarrow y} v_{j,\theta_{m,l}}(h^{-1/2}b_{m,l}^{1/2}z;\xi)|^{2}\,dy.
\end{multline}
Combining \eqref{norm-fjl-1} and \eqref{ub-Q-bnd}, we obtain in view of \eqref{def-tr-gamma-1} that,
\begin{multline}
{\rm Tr}\big[(\mathcal{P}_{h}-\Lambda h){\gamma_{1}}\big] \leq 
\sum_{l\in J}\sum_{\substack{m\in \mathcal{I}_{l}\\ \theta_{m,l}\in(0,\pi/2]}}\sum_{j}\int_{\R}M_{j,m,l,\xi}\Big( \mathcal{Q}_{h}(f_{j,l,m})- \Lambda h\norm{f_{j,l,m}}^{2}\Big)d\xi \\ \leq 
\sum_{l\in J}\sum_{\substack{m\in \mathcal{I}_{l}\\ \theta_{m,l}\in(0,\pi/2]}}\sum_{j}\int_{\R}M_{j,m,l,\xi}\Big\{h^{-1/2}b_{m,l}^{5/2}\lambda_{m,l,1}^{1/2}\lambda_{m,l,2}^{1/2}\Big[ ({\widetilde\chi}_{l}^{2}(y_{m,l})+C_{15}\varepsilon)\zeta_{j}(\theta_{m,l})\\+ C_{15}(\varsigma^{4} \varepsilon^{-1}+h^{2}(a\varsigma)^{-2})-  \Lambda  b_{m,l}^{-1} (1-e^{-\alpha(C_{14}\varsigma h^{-1/2}-L)})(\widetilde{\chi}_{l}^{2}(y_{m,l})-C_{13}\varsigma) \Big]\times\\
     \int_{\R}\Int{{Q}_{(1+a)\varsigma}^{m,l}}{}
|V_{z\rightarrow y} v_{j,\theta_{m,l}}(h^{-1/2}b_{m,l}^{1/2}z;\xi)|^{2}\,dy \Big\}d\xi.
\end{multline}
Performing the translation $\nu= h^{-1/2}b_{m,l}^{1/2}s- \frac{\xi}{\sin{\theta_{m,l}}}$ and let $\nu_{\pm}=\pm h^{-1/2}b_{m,l}^{1/2}(1+a)\varsigma \pm L$, it follows that 
\begin{multline*}\label{QF-trial-fct-2}
  {\rm Tr}\big[(\mathcal{P}_{h}-\Lambda h){\gamma_{1}}\big]
  \leq -(1+a)^{2} \sum_{l\in J}\sum_{\substack{m\in \mathcal{I}_{l}\\ \theta_{m,l}\in(0,\pi/2]}}{\widetilde\chi}_{l}^{2}(y_{m,l})\lambda_{m,l,1}^{1/2}\lambda_{m,l,2}^{1/2}b_{m,l}^{2}
\varsigma^{2}\times\\
\Big\{\sum_{j}(2\pi)^{-1} \sin(\theta_{m,l})\big(\zeta_{j}(\theta_{m,l}) -\Lambda b_{m,l}^{-1}\big)_{-}\Big\} \int_{\nu_{-}}^{\nu_{+}}|u_{j,\theta_{m,l}}(\nu,t)|d\nu dt
+ I_{\rm rest}^{(1)},
\end{multline*}
where
\begin{multline}
I_{\rm rest}^{(1)}=(1+a)^{2}\sum_{l\in J}\sum_{\substack{m\in \mathcal{I}_{l}\\ \theta_{m,l}\in(0,\pi/2]}} \sum_{j} \varsigma^{2} \lambda_{m,l,1}^{1/2}\lambda_{m,l,2}^{1/2}b_{m,l}^{2}(2\pi)^{-1}\sin(\theta_{m,l})\int_{\nu_{-}}^{\nu_{+}}|u_{j,\theta_{m,l}}(\nu,t)|d\nu dt\times\\
\Big\{ C_{15}\big(\varepsilon\zeta_{j}(\theta_{m,l})
+h^{-1}b_{m,l}^{-1}(\varsigma^{4} \varepsilon^{-1}+h^{2}(a\varsigma)^{-2}\big)                                                                                      +\Lambda  b_{m,l}^{-1}\big( e^{-\alpha(C_{14}\varsigma h^{-1/2}-L)}(\widetilde{\chi}_{l}^{2}(y_{m,l})-C_{13}\varsigma)+ C_{13}\varsigma\big)  \Big\}.
\end{multline}
By virtue of Lemma~\ref{thm-exp-for-Eth}, \eqref{QF-trial-fct-2} reads
\begin{multline}\label{comp-tr-gam}
  {\rm Tr}\big[(\mathcal{P}_{h}-\Lambda h){\gamma_{1}}\big]
 \leq -(1+a)^{2}\sum_{l\in J}\sum_{\substack{m\in \mathcal{I}_{l}\\ \theta_{m,l}\in(0,\pi/2]}}{\widetilde\chi}_{l}^{2}(y_{m,l})\lambda_{m,l,1}^{1/2}\lambda_{m,l,2}^{1/2}b_{m,l}^{2}
\varsigma^{2}\times \\ E(\theta_{m,l},\Lambda b_{m,l}^{-1})\int_{\nu_{-}}^{\nu_{+}}|u_{j,\theta_{m,l}}(\nu,t)|d\nu dt
+I^{(1)}_{\rm rest}.
\end{multline}
It remains to estimate
\begin{multline}\label{def-gamma-tr-2}
{\rm Tr}\big[(\mathcal{P}_{h}-\Lambda h)\small{\gamma_{2}}\big]
:= \sum_{l\in J}\sum_{\substack{m\in\mathcal{I}_{l}\\ \theta_{m,l}=0}} M^{\prime}_{m,l,\xi,\tau}\int_{\R}\big(\mathcal{Q}_{h}(g_{l,m}(x,\xi,\tau;h))-\Lambda h\norm{g_{l,m}(x,\xi,\tau;h)}^{2}\big)d\xi.
\end{multline}
We start by estimating $\norm{g_{l,m}(x,\xi,\tau;h)}^{2}$. It follows from \eqref{new-norm} \eqref{metr-approximated} and \eqref{chi-l-app} that there exists a constant $C_{13}>0$ such that
\begin{multline}\label{norm-fjl-0:01}
\int_{\Omega}{}|g_{l,m}(x,\xi,\tau;h)|^{2}dx\\
\geq (2\pi)^{-1}(\widetilde{\chi}_{l}^{2}(y_{m,l})-C_{13}\varsigma) \lambda_{m,l,1}^{1/2}\lambda_{m,l,2}^{1/2}b_{m,l}^{3/2}h^{-3/2}\Int{\R^{3}_{+}}{}|V_{z\to y}\big(v_{1}(h^{-1/2}b_{m,l}^{1/2}z;\xi)\big){\widetilde\varphi}_{m,l}{
\widetilde\psi}_{1} (y)|^{2}  dy\\= (2\pi)^{-2}(\widetilde{\chi}_{l}^{2}(y_{m,l})-C_{13}\varsigma) \lambda_{m,l,1}^{1/2}\lambda_{m,l,2}^{1/2}b_{m,l}^{3/2}h^{-3/2}\Int{\R^{3}_{+}}{}|u_1(h^{-1/2}b_{m,l}^{1/2}y_{3};\xi){\widetilde\varphi}_{m,l}{
\widetilde\psi}_{1} (y)|^{2}  dy,
\end{multline}
where the function $u_1(\cdot,\xi)$ is introduced in \eqref{muj}.
Let us write the last integral as
\begin{multline}\label{norm-fjl-2:02}
\Int{Q_{(1+a)\varsigma}^{m,l}}{}\!\!\!\!\!\!|u_{1}(h^{-1/2}b_{m,l}^{1/2}y_3;\xi){\widetilde\varphi}_{m,l}{
\widetilde\psi}_{1} |^{2}  dy= \Int{Q_{(1+a)\varsigma}^{m,l}}{}|u_{1}(h^{-1/2}b_{m,l}^{1/2}y_3;\xi) |^{2}  dy \\
+ \Int{Q_{(1+a)\varsigma}^{m,l}}{}\!\!\!\!\!\! \big[-1+{\widetilde\varphi}^{2}_{m,l}{{\widetilde\psi}}_{1}^{2}            \big]|u_{1}(h^{-1/2}b_{m,l}^{1/2}y_3;\xi) |^{2} dy.
\end{multline}

Due to the support of $\widetilde\psi_1$, we note that the integral on the right hand side is restricted to the set where $y_3\geq \varsigma/2$. Recalling \eqref{int-xi-tau} and selecting $\varsigma$ as in \eqref{choice}, one has for $h$ sufficiently small,
\begin{equation}\label{Asp-5}
(b^{1/2}h^{-1/2}\varsigma-\xi)^{2}\geq (b^{1/2}h^{-1/2}\frac{\varsigma}{2}-\xi)^{2}\gg \frac{1}{16} bh^{-1}\varsigma^{2}\gg 1.
\end{equation}
Using this and Lemma~\ref{Agmon-lem}, we obtain for some constant $C_{16}>0$
\begin{multline}\label{norm-fjl-2:1:03}
\Int{Q_{(1+a)\varsigma}^{m,l}}{}\!\!\!\!\!\!|u_{1}(h^{-1/2}b_{m,l}^{1/2}y_3;\xi)\big({\widetilde\varphi}_{m,l}{
\widetilde\psi}_{1}\big)(y) |^{2}  dy
\geq (1-e^{-C_{16}\varsigma^2 h^{-1}})\Int{Q_{(1+a)\varsigma}^{m,l}}{} |u_{1}(h^{-1/2}b_{m,l}^{1/2}y_3;\xi) |^{2}  dy,
\end{multline}
where we have used \eqref{supp-psi} and \eqref{Overlap-part}.

Implementing \eqref{norm-fjl-2:1:03} in \eqref{norm-fjl-0:01}, we obtain
\begin{multline}\label{norm-fjl-1:04}
\int_{\Omega}{}|g_{l,m}(x,\xi,\tau;h)|^{2}dx
 \geq (2\pi)^{-2}(1-e^{-C_{16}\varsigma^{2} h^{-1}})(\widetilde{\chi}_{l}^{2}(y_{m,l})-C_{13}\varsigma)\lambda_{m,l,1}^{1/2}\lambda_{m,l,2}^{1/2} b_{m,l}^{3/2}h^{-3/2}
\times \\  \Int{Q_{(1+a)\varsigma}^{m,l}}{}\big|u_{1}(h^{-1/2}b_{m,l}^{1/2}y_3;\xi)  \big|^{2}  dy.
\end{multline}
Using the same arguments that have led to \eqref{Q-fjml}, one can show that
 \begin{multline}\label{Q-gjml}
 \mathcal{Q}_{h}(g_{l,m})
\leq h^{-3/2}b_{m,l}^{3/2}(2\pi)^{-1}
\big({\widetilde\chi}_{l}^{2}(y_{m,l})+C_{15}\varepsilon\big)\Int{Q_{(1+a)\varsigma}^{m,l}}{}
  |(-ih\nb_{z}+b_{m,l}{\bf F}_{\theta_{m,l}})\, v_{1}(h^{-1/2}b_{m,l}^{1/2}z;\xi)|^{2}\,dz\\
     +C_{15}h^{-3/2}b_{m,l}^{3/2}(2\pi)^{-1}\big(\varsigma^{4} \varepsilon^{-1}+h^{2}(a\varsigma)^{-2}\big)\Int{Q_{(1+a)\varsigma}^{m,l}}{}
| v_{1}(h^{-1/2}b_{m,l}^{1/2}z;\xi)|^{2}\,dz\\
\leq \sum_{l\in J}\sum_{\substack{m\in \mathcal{I}_{l}\\ \theta_{m,l}=0}} \lambda_{m,l,1}^{1/2}\lambda_{m,l,2}^{1/2} b_{m,l}^{2}(1+a)^{2}\varsigma^{2}(2\pi)^{-2}\times\\
\Big\{({\widetilde\chi}_{l}^{2}(y_{m,l})+C_{15}\varepsilon)(\mu_{1}(\xi)+\tau^{2})
     +C_{15}h^{-1}b_{m,l}^{-1}(\varsigma^{4} \varepsilon^{-1}+h^{2}(a\varsigma)^{-2})\Big\}.
\end{multline}
Integrating in $\xi$ and $\tau$ and taking into account \eqref{int-xi-tau}, it follows that (recall \eqref{def-gamma-tr-2})
\begin{multline}\label{QF-trial-fct-I}
  {\rm Tr}\big[(\mathcal{P}_{h}-\Lambda h)\small{\gamma_{2}}\big]
 \leq  \sum_{l\in J}\sum_{\substack{m\in \mathcal{I}_{l}\\ \theta_{m,l}=0}}M^{\prime}_{m,l,\xi,\tau}\lambda_{m,l,1}^{1/2}\lambda_{m,l,2}^{1/2}b_{m,l}^{2}(2\pi)^{-2}\varsigma^{2}(1+a)^{2}
 \times\\
\int_{\R^{2}}\Big(({\widetilde\chi}_{l}^{2}(y_{m,l})+C_{15}\varepsilon)(\mu_{1}(\xi)+\tau^{2})
     +C_{15}h^{-1}b_{m,l}^{-1}(\varsigma^{4} \varepsilon^{-1}+h^{2}a^{-2}\varsigma^{-2})\\
-\Lambda  b_{m,l}^{-1} (1-e^{-C_{16}\varsigma^{2} h^{-1}})(\widetilde{\chi}_{l}^{2}(y_{m,l})-C_{13}\varsigma)\Big)d\xi d\tau\\
=-(1+a)^{2}\sum_{l\in J}\sum_{\substack{m\in \mathcal{I}_{l}\\ \theta_{m,l}=0}}{\widetilde\chi}_{l}^{2}(y_{m,l})\lambda_{m,l,1}^{1/2}\lambda_{m,l,2}^{1/2}b_{m,l}^{2}
\varsigma^{2}(2\pi)^{-2}\int_{\R^{2}}\big(\mu_{1}(\xi) +\tau^{2}-\Lambda b_{m,l}^{-1}\big)_{-}d\xi d\tau\\
+I_{\rm rest}^{(2)},
\end{multline}
where
\begin{multline}
I_{\rm rest}^{(2)}=\sum_{l\in J}\sum_{\substack{m\in \mathcal{I}_{l}\\ \theta_{m,l}=0}}\lambda_{m,l,1}^{1/2}\lambda_{m,l,2}^{1/2}b_{m,l}^{2}(2\pi)^{-2}\varsigma^{2}(1+a)^{2}
\int_{\R^{2}}M^{\prime}_{m,l,\xi,\tau}\Big(C_{15}\varepsilon(\mu_{1}(\xi)+\tau^{2})\\
     +C_{15}h^{-1}b_{m,l}^{-1}(\varsigma^{4} \varepsilon^{-1}+h^{2}a^{-2}\varsigma^{-2})
-\Lambda  b_{m,l}^{-1}\big( e^{-C_{16}\varsigma^{2} h^{-1}}(\widetilde{\chi}_{l}^{2}(y_{m,l})-C_{13}\varsigma) +C_{13}\varsigma \big)\Big)d\xi d\tau.
\end{multline}
Taking into account the support of $M^{\prime}_{m,l,\xi,\tau}$, we deduce the following bound on $|I_{\rm rest}^{(2)}|$,
\begin{multline}
|I_{\rm rest}^{(2)}|\leq \sum_{l\in J}\sum_{\substack{m\in \mathcal{I}_{l}\\ \theta_{m,l}=0}}\lambda_{m,l,1}^{1/2}\lambda_{m,l,2}^{1/2}b_{m,l}^{2}(2\pi)^{-2}\varsigma^{2}(1+a)^{2}
4K\Big(C_{15}\varepsilon\Lambda b_{m,l}^{-1}\\
     +C_{15}h^{-1}b_{m,l}^{-1}(\varsigma^{4} \varepsilon^{-1}+h^{2}a^{-2}\varsigma^{-2})
-\Lambda  b_{m,l}^{-1}\big( e^{-C_{16}\varsigma^{2} h^{-1}}(\widetilde{\chi}_{l}^{2}(y_{m,l})-C_{13}\varsigma) +C_{13}\varsigma \big)\Big).
\end{multline}
In view of Lemma~\ref{value-th=0}, the estimate \eqref{QF-trial-fct-I} reads
\begin{equation}\label{comp-tr-gam-2}
  {\rm Tr}\big[(\mathcal{P}_{h}-\Lambda h)\small{\gamma_{2}}\big]
 \leq  -(1+a)^{2}\sum_{l\in J}\sum_{\substack{m\in \mathcal{I}_{l}\\ \theta_{m,l}=0}}{\widetilde\chi}_{l}^{2}(y_{m,l})\lambda_{m,l,1}^{1/2}\lambda_{m,l,2}^{1/2}b_{m,l}^{2}
\varsigma^{2}(2\pi)^{-2}E(0,\Lambda b_{m,l}^{-1})
+I^{(2)}_{\rm rest}.
\end{equation}
Combining \eqref{comp-tr-gam} and \eqref{comp-tr-gam-2}, and recalling \eqref{E-th-ld-th-n-z}, we obtain
  \begin{multline}\label{comp-tr-gam-1-2}
  {\rm Tr}\bigg[(\mathcal{P}_{h}-\Lambda h)\small\frac{\gamma}{1+C_{10}\varsigma}\bigg]
 \\ \leq  -(1+C_{10}\varsigma)^{-1}
 \Big\{(1+a)^{2} \sum_{l\in J}\sum_{\substack{m\in \mathcal{I}_{l}\\ \theta_{m,l}\in(0,\pi/2]}}{\widetilde\chi}_{l}^{2}(y_{m,l})\lambda_{m,l,1}^{1/2}\lambda_{m,l,2}^{1/2}b_{m,l}^{2}
\varsigma^{2}\times\\
E(\theta_{m,l},\Lambda b_{m,l}^{-1}) \int_{\nu_{-}}^{\nu_{+}}|u_{j,\theta_{m,l}}(\nu,t)|d\nu dt
- I_{\rm rest}^{(1)}\\+ (1+a)^{2}\sum_{l\in J}\sum_{\substack{m\in \mathcal{I}_{l}\\ \theta_{m,l}=0}} {\widetilde\chi}_{l}^{2}(y_{m,l})\lambda_{m,l,1}^{1/2}\lambda_{m,l,2}^{1/2}b_{m,l}^{2}\varsigma^2 E(0,\Lambda b_{m,l}^{-1})- I^{(2)}_{\rm rest}\Big\}.
\end{multline}
By Lemma \ref{lem-VP-3}, it is easy to see that 
  \begin{multline}\label{comp-tr-gam-1-2}
-\sum_{j}(e_{j}(h)-\Lambda h)_{-}
  \leq  -(1+C_{10}\varsigma)^{-1}
 \Big\{(1+a)^{2} \sum_{l\in J}\sum_{\substack{m\in \mathcal{I}_{l}\\ \theta_{m,l}\in(0,\pi/2]}}{\widetilde\chi}_{l}^{2}(y_{m,l})\lambda_{m,l,1}^{1/2}\lambda_{m,l,2}^{1/2}b_{m,l}^{2}
\varsigma^{2}\times\\
E(\theta_{m,l},\Lambda b_{m,l}^{-1}) \int_{\nu_{-}}^{\nu_{+}}|u_{j,\theta_{m,l}}(\nu,t)|d\nu dt
- I_{\rm rest}^{(1)}\\+ (1+a)^{2}\sum_{l\in J}\sum_{\substack{m\in \mathcal{I}_{l}\\ \theta_{m,l}=0}} {\widetilde\chi}_{l}^{2}(y_{m,l})\lambda_{m,l,1}^{1/2}\lambda_{m,l,2}^{1/2}b_{m,l}^{2}\varsigma^2 E(0,\Lambda b_{m,l}^{-1})- I^{(2)}_{\rm rest}\Big\}.
\end{multline}
Using the upper bound estimate in Lemma~\ref{Nb-Ev-Finite}, together with the fact that $|{\bf B}|$ is bounded on $\partial\Omega$ and that $\sum_{l\in J}\sum_{m\in \mathcal{I}_{l}}  \varsigma^{2}\lambda_{m,l,1}^{1/2}\lambda_{m,l,2}^{1/2}\sim|\partial\Omega|$, we find
\begin{equation}
|I_{\rm rest}^{(1)}|+ |I_{\rm rest}^{(2)}|=\mathcal{O}(\varepsilon+h^{-1}\varsigma^{4}\varepsilon^{-1}+h(a\varsigma)^{-2}+ e^{-\alpha (C_{14}\varsigma h^{-1/2}-L)}+e^{-C_{16}\varsigma^{2} h^{-1}} ).
\end{equation}
Recall the choice of $\varepsilon,\varsigma, a$ in \eqref{choice} and choose in addition $L=h^{-1/ 16}$, we thus obtain 
\[
|I_{\rm rest}^{(1)}|+ |I_{\rm rest}^{(2)}|= \mathcal{O}(h^{1/8}),
\]
and 
\[
\lim_{h\rightarrow 0}\int_{\nu_{-}}^{\nu_{+}}|u_{j,\theta_{m,l}}(\nu,t)|d\nu dt=\int_{-\infty}^{\infty}|u_{j,\theta_{m,l}}(\nu,t)|d\nu dt= 1.
\]
We thus get, when taking $\limsup_{h\rightarrow 0}$ on both sides of \eqref{comp-tr-gam-1-2} the following estimate 
  \begin{multline}
\limsup_{h\rightarrow 0}\big\{-\sum_{j}(e_{j}(h)-\Lambda h)_{-} \big\}
  \leq  -
 \sum_{l\in J}\sum_{m\in\mathcal{I}_{l}}{\widetilde\chi}_{l}^{2}(y_{m,l})\lambda_{m,l,1}^{1/2}\lambda_{m,l,2}^{1/2}b_{m,l}^{2}
\varsigma^{2}
E(\theta_{m,l},\Lambda b_{m,l}^{-1}) .
\end{multline}
By \eqref{sml}, we have $\lambda_{m,l,1}^{1/2}\lambda_{m,l,2}^{1/2}=|g_{l}(y_{m,l})|^{1/2}$. For $y=(y_{1},y_{2},0)\in F_{\varsigma}^{m,l}\times\{0\}$, we define the function
\begin{equation}
v(y):=
|g_{l}(y)|^{1/2}\widetilde\chi_{l}^{2}(y) |\widetilde{\bf B}(y)|^{2}
E\big(\widetilde\theta(y), | \widetilde {\bf B}(y)|^{-1} \Lambda\big)
\end{equation}
We choose the points $y_{m,l}\in F_{\varsigma}^{m,l}\times\{0\}$ so that
\[
\max_{y\in F^{m,l}_{\varsigma}\times\{0\}} v(y)= v(y_{m,l}).
\]
Then the right-hand side of \eqref{comp-tr-gam-1-2} is an upper Riemann sum. We thus get
\begin{multline}\label{RS}\sum_{m\in \mathcal{I}_{l}}|g_{l}(y_{m,l})|^{1/2} \widetilde\chi_{l}^{2}(y_{m,l}) b_{m,l}^{2}E\big(\theta_{m,l},b_{m,l}^{-1} \Lambda\big)\varsigma^{2} = \sum_{m\in \mathcal{I}_{l}} v(y_{m,l})\varsigma^{2}           \geq \\
\int_{(-\delta_{l},\delta_{l})^{2}}v(y_{1},y_{2},0)dy_{1}dy_{2}=\int_{\partial\Omega} \chi_{l}^{2}(x) |{\bf B}(x)|^{2}E\big(\theta(x), |{\bf B}(x)|^{-1} \Lambda\big)d\sigma(x).
\end{multline}

Inserting this and \eqref{RS} into \eqref{comp-tr-gam-1-2}, and using \eqref{def-chi-l}, we obtain
  \begin{equation*}
\limsup_{h\rightarrow 0}\big\{-\sum_{j}(e_{j}(h)-\Lambda h)_{-} \big\}
  \leq 
-\int_{\partial\Omega}|{\bf B}(x)|^{2}E\big(\theta(x), \Lambda |{\bf B}(x)|^{-1}\big)d\sigma(x),
\end{equation*}
which is the desired upper bound.
\subsection{Proof of Corollary \ref{Cor-nb}}

Let us start by computing the left- and right- derivatives of the
function $(0,1)\ni \lambda\to E(\theta,\lambda)$. Using the formula
of $E(\theta,\lambda)$ given in Theorem \ref{main-thm}, we find
\begin{equation}\label{right-der}
\dfrac{\partial E}{\partial\lambda_{+}}(\theta,\lambda)=\left\{\begin{array}{ll}
\dfrac{1}{2\pi^{2}}\displaystyle\int^{\infty}_{0}(\mu_{1}(\xi)-\lambda)_{-}^{1/2}d\xi &{\rm if}\quad\theta=0,\\
\dfrac{\sin(\theta)}{2\pi}  {\rm card}{\{j\,:\,\zeta_{j}(\theta)\leq \lambda\}} 
&{\rm if}\quad\theta\in(0,\pi/2],
\end{array}\right.
\end{equation}
and
\begin{equation}
\dfrac{\partial E}{\partial\lambda_{-}}(\theta,\lambda)=\left\{\begin{array}{ll}
\dfrac{1}{2\pi^{2}}\displaystyle\int^{\infty}_{0}(\mu_{1}(\xi)-\lambda)_{-}^{1/2}d\xi &{\rm if}\quad\theta=0,\\
\dfrac{\sin(\theta)}{2\pi}  {\rm card}{\{j\,:\,\zeta_{j}(\theta)< \lambda\}} &{\rm if}\quad\theta\in(0,\pi/2].
\end{array}\right.
\end{equation}
Notice that the condition in \eqref{asp-nb} ensures the equality of
$\dfrac{\partial E}{\partial\lambda_{+}}(\theta,\lambda)$ and
$\dfrac{\partial E}{\partial\lambda_{-}}(\theta,\lambda)$ when
$\lambda\in\{ \Lambda|\mathbf B(x)|^{-1}~:~x\in\partial\Omega\}$.

Let $\varepsilon>0$. 
Using Corollary~\ref{NB-vs-EN}, we obtain 
\begin{equation}{\label{First-eq}}
{\rm Tr}\,(\mathcal{P}_{h}-(\Lambda+\varepsilon)h)_{-} -{\rm Tr}\,(\mathcal{P}_{h}-\Lambda h)_{-} \geq \varepsilon h \mathcal{N}(\Lambda h;\mathcal{P}_{h},\Omega).
\end{equation}
On the other hand, by the formula in \eqref{main-for}, we have
\begin{multline*}
{\rm Tr}\,(\mathcal{P}_{h}-(\Lambda+\varepsilon)h)_{-} -{\rm Tr}\,(\mathcal{P}_{h}-\Lambda h)_{-}\\
 =\int_{\partial\Omega}|{\bf B}(x)|^{2}\Big( E(\theta(x),(\Lambda+\varepsilon) |{\bf B}(x)|^{-1})-E(\theta(x),\Lambda |{\bf B}(x)|^{-1}) \Big)d\sigma(x)+o(1),\quad {\rm as}~h\rightarrow 0.
\end{multline*}
Implementing this into \eqref{First-eq}, then taking $\limsup_{h\rightarrow 0_{+}}$, we see that
\[
 \limsup_{h\rightarrow 0_{+}}h  \mathcal{N}(\Lambda h;\mathcal{P}_{h},\Omega)\leq \int_{\partial\Omega}|{\bf B}(x)|\dfrac{ E(\theta(x),(\Lambda+\varepsilon) |{\bf B}(x)|^{-1})-E(\theta(x),\Lambda |{\bf B}(x)|^{-1})}{\varepsilon|{\bf B}(x)|^{-1}} d\sigma(x).
\]
We recall here that $|{\bf B}(x)|>0$ for all $x\in\partial\Omega$. Taking the limit $\varepsilon\rightarrow 0_{+}$, we deduce using \eqref{right-der}, and Lebesgue's dominated convergence Theorem, that
\begin{equation}\label{u-b-n}
\limsup_{h\rightarrow 0_{+}}  h\mathcal{N}(\Lambda h;\mathcal{P}_{h},\Omega)\leq \int_{\partial\Omega}|{\bf B}(x)|\dfrac{\partial E}{\partial\lambda_{+}} (\theta(x),\Lambda |{\bf B}(x)|^{-1})d\sigma(x).
\end{equation}
Replacing $\varepsilon$ by $-\varepsilon$ in \eqref{First-eq} and following the same arguments that led to \eqref{u-b-n}, we find
 \begin{equation}\label{l-b-n}
 \liminf_{h\rightarrow 0_{+}} h \mathcal{N}(\Lambda h;\mathcal{P}_{h},\Omega)\geq \int_{\partial\Omega}|{\bf B}(x)|\dfrac{\partial E}{\partial\lambda_{-}} (\theta(x),\Lambda |{\bf B}(x)|^{-1})d\sigma(x).
\end{equation}
It follows by the assumption \eqref{asp-nb} that
\begin{equation}
\int_{\partial\Omega}|{\bf B}(x)|\dfrac{\partial E}{\partial\lambda_{+}} (\theta(x),\Lambda |{\bf B}(x)|^{-1})d\sigma(x)= \int_{\partial\Omega}|{\bf B}(x)|\dfrac{\partial E}{\partial\lambda_{-}} (\theta(x),\Lambda |{\bf B}(x)|^{-1})d\sigma(x).
\end{equation}
Combining \eqref{u-b-n} and \eqref{l-b-n} we obtain
\begin{equation}\label{F-eq-nb}
\lim_{h\rightarrow 0_{+}}  h\mathcal{N}(\Lambda h;\mathcal{P}_{h},\Omega)= \int_{\partial\Omega}|{\bf B}(x)|\dfrac{\partial E}{\partial\lambda_{+}} (\theta(x),\Lambda |{\bf B}(x)|^{-1})d\sigma(x),
\end{equation}
which finishes the proof.

\section*{acknowledgements}
This paper is a major part of the author's Ph.D. dissertation. The author
wishes to thank her advisors S. Fournais and A. Kachmar.
Financial support through the Lebanese University
and CNRS as well as through the grant of S. Fournais from Lundbeck foundation.

\begin{appendix}
\section{Proof of Lemma~\ref{Lem-App}}
Using \eqref{new-QF}, \eqref{app-jac} and \eqref{app-mf}, we obtain that for some constant $c_{1}>0$ 
\begin{multline}\label{Eq-app-qff}
 (1-c_{1}(\ell+T)) \bigg\{\Int{Q_{0,\ell,T}}{}
     \Sum{p,q=1}{3}g^{pq}(y_{0})\big[(-ih\nb_{y_{p}}+\widetilde{\bf A}_{p})\,\widetilde{u}\big]\,\big[\overline{(-ih\nb_{y_{q}}+\widetilde{\bf A}_{q})\,\widetilde{u}}\big]\,|g(y_{0})|^{1/2}dy\bigg\}\\
\leq \mathcal{Q}_{h}(u)
     \leq  (1+c_{1}(\ell+T)) \bigg\{\Int{Q_{0,\ell,T}}{}
     \Sum{p,q=1}{3}g^{pq}(y_{0})\big[(-ih\nb_{y_{p}}+\widetilde{\bf A}_{p})\,\widetilde{u}\big]\,\big[\overline{(-ih\nb_{y_{q}}+\widetilde{\bf A}_{q})\,\widetilde{u}}\big]\,|g(y_{0})|^{1/2}dy\bigg\}.
\end{multline}
Similarly, using \eqref{app-mf} and \eqref{new-norm}, we have for
some constant $c_{2}>0$
\begin{multline}\label{Eq-app-norm}
 (1-c_{2}(\ell+T))\int_{Q_{0,\ell,T}}\!\!\!\!|g(y_{0})|^{1/2}|\widetilde u|^{2}dy
\leq \norm{u}_{L^{2}(\mathcal{V}_{x_{0}})}^{2}
\leq
(1+c_{2}(\ell+T))\int_{Q_{0,\ell,T}}\!\!\!\!|g(y_{0})|^{1/2}|\widetilde
u|^{2}dy.
\end{multline}
By the Cauchy-Schwarz inequality, we get using \eqref{app-MF} that
there exists a constant $c_{3}>0$ such that
\begin{multline}\label{Eq-app-qf-1}
    (1-\varepsilon) \Int{\R^{3}_{+}}{}
     \Sum{p,q=1}{3}g^{pq}(y_{0})\big[(-ih\nb_{y_{p}}+\widetilde{\bf A}^{\rm lin}_{p})\,\widetilde{u}\big]\,\big[\overline{(-ih\nb_{y_{q}}+\widetilde{\bf A}^{\rm lin}_{q})\,\widetilde{u}}\big]\,|g(y_{0})|^{1/2}dy \\
     -  c_{3}(\ell^{2}+T^{2})^{2} \varepsilon^{-1}\Int{\R^{3}_{+}}{}
|\widetilde{u}|^{2}\,|g(y_{0})|^{1/2}dy\\
\leq \Int{\R^{3}_{+}}{}
     \Sum{p,q=1}{3}g^{pq}(y_{0})\big[(-ih\nb_{y_{p}}+\widetilde{\bf A}_{p})\,\widetilde{u}\big]\,\big[\overline{(-ih\nb_{y_{q}}+\widetilde{\bf A}_{q})\,\widetilde{u}}\big]\,|g(y_{0})|^{1/2}dy\\
     \leq
     (1+\varepsilon) \Int{\R^{3}_{+}}{}
     \Sum{p,q=1}{3}g^{pq}(y_{0})\big[(-ih\nb_{y_{p}}+\widetilde{\bf A}^{\rm lin}_{p})\,\widetilde{u}\big]\,\big[\overline{(-ih\nb_{y_{q}}+\widetilde{\bf A}^{\rm lin}_{q})\,\widetilde{u}}\big]\,|g(y_{0})|^{1/2}dy\\
     + c_{3}(\ell^{2}+T^{2})^{2} \varepsilon^{-1}\Int{\R^{3}_{+}}{}
|\widetilde{u}|^{2}\,|g(y_{0})|^{1/2}dy.
\end{multline}
for any $\varepsilon>0$.
Next, we perform the change of variables \(z=(z_{1},z_{2},z_{3})=
\Big({{\lambda^{1/2}_{1}}}y_{1},{{\lambda^{1/2}_{2}}}y_{2},y_{3}\Big)\).
We thus infer using \eqref{sm} the following quadratic form in the
$(z_{1},z_{2},z_{3})$ variables
\begin{multline}\label{Eq-ncv-qf}
 \Int{\R^{3}_{+}}{}
     \Sum{p,q=1}{3}g^{pq}(y_{0})\big[(-ih\nb_{y_{p}}+\widetilde{\bf A}^{\rm lin}_{p})\,\widetilde{u}\big]\,\big[\overline{(-ih\nb_{y_{q}}+\widetilde{\bf A}^{\rm lin}_{q})\,\widetilde{u}}\big]\,|g(y_{0})|^{1/2}dy\\
     =\sum_{p=1}^{3}\Int{\widetilde{Q}_{0,\ell,T}}{}|(-ih\nb_{z_{p}}+{\bf F}_{p})\,\breve{{u}}|^{2}\,dz,
\end{multline}
where ${\bf F}=({\bf F}_{1}, {\bf F}_{2},{\bf F}_{3})$ is the
magnetic potential given by
\begin{equation*}
{\bf F}_{1}(z)=\lambda_{1}^{-1/2}\breve{ {\bf A}}^{\rm lin}_{1}(z),\quad
{\bf F}_{2}(z)=\lambda_{2}^{-1/2}\breve{ {\bf A}}^{\rm lin}_{2}(z)\quad
{\bf F}_{3}(z)=\breve{{\bf A}}^{\rm lin}_{3}(z).
\end{equation*}
Also, we have
\begin{equation}\label{Eq-ncv}
\Int{Q_{0,\ell,T}}{}
|\widetilde{u}|^{2}\,|g(y_{0})|^{1/2}dy= \Int{\widetilde{Q}_{0,\ell,T}}{}
|\breve{{u}}|^{2}\,dz
\end{equation}
Substituting this into \eqref{Eq-app-qf-1} yields
\begin{equation}\label{Eq-ncv1}
  (1-c_{2}(\ell+T)) \Int{\widetilde{Q}_{0,\ell,T}}{}
|\breve{{u}}|^{2}\,dz
\leq \norm{u}_{L^{2}(\mathcal{V}_{x_{0}})}^{2}
\leq (1+c_{2}(\ell+T))   \Int{\widetilde{Q}_{0,\ell,T}}{}
|\breve{{u}}|^{2}\,dz.
\end{equation}
Let $\beta=(\beta_{1},\beta_{2},\beta_{3})={\rm curl}_{z}({\bf
F}(z)) $ and note that the coefficients of $\beta$ and $\alpha$ (see
\eqref{def-alphas}) are related by
\[
\beta_{1}= {\lambda^{-1/2}_{2}}\alpha_{1},\qquad \beta_{2}= {\lambda^{-1/2}_{1}}\alpha_{2},\qquad \beta_{3}=(
\lambda_{1}\lambda_{2})^{-1/2}\alpha_{3}.
\]
The relation \eqref{relation-nc} gives that
\begin{equation}\label{mf-nc}
|\beta|=|{\rm curl}_{z}({\bf F}(z)) |=(\beta_{1}^{2}+\beta_{2}^{2}+\beta_{3}^{2})^{1/2} = |\widetilde {\bf B}|(y_{0})
\end{equation}
We thus perform a gauge transformation so that there exists a
function $\phi_{0}\in C^{\infty}(\widetilde Q_{0,\ell,T})$ such that
\begin{equation}\label{gT}
{\bf F}(z)=b_{0}{\bf F}_{\theta_{0}}(z)+\nabla \phi_{0},\qquad b_{0}=|\widetilde {\bf B}(y_{0})|,
\end{equation}
where, for $\theta\in[0,\pi/2]$, ${\bf F}_{\theta}$ is the magnetic field from \eqref{Ftheta} and 
\begin{equation}\label{theta-ang-0}
{\theta_{0}}:=\widetilde\theta(y_{0})=\arcsin\left(\dfrac{|\beta_{3}|}{|\beta|}\right).
\end{equation}
We emphasize here that \eqref{theta-ang-0} is compatible with the
definition of $\theta(x)$ given in \eqref{def-theta}, i.e.,
$\widetilde\theta(y_{0})=\theta(\Phi_{x_{0}}^{-1}(y_{0}))$.
Combining \eqref{Eq-app-qf-1}, \eqref{Eq-ncv-qf} and \eqref{gT}, we
obtain, using \eqref{Eq-ncv},
\begin{multline}\label{Eq-app-qf-2}
    (1-\varepsilon) \Int{\widetilde{Q}_{0,\ell,T}}{}|(-ih\nb_{z}+b_{0}{\bf F}_{\theta_{0}})e^{i\phi_{0}/h}\,\breve{{u}}|^{2}\,dz
     -  c_{3}(\ell^{2}+T^{2})^{2}\Int{\widetilde{Q}_{0,\ell,T}}{}|\breve{{u}}|^{2}dz\\
\leq \Int{\R^{3}_{+}}{}
     \Sum{p,q=1}{3}g^{pq}(y_{0})\big[(-ih\nb_{y_{p}}+\widetilde{\bf A}_{p})\,\widetilde{u}\big]\,\big[\overline{(-ih\nb_{y_{q}}+\widetilde{\bf A}_{q})\,\widetilde{u}}\big]\,|g(y_{0})|^{1/2}dy\\
     \leq
     (1+\varepsilon)\Int{\widetilde{Q}_{0,\ell,T}}{}|(-ih\nb_{z}+b_{0}{\bf F}_{\theta_{0}})e^{i\phi_{0}/h}\,\breve{{u}}|^{2}\,dz
     + c_{3}(\ell^{2}+T^{2})^{2} \varepsilon^{-1}\Int{\widetilde{Q}_{0,\ell,T}}{}
|\breve{{u}}|^{2}\,dz.
\end{multline}
for any $\varepsilon>0$.
Choose $\varepsilon\geq \ell+T$. Inserting \eqref{Eq-app-qf-2} into
\eqref{Eq-app-qff}, we obtain that for some constant $c_{4}>0$
\begin{multline}\label{Eq-app-qf}
   (1-c_{4}\varepsilon) \Int{\widetilde{Q}_{0,\ell,T}}{}|(-ih\nb_{z}+b_{0}{\bf F}_{\theta_{0}})e^{i\phi_{0}/h}\,\breve{{u}}|^{2}\,dz
     -  c_{4}(\ell^{2}+T^{2})^{2} \varepsilon^{-1}\Int{\widetilde{Q}_{0,\ell,T}}{}|\breve{{u}}|^{2}dz\\
\leq \mathcal{Q}_{h}(u)
     \leq (1+c_{4}\varepsilon)\Int{\widetilde{Q}_{0,\ell,T}}{}|(-ih\nb_{z}+b_{0}{\bf F}_{\theta_{0}})e^{i\phi_{0}/h}\,\breve{{u}}|^{2}\,dz
     + c_{4}(\ell^{2}+T^{2})^{2} \varepsilon^{-1}\Int{\widetilde{Q}_{0,\ell,T}}{}
|\breve{{u}}|^{2}\,dz.
\end{multline}
Recall \eqref{Eq-ncv1} and choose $C=\max\{c_{2},c_{4}\}$, thereby
establishing \eqref{Ap-QF} and \eqref{Ap-norm}.
\section{Proof of Lemma~\ref{lem:app'}}
According to Lemma~\ref{neg-bulk}, the lemma follows if we can prove
a lower bound on the right-hand side of \eqref{Res-Bnd}. We start by
estimating $\mathcal{Q}_{h}(\psi_{1}f_{j})$. Using the IMS
decomposition formula, it follows that
\begin{align}\label{ims-bnd}
\mathcal{Q}_{h}(\psi_{1}f_{j})= \Sum{l\in {J}}{}\left(\mathcal{Q}_{h}(\chi_{l}\psi_{1}f_{j})-h^{2}\norm{|\nb \chi_{l}|\psi_{1}f_{j}}_{L^{2}(\Omega)}^{2}\right).
\end{align}
Using \eqref{bn-gr}, and implementing \eqref{def-chi-l}, we get
\begin{multline}\label{eq-ims-chi-l}
\mathcal{Q}_{h}(\psi_{1}f_{j})-(\Lambda h+C_{1}h^{2}\varsigma^{-2})\norm{\psi_{1}f_{j}}_{L^{2}(\Omega)}^{2}\\
\geq \Sum{l\in
J}{}\left(\mathcal{Q}_{h}(\psi_{1}\chi_{l}f_{j})-\big(\Lambda h+
(C_{1}+C_{2}) h^{2}\varsigma^{-2}\big)
\norm{\psi_{1}\chi_{l}f_{j}}_{L^{2}(\Omega)}^{2}\right),
\end{multline}
where we used that $\varsigma^{-2}\gg 1$ (see \eqref{choice} below).

Applying the IMS formula once again, we then find, using that $a\ll
1$,
\begin{multline}\label{Qf-ujl}
\mathcal{Q}_{h}(\psi_{1}\chi_{l}f_{j})
    = \sum_{m\in\mathcal{I}_{l}} \Big\{  \mathcal{Q}_{h}(\varphi_{m,l}\psi_{1}\chi_{l}f_{j})                               -h^{2}\norm{|\nabla\varphi_{m,l}|\psi_{1}\chi_{l}f_{j}}_{L^{2}(\Omega)}^{2}\Big\}\\
    \geq \sum_{m\in\mathcal{I}_{l}} \Big\{  \mathcal{Q}_{h}(\varphi_{m,l}\psi_{1}\chi_{l}f_{j})
     -
     (C_{1}+C_{2}+C_{3}^{\prime})h^{2}(a\varsigma)^{-2}\norm{\varphi_{m,l}\psi_{1}\chi_{l}f_{j}}_{L^{2}(\Omega)}^{2}\Big\}\,.
\end{multline}
The last inequality follows from \eqref{bnd-grad-phi} and $C^{\prime}_{3}:=C_{3}\sup_{l\in J} \|D\Phi_{l}\|^{2}$. 
Inserting this into \eqref{eq-ims-chi-l}, it follows that
\begin{multline}\label{eq-ims-varphi-m-l}
\mathcal{Q}_{h}(\psi_{1}f_{j})-(\Lambda h+C_{1}h^{2}\varsigma^{-2})\norm{\psi_{1}f_{j}}_{L^{2}(\Omega)}^{2}\\
\geq \Sum{l\in
J}{}\sum_{m\in\mathcal{I}_{l}}\left(\mathcal{Q}_{h}(\varphi_{m,l}\psi_{1}\chi_{l}f_{j})-\big(\Lambda
h+ (C_{1}+C_{2}+C_{3}^{\prime}) h^{2}(a\varsigma)^{-2}\big)
\norm{\varphi_{m,l}\psi_{1}\chi_{l}f_{j}}_{L^{2}(\Omega)}^{2}\right)\,.
\end{multline}
Applying Lemma~\ref{Lem-App} with $y_{0}$ replaced by $y_{m,l}$,
$u=\varphi_{m,l}\psi_{1}\chi_{l}f_{j}$, $\ell=(1+a)\varsigma$,
$T=\varsigma$, we then deduce that there exists a function
$\phi_{m,l}:=\phi_{y_{m,l}}\in
C^{\infty}(\widetilde{Q}^{m,l}_{(1+a)\varsigma})$ such that, for all
$\varepsilon\in(0,1]$ satisfying $\varepsilon\gg\varsigma$, one has,
using $a\ll 1$,
\begin{multline}\label{eq-app-app}
\mathcal{Q}_{h}(\psi_{1}f_{j})-(\Lambda h+C_{1}h^{2}\varsigma^{-2})\norm{\psi_{1}f_{j}}_{L^{2}(\Omega)}^{2}\\
\geq (1-C\varepsilon) \sum_{l\in J}\sum_{m\in \mathcal{I}_{l}}
 \Int{\widetilde{Q}^{m,l}_{(1+a)\varsigma}}{}|(-ih\nb_{z}+b_{m,l}{\bf F}_{\theta_{m,l}})\,e^{i\phi_{m,l}/h}\breve{\varphi} _{m,l}\breve{\psi}_{1}\breve{\chi}_{l}\breve{f}_{j}|^{2}                 dz
   \\ -\big((\Lambda h+ (C_{1}+C_{2}+C_{3}^{\prime})h^{2}(a\varsigma)^{-2})(1+3C\varsigma)+ 25C\varsigma^{4}\varepsilon^{-1}\big)
    \sum_{l\in J}\sum_{m\in \mathcal{I}_{l}} \Int{{\widetilde{Q}^{m,l}_{(1+a)\varsigma}}}{}
|\breve{\varphi
}_{m,l}\breve{\psi}_{1}\breve{\chi}_{l}\breve{f}_{j}|^{2}\,dz ,
\end{multline}
where $C$ is the constant from Lemma~\ref{Lem-App}. Put $\widetilde
C=\max\big\{C_{1}+C_{2}+C^{\prime}_{3},25 C\big\}$. Inserting
\eqref{eq-app-app} into \eqref{Res-Bnd} yields the desired estimate
of the lemma.
\end{appendix}


    \newpage
\end{document}